\documentclass[11pt, a4paper]{article} 
\usepackage[T1]{fontenc}
\usepackage[utf8]{inputenc} 
\usepackage{amsmath,amssymb,amsthm}
\usepackage{mathtools}
\usepackage{fullpage,color}
\usepackage[colorlinks=true,anchorcolor=blue,filecolor=blue,linkcolor=red,urlcolor=blue,citecolor=blue]{hyperref}     

\usepackage{tikz,bm}
\usetikzlibrary{shapes.geometric, arrows.meta, positioning, calc, fit, backgrounds}  
\usepackage{geometry} 
\AtBeginDocument{%
  \setlength{\abovedisplayskip}{7pt plus 2pt minus 3pt}%
  \setlength{\belowdisplayskip}{5pt plus 2pt minus 3pt}%
  \setlength{\abovedisplayshortskip}{3pt plus 2pt minus 1pt}%
  \setlength{\belowdisplayshortskip}{5pt plus 2pt minus 2pt}%
}

\usepackage{booktabs}

\newtheorem{theorem}{Theorem}[section]

\newtheorem{lemma}[theorem]{Lemma}
\newtheorem{problem}[theorem]{Problem}
\newtheorem{proposition}[theorem]{Proposition}
\newtheorem{corollary}[theorem]{Corollary}

\theoremstyle{definition}

\newtheorem{example}[theorem]{Example}

\newtheorem{remark}{Remark}
\theoremstyle{plain}

\newcommand{\tr}{\operatorname{tr}}

\newcommand{\ex}{\mathrm{ex}}

\renewcommand{\leq}{\leqslant}
\renewcommand{\le}{\leqslant}

\renewcommand{\ge}{\geqslant}

\newcommand{\one}{\mathbf{1}}

\newcommand{\EE}{\mathbb{E}}
\newcommand{\Var}{\operatorname{Var}}
\newcommand{\supp}{\operatorname{supp}}
\newcommand{\pmax}{\lVert \bm{p} \rVert_{\infty}}
\newcommand{\PhiG}{\Phi_G}

\definecolor{figblue}{HTML}{1F4E79}
\definecolor{figamber}{HTML}{A85F00}
\definecolor{figteal}{HTML}{1D6F63} 

\newcommand{\srcc}[1]{{\scriptsize\color{black!60}#1}}
\newcommand{\srcn}[1]{{\scriptsize\bfseries\color{figamber!85!black}#1}}

\title{An entropy bridge from weighted to spectral Tur\'{a}n theorems} 

\author{Yongtao Li\thanks{E-mail address: \texttt{ytli0921@hnu.edu.cn} }  \\[2mm]
{\small Yau Mathematical Sciences Center, Tsinghua University, Beijing, China.} 
 }

\date{}

\begin{document}

\maketitle

\vspace{-0.4cm}

\begin{abstract}
A classical result of Wilf [J. Combin. Theory Ser. B (1986)] shows that every $K_{r+1}$-free graph $G$ on $n$ vertices satisfies $\lambda (G)\le (1- \frac{1}{r}) n$. Another well-known result of Nikiforov [Combin. Probab. Comput. (2002)] states that every $K_{r+1}$-free graph with $m$ edges satisfies $\lambda^2(G)\le (1-\frac{1}{r}) 2m$, which extends both Tur\'{a}n's   and Wilf's bounds. 
Let $w_\ell(G)$ be the number of walks on $\ell$ vertices in $G$, so that $w_1(G)=n$ and $w_2(G)=2m$. Nikiforov [Linear Algebra Appl. (2006)] unified the two bounds by showing that $\lambda^{\ell}(G)\le (1- \frac{1}{r}) w_{\ell}(G)$ for every $\ell \ge 1$.

In this paper, we establish an entropy bridge, and then use it to prove that for any color-critical graph $F$ with chromatic number $\chi(F)=r+1\ge 3$, there exists a constant $\lambda_0=\lambda_0(F)$ such that if $G$ is an $F$-free graph with $\lambda (G)\ge \lambda_0$, then for every $\ell\ge 1$ with $(r,\ell )\neq (2,2)$, 
\vspace{-1.5mm}
\[ \lambda^\ell (G) \le
  \Big(1-\frac1r\Big)w_\ell(G), \vspace{-1mm} \]
with equality if and only if $G$ is a regular complete $r$-partite graph; in the case $r=2$ with $\ell$ even, equality holds for every complete bipartite graph. Isolated vertices are permitted when $\ell \ge 2$. 
The pair $(r,\ell )=(2,2)$ must be excluded, since the bound fails for several forbidden graphs, e.g., $C_{2t+1}$ with $t\ge 2$. Moreover,  walks counts may also be replaced by the homomorphism counts of unbalanced trees. 
As further applications,  we extend a spectral supersaturation result of Bollob\'{a}s and Nikiforov [J. Combin. Theory Ser. B (2007)],  revisit the spectral stability results of Nikiforov [J. Graph Theory (2009)] and Li, Liu and Zhang [arXiv:2508.15271 (2025)] in a new way, 
and we also extend the entropic Tur\'{a}n theorem of Chao and Yu [J. London Math. Soc. (2026)] from $K_{r+1}$-free graphs to $F$-free graphs with $F$ color-critical. 

We provide a framework by passing through weighted Tur\'an theorems of independent interest. If $G$ is $F$-free and $\bm{p}$ is a probability vector on $V(G)$ with $\lVert \bm{p}\rVert_\infty$ sufficiently small, then  \vspace{-2mm}
\[ 2\sum_{uv\in E(G)}p_up_v
\le 1-\frac1r + o(1), \vspace{-1mm}  \]
 and the error term $o(1)$ can be
removed if and only if $F$ is color-critical. This is a Motzkin--Straus-type 
inequality in which the clique number of $G$ is replaced by $\chi (F)-1$. The proof of this weighted result combines a blow-up
argument, the graph removal lemma, the
Erd\H{o}s--Simonovits stability theorem, a
probabilistic sampling argument, and an exact
estimate near a complete $r$-partite graph.
 The bridge linking spectral inequalities to weighted inequalities is based on the entropy method for the Markov chain attached to the Perron vector.
\end{abstract}

{\bf Keywords:} Spectral radius; weighted Tur\'an theorem; color-critical graph; entropy method. 

{\bf 2020 AMS Subject Classifications:} 
05C35, 05C50, 05D40, 94A17.


\section{Introduction}

For a graph $G$, let $V(G)$ and $E(G)$ denote its vertex and edge sets, and write $v(G)=|V(G)|$ and $e(G)=|E(G)|$. Let $N(v)$ denote the neighborhood of a vertex $v$.  Let $A(G)=(a_{uv})$ be the adjacency matrix of $G$, where $a_{uv}=1$ if $uv\in E(G)$, and $0$ otherwise. Let $\lambda(G)$ be the spectral radius of $A(G)$, which is defined as the maximum absolute value of the eigenvalues of $A(G)$.

A graph $G$ is $F$-free if it contains no subgraph isomorphic to $F$. The  classical Tur\'an theorem \cite{Turan1941} implies that every $n$-vertex $K_{r+1}$-free graph $G$ satisfies $e(G)\le (1- \frac{1}{r})\frac{n^2}{2}$. 
The Erd\H{o}s--Stone--Simonovits theorem \cite{ES1946,ES1966} says that for any graph $F$ with chromatic number $\chi (F)=r+1$, if $G$ is an $F$-free graph on $n$ vertices,  then $e(G)\le \left( 1- \frac{1}{r} + o(1)\right)\frac{n^2}{2}$, where $o(1)\to 0$ as $n\to \infty$.  

Both bounds above measure a graph by its number of edges. A finer parameter is the spectral radius.  
Spectral graph theory extracts structural information from the eigenvalues of matrices associated with a graph, and it has settled long-standing problems outside graph theory, such as the sensitivity conjecture \cite{Huang2019} and 
the equiangular lines problem \cite{JTYZZ2021}. 
Extremal spectral graph theory asks how large the spectral radius of a graph can be when certain subgraphs are forbidden. 
In 1986, Wilf \cite{Wilf1986} proved that every $n$-vertex  $K_{r+1}$-free graph $G$ satisfies  
\begin{equation}  \label{eq-Wilf}
\lambda (G) \le \Big( 1- \frac{1}{r}\Big) n,
\end{equation} 
which implies Tur\'{a}n's bound 
since $\lambda (G)\ge 2m/n$. 
Wilf's bound estimates $\lambda(G)$ in terms of the order $n$. One may instead estimate $\lambda (G)$ by the size $m$. We call these two settings 
 the \emph{vertex-spectral} and the 
\emph{edge-spectral} settings, respectively.  
In the latter setting, Nosal \cite{Nosal1970} showed that every triangle-free graph $G$ with $m$ edges has $\lambda (G)\le \sqrt{m}$; see, e.g., \cite{LNW2021,ZLS2021,NingZhai2023,LLZ2026,CLT2026} for recent generalizations. 
 In 2002, Nikiforov \cite{Nik2002} proved that every $K_{r+1}$-free graph with $m$ edges satisfies 
\begin{equation} \label{eq-Nik-2002}
\lambda^2(G)\le \Big( 1- \frac{1}{r}\Big)2m,
\end{equation}  
with equality if and only if $G$ is a complete bipartite graph when $r=2$, or a regular complete $r$-partite graph when $r\ge 3$;  see \cite{Nik2006, Nik2009}.  
The bound \eqref{eq-Nik-2002} implies both
Tur\'{a}n's and Wilf's bounds.

\smallskip 
In 2006, Nikiforov \cite{Nik2006} proved a common generalization of the vertex- and edge-spectral bounds. 
A {\it walk on $\ell$ vertices} is a sequence $(v_1,\ldots,v_\ell)$ of not necessarily distinct vertices such that $v_iv_{i+1}\in E(G)$ for every $1\le i\le \ell-1$. Let $w_{\ell}(G)$ denote the number of walks on $\ell$ vertices in $G$. 
Nikiforov \cite[Theorem 14]{Nik2006} proved that  if $r\ge 2$ and $G$ is a $K_{r+1}$-free graph, then for every $\ell \ge 1$, 
\begin{equation} \label{eq:nikiforov-walk}
\lambda^\ell (G) \leq \Big( 1- \frac{1}{r}\Big) w_\ell(G). 
\end{equation}

The cases $\ell =1, 2$ of \eqref{eq:nikiforov-walk} are exactly
\eqref{eq-Wilf} and \eqref{eq-Nik-2002}. Other values of $\ell$ bring in the whole degree sequence, 
since $w_3(G)=\sum_{v\in V}d_v^{2}$ and
$w_4(G)=2\sum_{uv\in E}d_ud_v$.  
Recently, Chao and Yu \cite{CY2026} proved an  entropic Tur\'{a}n theorem and extended \eqref{eq:nikiforov-walk} by replacing
$w_{\ell}(G)=\hom (P_{\ell},G)$ with $\hom (T,G)$ for an arbitrary tree $T$. The entropic point of view is also the starting point of the present paper.      
 
\smallskip 

All the bounds above forbid a clique, and all of them are exact. For a general forbidden graph $F$, the Erd\H{o}s--Stone--Simonovits theorem says that only $\chi (F)$ matters asymptotically, but its error term cannot be removed in general. 
It can be removed exactly when $F$ belongs to the following class. 
A graph $F$ is called {\it color-critical} if there is an edge $uv\in E(F)$ for which $\chi(F- uv)<\chi(F)$. 
 This family of graphs, which includes cliques, odd cycles and books, plays a central role in the
development of extremal graph theory; see, e.g., \cite{Mub2010,PY2017,ZhaiLin2023}. 
In 1968, Simonovits \cite{Sim1968} extended the Tur\'{a}n theorem to color-critical forbidden graphs. 
 Let $T_{n,r}$ denote the $n$-vertex Tur\'{a}n graph, which is a complete balanced $r$-partite graph. Simonovits showed that for all sufficiently large $n$, if $G$ is an $n$-vertex $F$-free graph, 
 then $e(G)\le e(T_{n,r})$, with equality if and only if $G=T_{n,r}$. 
In 2009,  Nikiforov \cite{Nik2009-critical} proved that $T_{n,r}$ also attains the maximum spectral radius for sufficiently large $n$.  The vertex-spectral Tur\'an problem has been studied for much wider families; see, e.g., \cite{TT2017, CDT2023,  BDT2025,  WKX2023, ZL2022jctb}.   
All of these results fix the order of $G$. Fixing the size instead changes the problem,
since a graph with $m$ edges may have many vertices of low degree and the extremal graphs need not be Tur\'an graphs. 
This is the Brualdi--Hoffman--Tur\'an problem \cite{BH1985, Row1988}, taken up in the spectral setting in recent work \cite{LZS2024, LZZ2025, LLLY2026, LLZ-ESS, LLZ-Critical}. 
Walk counts interpolate between the two: since $w_1(G)=n$ and $w_2(G)=2m$, a single inequality in the variable $w_\ell$ carries both the vertex-spectral and edge-spectral bounds, and other values of $\ell$ go beyond both. This is the form in which we state our results.

 \smallskip 
  In this walk-spectral form, the Erd\H{o}s--Stone--Simonovits theorem has an analogue for every $\ell$. In 2025, Li, Liu and Zhang \cite{LLZ-ESS}  
 proved that if $\chi (F)=r+1\ge 3$, then every $F$-free graph $G$ satisfies  $ \lambda^\ell (G)\le \left( 1 \!-\! \frac{1}{r} \!+\! o(1)\right)w_\ell(G)$ for every $\ell \ge 1$.  
This unifies the Erd\H{o}s--Stone--Simonovits theorem and its vertex-spectral version due to  Nikiforov \cite{Nik2009ESB}. 
 In the case $\ell =2$, Li, Liu and Zhang \cite{LLZ-ESS} also established the edge-spectral stability theorem: if $\chi (F)=r+1\ge 3$ and $G$ is an $F$-free graph
with $m$ edges and $\lambda^{2}(G)\ge \bigl(1-\frac1r-o(1)\bigr)2m$, then $G$ is
within edit distance $o(m)$ of a complete bipartite graph when $r=2$, and of a
Tur\'{a}n graph $T_{C,r}$ on some $C\subseteq V(G)$ when $r\ge 3$. 
In a companion paper,  Li, Liu and Zhang \cite{LLZ-Critical} developed the edge-spectral stability method and extended Nikiforov's bound (\ref{eq-Nik-2002}) to color-critical forbidden graphs, showing that if $F$ is a color-critical graph with $\chi (F)=r + 1\ge 4$, then every $F$-free graph $G$ with large size $m$ satisfies 
\begin{equation} \label{eq-LLZ-critical}
  \lambda^2(G)\le \Big( 1- \frac{1}{r} \Big)2m, 
\end{equation} 
with equality if and only if $G$ is a regular complete $r$-partite graph. The hypothesis $\chi (F) \ge 4$ is necessary, since the above bound fails when $F=C_{2t+1}$ or $K_{t+1,t+1}^+$ for every $t\ge 2$; see  \cite{LZS2024,LLZ-Critical}.   
Table \ref{tab:all-unified} summarizes what is known: its rows bound $\lambda (G)$ by the order, the size, and the walk counts of $G$, and its
columns forbid a clique, a general graph $F$, and a color-critical  graph $F$.

\begin{table}[htbp]
\centering
\begin{tikzpicture}[
  font=\small,
  cell/.style={rounded corners=2.5pt, draw, line width=0.5pt,
               inner xsep=3pt, inner ysep=4.5pt, align=center,
               text width=38mm, minimum height=12.5mm},
  exact/.style={cell, draw=figblue!45, fill=figblue!8},
  asym/.style={cell, draw=black!25, fill=black!4},
  new/.style={cell, draw=figamber!65, fill=figamber!12, line width=0.9pt},
  chead/.style={font=\small\bfseries, text=figblue!85!black, align=center},
  rhead/.style={align=center, font=\small}
]
\def\cL{0}\def\cA{3}\def\cB{7.15}\def\cC{11.30}
\def\yH{3.30}\def\yRule{2.95}\def\yR{2.15}\def\yS{0.7}\def\yW{-0.75}
\begin{scope}[local bounding box=tbl] 
\node[chead] at (\cA,\yH) {$K_{r+1}$-free};
\node[chead] at (\cB,\yH) {$F$-free};
\node[chead] at (\cC,\yH) {color-critical $F$-free};
\draw[line width=0.6pt, black!35] (-0.75,\yRule) -- (13.40,\yRule); 

\node[rhead] at (\cL,\yR) {order\\[2pt]{\footnotesize\color{black!55}$w_1=n$}};
\node[rhead] at (\cL,\yS) {size\\[2pt]{\footnotesize\color{black!55}$w_2=2m$}};
\node[rhead] at (\cL,\yW) {walks\\[2pt]{\footnotesize\color{black!55}$w_\ell$}};

\node[exact] at (\cA,\yR) {$\lambda \le \bigl(1-\tfrac1r\bigr)n$\\[2pt]
   \srcc{Wilf \cite{Wilf1986}, 1986}};
\node[asym]  at (\cB,\yR) {$\lambda\le\bigl(1-\tfrac1r+o(1)\bigr)n$\\[2pt]
   \srcc{Nikiforov \cite{Nik2009ESB}, 2009}};
\node[exact] at (\cC,\yR) {$\lambda\le\bigl(1-\tfrac1r\bigr)n$\\[2pt]
   \srcc{Nikiforov \cite{Nik2009-critical}, 2009}};

\node[exact] at (\cA,\yS) {$\lambda^{2}\le\bigl(1-\tfrac1r\bigr)2m$\\[2pt]
   \srcc{Nikiforov \cite{Nik2002}, 2002}};
\node[asym]  at (\cB,\yS) {$\lambda^{2}\le\bigl(1-\tfrac1r+o(1)\bigr)2m$\\[2pt]
   \srcc{Li--Liu--Zhang \cite{LLZ-ESS}, 2025}};
\node[exact] at (\cC,\yS) {$\lambda^{2}\le\bigl(1-\tfrac1r\bigr)2m$\\[2pt]
   \srcc{Li--Liu--Zhang \cite{LLZ-Critical}, 2025}};

\node[exact] at (\cA,\yW) {$\lambda^{\ell}\le\bigl(1-\tfrac1r\bigr)w_\ell$\\[2pt]
   \srcc{Nikiforov \cite{Nik2006}, 2006}};
\node[asym]  at (\cB,\yW) {$\lambda^{\ell}\le\bigl(1-\tfrac1r+o(1)\bigr)w_\ell$\\[2pt]
   \srcc{Li--Liu--Zhang \cite{LLZ-ESS}, 2025}};
\node[new]   at (\cC,\yW) {$\lambda^{\ell}\le\bigl(1-\tfrac1r\bigr)w_\ell$\\[2pt]
   \srcn{Theorem \ref*{thm:large-lambda}}};
   \end{scope}
\begin{scope}[on background layer]
\node[rounded corners=8pt, draw=figblue!40, fill=black!1,
      line width=0.9pt, inner sep=7pt, fit=(tbl)] {};
\end{scope} 
\end{tikzpicture}
\caption{All entries assume $\chi (F)=r+1$. The entries in the last column require the relevant parameter to be sufficiently large, and the entry for $w_\ell$ excludes the pair $(r,\ell )=(2,2)$.}
\label{tab:all-unified}
\end{table}

\vspace{-3mm}
\subsection{The walk-spectral Tur\'{a}n theorem}

The last column of Table \ref{tab:all-unified},
where $F$ is color-critical, is exact: reading 
it downwards, the first two entries are the vertex-spectral bound of Nikiforov \cite{Nik2009-critical} and the edge-spectral bound of Li, Liu and Zhang \cite{LLZ-Critical}, both with the sharp constant
$1-\frac1r$ and with the regular complete
$r$-partite graphs as the extremal graphs. 
However, the entry in the lower right-hand corner was missing. At the end of \cite{LLZ-Critical}, 
Li, Liu and Zhang proposed the corresponding 
bound for every $\ell$ as a conjecture.  

\smallskip 
We develop a new framework combining the entropy method and the weighted Tur\'{a}n theorems, 
of which the conjectured walk-spectral bound is one principal consequence. 
Recall that the proof of (\ref{eq-LLZ-critical}) by  Li, Liu and Zhang \cite{LLZ-Critical} uses the edge-spectral
stability established in \cite{LLZ-ESS}, but it has no evident analogue in the walk-spectral setting with $\ell \ge 3$. 
We therefore argue differently:  
 an entropy bridge for the Markov chain attached to the Perron vector turns the spectral inequality into a weighted Tur\'an-type theorem, in which the clique number $\omega (G)$ is replaced by $\chi (F)-1$.

\begin{theorem}
\label{thm:large-lambda}
Let $F$ be color-critical with
$\chi (F)=r+1\ge 3$. There is a constant
$\lambda_0=\lambda_0(F)$ such that
if $G$ is an $F$-free
graph with $\lambda (G)\ge \lambda_0$, then for every  $\ell \ge 1$ with
$(r,\ell )\neq (2,2)$, 
\begin{equation}  \label{eq:main-bound}
  \lambda^{\ell}(G)\ \le\
  \Bigl(1-\frac{1}{r}\Bigr)w_{\ell}(G),
\end{equation}
with equality if and only if $G$ is a regular complete
$r$-partite graph, possibly together with isolated
vertices when $\ell \ge 2$. The exception is the
case $\chi (F)=3$ with $\ell$ even, in which equality holds
exactly for all complete bipartite graphs, again possibly
together with isolated vertices. 
\end{theorem}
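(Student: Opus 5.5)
The plan is to route the spectral inequality through a weighted Turán inequality, as the abstract announces. Let $\bm{x}$ be the unit Perron vector of $G$; we may restrict to the component attaining $\lambda=\lambda(G)$, since this only decreases $w_\ell$. Consider the stationary Markov chain on $V(G)$ with transition probabilities $P(u,v)=a_{uv}x_v/(\lambda x_u)$ and stationary distribution $\pi_u=x_u^2$. A walk $(X_1,\dots,X_\ell)$ drawn from this chain is supported on walks of $G$, so $\log w_\ell(G)\ge H(X_1,\dots,X_\ell)=H(X_1)+(\ell-1)H(X_2\mid X_1)$. A direct computation gives $H(X_2\mid X_1)=\log\lambda+\sum_u x_u^2\log x_u-\sum_v x_v^2\log x_v$, up to the appropriate terms, so the entropy expression reduces to $\ell\log\lambda$ plus a correction. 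The bridge I would prove is sharper than this naive bound. Instead of the uniform walk, compare $w_\ell$ with $\lambda^\ell$ times a weighted edge density. For a suitably chosen probability vector $\bm{p}$ built from $\bm{x}$ (for odd $\ell=2k+1$, take $p_v\propto (A^k\one)_v\,x_v$ or a similar entropy-optimal mixture), Jensen's inequality for $\log$ should give
\[
  \lambda^\ell\le \Bigl(2\sum_{uv\in E(G)}p_up_v\Bigr)\, w_\ell(G),
\]
in the spirit of Chao--Yu's entropic Turán theorem. Equality in Jensen would force a specific regularity of the chain.

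Second, I would apply the weighted Turán theorem for colour-critical $F$ stated in the abstract: if $\lVert\bm{p}\rVert_\infty\le\varepsilon_0(F)$, then $2\sum_{uv}p_up_v\le 1-\frac1r$. Its equality case is the complete $r$-partite structure with $\bm{p}$ giving each part mass $1/r$. This weighted theorem does the combinatorial work (removal lemma, stability, sampling, local estimate). The remaining task is to check that the $\bm{p}$ produced by the bridge has small sup-norm once $\lambda\ge\lambda_0$. For $\ell=1$ this is standard: $p=x/\lVert x\rVert_1$, and the eigen-equation bounds $x_v\le\sqrt{d_v}/\lambda$-type estimates. For general $\ell$, I would use $x_v\le 1/\sqrt{2}$ together with $\lambda x_v=\sum_{u\sim v}x_u\le\sqrt{d_v}$.

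The main obstacle is exactly this smallness. A star-like piece, i.e.\ a vertex of huge degree with a leaf-heavy neighbourhood, can make $\lVert\bm{p}\rVert_\infty$ large. This is also why $(r,\ell)=(2,2)$ fails: for $\ell=2$ the natural weight is $p_v=x_v^2$, which can concentrate on one vertex. My first attempt would be a case split. If some $p_v>\varepsilon_0$, I would bound $\lambda^\ell/w_\ell$ directly by showing that the mass near $v$ forces a nearly star-like (bipartite-like) structure, for which the ratio is at most about $1/2$, or strictly less than $1-\frac1r$ when $r\ge3$. For $r=2$ and $\ell\ne2$, I would check that odd $\ell$ (and $\ell\ge4$) pick up extra walks, giving strict inequality, using $w_\ell\ge$ the count of walks through $v$ and $\lambda\ge\lambda_0$.

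Finally, the equality analysis combines the two equality conditions. Equality in the weighted theorem forces the support to induce a complete $r$-partite graph with balanced $\bm{p}$-mass. Equality in Jensen forces the relevant walk weights to be constant, which yields regularity, except when $r=2$ and $\ell$ is even. In that case bipartite walk counts factor as $\lambda^\ell$ times a quantity independent of part sizes, so every complete bipartite graph attains equality. Isolated vertices contribute nothing to $w_\ell$ for $\ell\ge2$, which explains why they are allowed there.
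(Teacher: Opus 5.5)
Your proposal has two genuine gaps: the bridge inequality is never actually established, and the case where the Perron vector is not flat is handled incorrectly for even $\ell$.

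\textbf{The bridge.} Your first paragraph already contains the paper's bridge, and you abandon it one step too early. Both marginals of the edge distribution $Q_{uv}=a_{uv}x_ux_v/\lambda$ equal $\pi$. So the two sums in your formula cancel, $H(X_2\mid X_1)=\log\lambda$ exactly, and $\log w_\ell(G)\ge H(\pi)+(\ell-1)\log\lambda$.

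The missing step is Gibbs' inequality for $Q$ against $R_{uv}=a_{uv}\pi_u\pi_v/\Phi_G(\pi)$. A direct computation gives $D(Q\,\Vert\,R)=H(\pi)+\log(\Phi_G(\pi)/\lambda)\ge 0$. Hence $\lambda^\ell\le\Phi_G(\pi)\,w_\ell(G)$ for every $\ell$, with the single weight $\pi_v=x_v^2$.

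Instead you assert, without proof, a Jensen bound for a different vector ($p_v\propto(A^k\one)_vx_v$, with no candidate at all for even $\ell$). So neither the inequality nor its equality case is established, and the sup-norm of that vector is even harder to control.

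The exact identity $\log w_\ell=\ell\log\lambda-\log\Phi_G(\pi)+D(Q\,\Vert\,R)+D(\mu_\ell\,\Vert\,\nu_\ell)$ is also what drives the equality analysis. It does so without needing any equality case of the weighted theorem, which as stated has none. The argument runs as follows:
\begin{itemize}
\item $D(Q\,\Vert\,R)=0$ forces $x_ux_v$ to be constant on edges, so $\bm{x}$ is constant along even walks.
\item If $G$ is bipartite, then $1-\frac1r=\Phi_G(\pi)\le 2\pi(A)\pi(B)\le\frac12$ forces $r=2$ and $G$ complete bipartite.
\item If $G$ is not bipartite, it is regular with $e(G)=(1-\frac1r)\frac{n^2}{2}$, so $G=T_{n,r}$ by Simonovits.
\end{itemize}

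\textbf{The non-flat case.} This is the more serious gap. Flatness of $\pi$ cannot come from $x_v\le 1/\sqrt2$ and $\lambda x_v\le\sqrt{d_v}$; it has to be extracted from the assumed violation $\lambda^\ell>(1-\frac1r)w_\ell$.
\begin{itemize}
\item For odd $\ell=2k+1$, counting walks in the star at $v$ does this, since $w_\ell\ge d_v^{k+1}\ge(\lambda x_v)^{2k+2}$. But the resulting threshold $\lambda>2\eta^{-(k+1)}$ grows with $\ell$, while the theorem asks for $\lambda_0(F)$ uniform in $\ell$. The paper gets uniformity from $w_\ell\ge\lVert\bm{x}\rVert_1^2\lambda^{\ell-1}$ (all $\lambda_i^{\ell-1}\ge 0$) together with $\lVert\bm{x}\rVert_1\ge\lambda\max_v x_v$.
\item For even $\ell=2k$, the star only gives $w_\ell\ge 2d_v^k$, i.e.\ a useless ratio bound $1/(2\eta^k)$.
\item For $r\ge 3$ and even $\ell$, you therefore need a genuine gap-flatness lemma: $\lambda^\ell\ge(\frac12+\delta)w_\ell$ implies $\max_v x_v^2<\delta^{-8}/\lambda$. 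The paper imports this from Li--Liu--Zhang; ``nearly star-like structure'' does not supply it.
\item For $r=2$ and even $\ell\ge 4$, your claim of strict inequality is false. $K_{1,d}$ has $x_{\mathrm{center}}^2=\frac12$ and $\lambda^\ell=\frac12 w_\ell$ exactly, and so does every $K_{a,b}$ with $a$ bounded.
\end{itemize}

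The last case needs an exact argument: $\max_v x_v^2\ge\eta$ and $\lambda\ge 16\eta^{-6}$ must force $w_\ell\ge 2\lambda^\ell$, with equality only for complete bipartite graphs. This is the paper's lemma that writes $A\one=\bm{y}+\frac{\lambda}{x_u}\bm{x}$ with $\bm{y}\ge 0$ and plays $\bm{y}^{\mathsf T}A^{\ell-3}\bm{y}$ against the Perron weight $W$ of the edges inside $N(u)$. Without it, both the bound for even $\ell\ge 4$ when $r=2$ and the ``all complete bipartite graphs'' equality statement remain unproved. Those extremal graphs live exactly in the case you declared strict.
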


\begin{remark}
The constant $1-\frac1r$ in Theorem
\ref{thm:large-lambda} is best possible. If $r$
divides $n$, then $T_{n,r}$ is $d$-regular with
$d=(1-\frac1r)n$, so 
$w_\ell (T_{n,r})=nd^{\ell -1}$ and $\lambda^\ell (T_{n,r})
 =(1-\frac1r)w_\ell (T_{n,r})$. Since every
$r$-partite graph is $F$-free, no smaller constant
can replace $1-\frac1r$. 
Two features distinguish Theorem \ref{thm:large-lambda} from (\ref{eq:nikiforov-walk}). First, a threshold is
unavoidable. If $F$ is color-critical with
$\chi (F)=r+1$ and $F\neq K_{r+1}$, then
$v(F)\ge r+2$, so $K_{r+1}$ is $F$-free,
and $\lambda^{\ell}(K_{r+1})= r^{\ell}$
exceeds $\bigl(1-\frac1r\bigr)w_\ell (K_{r+1})
 =(r^{2}-1)r^{\, \ell -2}$. 
Second, we must exclude $\chi (F)=3$ and $\ell =2$.
In this case, the bound $\lambda^2 (G) \le \frac{1}{2}w_2(G) = m$ fails for many graphs $F$, for which the extremal graphs are the nearly split graphs \cite{LZS2024, LLZ-Critical}.
\end{remark}

The threshold in Theorem \ref{thm:large-lambda} is imposed on the spectral radius, and $\lambda_0$ does not depend on $\ell$. It can be imposed on the walk count, which confirms a conjecture of Li, Liu and Zhang \cite{LLZ-Critical}. 

\begin{corollary} \label{cor:threshold-walks} 
For every color-critical graph with $\chi (F)=r+1 \ge 4$, and every integer $\ell \ge 1$,  
there is a constant $W_0=W_0(F,\ell )$ such
that every $F$-free graph $G$ with
$w_\ell (G)\ge W_0$ satisfies 
\begin{equation*}
  \lambda^\ell (G)\le
  \Bigl(1-\frac{1}{r}\Bigr)w_\ell (G),
\end{equation*}
with equality if and only if $G$ is a 
regular complete $r$-partite graph, possibly with
isolated vertices when $\ell \ge 2$. 
For $\chi (F)=3$ and $\ell \neq 2$, we have $\lambda^{\ell} (G)\le \frac{1}{2}w_{\ell} (G)$, and equality holds exactly for 
the regular complete bipartite graph when $\ell$ is odd; 
 for all complete bipartite graphs when $\ell$ is even. 
\end{corollary}

Indeed, suppose that $G$ is an $F$-free graph with $\lambda^\ell(G) > \left( 1- \frac{1}{r}\right)w_\ell(G)$ for some $\ell $ with $(r, \ell) \neq (2,2)$. 
Then Theorem \ref{thm:large-lambda} implies  $\lambda (G)<\lambda_0 (F)$. 
Hence, it follows that  $w_\ell (G) < \lambda^{\ell}(G)/ (1- \frac{1}{r})  <\lambda_0^{\ell}(F) /(1-\frac1r) =:W_0(F,\ell)$. 
Consequently, the bound (\ref{eq:main-bound}) holds for every $w_{\ell}(G)\ge W_0(F,\ell)$.   
Moreover, if $w_\ell (G)\ge W_0$ and  
equality holds in \eqref{eq:main-bound}, then
$\lambda^{\ell}(G)=(1-\frac1r)\, w_\ell (G)
 \ge (1-\frac1r)W_0=\lambda_0^{\ell}$, so
$\lambda (G)\ge \lambda_0$. Therefore, the equality characterization of Theorem \ref{thm:large-lambda} applies to $G$ verbatim.

\subsection{Tree homomorphisms and degree powers}

Walks are the homomorphism counts of paths, 
and the method of this paper is not tied to paths.  
If $ \hom (P_\ell ,G)\le \hom (T,G)$ held for every tree $T$ on $\ell$ vertices and every graph $G$, 
then a bound in terms of $\hom (T,G)$ 
would follow at once
 from Theorem \ref{thm:large-lambda}. 
However, this is not the case: the path need not
minimize $\hom (\,\cdot\,,G)$ over the trees on $\ell$ vertices, and the inequality already fails for a tree on seven vertices \cite{CL2014}. It does
hold when $G$ is a path or a star \cite{CL2015}, but characterizing the graphs $G$ for which it holds remains open. 
A bound for tree homomorphism counts is
therefore a genuinely different statement, 
and we prove one for the following class of trees. 

\smallskip 
For a tree $T$ on $\ell$ vertices, 
we write $c(T)$ for the size of the larger side
of its bipartition, and call $T$ \emph{unbalanced} if $2c(T)>\ell$. Every tree of odd order is unbalanced.

\begin{theorem}\label{thm:tree-intro}
Let $F$ be color-critical with $\chi (F)=r+1\ge 3$, and let $T$ be an unbalanced tree on $\ell$ vertices. There is a constant
$\lambda_0=\lambda_0(F,T)$ such that every $F$-free graph $G$ with
$\lambda (G)\ge \lambda_0$ satisfies
\[
  \lambda^{\ell}(G)\le \Bigl(1-\frac1r\Bigr)\hom (T,G).
\]
\end{theorem}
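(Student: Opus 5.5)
The plan is to split the inequality into a purely spectral--combinatorial lower bound for $\hom(T,G)$ and the weighted Tur\'an theorem for $F$-free graphs from the abstract, with the Perron vector as the common link. Let $\bm{x}$ be the nonnegative Perron vector of $A(G)$, normalized so that $\sum_u x_u^2=1$, and put $S=\sum_u x_u$. The eigen-equation gives $\sum_{u}\sum_{v\sim u}x_ux_v=\lambda$. It also gives $\lambda x_u=\sum_{v\sim u}x_v\le S$.

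\textbf{Step 1: the weighted Tur\'an input.} Take the probability vector $p_u=x_u/S$. By the bound $\lambda x_u\le S$ we have $\pmax\le 1/\lambda$, so this is small once $\lambda\ge\lambda_0$. The weighted Tur\'an theorem for color-critical $F$, in its exact form without the $o(1)$ term, then gives
\[
\frac{\lambda}{S^2}=2\sum_{uv\in E(G)}p_up_v\le 1-\frac1r,
\qquad\text{that is,}\qquad \lambda\le\Bigl(1-\frac1r\Bigr)S^2 .
\]
Multiplying by $\lambda^{\ell-1}$, the theorem reduces to the following lower bound for every unbalanced tree $T$ on $\ell$ vertices and every graph $G$:
\begin{equation*}
\hom(T,G)\ \ge\ \lambda^{\ell-1}S^2 .
\end{equation*}
This step also fixes $\lambda_0(F,T)$: it is the threshold needed to make $\pmax$ small enough for the weighted theorem.

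\textbf{Step 2: the case of odd paths.} When $T=P_{2k+1}$ the bound is immediate. Write $\mathbf 1=\sum_i c_i\bm{v}_i$ in an orthonormal eigenbasis of $A(G)$, with $\bm{v}_1=\bm{x}$. Then
\[
w_{2k+1}=\mathbf 1^{T}A^{2k}\mathbf 1=\sum_i\lambda_i^{2k}c_i^2\ge\lambda^{2k}c_1^2=\lambda^{2k}S^2 ,
\]
since every term is nonnegative. For even $\ell$ this argument breaks, because negative eigenvalues can make terms negative. This is exactly why balanced trees are excluded, and why the case $(r,\ell)=(2,2)$ is special.

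\textbf{Step 3: general unbalanced trees via entropy.} This is the main obstacle. Let $T$ have bipartition $(A,B)$ with $|A|=c(T)>\ell/2$. Following the Chao--Yu entropy method, I would build a random homomorphism $\phi\colon T\to G$ and use $\log\hom(T,G)\ge H(\phi)$. The first attempt is the stationary Markov chain attached to $\bm{x}$: root $T$ at some vertex, give the root the distribution $x_u^2$, and extend along each edge of $T$ by the transition probability $P(u\to v)=x_v/(\lambda x_u)$. The chain rule then gives
\[
H(\phi)=H(\bm{x}^2)+(\ell-1)\log\lambda .
\]
But $H(\bm{x}^2)\le\log S^2$ by concavity, which is the wrong direction.

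So the distribution has to be tilted toward the larger side. I would first try a Gibbs-type measure
\[
\mu(\phi)\ \propto\ \prod_{a\in A}x_{\phi(a)}^{\alpha_a}\prod_{b\in B}x_{\phi(b)}^{\beta_b},
\]
with exponents depending on the degrees in $T$. They should be chosen so that summing out the leaves repeatedly, using $\sum_{v\sim u}x_v=\lambda x_u$, produces the factor $\lambda^{\ell-1}$ times a power of $S$. They should also be chosen so that the excess $2c(T)-\ell>0$ of $A$-vertices leaves only linear weights $x_u$, never $x_u^2$, at the final stage, which is what yields $S^2$. The remaining comparison between $H(\mu)$ and $\log$ of the partition function would be done with Gibbs' inequality, in the form $H(\mu)\ge\log Z-\EE_\mu[\log w]$. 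Here $Z$ is the normalizing constant of $\mu$ and $w(\phi)$ is the unnormalized weight, and I would control the correction term by Jensen's inequality, checking that it is nonnegative exactly when $|A|>|B|$.

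As a sanity check, the construction should reproduce Step 2 for $P_{2k+1}$, and it should be tight for regular complete $r$-partite graphs. For these $\bm{x}=n^{-1/2}\mathbf 1$, so $S^2=n$ and $\hom(T,G)=nd^{\ell-1}=\lambda^{\ell-1}S^2$. If this exponent bookkeeping resists a uniform treatment, the fallback is induction on $\ell$: remove a leaf attached to the larger side, or a pair of leaves, and track the weighted quantities $\sum_u x_u^{k}(\cdot)$ through the induction.
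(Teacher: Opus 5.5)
\textbf{There is a genuine gap in Step 3.}

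Your Steps 1 and 2 are correct. Step 1 is Corollary~\ref{cor:perron-wilf}, which gives $\lambda\le(1-\frac1r)\lVert\bm{x}\rVert_1^2$ once $\lambda$ is large. Step 2 is the spectral estimate inside Lemma~\ref{lem:flat-odd}. Together they prove the statement for $T=P_{2k+1}$. For stars $K_{1,k}$ with $k\ge2$, H\"older applied to $\sum_v d_vx_v=\lambda\lVert\bm{x}\rVert_1$ also gives $\sum_v d_v^{k}\ge\lambda^k\lVert\bm{x}\rVert_1^2$.

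For a general unbalanced tree, however, everything rests on $\hom(T,G)\ge\lambda^{\ell-1}\lVert\bm{x}\rVert_1^2$ for \emph{every} graph $G$. This is a Sidorenko-type inequality strictly stronger than the theorem needs; it would even make $\lambda_0$ independent of $T$. Step 3 does not prove it:
\begin{itemize}
\item The claim that the Jensen correction term ``is nonnegative exactly when $|A|>|B|$'' is asserted, not computed.
\item The fallback induction is not set up.
\item The Gibbs family is far less flexible than it looks. If leaves are summed out with $\sum_{v\sim u}x_v=\lambda x_u$, the exponents are forced: $1$ at leaves and $2-d_T(v)$ at non-root vertices. So $\mu$ is just the Perron chain with root law tilted to $\propto x_u^{k}$.
\item This tilted chain can fail. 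For $T=P_3$ rooted at its centre with $k=1$ (so $Z=\lambda^2\lVert\bm{x}\rVert_1$), the resulting entropy bound is false for $G=K_{1,d}$ with $d$ large, although the target inequality holds there.
\end{itemize}
So the step you call the main obstacle is the whole content of the theorem beyond odd paths and stars, and it is left open.

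\textbf{The missing idea.} You discarded the right object for the wrong reason. The stationary Perron chain gives $H(\phi)=H(\pi)+(\ell-1)\log\lambda$ for every tree. The entropy $H(\pi)$ should be compared not with $\log\lVert\bm{x}\rVert_1^2$ but with the weighted density at $\pi$ itself. Gibbs' inequality for the edge law $Q_{uv}=a_{uv}x_ux_v/\lambda$ against $R_{uv}\propto a_{uv}\pi_u\pi_v$ gives $H(\pi)\ge\log\lambda-\log\Phi_G(\pi)$. Hence $\lambda^{\ell}\le\Phi_G(\pi)\hom(T,G)$ for every tree (Theorem~\ref{thm-tree-bridge}).

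The weighted Tur\'an theorem is then applied to $\pi=(x_v^2)$ rather than to $\bm{x}/\lVert\bm{x}\rVert_1$. The price is that $\lVert\pi\rVert_\infty$ is not automatically $O(1/\lambda)$; it equals $\frac12$ for a star. This is exactly where unbalancedness is used:
\begin{itemize}
\item Suppose $\lambda^{\ell}>(1-\frac1r)\hom(T,G)$. The bridge gives $\Phi_G(\pi)>1-\frac1r$, so Theorem~\ref{thm-second-key} forces $\max_v\pi_v>\eta(F)$.
\item Take $u$ maximizing $x_u$. Cauchy--Schwarz gives $d_u\ge\lambda^2x_u^2$, and the star $K_{1,d_u}\subseteq G$ gives $\hom(T,G)\ge d_u^{\,c}\ge(\lambda x_u)^{2c}$.
\item Hence $x_u^2\le(1-\frac1r)^{-1/c}\lambda^{-(2c-\ell)/c}$. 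Since $2c>\ell$, this contradicts the previous bullet once $\lambda\ge((1-\frac1r)^{-1}\eta^{-c})^{1/(2c-\ell)}$.
\end{itemize}
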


We give an explicit value of $\lambda_0$ in Section
\ref{subsec:trees}. 
The path $P_\ell$ with odd $\ell$ is
unbalanced, so Theorem \ref{thm:tree-intro} recovers the odd case of \eqref{eq:main-bound}. Taking $T=K_{1,\ell -1}$
 gives the following corollary.
 
\begin{corollary} \label{coro-spectral-power}
Let $F$ be color-critical with
$\chi (F)=r+1\ge 3$, and let $\ell \ge 1$ be an
integer with $\ell \ne 2$. There is a constant
$\lambda_0=\lambda_0(F,\ell )$ such that every
$F$-free graph $G$ with $\lambda (G)\ge \lambda_0$
satisfies
\[
  \lambda^\ell (G)\le \Bigl(1-\frac{1}{r}\Bigr)
  \sum_{v\in V(G)}d_v^{\,\ell -1}.
\]
 For $\ell =2$, the same bound holds under 
 the stronger hypothesis $\chi (F)=r+1\ge 4$.
\end{corollary}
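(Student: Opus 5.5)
This follows directly from Theorem \ref{thm:tree-intro} applied to the star $T=K_{1,\ell-1}$, whose homomorphism count is the degree-power sum. For any graph $G$, a homomorphism $K_{1,\ell-1}\to G$ is determined by the image $v$ of the centre together with an arbitrary choice of images for the $\ell-1$ leaves in $N(v)$. Hence
\[
\hom (K_{1,\ell-1},G)=\sum_{v\in V(G)} d_v^{\,\ell-1}.
\]
For $\ell=1$ the star is a single vertex, and this reads $\hom(K_1,G)=n$. So it suffices to check when $K_{1,\ell-1}$ is unbalanced in the sense of Theorem \ref{thm:tree-intro}.

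The bipartition of $K_{1,\ell-1}$ has sides of sizes $1$ and $\ell-1$, so $c(T)=\max\{1,\ell-1\}$.
\begin{itemize}
\item For $\ell=1$: $c(T)=1$ and $2c(T)=2>1$.
\item For $\ell\ge 3$: $2c(T)=2\ell-2>\ell$.
\item For $\ell=2$: $2c(T)=2=\ell$, so $K_{1,1}$ is not unbalanced.
\end{itemize}
Thus for every $\ell\neq 2$, Theorem \ref{thm:tree-intro} applies with $T=K_{1,\ell-1}$ and $\lambda_0(F,\ell):=\lambda_0(F,K_{1,\ell-1})$. It gives
\[
\lambda^\ell(G)\le \Bigl(1-\frac1r\Bigr)\sum_{v\in V(G)} d_v^{\,\ell-1}
\]
for every $F$-free graph $G$ with $\lambda(G)\ge\lambda_0$.

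For $\ell=2$ the star is the path $P_2$, and $\sum_v d_v=2m=w_2(G)$. The claimed bound is then exactly \eqref{eq:main-bound} with $\ell=2$. Under the hypothesis $\chi(F)=r+1\ge 4$ we have $(r,\ell)\neq(2,2)$, so Theorem \ref{thm:large-lambda} yields the bound whenever $\lambda(G)\ge\lambda_0(F)$.

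The two earlier theorems carry all the substance; the only step needing care is the $\ell=2$ balanced case. It falls outside Theorem \ref{thm:tree-intro} and must be routed through Theorem \ref{thm:large-lambda}, which is why the restriction to $\chi(F)\ge 4$ is needed there, consistent with the remark on the failure for $C_{2t+1}$.
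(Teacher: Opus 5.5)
Your proposal is correct and follows essentially the paper's own argument. The paper derives the corollary by taking $T=K_{1,\ell-1}$ in Theorem \ref{thm:tree-intro}, which is unbalanced exactly when $\ell\neq 2$, together with $\hom(K_{1,\ell-1},G)=\sum_v d_v^{\,\ell-1}$. For $\ell=2$ the paper points to the edge-spectral bound \eqref{eq-LLZ-critical} of Li, Liu and Zhang, which is the same inequality you obtain from Theorem \ref{thm:large-lambda} with $\ell=2$ and $r\ge 3$; since $\lambda^2\le 2m$, a threshold on $\lambda$ also supplies the threshold on $m$ that \eqref{eq-LLZ-critical} requires.
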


Here the case $\ell =1$ recovers Nikiforov's bound \cite{Nik2009-critical} and the case $\ell =2$ reduces to 
the edge-spectral bound of Li, Liu and Zhang \cite{LLZ-Critical}. 
The cases $\ell \ge 3$ are new for color-critical graphs.  
By the identities for $w_3$ and $w_4$ recorded above, the cases $\ell =3, 4$ of Theorem \ref{thm:large-lambda} 
read as follows. 

\begin{corollary} \label{coro=3and4}
For every color-critical graph $F$ with
$\chi (F)=r+1\ge 3$, there is a constant
$\lambda_0=\lambda_0(F)$ such that if  $G$ is an $F$-free graph with $\lambda (G)\ge \lambda_0$, then 
\[
  \lambda^3 (G) \le\left(1-\frac1r\right)
    \sum_{v\in V(G)}d_v^{\,2}  \quad 
    \text{and} \quad 
      \lambda^4 (G) \le 2 \left(1-\frac1r\right) 
      \sum_{uv\in E(G)}d_ud_v. \]
\end{corollary}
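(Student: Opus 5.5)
This corollary is the case $\ell=3$ and $\ell=4$ of Theorem \ref{thm:large-lambda}, rewritten using the walk identities $w_3(G)=\sum_{v}d_v^{2}$ and $w_4(G)=2\sum_{uv\in E}d_ud_v$. The plan has three steps.

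\emph{Step 1: invoke the theorem.} I take $\lambda_0=\lambda_0(F)$ to be the constant from Theorem \ref{thm:large-lambda}. Since that constant does not depend on $\ell$, one threshold serves both inequalities simultaneously. The excluded pair $(r,\ell)=(2,2)$ does not arise, because $\ell\in\{3,4\}$. So for every $F$-free $G$ with $\lambda(G)\ge\lambda_0$ we get
\[
\lambda^3(G)\le\Bigl(1-\tfrac1r\Bigr)w_3(G) \quad\text{and}\quad \lambda^4(G)\le\Bigl(1-\tfrac1r\Bigr)w_4(G).
\]

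\emph{Step 2: verify the two walk identities.} For $w_3$, a walk $(v_1,v_2,v_3)$ is determined by its middle vertex $v_2=v$ together with an ordered pair of (not necessarily distinct) neighbours of $v$. This gives $d_v^2$ walks for each $v$, hence $w_3(G)=\sum_v d_v^2$.

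For $w_4$, a walk $(v_1,v_2,v_3,v_4)$ is determined by the oriented middle edge $(v_2,v_3)$, which must be an edge, together with a neighbour $v_1$ of $v_2$ and a neighbour $v_4$ of $v_3$. Each oriented edge $(u,v)$ therefore contributes $d_ud_v$ walks. Each unordered edge $uv$ has two orientations, each contributing $d_ud_v$, so $w_4(G)=2\sum_{uv\in E}d_ud_v$.

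\emph{Step 3: substitute.} Putting the identities from Step 2 into the inequalities from Step 1 gives exactly the two displayed bounds.

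There is no real obstacle here: all the difficulty sits in Theorem \ref{thm:large-lambda}. The only point to check is that $\lambda_0$ is uniform in $\ell$, which the paper states explicitly. Even if it were not, one could simply take the maximum of the two thresholds.
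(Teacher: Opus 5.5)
Your proposal is correct and follows the paper's own argument: specialize Theorem \ref{thm:large-lambda} to $\ell=3,4$, where the excluded pair $(r,\ell)=(2,2)$ never arises and $\lambda_0$ does not depend on $\ell$, then substitute $w_3(G)=\sum_v d_v^2$ and $w_4(G)=2\sum_{uv\in E(G)}d_ud_v$. Your derivations of these two walk identities, by conditioning on the middle vertex and on the oriented middle edge respectively, are also correct.
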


Equivalently, if $\lambda (G)\ge \lambda_0$ and
$\lambda^3 (G)> \left(1-\frac1r \right)\sum_{v}d_v^2$, then $G$ contains a copy of 
$F$. This is a spectral criterion for color-critical graphs that involves only the degree sequence. 
 As an upper bound on
$\lambda(G)$, the quantity $((1-\frac1r)\sum_v d_v^{\,2})^{1/3}$ may be
compared with $( (1-\frac1r )2m )^{1/2}$, which follows from \eqref{eq-LLZ-critical}. Indeed,  for a $d$-regular graph,  the new bound is smaller
whenever $d<\bigl(1-\frac1r\bigr)n$. For
$\ell \ge 3$, the quantity $w_\ell (G)$ depends on the whole degree sequence rather than
only $n$ and $m$. Degree powers of $F$-free graphs are a classical extremal
quantity; see Bollob\'as and Nikiforov \cite{BN2004,BN2012}.

\subsection{Probabilistic method: an entropy-Perron bridge}

\label{subsec:bridge}

As mentioned above, the proof of Theorem \ref{thm:large-lambda} rests on two
statements: an inequality in which no forbidden subgraph appears, presented in
this subsection, and an inequality in which no eigenvalue appears, given in the next one. 
Neither of them involves the spectral stability argument.  
For the first, let $\bm{x}$ be the unit Perron
vector of $G$, and define the
probability vector $\pi_v:=x_v^{2}$ together with
 the Markov chain
$(X_t)_{t\ge 1}$ with initial distribution $\pi$ and
transition matrix $P_{uv}:=a_{uv}x_v/(\lambda x_u)$. 
Applying the entropy method to
the first $\ell$ steps of this chain gives the following.

\begin{theorem} 
\label{thm:bridge-intro}
Let $G$ be a connected graph with at least one edge, let $\bm{x}=(x_v)_{v\in V(G)}$ be its
positive unit Perron vector, and put $\pi_v:=x_v^{2}$ for every $v\in V(G)$. Then for every integer
$\ell \ge 1$,
\[
  \lambda^{\ell}(G)\ \le\ \Phi_G(\pi )\cdot w_\ell (G),
  \quad\text{where}\quad
  \Phi_G(\pi ):=2\!\!\sum_{uv\in E(G)}\!\!\pi_u\pi_v .
\]
\end{theorem}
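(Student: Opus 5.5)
We run the Markov chain $(X_1,\dots,X_\ell)$ with $X_1\sim\pi$ and $P_{uv}=a_{uv}x_v/(\lambda x_u)$, and compare its entropy with the uniform distribution on walks. First we check that $\pi$ is stationary: $\sum_u \pi_u P_{uv}=\sum_u x_u a_{uv}x_v/\lambda = x_v^2$, since $A\bm{x}=\lambda\bm{x}$. Next we compute the law of the chain. For a walk $(v_1,\dots,v_\ell)$ the probabilities telescope:
\[
\Pr[(X_1,\dots,X_\ell)=(v_1,\dots,v_\ell)] = x_{v_1}^2\prod_{i=1}^{\ell-1}\frac{x_{v_{i+1}}}{\lambda x_{v_i}} = \frac{x_{v_1}x_{v_\ell}}{\lambda^{\ell-1}} .
\]
This distribution is supported on walks, so $H(X_1,\dots,X_\ell)\le \log w_\ell(G)$. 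On the other hand,
\[
H(X_1,\dots,X_\ell) = (\ell-1)\log\lambda - \EE[\log x_{X_1}] - \EE[\log x_{X_\ell}] = (\ell-1)\log\lambda - 2\,\EE_{\pi}[\log x_{v}],
\]
using that $X_\ell\sim\pi$ by stationarity. Writing $\EE_\pi[\log x_v]=\tfrac12\sum_v \pi_v\log\pi_v$, we get $H=(\ell-1)\log\lambda + H(\pi)$. Hence
\[
(\ell-1)\log\lambda + H(\pi)\le \log w_\ell(G).
\]

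It remains to show $\log\lambda - H(\pi)\le \log\Phi_G(\pi)$; adding this to the previous inequality yields $\lambda^\ell\le\Phi_G(\pi)\,w_\ell(G)$. This is the key step, and the cleanest route is Jensen applied to the edge distribution. Consider the pair $(X_1,X_2)$, which is distributed on ordered edges with mass $x_ux_v/\lambda$. Note $\Phi_G(\pi)=\sum_{(u,v):\,uv\in E}x_u^2x_v^2$, which is an average under this measure:
\[
\Phi_G(\pi)=\EE\Bigl[\frac{x_{X_1}^2x_{X_2}^2}{x_{X_1}x_{X_2}/\lambda}\Bigr]=\lambda\,\EE[x_{X_1}x_{X_2}].
\]
By Jensen (concavity of $\log$),
\[
\log\Phi_G(\pi)\ge \log\lambda + \EE[\log x_{X_1}]+\EE[\log x_{X_2}] = \log\lambda + \sum_v\pi_v\log\pi_v = \log\lambda - H(\pi),
\]
since both marginals equal $\pi$. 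This is exactly the needed inequality, and it finishes the proof.

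Connectivity is used only to guarantee that $\bm{x}>0$, so that the logarithms and the transition matrix are well defined. The case $\ell=1$ is consistent: there the chain inequality reads $H(\pi)\le\log n$, and combining it with the Jensen step gives $\lambda\le\Phi_G(\pi)\, n$. The main obstacle was spotting the Jensen step. The rest is routine bookkeeping, in particular checking that the boundary terms $x_{v_1}x_{v_\ell}$ contribute $H(\pi)$ exactly, which relies on the stationarity of $\pi$.
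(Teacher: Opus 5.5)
Your proof is correct and is essentially the paper's argument. You use the same Perron chain and obtain the same chain entropy $H(\pi)+(\ell-1)\log\lambda$, reading it off the telescoped law $x_{v_1}x_{v_\ell}/\lambda^{\ell-1}$ where the paper uses the chain rule with one-step conditional entropy $\log\lambda$; your Jensen step on the edge distribution is exactly Gibbs' inequality $D(Q\Vert R)\ge 0$ with $R_{uv}=a_{uv}\pi_u\pi_v/\Phi_G(\pi)$ (the paper keeps both relative entropies as an exact identity only to extract the equality case, which this statement does not need).
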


Theorem \ref{thm:bridge-intro} is based on a single
computation: each step of the chain above has conditional entropy exactly $\log \lambda (G)$, so the chain rule determines the entropy of $(X_1,\ldots ,X_\ell )$, and the uniform bound shows that 
 this entropy is at most $\log w_\ell (G)$. We single out two features of Theorem \ref{thm:bridge-intro}. 
First, no subgraph is forbidden, 
so every hypothesis on $F$ is deferred to a bound on
$\Phi_G(\pi )$ in which no eigenvalue appears. Second,   we prove an exact identity (Theorem \ref{thm:entropy-bridge}) whose two error terms are relative entropies, and tracking them yields the characterization of equality in Theorem \ref{thm:large-lambda}. 
After the proof of Theorem \ref{thm:entropy-bridge}, we  make two comparisons: the first with the entropic Tur\'{a}n theorem \cite{CY2026} (Remark \ref{rem:compare-CY}), and the second with the weighted-graph estimate \cite{LSWW2026}, which yields an inequality of the same shape but with a weight vector for which we have no flatness estimate (Remark \ref{rem:not-replaceable}).

\smallskip 
Theorem \ref{thm:bridge-intro} recovers Nikiforov's bound \eqref{eq:nikiforov-walk}: if $G$ is
$K_{r+1}$-free, then the Motzkin--Straus theorem \cite{MS1965} gives
$\Phi_G(\pi )\le 1-\frac1r$, as needed. 
With the aid of Theorem \ref{thm:bridge-intro}, 
the rest of the paper therefore concentrates 
on a weighted extremal graph problem in which no eigenvalue appears: \textit{for
an $F$-free graph $G$ with $F$ color-critical, when is $\Phi_G(\pi )\le 1-\frac1r$?} 
The forthcoming Theorem \ref{thm-second-key} answers this question for every ``sufficiently flat'' probability vector. 
 More generally, every spectral inequality in Table \ref{tab:all-unified} can be recovered within our framework. 
 Figure \ref{fig:mechanism} shows the mechanism.

\begin{figure}[htbp]
\centering
\begin{tikzpicture}[
  font=\footnotesize,
  itembox/.style={rounded corners=2.5pt, draw, line width=0.5pt,
                  inner xsep=4pt, inner ysep=3.5pt, align=center, minimum width=44mm},
  kn/.style={itembox, draw=figblue!45, fill=figblue!8},
  nw/.style={itembox, draw=figamber!60, fill=figamber!10},
  st/.style={rounded corners=2.5pt, draw=figteal!50, fill=figteal!8, line width=0.5pt,
             inner xsep=4pt, inner ysep=4pt, align=center, minimum width=48mm},
  tl/.style={rounded corners=2pt, draw=black!22, fill=black!3, line width=0.4pt,
             inner xsep=4pt, inner ysep=4pt, align=center, font=\scriptsize,
             minimum width=48mm, text=black!65},
  bridge/.style={rounded corners=3pt, draw=black!40, fill=white, line width=0.6pt, inner xsep=5pt, inner ysep=4pt, align=center, font=\scriptsize},
  cont/.style={rounded corners=8pt, line width=0.9pt, inner sep=4pt},
  mycap/.style={font=\scriptsize\itshape, text=black!60},
  arr/.style={-{Stealth[length=2.6mm,width=2.2mm]}, line width=0.9pt}
]
\def\LX{0} \def\RX{9.75}

\node[font=\footnotesize, text=figblue] (ltitle) at (\LX, 2.30) {\textbf{Spectral Tur\'an Problems}};
\node[mycap] (lsub) at (\LX, 1.95) {$F$ color-critical, $\chi(F)=r\!+\!1$, $G$ is $F$-free};
\node[kn] (l1) at (\LX, 1.25)
   {$\ell=1$: \ $\lambda\le\bigl(1-\tfrac1r\bigr)n$\\[-2pt]
    {\color{black!45}\scriptsize ~~vertex-spectral \ (Nikiforov)}};
\node[kn] (l2) at (\LX, 0.2)
   {$\ell=2$: \ $\lambda^{2}\le\bigl(1-\tfrac1r\bigr)2m$\\[-2pt]
    {\color{black!45}\scriptsize edge-spectral  (Li--Liu--Zhang)}};
\node[nw] (l3) at (\LX,-0.85)
   {$\ell\ge 1$: \ $\lambda^{\ell}\le\bigl(1-\tfrac1r\bigr)w_{\ell}(G)$\\[-2pt]
    {\color{figamber}\scriptsize \quad walk-spectral \ (Theorem \ref*{thm:large-lambda})\quad}};
\node[nw] (l4) at (\LX,-1.9)
   {tree $T$: $\lambda^{\ell}\le\bigl(1-\tfrac1r\bigr)\hom(T,G)\!$\\[-2pt]
    {\color{figamber}\scriptsize tree-spectral \ (Theorem \ref*{thm:tree-intro})}};
    
\begin{scope}[on background layer]
\node[cont, draw=figblue!40, fill=figblue!3, fit=(ltitle)(lsub)(l1)(l4)] (LEFT) {};
\end{scope}

\node[font=\footnotesize, text=figteal] (rtitle) at (\RX, 2.30) {\textbf{Weighted Tur\'an Problems}};
\node[mycap] (rsub) at (\RX, 1.95) {No  eigenvalues};
\node[st] (r1) at (\RX, 0.9)
  {$\Phi_G(\bm p)\ :=\ 2\!\!\textstyle\sum\limits_{uv\in E}\!p_up_v$\\
   $\le\ 1-\tfrac1r$ \ {\scriptsize if $\ \lVert\bm p\rVert_{\infty}\le\eta(F)$}\\[1pt]
   {\color{figteal}\scriptsize (Theorem \ref*{thm-second-key})}};
\node[mycap] (ring) at (\RX,-0.2) {By probabilistic graph method:};
\node[tl] (r2) at (\RX,-1.43)
  {Blow-up $+$ graph removal lemma\\ 
  Chebyshev + Markov inequalities\\ 
  Erd\H{o}s--Stone--Simonovits $+$ stability\\
  A random weighted sampling\\ An exact bound near $r$-partition};
\begin{scope}[on background layer]
\node[cont, draw=figteal!40, fill=figteal!3, fit=(rtitle)(rsub)(r1)(ring)(r2)] (RIGHT) {};
\end{scope}

\coordinate (MID) at ($(LEFT.east)!0.5!(RIGHT.west)$);
\node[bridge] (BR) at ($(MID)+(0,0.68)$)
  {\textbf{Entropy Method}\\[2pt]
   $H(X_{t+1} | X_{t})=\log\lambda$\\[2pt]
   $ \frac{\lambda^{\ell}(G)}{w_{\ell}(G)}  \le  2\!\! \sum\limits_{uv\in E} \! \pi_u \pi_v$\\[2pt]
   with $\pi_v=x_v^{2}$\\[2pt]
   {\color{black!60}(Theorem \ref*{thm:bridge-intro})}};
\begin{scope}[on background layer]
\draw[arr, figblue!65!black] ([yshift=7.5mm]LEFT.east) -- ([yshift=7.5mm]RIGHT.west);
\end{scope}

\coordinate (BOT) at ([yshift=-6.5mm]LEFT.south);
\draw[arr, figamber!85!black, rounded corners=7pt]
   (RIGHT.south) |- (BOT) -- (LEFT.south);
\coordinate (BOT2) at (BOT -| RIGHT.south);
\node[font=\scriptsize, text=black, fill=white, inner xsep=4pt, inner ysep=1pt]
   at ($(BOT)!0.5!(BOT2)$)
   {Flatness lemma \cite{LLZ-ESS}: $\max_{v\in V}x_v^{2}=O_{\delta}(1/\lambda)\le\eta(F)$};
\end{tikzpicture}
\caption{The mechanism uses the entropy method
(Theorem \ref{thm:bridge-intro}) to transform every spectral inequality on the left box into one statement about the weighted quantity $\Phi_G(\pi )$ on the right.}
\label{fig:mechanism}
\end{figure}

\subsection{Two weighted Tur\'{a}n theorems}

We now start with the last main part of this paper and
 answer the question raised at the end of Section \ref{subsec:bridge}. 
The quantity $\Phi_G(\pi )$ of Theorem \ref{thm:bridge-intro} makes sense for an
arbitrary probability vector $\bm{p}$ on $V(G)$, 
that is, the weight $p_v \ge 0$ for
each $v\in V(G)$ and $\sum_{v\in V(G)} p_v =1$. Now we call  
\[
  \Phi_G(\bm{p}):=2\sum_{uv\in E(G)}p_up_v 
\]
the weighted edge density of $G$ with respect to $\bm{p}$, and we write
$\lVert \bm{p}\rVert_\infty =\max_{v\in V(G)}p_v$.

The Motzkin--Straus theorem \cite{MS1965} states that
$\Phi_G(\bm{p}) \le 1-1/\omega (G)$, 
where $\omega (G)$ is the clique number of $G$. 
This inequality yields the classical bounds of Tur\'an
\cite{Turan1941}, Wilf \cite{Wilf1986} and Nikiforov \cite{Nik2002}. It is useful only for $K_{r+1}$-free graphs, however, and it yields no information for $F$-free graphs when $F$ is not a clique, since an
$F$-free graph may contain cliques larger than $K_{r+1}$.  
Weighted versions of Tur\'an's theorem have been studied in several directions, such as 
the local variants investigated in \cite{LiuNing2026, LiuNing2025, KKP2025, LSWW2026}; in all these results, the clique number of $G$ still appears and necessarily so.

In this paper, we prove that the clique number may be replaced by $\chi(F)-1$, up to an error term in general, and with no error term for color-critical $F$, whenever $\lVert \bm{p}\rVert_\infty$ is sufficiently small.

\begin{theorem}
\label{thm-weighted-ESS}
Let $F$ be a graph with $\chi (F)=r+1\ge 3$. For every $\varepsilon >0$, there is $\eta (F,\varepsilon )>0$ such that if $G$ is an $F$-free graph and
$\bm{p}$ is a probability vector on $V(G)$ satisfying
$\lVert \bm{p}\rVert_\infty\le \eta (F,\varepsilon )$, then
\[  \Phi_G(\bm{p})\ \le \ 1-\frac1r+\varepsilon .\] 
\end{theorem}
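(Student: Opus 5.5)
We reduce the weighted statement to the Erd\H{o}s--Stone--Simonovits theorem by a random sampling argument. Fix $\varepsilon>0$ and let $N$ be large enough that every $F$-free graph on $N$ vertices has at most $(1-\frac1r+\frac{\varepsilon}{2})\frac{N^2}{2}$ edges; this is possible by the Erd\H{o}s--Stone--Simonovits theorem. Given $G$ and $\bm p$, sample $Y_1,\dots,Y_N$ independently from $V(G)$ according to $\bm p$. Consider the random graph $H$ on $[N]$ in which $ij$ is an edge if and only if $Y_iY_j\in E(G)$. Since $G$ has no loops, coinciding samples are never adjacent. For $i\neq j$ we have $\Pr[Y_iY_j\in E(G)]=\Phi_G(\bm p)$, so $\EE[e(H)]=\binom N2\Phi_G(\bm p)$.

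The key point is that $H$ is $F$-free with high probability once the samples are mostly distinct. If $Y_1,\dots,Y_N$ are pairwise distinct, then $H$ is isomorphic to the induced subgraph $G[\{Y_1,\dots,Y_N\}]$, which is $F$-free. The probability of a collision is at most $\binom N2\sum_v p_v^2\le \binom N2\pmax\le \binom N2\eta$. We choose $\eta(F,\varepsilon)\le \varepsilon/(4N^2)$, so this probability is below $\varepsilon/8$.

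We then bound the expectation by splitting on collisions. On the event of no collision, $e(H)\le(1-\frac1r+\frac\varepsilon2)\frac{N^2}{2}$. On the complementary event, we use only the trivial bound $e(H)\le\binom N2$. This gives
\[
\binom N2\Phi_G(\bm p)\le \Bigl(1-\frac1r+\frac\varepsilon2\Bigr)\frac{N^2}{2}+\frac{\varepsilon}{8}\binom N2 .
\]
Dividing by $\binom N2$ yields $\Phi_G(\bm p)\le (1-\frac1r+\frac\varepsilon2)\frac{N}{N-1}+\frac\varepsilon8$. Taking $N$ also large enough that $\frac{N}{N-1}(1-\frac1r+\frac\varepsilon2)\le 1-\frac1r+\frac{3\varepsilon}{4}$ gives the claimed bound $\Phi_G(\bm p)\le 1-\frac1r+\varepsilon$.

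An alternative would avoid the collision estimate: blow up each vertex $v$ into roughly $p_vM$ copies and use the blow-up's small $F$-homomorphism density with the removal lemma. The sampling route needs none of that, because distinct samples give an honest induced subgraph of $G$. The only real obstacle is controlling collisions, and the hypothesis $\pmax\le\eta$ handles it through the inequality $\sum_v p_v^2\le\pmax$. The order of quantifiers is: first choose $N=N(F,\varepsilon)$, then set $\eta=\varepsilon/(4N^2)$, which depends only on $F$ and $\varepsilon$, as required.
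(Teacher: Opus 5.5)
Your proof is correct, and it takes a genuinely different route from the paper.

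\textbf{The paper's route.} The paper obtains Theorem~\ref{thm-weighted-ESS} as a corollary of weighted supersaturation (Theorem~\ref{thm:weighted-supersat}). It rounds $\bm p$ to a rational vector $\bm q$ and builds the blow-up $B$ with clusters of size $Nq_v$. A copy of $F$ in $B$ that does not project to a copy in $G$ must have two vertices in one cluster; there are at most $\binom{f}{2}N^f\lVert\bm q\rVert_\infty$ of these, where $f=v(F)$. The paper removes them with the graph removal lemma and applies Erd\H{o}s--Stone--Simonovits to what is left. The removal lemma is needed because $B$ need not be $F$-free.

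\textbf{Your route.} You fix the sample size $N$ first and take $\eta=\varepsilon/(4N^2)$. Then $N$ independent samples from $\bm p$ are pairwise distinct with probability at least $1-\varepsilon/8$. On that event the sample is an honest induced subgraph of $G$, Erd\H{o}s--Stone--Simonovits applies to it directly, and averaging $e(H)$ finishes the proof. This needs no rationalization, no blow-up and no regularity-type input.

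\textbf{Quantitative gain.} Your threshold is much better. By the quantitative Erd\H{o}s--Stone theorem, the required $N$ is polynomial in $1/\varepsilon$ for fixed $F$, so your $\eta$ is polynomial in $\varepsilon$. The paper's $\eta=\nu/(4\binom{f}{2})$ inherits the tower-type dependence of the removal lemma.

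\textbf{Supersaturation also follows.} Your sampling yields the supersaturation form as well, by the usual Erd\H{o}s--Simonovits averaging. Suppose $\Phi_G(\bm p)\ge 1-\frac1r+\varepsilon$. Since $e(H)\le\binom{N}{2}$, we get $e(H)\ge(1-\frac1r+\frac\varepsilon2)\binom{N}{2}$ with probability at least $\varepsilon/2$. Taking $N$ large enough for Erd\H{o}s--Stone--Simonovits with parameter $\varepsilon/4$, with probability at least $3\varepsilon/8$ the sample is collision-free and contains $F$. A union bound over the at most $N^f$ ordered choices of $f$ sample positions then gives $N_F(G,\bm p)\ge 3\varepsilon/(8f!\,N^f)$.

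\textbf{What the blow-up buys.} The paper's construction mainly buys reuse. Combined with the Erd\H{o}s--Simonovits stability theorem, the same blow-up drives the weighted stability theorem (Theorem~\ref{thm:weighted-stability}). There a partition of the blow-up has to be pulled back to all of $V(G)$ by random rounding, and a bounded sample does not directly provide that.
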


Theorem \ref{thm-weighted-ESS} is a weighted generalization of the Erd\H{o}s--Stone--Simonovits theorem. We prove it in Section \ref{sec:weighted}  in the sharper form of a weighted supersaturation theorem. 
Our next result removes the error term;  by Remark \ref{rem:necessity},  this is possible only when $F$ is color-critical. The result may be viewed as a new Motzkin--Straus-type theorem, which we believe to be of independent interest.

\begin{theorem}
\label{thm-second-key}
For every color-critical graph $F$ with $\chi (F)=r+1\ge 3$, there exists
$\eta (F)>0$ such that if $G$ is an $F$-free graph and $\bm{p}$ is a
probability vector with $\lVert \bm{p}\rVert_\infty \le \eta (F)$, then
\[ \Phi_G(\bm{p}) \ \le\  1-\frac{1}{r}. \] 
\end{theorem}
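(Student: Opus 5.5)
We argue by contradiction and compactness, combining Theorem \ref{thm-weighted-ESS} with a weighted stability theorem and an exact local estimate near a complete $r$-partite structure. Suppose the statement fails. Then for every $\eta>0$ there is an $F$-free graph $G$ and a probability vector $\bm{p}$ with $\pmax\le\eta$ and $\Phi_G(\bm{p})>1-\frac1r$.

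\textbf{Step 1: reduce to a near-maximizer.} Among all probability vectors supported on $V(G)$ with $\pmax\le\eta$, choose $\bm{p}$ maximizing $\Phi_G(\bm{p})$; the feasible set is compact, so a maximizer exists. The Lagrange/KKT conditions then say the following. Every vertex with $0<p_v<\eta$ has weighted degree $d_{\bm p}(v):=\sum_{u\in N(v)}p_u$ equal to a common value $\mu$. Vertices with $p_v=\eta$ have $d_{\bm p}(v)\ge\mu$, and vertices with $p_v=0$ have $d_{\bm p}(v)\le\mu$. Moreover $\Phi_G(\bm p)=\sum_v p_v d_{\bm p}(v)$, which is at least $\mu$ up to the contribution of the capped vertices.

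\textbf{Step 2: weighted stability.} I would prove a weighted analogue of the Erd\H{o}s--Simonovits stability theorem, using the same blow-up route as Theorem \ref{thm-weighted-ESS}. Replace each vertex $v$ by an independent set of size roughly $p_vN$ and sample randomly, so that the sample is a genuine graph whose density tracks $\Phi_G(\bm p)$. The smallness of $\pmax$ ensures that blowing up creates copies of $F$ only through genuine homomorphic images; these images are removed by the graph removal lemma. Applying Erd\H{o}s--Simonovits to the sample then gives the conclusion: for every $\delta>0$ there is $\eta$ such that $\Phi_G(\bm p)\ge 1-\frac1r-\eta$ forces a partition $V=V_1\cup\dots\cup V_r$ with total weight of edges inside the parts at most $\delta$. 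Taking the parts to maximize the weight of crossing edges, each part has weight close to $\frac1r$.

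\textbf{Step 3: cleaning and the exact estimate.} This is the main obstacle. Call a vertex \emph{bad} if it has weighted degree at least $\gamma$ into its own part. Since $F$ is color-critical, $F$ embeds into $K_{r}(t,\dots,t)$ plus a single edge inside one part. Hence, after removing a small-weight set of vertices with low degree into some other part, no part can contain an edge whose endpoints both have large weighted common neighbourhood in every other part. Otherwise, picking $t$ vertices in each part, which is possible because weights are at most $\eta$ and the typical parts have many vertices of positive weight, yields a copy of $F$. The KKT condition $d_{\bm p}(v)\ge\mu\approx 1-\frac1r$ for vertices in the support gives every vertex large weighted degree. From this, a vertex with an internal neighbour must have weight-deficient degree into some other part. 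A direct calculation then shows that each internal edge of weight $p_up_v$ is compensated by missing cross edges of weight at least comparable to $p_u\cdot\Omega(1)$. Consequently
\[
\Phi_G(\bm p)\le 2\sum_{i<j}p(V_i)p(V_j)-c\sum_{uv\in E(G[V_i])}p_up_v\le 1-\frac1r,
\]
where the final inequality $2\sum_{i<j}a_ia_j\le 1-\frac1r$ for $\sum a_i=1$ is elementary. This contradicts the assumption. The delicate point is making the compensation quantitative uniformly in $\eta$, which I would handle with Chebyshev/Markov sampling bounds as in the abstract's outline.
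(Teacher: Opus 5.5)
\textbf{There is a genuine gap in Step 3.} Your overall architecture matches the paper's. You use a blow-up, the removal lemma and Erd\H{o}s--Simonovits stability to get a nearly balanced $r$-partition with small missing cross-weight, and then the template $K_r^+[t]$ with weighted sampling to remove the error term. But that second half, the only place where $1-\frac1r$ is obtained exactly, is not proved, and the one concrete mechanism you offer for it fails.

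\textbf{The KKT step does not give a minimum degree.} The KKT conditions give $d_{\bm p}(v)=\mu$ only for uncapped support vertices. Capped vertices satisfy only $d_{\bm p}(v)\ge \mu$, and $\Phi_G(\bm p)=\sum_v p_v d_{\bm p}(v)$ bounds $\mu$ from \emph{above}, not from below. For instance, if $1/\eta=n$ is an integer and $G$ has exactly $n$ vertices, the uniform vector is the only feasible one and every vertex is capped. KKT then says nothing, and a pendant vertex has $d_{\bm p}=1/n$. Since the cap cannot be dropped (Remark \ref{rem:necessity}), the claim ``$d_{\bm p}(v)\ge \mu\approx 1-\frac1r$ on the support'' is unjustified.

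\textbf{So the compensation step is unsupported.} An internal edge $uv$ with deficient endpoint $u$ is paid for only by $u$'s missing cross-weight, which is $p_u\cdot\Omega(1)$. The internal weight charged to $u$ is $p_u\,d_{V_i}(u)$, which can be larger unless every internal degree is known to be small. You define ``bad'' vertices but never dispose of them. You also never account for the ``removed small-weight set'', which still contributes to $\Phi_G(\bm p)$.

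\textbf{Step 2 has a smaller gap.} It yields a partition of the blow-up (or of the sample), and you do not say how this becomes a partition of $V(G)$. The paper handles this by independent random rounding of each cluster, controlled by Chebyshev and Markov.

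\textbf{How the paper avoids degree conditions.} In Lemma \ref{lem:near-turan-weighted} it proves by sampling that:
\begin{itemize}
\item every vertex has some part with $d_i(v)\le\frac1{10}a_i$ (Claim 1);
\item the sets $V_i$ of vertices with $d_j(v)\ge\frac23a_j$ for all $j\ne i$ are disjoint independent sets (Claims 2 and 3);
\item the complement $S$ of $T=\bigcup_i V_i$ has weight $O(rM)$, and each $v\in S$ has $p(N_G(v))\le(1-\frac1r)p(T)$.
\end{itemize}
Cauchy--Schwarz on the masses $p(V_i)$ then gives $\Phi_G(\bm p)\le(1-\frac1r)(X^2+2X(1-X))\le 1-\frac1r$, where $X=p(T)$.

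\textbf{How your route could be completed without KKT.} Use the max-cut partition you already mention, as follows.
\begin{itemize}
\item Local optimality gives $d_{V_i}(u)\le d_{V_j}(u)$ for $u\in V_i$ and every $j$. So a vertex with $d_{V_i}(u)\ge\gamma$ has neighbourhood weight at least $\gamma$ in every part.
\item The sampling argument, with $u$ as an end of the extra edge, then yields $K_r^+[t]$. Hence every internal degree is below $\gamma$.
\item Every internal edge has an endpoint in the set $D$ of vertices missing weight at least $\beta/(2r)$ in some other part.
\item Therefore $I\le\gamma\,p(D)\le\frac{4r\gamma}{\beta}M\le M$ once $\gamma\le\beta/(4r)$. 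This gives $\Phi_G(\bm p)=2\sum_{i<j}p(V_i)p(V_j)-2M+2I\le 1-\frac1r$.
\end{itemize}
This also settles the transfer in Step 2. A maximum $r$-cut of the blow-up can be taken constant on each cluster, because a cluster is an independent set of twins. So the max-cut partition of $V(G)$ is at least as good as any stability partition. As written, however, these ingredients are absent, and Step 3 rests on a degree bound that does not hold.
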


\begin{remark} \label{rem:necessity}
Both hypotheses of Theorem \ref{thm-second-key} are necessary.  
First, the bound on $\pmax$ in Theorem \ref{thm-second-key} cannot be dropped unless $F=K_{r+1}$. 
Let $F$ be color-critical with $\chi (F)=r+1$ and with $f\ge r+2$ vertices, take $G=K_{f-1}$ and $\bm{p}$ uniform. Then $G$ is $F$-free, $\pmax = \frac{1}{f-1}$ and
$ \Phi_G(\bm{p})=1-\frac{1}{f-1}>1-\frac1r $.
Hence, we get $\eta (F)\le \frac{1}{f-1}$ for every
color-critical $F\neq K_{r+1}$.  
Second, color-criticality cannot be dropped either. It is exactly the hypothesis under which the error term of Theorem \ref{thm-weighted-ESS} can be removed. Suppose that $F$ is not color-critical, let $r\mid n$, and let $G$ be
the Tur\'an graph $T_{n,r}$ with one extra edge inside a part. 
Since $F$ does
not embed in the $r$-partite graph $T_{n,r}$, every copy of $F$ in $G$ would
have to use the extra edge. Deleting from such a copy the edge $e'$ of $F$
mapped to it would exhibit $F-e'$ inside $T_{n,r}$ and force
$\chi (F-e')\le r$, which is impossible. Hence $G$ is $F$-free, while the
uniform probability vector $\bm{p}$ on $V(G)$ has
$\lVert \bm{p}\rVert_\infty =1/n$, which is as small as we please once $n$ is large, and
$\Phi_G(\bm{p})=1-\frac1r+\frac{2}{n^{2}}>1-\frac1r$. Thus, for a graph $F$
with $\chi (F)=r+1\ge 3$, the conclusion of Theorem \ref{thm-second-key} holds
for some $\eta >0$ if and only if $F$ is color-critical.
\end{remark}

Theorem \ref{thm-second-key} contains the case
$\ell =1$ of Theorem \ref{thm:large-lambda} in a
sharper form with a smaller threshold. 
Let $\bm{x}$ be a unit Perron vector of $G$. 
 We write $\lVert \bm{x}\rVert_1:=\sum_{v\in V(G)}x_v$ and put
$\bm{p}:=\bm{x}/\lVert \bm{x}\rVert_1$. 
 Then $\Phi_G(\bm{p})=\bm{x}^{\mathsf T}A(G)\bm{x}
 / \lVert \bm{x}\rVert_1^{2}
 =\lambda (G)/\lVert \bm{x}\rVert_1^{2}$. Moreover, if $i$ is a vertex with $x_i=\max_v x_v$, then 
$\lVert \bm{x}\rVert_1\ge x_i+\sum_{j\sim i}x_j
 =(1+\lambda (G))x_i$. Hence  
$\lVert \bm{p}\rVert_\infty \le 1/(1+\lambda (G))$.
Theorem \ref{thm-second-key} therefore applies as soon as $\lambda (G)\ge \eta (F)^{-1}-1$, and 
it yields the following strengthening of Wilf's bound (\ref{eq-Wilf}).

\begin{corollary}\label{cor:perron-wilf}
Let $F$ be a color-critical graph with $\chi (F)=r+1\ge 3$, and let
$\eta =\eta (F)$ be the constant of Theorem \ref{thm-second-key}. If $G$ is an
$F$-free graph with at least one edge and $\lambda (G)\ge \eta^{-1}-1$, and if
$\bm{x}$ is a unit Perron vector of $G$, then
\begin{equation*}
  \lambda (G)\le \Bigl(1-\frac1r\Bigr)\lVert \bm{x}\rVert_1^{2}. 
\end{equation*}
\end{corollary}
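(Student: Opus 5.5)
Corollary \ref{cor:perron-wilf} follows directly from Theorem \ref{thm-second-key}, applied to the normalized Perron vector, as indicated just before the statement. The plan has three steps. First, reduce to a vector with strictly positive entries. Second, check the flatness hypothesis $\lVert\bm p\rVert_\infty\le\eta$. Third, compute $\Phi_G(\bm p)$ in terms of $\lambda(G)$.

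\emph{Setup.} Let $\bm x\ge 0$ be a unit Perron vector of $G$, so $A(G)\bm x=\lambda\bm x$ with $\lambda=\lambda(G)>0$, since $G$ has an edge. Then $\lVert\bm x\rVert_1>0$, and we set $\bm p:=\bm x/\lVert\bm x\rVert_1$, which is a probability vector on $V(G)$. The quadratic form gives
\[
\Phi_G(\bm p)=\frac{2\sum_{uv\in E(G)}x_ux_v}{\lVert\bm x\rVert_1^2}=\frac{\bm x^{\mathsf T}A(G)\bm x}{\lVert\bm x\rVert_1^2}=\frac{\lambda(G)}{\lVert\bm x\rVert_1^2}.
\]
So the claimed inequality is exactly $\Phi_G(\bm p)\le 1-\frac1r$.

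\emph{Flatness.} Choose a vertex $i$ with $x_i=\max_v x_v$. The eigen-equation gives $\lambda x_i=\sum_{j\sim i}x_j$, and $i$ does not lie in $N(i)$. Hence
\[
\lVert\bm x\rVert_1\ \ge\ x_i+\sum_{j\sim i}x_j=(1+\lambda)x_i,
\]
so $\lVert\bm p\rVert_\infty=x_i/\lVert\bm x\rVert_1\le 1/(1+\lambda)$. The hypothesis $\lambda\ge\eta^{-1}-1$ is equivalent to $1/(1+\lambda)\le\eta$, so $\lVert\bm p\rVert_\infty\le\eta(F)$.

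\emph{Conclusion.} Since $G$ is $F$-free and $F$ is color-critical with $\chi(F)=r+1\ge3$, Theorem \ref{thm-second-key} gives $\Phi_G(\bm p)\le1-\frac1r$. By the identity above, this is $\lambda(G)\le(1-\frac1r)\lVert\bm x\rVert_1^2$.

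The only point needing care is disconnected $G$, where the Perron vector need not be positive or unique. Nothing above uses positivity or uniqueness: the identity $\bm x^{\mathsf T}A\bm x=\lambda$ and the eigen-equation at the maximizing vertex $i$ hold for any nonnegative unit eigenvector for $\lambda$. Entries equal to zero are harmless because Theorem \ref{thm-second-key} allows arbitrary probability vectors. So I expect no real obstacle; the work is all inside Theorem \ref{thm-second-key}.
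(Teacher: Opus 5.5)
Your proposal is correct and follows the paper's argument essentially verbatim. You normalize to $\bm p=\bm x/\lVert\bm x\rVert_1$, identify $\Phi_G(\bm p)=\lambda(G)/\lVert\bm x\rVert_1^2$, bound $\lVert\bm p\rVert_\infty\le 1/(1+\lambda(G))$ via the eigen-equation at a maximal coordinate, and invoke Theorem~\ref{thm-second-key}. Your remark that positivity and uniqueness of $\bm x$ are never used, so disconnected $G$ causes no trouble, is a correct and worthwhile clarification.
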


\begin{remark}
The Cauchy--Schwarz inequality gives
$\lVert \bm{x}\rVert_1^{2}\le n\sum_v x_v^{2}=n$, with
equality if and only if $\bm{x}$ is constant, that is, if
and only if $G$ is regular. Thus Corollary
\ref{cor:perron-wilf} refines Wilf's bound
$\lambda (G)\le (1-\frac1r)n$ for color-critical forbidden
graphs, and the gap $n-\lVert \bm{x}\rVert_1^{2}$ measures
how far $G$ is from being regular. Since
$\lVert \bm{x}\rVert_1^{2}=n-\sum_{u<v}(x_u-x_v)^{2}$,
Corollary \ref{cor:perron-wilf} reads
$\lambda (G)\le \bigl(1-\frac1r\bigr)
 \bigl(n-\sum_{u<v}(x_u-x_v)^{2}\bigr)$.
\end{remark}

\subsection{Proof overview, applications and organization}

\noindent
{\bf The spectral Tur\'an theorem.} 
This paper provides a new framework for spectral extremal graph problems; for instance, every spectral inequality in Table \ref{tab:all-unified} can be deduced from a weighted one through the entropy-Perron bridge.  
The proof of Theorem \ref{thm:large-lambda} needs three  ingredients, summarized in Figure \ref{fig:mechanism}. 
Theorem \ref{thm:bridge-intro} bounds $\lambda^{\ell}(G)/w_\ell (G)$ by
$\Phi_G(\pi )$, and Theorem \ref{thm-second-key} bounds $\Phi_G(\bm{p})$ by
$1-\frac1r$ for every sufficiently flat $\bm{p}$. 
The two bounds combine only when 
$\max_{v\in V(G)} \pi_v\le \eta (F)$, and the third ingredient supplies exactly this: a flatness estimate of Li, Liu and Zhang \cite[Lemma 3.1]{LLZ-ESS} says that every
graph $G$ with $\lambda^{\ell}(G) \ge (\frac12+\delta )w_\ell (G)$ for some fixed $\delta \in (0,0.1)$ must  satisfy
\[
  \max_{v\in V(G)}x_v^{2}=O_\delta \bigl(1/\lambda (G)\bigr).
\]
This is where  the hypothesis $\chi (F)=r+1\ge 4$ enters, since it makes $1-\frac1r$ exceed
$\frac12$ by a fixed amount.  
When $\chi (F)=3$, the desired gap disappears, and we supply two substitutes: for odd $\ell$, a flatness estimate that needs no gap (Lemma \ref{lem:flat-odd}), and for even $\ell \ge 4$, a lemma that reverses the logic (Lemma \ref{lem:large-perron}). The latter proves \eqref{eq:main-bound} directly by assuming a lower bound on $\max_v x_v^2$ instead of finding an upper bound. 
We combine the three ingredients by contradiction. 
Suppose that an $F$-free graph $G$ violates
\eqref{eq:main-bound}. On the one hand, Theorem \ref{thm:bridge-intro} then
forces $\Phi_G(\pi )>1-\frac1r$, so the contrapositive of Theorem
\ref{thm-second-key} says that $\pi$ has an entry larger than $\eta (F)$. On the
other hand, the flatness estimate says that no entry of $\pi$ exceeds
$O(1/\lambda (G))$. For sufficiently large $\lambda (G)$, 
the two conclusions are incompatible. 
 Hence,  every counterexample has bounded spectral radius.

\smallskip
\noindent
{\bf The weighted Tur\'an theorem.}
Proving Theorem \ref{thm-second-key} occupies Sections \ref{sec:weighted} and
\ref{sec-prove-exact}.

\emph{The asymptotic bound.} 
We first round $\bm{p}$ to a rational probability vector $\bm{q}$, and then construct a blow-up $B$ of $G$ by 
replacing each vertex $v$ of $G$ by an independent cluster of size proportional to
$q_v$ and joining the clusters of adjacent vertices 
by complete bipartite graphs. The resulting
blow-up $B$ on $N$ vertices has edge density $\Phi_G(\bm{p})$, up to a
negligible rounding error. Since $G$ is $F$-free, every copy of $F$ in $B$ has
two vertices in a common cluster, and small weights make the number of such
copies $o(N^{f})$. The removal lemma turns $B$ into an $F$-free graph of
almost the same density, and the Erd\H{o}s--Stone--Simonovits theorem gives
$\Phi_G(\bm{p})\le 1-\frac1r+\varepsilon$, which proves Theorem
\ref{thm-weighted-ESS}. We prove this in the sharper supersaturation form
in Theorem \ref{thm:weighted-supersat}: once the weighted edge density exceeds
$1-\frac1r$ by a fixed amount, the weighted number of copies of $F$ is bounded
away from zero.

\emph{The stability.} If $\Phi_G(\bm{p})$ is within $\delta$ of $1-\frac1r$,
then the same blow-up is within $o(N^{2})$ edges of being extremal, so the
Erd\H{o}s--Simonovits stability theorem partitions the vertex set of the blow-up $B$ into $r$ nearly equal
parts missing few cross-pairs. 
This partition is defined on $B$, and it has to be
transported back to $G$. We do so by rounding at random: independently for each vertex $v$, we 
assign $v$ to the part $U_i$ with probability equal to 
the fraction of its cluster that lies in $B_i$. 
The resulting part masses $p(U_i):=\sum_{u\in U_i} p_u$ and the total 
missing cross-weight $M$ are then sums of independent contributions whose
variances are controlled by $\lVert \bm{p}\rVert_\infty$, so Chebyshev's and
Markov's inequalities furnish a single outcome that is good simultaneously for
all $r+1$ quantities. This is Theorem \ref{thm:weighted-stability}: a partition
$V(G)=U_1\cup \cdots \cup U_r$ with $\lvert p(U_i)-\frac1r\rvert \le \varepsilon$
for every $i$ and $M\le \varepsilon$.

\emph{The exact bound.} Near such a partition the error term disappears, and 
this is where color-criticality is used. The sampling lemma (Lemma \ref{lem:weighted-sampling}) says that from any $r$
sets of weight bounded below, one lying in each part, one can select $t$ vertices from each so that vertices selected from different sets are adjacent. 
Applying this to 
the neighborhoods of a single vertex, and to the common neighborhoods of the two
ends of an edge, would produce a copy of $K_r^+[t]$, which is impossible.
Consequently every vertex misses a fixed fraction of the weight of some part,
and all but a small fraction of the total weight is carried by $r$ independent
sets. The Cauchy--Schwarz inequality applied to their masses  gives
$\Phi_G(\bm{p})\le 1-\frac1r$ with no error term
(Lemma \ref{lem:near-turan-weighted}). Theorem \ref{thm-second-key} follows by
feeding the partition produced by the stability theorem into this local
estimate.

\smallskip
\noindent
{\bf The entropy method.}
The bridge of Section \ref{subsec:bridge} is an entropy argument, and entropy \cite{Shannon1948} 
has become a powerful tool in extremal combinatorics. 
Early examples include the work of Chung, Graham, Frankl and Shearer \cite{CGFS1986}
on triangle-intersecting families, 
where Shearer's inequality appeared;   Radhakrishnan's proof of Bregman's theorem \cite{Rad1997};  
the theorem of Friedgut and Kahn \cite{FK1998}
on counting copies of one hypergraph in another;  and Kahn's entropy approach \cite{Kahn2002} for counting independent sets and antichains. 
The entropy method has grown quickly in the last few years;
see Szegedy \cite{Szegedy2014} on the Sidorenko conjecture, 
Gilmer \cite{Gil2022} on Frankl's union-closed sets conjecture, 
Chao and Yu  \cite{CY2024} on the Kruskal--Katona-type problem,  
Gowers, Green, Manners and Tao
\cite{GGMT2025} on the Marton conjecture, 
Green, Manners and Tao
\cite{GMT2025} on sumset problems, where an entropic doubling constant replaces the combinatorial one and turns
out to behave better under homomorphisms.  
The entropy method also gives clean reformulations of arguments first found by other means: Tao \cite{Tao2020} recast the improved sunflower bound of Alweiss, Lovett, Wu and Zhang \cite{ALWZ2021} in these terms. 

\smallskip 
Closest to the present paper is the work of Chao and Yu \cite{CY2026}, who brought entropy arguments into spectral Tur\'an theory: they identified $\lambda (G)$ with the largest value of $2^{H(Y\mid X)}$ over
symmetric pairs supported on the edges of $G$, and used this to bound $\lambda^{\ell}(G)$ by the number of homomorphisms of an arbitrary tree on $\ell$ vertices into a $K_{r+1}$-free graph. 
 In addition, Liu, Sun, Wang and Wu \cite{LSWW2026} built a
Markov chain from a Perron vector and combined it with the weighted local inequalities of Liu and Ning
\cite{LiuNing2026,LiuNing2025} to obtain an edge-local version  of \eqref{eq:nikiforov-walk}, confirming a conjecture in \cite{KKP2025}. 
All these results localize the clique number of $G$ and remain restricted to the clique-free setting. 
Our result removes the clique from the hypothesis altogether: the clique number of $G$ is replaced by $\chi (F)-1$ for a
color-critical forbidden graph $F$, even though an $F$-free graph may contain cliques larger than $K_{r+1}$.
The price is the threshold $\lambda_0(F)$, which does not appear in the local inequalities.  Combining the two directions is an interesting problem for future work. 

\medskip
\noindent
{\bf Applications.}
The proof of Theorem \ref{thm:large-lambda} has three modular ingredients: the entropy-Perron bridge, the weighted Tur\'{a}n theorem it is fed with, and a flatness estimate for the Perron vector. 
Replacing the ingredients yields further applications, which we present in Section \ref{sec:applications}. First, walks may be replaced by the homomorphism counts of an arbitrary unbalanced tree $T$ on $\ell$ vertices; here $\chi (F)\ge 3$ already suffices, and the threshold is explicit 
(Theorem \ref{thm:spectral-tree}).
Second, the weighted supersaturation theorem, fed into the bridge in place of the exact one, shows that once $\lambda^{\ell}(G)$ exceeds the right-hand side of \eqref{eq:nikiforov-walk} by $\varepsilon w_\ell (G)$, the graph $G$ contains 
$\Omega (\lambda(G)^{f})$ copies of $F$, 
where $f=v(F)$; 
for this application $F$ need not be color-critical (Theorem \ref{thm:spectral-supersat}). 
Third, the weighted stability theorem alone yields the vertex-spectral stability theorem (Proposition
\ref{prop:vertex-stability}) and, in Appendix
\ref{sec:appendix-stability}, its edge-spectral counterpart; neither deduction uses the argument on eigenvalues.
Fourth, reading the bridge backwards turns Theorem \ref{thm-second-key} into an entropic Tur\'{a}n theorem for color-critical graphs, together with its equality case
(Theorem \ref{thm:entropic-turan-critical}). 
Fifth, the same route attaches to each bipartite graph $H$ a walk-spectral inequality, which we term the spectral Sidorenko inequality, that holds for every graph $G$ precisely when $H$ satisfies the traditional  Sidorenko inequality (Theorem \ref{thm:walk-sidorenko}).

\medskip 
\noindent
{\bf Organization.}
In Section \ref{sec:preliminaries}, we record the graph  removal lemma and the stability theorem, and collect the facts about entropy and the probabilistic bounds. 
In Section \ref{sec:entropy}, we prove the entropy-Perron bridge, and show the differences from those in \cite{CY2026, LSWW2026}.   
Section \ref{sec:main-proof} contains the two Perron-vector estimates, a rigidity lemma for the equality case, and the proof of Theorem \ref{thm:large-lambda}, assuming Theorem \ref{thm-second-key}.  
The next two sections concern the weighted Tur\'{a}n problems:  in Section \ref{sec:weighted},  we
establish the weighted supersaturation and weighted  stability, and in Section \ref{sec-prove-exact}, we combine them with a sampling argument to remove the error term and to prove the exact bound in Theorem \ref{thm-second-key}. 
Section \ref{sec:applications} contains the five applications mentioned above. 
Section \ref{sec:remarks} places our results in the setting of a general limit-superior problem and discusses the case of balanced trees.
Finally, Appendix \ref{sec:appendix-stability} deduces the edge-spectral 
stability theorem via the weighted stability theorem.

\medskip 
\noindent
{\bf Notation.} 
 We write $A=(a_{uv})$ for the adjacency
matrix of $G$, $\one$ for the all-ones vector, and
$\tr (A)$ for the trace of $A$. The degree of a vertex $v$ is $d_v$, and $G[S]$ is the subgraph induced on a set $S\subseteq V(G)$. We write $\chi (G)$ for the chromatic number and $\omega (G)$ for the clique number. 
The path and the cycle on $\ell$ vertices are $P_\ell$ and $C_\ell$, and the star with $d$ leaves is $K_{1,d}$. 
Let $I_t$ be the graph on $t$ vertices with no edges, and let $T_{n,r}$ be the $r$-partite Tur\'an graph on $n$ vertices. The join of two graphs is written $G\vee H$. For graphs $H$ and $G$, we write $\hom (H,G)$ for the number of homomorphisms from $H$ to $G$, and $N_H(G)$ for the number of  copies of $H$ in $G$. We write $\ex(n,F)$ for the maximum number of edges in an $n$-vertex $F$-free graph. 
A vector $\bm{p}=(p_v)_{v\in V(G)}$ is a
\emph{probability vector} if its entries are
nonnegative and add up to one. For such a vector and a set $S\subseteq V(G)$, we write
$p(S):=\sum_{v\in S}p_v$ and
$\pmax:=\max_{v\in V(G)}p_v$, and we denote 
\[
  \PhiG(\bm{p}):=2\!\!\sum_{uv\in E(G)}\!\!p_up_v .
\]
Let $\bm{x}=(x_v)_{v\in V(G)}$ be a
nonnegative unit eigenvector of $A(G)$ corresponding to $\lambda (G)$, and let $\pi$ be the probability vector $\pi_v=x_v^2$ attached to it. We write $\supp (\cdot)$ for the support of a random variable, $H(\cdot )$ for the Shannon entropy, and
$\log =\log_2$. We write $|G|:=v(G)$, $\lVert \cdot \rVert$ for the Euclidean norm, and
$\lambda^{\ell}(G):= (\lambda (G) )^{\ell}$
for every real exponent $\ell$.
 In asymptotic statements, the error term $o(1)$ refers to the limit as the relevant size parameter tends to infinity, with $F$ and $\ell$ fixed.

\section{Preliminaries} 

\label{sec:preliminaries}

\subsection{Tools from extremal graph theory} 

We first record the quantitative form of the Erd\H{o}s--Stone--Simonovits theorem.

\begin{theorem}[Erd\H{o}s--Stone--Simonovits
\cite{ES1946,ES1966}]
\label{thm:ess}
Let $F$ be a graph with $\chi (F)=r+1\ge 2$. For every $\varepsilon >0$, 
there is $N_0=N_0(F,\varepsilon )$ such that 
every $F$-free graph $G$ 
on $N \ge N_0$ vertices satisfies
$$e(G)\le \Bigl(1-\frac1r+\varepsilon \Bigr)\frac{N^2}{2}.$$
\end{theorem}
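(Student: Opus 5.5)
This is the classical Erdős--Stone--Simonovits theorem, which the paper only records, so the plan is to recall its standard derivation. Everything reduces to the Erdős--Stone theorem: for fixed $r\ge 1$, $t\ge 1$ and $\varepsilon>0$, every graph on $N\ge N_1(r,t,\varepsilon)$ vertices with at least $(1-\frac1r+\varepsilon)\frac{N^2}{2}$ edges contains the complete $(r+1)$-partite graph $K_{r+1}(t)$ with parts of size $t$.

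Given that, let $t=v(F)$ and fix a proper $(r+1)$-colouring of $F$. Each colour class has at most $t$ vertices, so $F$ embeds in $K_{r+1}(t)$ by mapping each class into its own part. Hence an $F$-free graph on $N\ge N_0:=N_1(r,t,\varepsilon)$ vertices contains no $K_{r+1}(t)$. By the Erdős--Stone theorem it therefore has fewer than $(1-\frac1r+\varepsilon)\frac{N^2}{2}$ edges, which is the claim. When $r=1$ the graph $F$ is a nonempty bipartite graph, and the statement reduces to Kővári--Sós--Turán, $e(G)=O(N^{2-1/t})\le\varepsilon N^2/2$ for large $N$; this is again the $r=1$ instance of the same argument.

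For the Erdős--Stone theorem itself, I would use the supersaturation plus Kővári--Sós--Turán route.

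\begin{enumerate}
\item Delete vertices of low degree. Repeatedly removing any vertex of degree below $(1-\frac1r+\frac\varepsilon2)|V|$ leaves a subgraph $G'$ on at least $\sqrt{\varepsilon}\,N$ vertices whose minimum degree is at least $(1-\frac1r+\frac\varepsilon2)|V(G')|$.
\item Build up the multipartite graph by induction on $r$. In $G'$, first find $K_r(T)$ for a very large $T$; this is the inductive hypothesis, since $1-\frac1r>1-\frac1{r-1}$.
\item Count common neighbours. The minimum-degree condition forces at least $\varepsilon' N$ vertices outside this $K_r(T)$ to be adjacent to at least $t$ vertices in each of its $r$ parts.
\item Pigeonhole. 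There are at most $\binom{T}{t}^r$ possible choices of such $t$-subsets, so for $T$ large relative to $t$ and $N$ large, some $t$ of these outside vertices share the same choice. Together with the chosen $t$-subsets they form $K_{r+1}(t)$.
\end{enumerate}

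The counting in the third step is the main obstacle. One fixes $T$ as a function of $t$ and $\varepsilon$, uses double counting of the missing edges between the outside vertices and the $K_r(T)$ to show that few outside vertices fail to see $t$ vertices in some part, and then lets $N$ grow. Since the statement is classical, in the paper I would simply cite \cite{ES1946,ES1966} rather than reproduce this argument.
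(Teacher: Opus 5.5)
Your proposal is correct and essentially matches the paper, which gives no proof of this statement and simply quotes it from \cite{ES1946,ES1966}, as you also conclude you would do. Your supplementary sketch is the standard argument and is sound: the reduction embeds $F$ into $K_{r+1}(t)$ with $t=v(F)$, and Erd\H{o}s--Stone follows by min-degree deletion, induction on $r$, and pigeonhole over $\binom{T}{t}^r$ choices. The only correction is a harmless constant: the deletion step guarantees about $\sqrt{r\varepsilon/2}\,N$ surviving vertices, so your $\sqrt{\varepsilon}\,N$ should be replaced by, e.g., $\sqrt{\varepsilon}\,N/2$.
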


We use the graph removal lemma, which says roughly that 
every $N$-vertex graph with $o(N^f)$ copies of $F$ can be made $F$-free by removing $o(N^2)$ edges. This is a direct consequence of the celebrated Szemer\'{e}di regularity lemma; 
 see \cite{Fox2011} for a proof and for quantitative bounds.

\begin{theorem}[Graph removal lemma]\label{thm:removal}
Let $F$ be a fixed graph with $f=v(F)$. For every
$\varepsilon >0$, there is $\nu =\nu (F,\varepsilon )>0$
such that every graph $G$ on $N$ vertices containing at most $\nu N^{f}$ copies of $F$ can be made $F$-free by deleting at most $\varepsilon N^{2}$ edges.
\end{theorem}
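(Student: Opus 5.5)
We will derive the removal lemma from Szemer\'edi's regularity lemma together with the graph counting lemma, following the standard cleaning argument. Fix $\varepsilon>0$ and $f=v(F)$. First we choose auxiliary parameters: a density threshold $d:=\varepsilon/4$, and a regularity parameter $\gamma>0$ small enough relative to $d$ and $f$. Concretely we take $\gamma\le \varepsilon/8$ and small enough that the counting lemma applies with density $d$. We then apply the regularity lemma with parameter $\gamma$ and a lower bound $m_0\ge 8/\varepsilon$ on the number of parts. This yields an equipartition $V(G)=V_1\cup\cdots\cup V_k$ with $m_0\le k\le M(\gamma)$, in which all but at most $\gamma k^2$ pairs $(V_i,V_j)$ are $\gamma$-regular.

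Next we clean the graph $G$, deleting three kinds of edges:
\begin{itemize}
\item all edges inside some $V_i$, at most $k\cdot (N/k)^2/2\le N^2/(2k)\le \varepsilon N^2/16$ edges;
\item all edges between irregular pairs, at most $\gamma k^2 (N/k)^2\le \varepsilon N^2/8$ edges;
\item all edges between pairs of density less than $d$, at most $d N^2/2=\varepsilon N^2/8$ edges.
\end{itemize}
In total at most $\varepsilon N^2$ edges are removed; call the resulting graph $G'$.

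We claim that $G'$ is $F$-free once $\nu$ is small enough. Suppose $G'$ contains a copy of $F$. Every edge of this copy lies in a $\gamma$-regular pair of density at least $d$, and its endpoints lie in distinct parts. The map sending each vertex of $F$ to the index of its part is therefore a homomorphism $\phi$ from $F$ into the reduced graph $R$, whose vertices are the parts and whose edges are the dense regular pairs. Some parts may be used several times, so $\phi$ need not be injective.

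The counting lemma, applied to the $f$ (not necessarily distinct) parts $V_{\phi(x)}$, then shows that the number of homomorphic copies of $F$ respecting $\phi$ is at least $c(d,f)\prod_x |V_{\phi(x)}|\ge c\,(N/k)^f$, where for instance $c=(d/2)^{e(F)}$. If parts are repeated, we first split each repeated part into $f$ equal subparts; regularity passes to these subparts with a slightly worse parameter, which is harmless for small $\gamma$. Subtracting the $O(N^{f-1})$ degenerate maps, we obtain at least $\tfrac{c}{2}(N/M(\gamma))^f$ genuine copies of $F$, counted up to the factor $|\mathrm{Aut}(F)|$, for $N$ large. Choosing $\nu<c/(2|\mathrm{Aut}(F)|M(\gamma)^f)$ contradicts the hypothesis. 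For small $N$, where degeneracy could matter, we handle the case trivially: if $N<N_1(\varepsilon)$ we take $\nu<N_1^{-f}$, so the hypothesis forces zero copies of $F$.

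The main obstacle is the counting lemma itself, in particular when $\phi$ is non-injective. We plan to prove it by the usual greedy embedding. We embed the vertices of $F$ one at a time, maintaining candidate sets of size at least $(d-\gamma)^{j}|V_i|$, and use $\gamma$-regularity to ensure that all but $\gamma|V_i|$ choices keep every later candidate set large. This requires $\gamma\le (d-\gamma)^{f}/2$, which fixes the choice of $\gamma$ made at the start.
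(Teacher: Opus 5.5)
Your proposal is correct and is the standard regularity-lemma argument the paper relies on: the paper gives no proof, stating only that the removal lemma is a direct consequence of Szemer\'edi's regularity lemma and referring to Fox for a proof. The slips are only quantitative: in the greedy embedding each of the up to $f-1$ later neighbours can rule out fewer than $\gamma|V_i|$ choices, and the split subparts are only about $f\gamma$-regular, so $\gamma$ must be smaller than you stated by a factor depending on $f$. This merely shrinks $\gamma$ and $\nu$.
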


We also use the Erd\H{o}s--Simonovits stability theorem \cite{Sim1968}
 in the following quantitative form.

\begin{theorem}[Erd\H{o}s--Simonovits stability]
\label{thm:stability}
Let $F$ be a fixed graph with $\chi (F)=r+1\ge 2$. For
every $\varepsilon >0$, there are $\theta =\theta
(F,\varepsilon )>0$ and $N_1=N_1(F,\varepsilon )$ such
that every $F$-free graph $G$ on $N\ge N_1$ vertices with 
$$e(G)\ge \Big(1-\frac1r-\theta \Big)\frac{N^{2}}{2}$$ 
admits a partition $V(G)=B_1\cup \cdots \cup B_r$ with
$\bigl\lvert |B_i|-\frac Nr\bigr\rvert \le \varepsilon N$
for every $i\in [r]$, and with at most $\varepsilon N^{2}$
pairs that lie in different parts and are non-adjacent
in $G$.
\end{theorem}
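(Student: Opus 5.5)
The plan is to prove the statement in two stages. First reduce from a general $F$ with $\chi(F)=r+1$ to the clique $K_{r+1}$. Then prove stability for $K_{r+1}$-free graphs and pull the resulting partition back to $G$.

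\emph{Reduction to cliques.} Let $f=v(F)$. Since $\chi(F)=r+1$, $F$ is a subgraph of the blow-up $K_{r+1}[f]$, in which each vertex of $K_{r+1}$ is replaced by an independent set of size $f$. I claim that an $F$-free graph $G$ on $N$ vertices contains at most $\mu N^{r+1}$ copies of $K_{r+1}$ once $N$ is large, for any fixed $\mu>0$. Suppose instead there were at least $\mu N^{r+1}$ copies. Form the $(r+1)$-uniform hypergraph of these cliques; it has positive density. By Erd\H{o}s' theorem on complete $(r+1)$-partite $(r+1)$-graphs (the hypergraph K\H{o}v\'ari--S\'os--Tur\'an bound), it contains a complete $(r+1)$-partite sub-hypergraph with all parts of size $f$. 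Any two of its vertices lying in different parts are adjacent in $G$, because some clique of the hypergraph contains both. So $G\supseteq K_{r+1}[f]\supseteq F$, a contradiction. Now apply Theorem~\ref{thm:removal} to $K_{r+1}$. With $\mu=\nu(K_{r+1},\varepsilon')$, we delete at most $\varepsilon' N^2$ edges and obtain a $K_{r+1}$-free graph $G'$ with
\[
e(G')\ge \Bigl(1-\frac1r-\theta-2\varepsilon'\Bigr)\frac{N^2}{2}.
\]

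\emph{Stability for $K_{r+1}$-free graphs.} Here I would use F\"uredi's argument, by induction on $r$. Take a vertex $v$ of maximum degree $\Delta$ in $G'$. Its neighborhood $N(v)$ is $K_r$-free, and every edge of $G'$ meets $N(v)$ or lies inside it. Counting gives
\[
e(G')\le e\bigl(G'[N(v)]\bigr)+(N-\Delta)\Delta .
\]
The term $(N-\Delta)\Delta$ is at most $N^2/4$, so near-extremality of $e(G')$ forces two things. First, $\Delta\approx(1-\frac1r)N$. Second, $e(G'[N(v)])$ is close to the Tur\'an number for $K_r$ on $\Delta$ vertices, and the set $V\setminus N(v)$ spans $o(N^2)$ edges. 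By induction, $N(v)$ splits into $r-1$ nearly balanced parts with few missing cross pairs. We take $V\setminus N(v)$ as the $r$-th part. This yields the stronger form: $e(G')\ge t(N,r)-s$ implies $G'$ is $s$-close to $r$-partite.

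\emph{Balancing and transfer.} Given an $r$-partition $B_1,\dots,B_r$ that misses few cross pairs, the edge count is at most $\sum_{i<j}|B_i||B_j|+o(N^2)$. Since
\[
\sum_{i<j}|B_i||B_j|=\frac{N^2}{2}\Bigl(1-\frac1r\Bigr)-\frac12\sum_i\Bigl(|B_i|-\frac Nr\Bigr)^2 ,
\]
the near-extremal count forces $\bigl\lvert |B_i|-\frac Nr\bigr\rvert\le\varepsilon N$. The number of nonadjacent cross pairs in $G$ is at most that in $G'$ plus zero, since $G\supseteq G'$. Hence choosing $\varepsilon',\theta$ small and $N_1$ large relative to $\varepsilon$ gives the conclusion.

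The main obstacle is the clique supersaturation step, which is where $F$ enters. If one prefers to avoid the hypergraph theorem, I would instead use the regularity lemma directly. Apply it to $G$, pass to the reduced graph $R$, and note that $R$ is $K_{r+1}$-free by the counting lemma. Then run clique stability on $R$ and lift the partition back to $G$.
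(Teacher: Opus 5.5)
The paper does not prove this statement. It is imported from Simonovits \cite{Sim1968} and used as a black box, so there is no in-paper argument to match. Your proof is correct in outline and takes a genuinely different route from the classical one. It is essentially F\"uredi's derivation of Erd\H{o}s--Simonovits stability from the clique case:
\begin{itemize}
\item Since $F\subseteq K_{r+1}[f]$, Erd\H{o}s's theorem on complete $(r+1)$-partite $(r+1)$-graphs shows that an $F$-free graph has $o(N^{r+1})$ copies of $K_{r+1}$.
\item Theorem \ref{thm:removal} then deletes $o(N^2)$ edges to reach a $K_{r+1}$-free $G'$.
\item Clique stability follows from the maximum-degree induction.
\item The partition transfers back because $G\supseteq G'$.
\end{itemize}
The classical proofs of Erd\H{o}s and Simonovits predate the regularity lemma and work directly with the structure of $G$. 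Your route is shorter and modular, and it reuses the removal lemma the paper already imports, in the same spirit as the proofs of Theorems \ref{thm:weighted-supersat} and \ref{thm:weighted-stability}. The price is that $\theta$ and $N_1$ inherit regularity-type dependence on $\varepsilon$. That is harmless here, since the paper only uses their existence.

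Several steps of the clique induction need tightening. Write $V_1=V\setminus N(v)$, $U_1=N(v)$ and $e(G')\ge(1-\frac1r-\theta')\frac{N^2}{2}$.
\begin{itemize}
\item The sentence ``every edge meets $N(v)$ or lies inside it'' is false as written, since edges inside $V_1$ do neither.
\item Your inequality $e(G')\le e(G'[U_1])+(N-\Delta)\Delta$ alone does not show that $V_1$ spans few edges. Use the double count $2e(V_1)+e(V_1,U_1)=\sum_{u\in V_1}d(u)\le(N-\Delta)\Delta$ together with Tur\'an's bound $e(U_1)\le\frac{r-2}{2(r-1)}\Delta^2$. This gives both $e(V_1)\le\theta'N^2/2$ and at most $\theta'N^2$ missing pairs between $V_1$ and $U_1$. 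You never bounded the latter, and you need it for the final partition.
\item What forces $\Delta\approx(1-\frac1r)N$ is the concavity of $N\Delta-\frac{r}{2(r-1)}\Delta^2$, whose maximum $(1-\frac1r)\frac{N^2}{2}$ is attained at $\Delta=(1-\frac1r)N$. The bound $(N-\Delta)\Delta\le N^2/4$ plays no role.
\item In your balancing paragraph, ``few missing cross pairs'' does not give $e(G')\le\sum_{i<j}|B_i||B_j|+o(N^2)$. That needs few edges inside the parts, which is F\"uredi's form of the conclusion. Alternatively, note that the induction already produces balanced parts, since $|V_1|=N-\Delta\approx N/r$ and the inductive parts of $U_1$ are balanced.
\end{itemize}
These are routine repairs, not gaps in the approach.
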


Simonovits \cite{Sim1968} determined the
exact Tur\'{a}n number for color-critical 
graphs.

\begin{theorem}[Simonovits \cite{Sim1968}]
\label{thm:simonovits}
Let $F$ be a color-critical graph with
$\chi (F)=r+1\ge 3$. There is a constant
$n_0=n_0(F)$ such that if $G$ is an $F$-free graph on $n\ge n_0$ vertices, then 
\[
  e(G) \ \le \ e(T_{n,r}),
\]
with equality if and only if $G=T_{n,r}$.
\end{theorem}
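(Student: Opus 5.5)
The plan follows the standard stability-and-cleaning route. Let $G$ be $F$-free on $n$ vertices with $e(G)\ge e(T_{n,r})$; it suffices to show $G=T_{n,r}$ for large $n$. Fix $t:=v(F)$. Because $F$ is color-critical, $F-uv$ is $r$-colorable for some edge $uv$. Hence $F$ embeds in the graph $K_r[t]^+$, obtained from the balanced complete $r$-partite graph with parts of size $t$ by adding one edge inside a part. So it is enough to show that every $G$ as above either contains $K_r[t]^+$ or equals $T_{n,r}$.

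\emph{Step 1 (minimum degree cleaning).} Fix a small $\delta>0$. As long as the current graph $G'$ on $n'$ vertices has a vertex of degree less than $(1-\frac1r-\delta)n'$, delete it. Since $e(T_{n',r})-e(T_{n'-1,r})\approx(1-\frac1r)n'$, each deletion raises the surplus $e(G')-e(T_{n',r})$ by at least about $\delta n'$. The surplus can never exceed $\binom{n'}{2}$, so this process stops after at most $O(\sqrt{n}/\delta)$-ish (in any case $o(n)$) deletions. We obtain $G'$ on $n'\ge n-o(n)$ vertices with $e(G')\ge e(T_{n',r})$ and $\delta(G')\ge(1-\frac1r-\delta)n'$, and the inequality $e(G')\ge e(T_{n',r})$ is strict whenever at least one vertex was deleted.

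\emph{Step 2 (stability and a max-cut partition).} Apply Theorem \ref{thm:stability} to $G'$ with a parameter $\varepsilon\ll\delta$. Among all partitions $V(G')=V_1\cup\cdots\cup V_r$, choose one maximizing the number of cross edges. It then also has parts of size $(\frac1r\pm\varepsilon)n'$ and at most $\varepsilon n'^2$ missing cross pairs. By maximality, each vertex has at most as many neighbors in its own part as in any other part. Combined with the minimum degree, this gives that every vertex has at most about $2\delta n'$ neighbors in its own part. Consequently, every vertex is adjacent to all but $O(\delta n')$ vertices of each other part.

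\emph{Step 3 (no edge inside a part; main obstacle).} Suppose $uv$ is an edge inside $V_1$. For each $i\ge2$, the common neighborhood $N(u)\cap N(v)\cap V_i$ has size at least $(\frac1r-O(\delta))n'$. Discard the at most $O(\sqrt\varepsilon\, n')$ vertices per part that miss more than $\sqrt\varepsilon\, n'$ cross pairs. Inside the remaining sets $W_i\subseteq N(u)\cap N(v)\cap V_i$ ($i\ge2$), together with a set $W_1\subseteq V_1\setminus\{u,v\}$ of typical vertices, greedily pick $t$ vertices from each $W_i$ ($i\ge2$) and $t-1$ vertices from $W_1$ so that all cross pairs are edges. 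This works because each chosen vertex excludes only $\sqrt\varepsilon\, n'$ candidates in each other part, which is far fewer than $|W_i|$. Adding $u,v$ to the chosen vertices of $V_1$ yields $K_r[t]^+\supseteq F$, a contradiction. The delicate point is the bookkeeping: the common-neighborhood loss $O(\delta n')$ and the atypical-vertex loss $O(\sqrt\varepsilon\, n')$ must both stay below $\frac1{2r}n'$. This forces the order of choices $\varepsilon\ll\delta\ll 1/(rt)$ and then $n$ large.

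\emph{Step 4 (conclusion).} By Step 3, $G'$ is $r$-partite, so $e(G')\le e(T_{n',r})$, with equality only for $G'=T_{n',r}$. By Step 1 the inequality $e(G')\ge e(T_{n',r})$ is strict if anything was deleted, which would be a contradiction. Hence no vertex was deleted, $G'=G$, and $G=T_{n,r}$, proving both the bound and the uniqueness of the extremal graph.
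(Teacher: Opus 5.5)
The paper does not prove Theorem \ref{thm:simonovits}; it quotes it from Simonovits. Your route (stability, then cleaning, with $F$ replaced by $K_r^+[t]$) is the standard one, and it mirrors what the paper does in the weighted setting in Section \ref{sec-prove-exact}. However, Step 2 contains a genuine gap.

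You claim that maximality of the cut together with the minimum degree gives every vertex at most about $2\delta n'$ neighbours in its own part. This does not follow. Maximality only gives $d_{V_{i(v)}}(v)\le d_{V_j}(v)$ for all $j$. The minimum degree only gives $\sum_j d_{V_j}(v)\ge(1-\frac1r-\delta)n'$. A vertex adjacent to every other vertex satisfies both whenever the parts are balanced. Concretely, $T_{n'-1,r}$ plus one universal vertex satisfies every hypothesis you use in Steps 1 and 2 except $F$-freeness. Yet its universal vertex has about $n'/r$ neighbours in its own part in every nearly balanced partition. So this bound cannot come from counting alone; it needs $F$-freeness. Step 3 (the lower bound $(\frac1r-O(\delta))n'$ on the common neighbourhoods $N(u)\cap N(v)\cap V_i$) rests entirely on it.

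The missing ingredient is an embedding claim placed before the maximality argument: every vertex $v$ has fewer than $\beta n'$ neighbours in some part.
\begin{itemize}
\item Suppose not. Discard atypical vertices and greedily choose $t-1$ neighbours of $v$ in one part and $t$ neighbours in each other part, with all cross pairs adjacent.
\item Putting $v$ into the first class yields $K_r^+[t]\supseteq F$, because the edges from $v$ to its $t-1$ class-mates supply the extra edge.
\item This works exactly like your Step 3, provided $\sqrt{\varepsilon}\ll\beta/(rt)$.
\end{itemize}
Maximality then gives $d_{V_{i(v)}}(v)\le\min_j d_{V_j}(v)<\beta n'$. The minimum degree together with $|V_k|\le(\frac1r+O(\sqrt{\varepsilon}))n'$ then shows that $v$ misses at most $(\delta+\beta+O(\sqrt{\varepsilon}))n'$ vertices of each other part, which is what Step 3 uses (with $\delta,\beta\ll 1/r$). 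This is precisely the role of Claim 1 in the proof of Lemma \ref{lem:near-turan-weighted}.

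Two smaller inaccuracies:
\begin{itemize}
\item In Step 1, the cap ``surplus $\le\binom{n'}{2}$'' does not give $o(n)$ deletions. Comparing the accumulated surplus, at least about $\frac{\delta}{2}(n^2-n'^2)-n$, with the cap $\frac{n'^2}{2r}+O(r)$ only gives $n'=\Omega(\sqrt{\delta}\,n)$. That is enough, since Steps 2--4 only need $n'$ large. If you want $n-n'=o(n)$, bound the surplus with Theorem \ref{thm:ess} instead.
\item The max-cut partition inherits only $\pm O(\sqrt{\varepsilon})n'$ balance and $O(\varepsilon n'^2)$ missing cross pairs from the partition of Theorem \ref{thm:stability}, not $\pm\varepsilon n'$ and $\varepsilon n'^2$. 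This is harmless for the rest of the argument.
\end{itemize}
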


We next record the fixed template supplied by a
critical edge. For integers $r\ge 2$ and $t\ge 2$,
let $K_r[t]$ denote the complete $r$-partite graph
with $t$ vertices in every part, and let
$K_r^+[t]$ be the graph obtained from $K_r[t]$ by
adding one edge inside the first part.

\begin{lemma}\label{lem:critical-template}
Let $F$ be a color-critical graph with
$\chi (F)=r+1$, and put $t:=v(F)$. Then
$F\subseteq K_r^+[t]$. Consequently, every $F$-free
graph is $K_r^+[t]$-free.
\end{lemma}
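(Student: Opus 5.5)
We take the critical edge of $F$ and embed $F$ directly into $K_r^+[t]$ by means of a proper coloring of $F$ minus that edge. By definition of color-criticality there is an edge $uv\in E(F)$ with $\chi(F-uv)\le r$. Since $\chi(F)=r+1$, any proper $r$-coloring $c$ of $F-uv$ must give $u$ and $v$ the same color; otherwise $c$ would already be a proper $r$-coloring of $F$. Relabel the colors so that $c(u)=c(v)=1$.

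Let $V_1,\dots,V_r$ be the color classes of $c$ (some possibly empty). Each has size at most $v(F)=t$, so we can inject $V_i$ into the $i$-th part of $K_r[t]$ for each $i$. We do this so that $u$ and $v$ land on the two endpoints of the extra edge inside the first part of $K_r^+[t]$. This is possible because $u,v\in V_1$ are distinct and the first part has $t\ge 2$ vertices; by the symmetry of the first part we may take the extra edge to join the images of $u$ and $v$.

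We check that this injection $\phi$ preserves every edge of $F$:
\begin{itemize}
\item every edge of $F$ other than $uv$ is an edge of $F-uv$, so its endpoints receive different colors and are mapped to different parts, which are completely joined in $K_r[t]\subseteq K_r^+[t]$;
\item the edge $uv$ is mapped onto the added edge inside the first part.
\end{itemize}
Hence $\phi$ is an injective homomorphism, i.e. $F\subseteq K_r^+[t]$. The consequence follows at once: if $G$ contained $K_r^+[t]$, it would contain $F$, so every $F$-free graph is $K_r^+[t]$-free.

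The main point to get right is that $u$ and $v$ end up in the same class of every proper $r$-coloring of $F-uv$, which is forced by $\chi(F)=r+1$. Beyond this there is no real obstacle, only the size bookkeeping $|V_i|\le t$, which is ensured by the choice $t=v(F)$.
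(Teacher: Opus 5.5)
Your proposal is correct and follows the paper's own proof essentially step for step. You take a critical edge $uv$ and a proper $r$-coloring of $F-uv$, note that $u$ and $v$ must share a color, and then send the color classes injectively into the parts of $K_r[t]$ with $u,v$ placed at the ends of the added edge.
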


\begin{proof}
Choose an edge $uv\in E(F)$ with
$\chi (F-uv)=r$ and fix a proper $r$-coloring of
$F-uv$. The vertices $u$ and $v$ get the same
color, since otherwise the coloring would be
proper for $F$ as well. Map the color classes to
the $r$ parts of $K_r[t]$, sending the class of
$u$ to the first part and placing $u$ and $v$ at
the two ends of the additional edge. This is
possible because every class has at most
$t=v(F)$ vertices. Thus, 
the resulting map embeds $F$ into $K_r^+[t]$, as needed. 
\end{proof}

We also need the number of homomorphisms
from a tree into a star.

\begin{lemma}\label{lem:hom-star}
Let $T$ be a tree on $\ell \ge 1$ vertices, and
let $X$ and $Y$ be the two sides of its
bipartition, with $|X|=c$ and $|Y|=\ell -c$. Then
for every integer $d\ge 1$,
\[
  \hom \bigl(T,K_{1,d}\bigr)=d^{\,c}+d^{\,\ell -c}.
\]
\end{lemma}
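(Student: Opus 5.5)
We count homomorphisms directly. Let the star $K_{1,d}$ have centre $z$ and leaves $y_1,\ldots ,y_d$. Since $K_{1,d}$ is bipartite with sides $\{z\}$ and $\{y_1,\ldots ,y_d\}$, and $T$ is connected, every homomorphism $\varphi :T\to K_{1,d}$ maps each side of the bipartition of $T$ into a single side of the star. So either $\varphi (X)=\{z\}$ and $\varphi (Y)$ lies among the leaves, or $\varphi (Y)=\{z\}$ and $\varphi (X)$ lies among the leaves.

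To justify this, fix a vertex $a\in X$. If $\varphi (a)=z$, then every neighbour of $a$ goes to a leaf, every vertex at distance two goes to $z$, and so on. Hence the parity of the distance from $a$ determines whether a vertex goes to the centre or to a leaf: $X$ goes to $z$ and $Y$ to leaves. If instead $\varphi (a)$ is a leaf, the same argument gives $X\to$ leaves and $Y\to z$. Edges of $T$ join $X$ to $Y$, and every centre--leaf pair is an edge of $K_{1,d}$. Therefore any map sending $X$ to $z$ and $Y$ arbitrarily into the leaves is a homomorphism, and these number $d^{\,|Y|}=d^{\,\ell -c}$. Symmetrically, there are $d^{\,c}$ homomorphisms sending $Y$ to $z$.

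The two families are disjoint when both sides $X$ and $Y$ are nonempty. The two families are distinguished by where a vertex of $X$ goes, and in one family it goes to $z$ while in the other it goes to a leaf. This gives $\hom (T,K_{1,d})=d^{\,c}+d^{\,\ell -c}$.

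The only subtle point is the degenerate case $\ell =1$, where one side is empty, say $c=1$ and $\ell -c=0$. Then $\hom (T,K_{1,d})=d+1$, which matches $d^{1}+d^{0}$. Indeed, the vertex may go to the centre (the family contributing $d^{\ell-c}=d^0=1$) or to any of the $d$ leaves (the family contributing $d^{c}=d$). So the formula holds in this case as well.
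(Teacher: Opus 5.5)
Your proof is correct and follows the paper's argument. Both show that the preimage of the centre must be exactly one side of the bipartition: the paper uses uniqueness of the bipartition of the connected tree, and you use parity of distance from a fixed vertex. Both then count the two disjoint families of $d^{\ell-c}$ and $d^{c}$ free leaf assignments. Your separate treatment of $\ell=1$ is fine, though the paper's formulation $f^{-1}(z)\in\{X,Y\}$ already covers that case.
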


\begin{proof}
Let $(\{z\},L)$ be the bipartition of $K_{1,d}$, so that $z$ is its
center and $|L|=d$. The preimages of $z$ and of $L$ under a
homomorphism $f\colon T\to K_{1,d}$ are independent sets partitioning
$V(T)$, so $f^{-1}(z)\in \{X,Y\}$ because $T$ is connected. Conversely, the vertices outside $f^{-1}(z)$ may be sent to leaves arbitrarily, since every edge of $T$ has exactly one end in $f^{-1}(z)$. 
This yields $d^{\,\ell -c}$ homomorphisms with $f^{-1}(z)=X$,  and $d^{\,c}$ homomorphisms with
$f^{-1}(z)=Y$, and the two families are disjoint.
\end{proof}

\subsection{Shannon entropy and Markov chains}

\label{subsec:entropy-prelim}

Shannon \cite{Shannon1948} introduced entropy to measure how much information a random variable carries.  
Let $X$ be a discrete random variable 
with values in a finite
set. We write $p(x):=\Pr (X=x)$ and $\log (\cdot )= \log_2(\cdot)$ for short. The
Shannon entropy of $X$ is
\[
  H(X):=-\sum_{x\in \supp (X)} p(x) \cdot \log p(x),
\]
where $\supp (X)$ is the support of $X$, i.e., the set of all $x$ such that $p(x)>0$. 
The entropy depends only on the distribution of
$X$, so we also write $H( p )$ for the right-hand side
above. In particular, $H(\pi)=-\sum_v\pi_v\log \pi_v$
for a probability vector $\pi$ on $V(G)$.

For the joint random variable $Z=(X,Y)$,  we write 
$p(x,y)= \Pr(X=x,Y=y)$ and  
\begin{align*}
  H(X,Y):= \sum_{(x,y)} - p(x,y) \cdot \log p(x,y) ,
     \end{align*}
     where the sum runs over the pairs $(x,y)$ with $ p (x,y)>0$. 
If $X$ and $Y$ are independent, then $H(X,Y)=H(X) + H(Y)$. 
For any sequence of random variables $X_1,X_2,\ldots ,X_{\ell}$, we write $H(X_1,X_2,\ldots ,X_{\ell})$ for the entropy of the random tuple $(X_1,X_2,\ldots ,X_{\ell})$.

For random variables $X$ and $Y$, we write
$H(Y \,|\, X=x)$ for the entropy of the conditional
distribution of $Y$ given $X=x$, and we define the
{\it conditional entropy} of $Y$ given $X$ by 
\begin{align*}
  H(Y\mid X):= \sum_{x} p(x) \cdot H(Y\mid X=x)  
  = - \sum_{x,\,y} p(x,y) 
     \log \frac{p(x,y)}{p(x)}, 
\end{align*}
where both sums run over the values with $p(x)>0$. 
By definition, $H(Y | X) = H(X,Y) - H(X).$ 

\smallskip 
A \emph{Markov chain} on a set $V$ with initial
distribution $\pi$ and transition matrix
$P=(P_{uv})_{u,v\in V}$ is a sequence $(X_t)_{t\ge 1}$ of
$V$-valued random variables such that $X_1$ has
distribution $\pi$ and
\[
  \Pr (X_{t+1}=v\mid X_1=u_1,\ldots ,X_{t-1}=u_{t-1},
       X_t=u)=P_{uv}
\]
whenever the conditioning event has positive
probability. Here $P$ is nonnegative and each row of
$P$ sums to one. A probability vector $\pi$ on $V$ is
\emph{stationary} for $P$ if
$\sum_{u\in V}\pi_uP_{uv}=\pi_v$ for every $v\in V$,
and the pair $(P,\pi )$ satisfies \emph{detailed
balance}, or is \emph{reversible}, if
$\pi_uP_{uv}=\pi_vP_{vu}$ for all $u,v\in V$.
Detailed balance implies stationarity: summing over $u$
and using that row $v$ of $P$ sums to one gives
$\sum_u\pi_uP_{uv}=\pi_v\sum_uP_{vu}=\pi_v$.
We call the chain \emph{stationary} when $\pi$ is
stationary for $P$ and $X_1$ has distribution $\pi$. In
that case
\[
  \Pr (X_t=u_0,X_{t+1}=u_1,\ldots ,X_{t+k}=u_k)
  =\pi_{u_0}P_{u_0u_1}\cdots P_{u_{k-1}u_k},
\]
which does not depend on $t$. In particular, every
$X_t$ has distribution $\pi$, that is, $\mathrm{Pr}(X_t=u) = \pi_u$,  and $(X_t,X_{t+1})$ has
the same distribution for every $t$, namely 
$\mathrm{Pr}\big( (X_t,X_{t+1})=(u,v) \big) = \pi_u P_{uv}$; see \cite{CT2006}
for background. We use the following standard facts;
we refer the reader to \cite{AS2016,ZhaoYufei}. 

\begin{enumerate}
\item[E1] (Uniform bound.) For any random variable $X$, we have 
  $$ H(X) \ \le \  \log |\supp (X)|, $$ 
  with equality if and
  only if $X$ is uniformly distributed on $\supp (X)$.
  
\item[E2] (Chain rule.) For every $\ell \ge 2$,
$
    H(X_1,\ldots ,X_\ell)=H(X_1)
      + \sum_{t=2}^{\ell}H(X_t\mid X_1,\ldots ,X_{t-1})$.
  
  \item[E3] 
  (Subadditivity.) For every $\ell \ge 2$, we have 
  $H(X_1,\ldots ,X_{\ell}) \le \sum_{i=1}^{\ell} H(X_i)$.
  
  \item[E4] (Conditioning does not increase entropy.) 
  $H(X\,|\,Y)\le H(X)$ and $H(X\,|\,Y,Z) \le H(X\,|\,Y)$. 
  
\item[E5] (Markov property.) If $(X_t)_{t\ge 1}$ is
  a Markov chain, then
  \[
    H(X_{t+1}\mid X_1,\ldots ,X_t)
      =H(X_{t+1}\mid X_t)
    \quad \text{for every }t\ge 1.
  \]
 \end{enumerate}

 For two probability vectors $q$ and $s$
on the same finite set, with $s(a)>0$ whenever
$q(a)>0$, the {\it relative entropy} of $q$ with respect to
$s$ is defined by 
\[
  D(q\,\Vert \,s):=\sum_a q(a)\log \frac{q(a)}{s(a)}.
\]
We use the nonnegativity of $D(q\,\Vert\,s)$ several times. 

 \begin{lemma}[Gibbs' inequality]  \label{lem-Gibb}
 We have $D(q\,\Vert \,s)\ge 0$,
  with equality if and only if $q=s$.
 \end{lemma}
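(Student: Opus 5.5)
The plan is the standard Jensen (log-sum) argument, working on the support of $q$. Set $S:=\{a: q(a)>0\}$; by hypothesis $s(a)>0$ for every $a\in S$. Then
\[
  -D(q\,\Vert\,s)=\sum_{a\in S} q(a)\log\frac{s(a)}{q(a)} .
\]
Since $\log$ is strictly concave and the weights $q(a)$, $a\in S$, sum to one, Jensen's inequality gives
\[
  -D(q\,\Vert\,s)\le \log\Bigl(\sum_{a\in S} q(a)\frac{s(a)}{q(a)}\Bigr)=\log s(S)\le \log 1=0 .
\]
This yields $D(q\,\Vert\,s)\ge 0$.

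For the equality case, both inequalities above must be equalities. Strict concavity of $\log$ forces the ratio $s(a)/q(a)$ to be constant, say $c$, on $S$. The last step forces $s(S)=1$, so $s$ vanishes off $S$. Summing $s(a)=c\,q(a)$ over $S$ gives $c=1$, and hence $q=s$ everywhere. Conversely, if $q=s$, every term of $D(q\,\Vert\,s)$ is $q(a)\log 1=0$.

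The only point needing care is the equality analysis. Equality in the final step requires $s(S)=1$, i.e.\ $s$ puts no mass outside $\supp(q)$. This is what rules out cases with $q\neq s$ where $s$ has extra support. Alternatively, one can avoid Jensen and use the elementary bound $\ln x\le x-1$, with equality only at $x=1$, applied termwise to $x=s(a)/q(a)$; this reaches the same two equality conditions.
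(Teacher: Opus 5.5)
Your proof is correct and is essentially the paper's argument: apply Jensen's inequality on $\supp(q)$ to get $-D(q\,\Vert\,s)\le \log s(\supp(q))\le 0$. Equality then forces $s(a)/q(a)$ to be constant on the support and forces $s$ to vanish off it. Your explicit check that the constant equals $1$ just spells out a step the paper leaves implicit.
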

 
For completeness,  we recall the proof of Lemma \ref{lem-Gibb}.

\begin{proof}
By the concavity of the logarithm and Jensen's inequality, we have 
\[
  -D(q\,\Vert \,s)
  =\sum_{a\in \supp (q)}q(a)
     \log \frac{s(a)}{q(a)}
  \le \log \!\sum_{a\in \supp (q)}\!\! s(a)
  \le 0 ,
\]
where equality in the first step forces $s(a)/q(a)$ to be
constant on $\supp (q)$, and equality in the second
forces $s(a)=0$ for $a\notin \supp (q)$; together these
give $s=q$. 
\end{proof}

The next lemma  says that the entropy of an
initial segment of a stationary chain grows linearly,
with slope equal to the one-step conditional entropy. 

\begin{lemma}[Entropy of a stationary chain]
\label{lem:chain-entropy}
Let $(X_t)_{t\ge 1}$ be a stationary Markov chain on a
finite state space, with initial distribution $\pi$.
Then for every integer $\ell \ge 1$,
\[
  H(X_1,\ldots ,X_\ell)
 \ = \ H(\pi)+(\ell -1)H(X_2\mid X_1).
\]
\end{lemma}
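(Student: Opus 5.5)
The plan is to apply the chain rule (E2) to $(X_1,\ldots,X_\ell)$ and then simplify each conditional term with the Markov property (E5) and stationarity. The case $\ell=1$ is trivial, since $H(X_1)=H(\pi)$ and the sum is empty.

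For $\ell\ge 2$, E2 gives
\[
H(X_1,\ldots,X_\ell)=H(X_1)+\sum_{t=2}^{\ell}H(X_t\mid X_1,\ldots,X_{t-1}).
\]
By E5, each summand equals $H(X_t\mid X_{t-1})$. Since the chain is stationary, $X_1$ has distribution $\pi$, so $H(X_1)=H(\pi)$.

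It remains to show that $H(X_t\mid X_{t-1})=H(X_2\mid X_1)$ for every $t\ge 2$. Conditional entropy depends only on the joint distribution of the pair. As recorded before the lemma, stationarity gives
\[
\Pr\bigl((X_{t-1},X_t)=(u,v)\bigr)=\pi_uP_{uv},
\]
independent of $t$. Hence
\[
H(X_t\mid X_{t-1})=-\sum_{u,v}\pi_uP_{uv}\log P_{uv},
\]
where the sum runs over pairs with $\pi_uP_{uv}>0$. This is the same value as $H(X_2\mid X_1)$. Summing the $\ell-1$ equal terms gives the formula.

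The only point needing care is applying E5 when some conditioning events have probability zero. This causes no trouble, because the definition of conditional entropy sums only over values of positive probability. Beyond that, I see no real obstacle: the argument is bookkeeping with E2, E5 and stationarity.
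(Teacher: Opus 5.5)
Your proposal is correct and follows essentially the same approach as the paper: apply the chain rule (E2), reduce each conditional term with the Markov property (E5), and use stationarity to identify every $H(X_t\mid X_{t-1})$ with $H(X_2\mid X_1)$ through the common pair distribution $\pi_uP_{uv}$. Your explicit formula $-\sum_{u,v}\pi_uP_{uv}\log P_{uv}$ and your remark about zero-probability conditioning events are harmless additions to the same argument.
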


\begin{proof}
Since $X_1$ has distribution $\pi$, we have $H(X_1)=H(\pi)$,
which settles the case $\ell =1$. Let $\ell \ge 2$.
Stationarity implies that $(X_t,X_{t+1})$ has the same
distribution for every $t\ge 1$, and hence
$$H(X_{t+1}\mid X_t)=H(X_2\mid X_1).$$
 Combining this
with (E5), we see that every term of (E2) with $t\ge 2$ equals 
\[
  H(X_t\mid X_1,\ldots ,X_{t-1})
  =H(X_t\mid X_{t-1})
  =H(X_2\mid X_1),
\]
and there are $\ell -1$ such terms. Thus, simplifying the chain rule (E2) gives the desired identity. 
\end{proof}

We also use the following two standard probabilistic bounds; see
\cite{AS2016, ZhaoYufei}. 

\begin{lemma}[Markov's inequality]
\label{lem:markov}
Let $Y$ be a nonnegative random variable with finite
mean. Then for every $a>0$,
\[
  \Pr (Y\ge a)\le \frac{\EE [Y] }{a}.
\]
\end{lemma}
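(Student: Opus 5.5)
The plan is to prove the inequality in the usual way: first bound the probability of the event $\{Y\ge a\}$ pointwise by a multiple of $Y$, and then take expectations. Let $\mathbf{1}_{\{Y\ge a\}}$ be the indicator random variable of the event $\{Y\ge a\}$. The key observation is the pointwise inequality
\[
  a\,\mathbf{1}_{\{Y\ge a\}}\ \le\ Y ,
\]
valid on the whole sample space. To check it, split into two cases. On the event $\{Y\ge a\}$ the left-hand side equals $a$, which is at most $Y$ by the definition of the event. On the complementary event the left-hand side equals $0$, which is at most $Y$ because $Y$ is nonnegative. Nonnegativity of $Y$ is used only here, and it is exactly what makes the second case work.

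Next, take expectations of both sides. Expectation is monotone and linear, and $\EE[\mathbf{1}_{\{Y\ge a\}}]=\Pr(Y\ge a)$, so
\[
  a\Pr(Y\ge a)\ \le\ \EE[Y].
\]
The hypothesis that $Y$ has finite mean ensures that the right-hand side is a real number. The left-hand side is bounded by $a$, so the comparison makes sense. Dividing by $a>0$ then gives $\Pr(Y\ge a)\le \EE[Y]/a$, as required.

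There is no real obstacle. The only point that needs care is verifying the pointwise inequality on the event $\{Y<a\}$, which is where the nonnegativity assumption enters. In the applications later in the paper, $Y$ takes finitely many values; in that setting expectations are finite sums over the support, and the argument above goes through verbatim.
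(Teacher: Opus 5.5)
Your argument is correct: the pointwise bound $a\,\mathbf{1}_{\{Y\ge a\}}\le Y$, followed by monotonicity of expectation and division by $a$, is the standard proof. The paper does not prove this lemma itself; it quotes it as a standard fact from the probabilistic-method references it cites, so your proposal supplies the usual textbook argument.
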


\begin{lemma}[Chebyshev's inequality]
\label{lem:chebyshev}
Let $Z$ be a random variable with finite variance.
Then for every $a>0$,
\[
  \Pr \bigl( \big\lvert Z-\EE [Z] \big\rvert \ge a\bigr)
  \le \frac{\Var (Z)}{a^2}.
\]
\end{lemma}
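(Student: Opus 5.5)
The last statement is Chebyshev's inequality (Lemma \ref{lem:chebyshev}). I will deduce it directly from Markov's inequality (Lemma \ref{lem:markov}), applied to the nonnegative random variable
\[ Y:=\bigl(Z-\EE [Z]\bigr)^2 . \]

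The steps are as follows.

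First, since $Z$ has finite variance, $\EE [Z]$ is finite, so $Y$ is well defined. It is nonnegative, and its mean is exactly $\Var (Z)$, which is finite by hypothesis.

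Second, for $a>0$ the events $\{\lvert Z-\EE [Z]\rvert \ge a\}$ and $\{Y\ge a^2\}$ coincide. This holds because $s\mapsto s^2$ is strictly increasing on $[0,\infty)$.

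Third, Lemma \ref{lem:markov} applied with threshold $a^2>0$ gives
\[ \Pr (Y\ge a^2)\le \frac{\EE [Y]}{a^2}=\frac{\Var (Z)}{a^2}, \]
which is the claim.

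There is no real obstacle here. The only point requiring care is the first step: the finiteness hypothesis on the variance is exactly what Lemma \ref{lem:markov} needs, namely that $Y$ has finite mean. If one prefers not to rely on Lemma \ref{lem:markov}, the same bound follows in one line from the pointwise inequality
\[ a^2\,\one_{\{Y\ge a^2\}}\le Y \]
by taking expectations.
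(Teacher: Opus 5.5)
Your proof is correct: applying Lemma \ref{lem:markov} to $Y=(Z-\EE[Z])^2$ with threshold $a^2>0$ is the standard derivation, and your remarks on finiteness of the mean and on the equality of the two events cover the only points that need care. The paper gives no proof of this lemma; it records it as a standard probabilistic fact and cites the usual textbooks, where this same argument appears.
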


\section{The entropy-Perron bridge}
\label{sec:entropy}

The entropy method suits our problem for a simple reason. To bound a power of
$\lambda (G)$ by a count, sample a random object of the kind being counted: by
the uniform bound, its entropy is at most the logarithm of the count. If the
object is built one vertex at a time, the chain rule splits that entropy into
one local term per step. An $\ell$-walk is built by $\ell -1$ steps, each moving to a neighbor, 
and each step, as we now show, contributes exactly $\log \lambda$. 
This is why the entropy method is particularly effective  for counting homomorphisms; see
\cite{Szegedy2014, KLL2016, CKLL2018} for its use on Sidorenko's conjecture. 

\smallskip 
Before carrying out the above idea, we record the entropic Tur\'{a}n theorem of Chao and Yu \cite{CY2026}, which is the starting point of the present approach. 
In \cite[Theorem 1.2]{CY2026},  
Chao and Yu  proved the entropic Tur\'an theorem in the following formulation: if $G$ is a $K_{r+1}$-free graph and $(X,Y)$ is a random 
symmetric\footnote{The distribution of $(X,Y)$ and the one of $(Y,X)$ are the same.} pair of vertices of $V(G)$ with $XY\in E(G)$, 
then 
\begin{equation}
\label{eq-Chao-Yu-entropic}
  H(X,Y) \ \le \  2H(X)+\log \Big(1-\frac1r \Big).
  \end{equation}
 Moreover, they showed that this entropic version is equivalent to the density version \cite[Corollary 5.7]{CY2026}. 
In addition, they proved two entropic identities for such pair: first,  the largest value of $2^{H(X,Y)-2H(X)}$ is the blow-up density of $G$,
which is twice its Lagrangian; second, the largest value of $2^{H(Y\mid X)}$ is the spectral radius
$\lambda (G)$. 
Both of them are instances of a general result established for the $p$-spectral radius of a
$k$-graph in \cite[Proposition 5.4]{CY2026}. 
The second one turns \eqref{eq-Chao-Yu-entropic} into a
spectral statement: sampling $(X,Y)$ along the edges
of an $\ell$-vertex tree $T$ gives
$\lambda^{\ell}(G)\le (1-\frac1r)\hom (T,G)$ for every
$K_{r+1}$-free graph $G$. This recovers the bounds of
Wilf \cite{Wilf1986} and Nikiforov
\cite{Nik2002, Nik2006}. Combining the identity
for general $p$ with \eqref{eq-Chao-Yu-entropic} then 
 recovers the bound of Kang and Nikiforov
\cite{KN2014}.

\smallskip  
We now return to the plan described above, and use the entropy method to bound the $\ell$-th power of $\lambda (G)$ in terms of the number of $\ell$-walks. 
Recall that $\PhiG(\pi) = \sum_{u,v\in V(G)} a_{uv}\pi_u \pi_v$.

\begin{theorem}[Entropy-Perron bridge]
\label{thm:entropy-bridge}
Let $G$ be a connected graph with at least one
edge, let $\lambda =\lambda (G)$, let $\bm{x}$ be
its positive unit Perron vector and put
$\pi_v=x_v^2$. Let $(X_t)_{t\ge 1}$ be the Markov
chain on $V(G)$ with initial distribution $\pi$ and
transition matrix $P_{uv}=a_{uv}x_v/(\lambda x_u)$,
let $\mu_\ell$ be the distribution of
$(X_1, \ldots ,X_\ell )$, and let $\nu_\ell$ be the
uniform distribution on the set of walks on $\ell$
vertices of $G$. Define probability distributions on
the ordered pairs $(u,v)$ with $u,v\in V(G)$ by
\[ Q_{uv}=\frac{a_{uv}x_ux_v}{\lambda},
  \qquad
  R_{uv}=\frac{a_{uv}\pi_u\pi_v}{\Phi_G(\pi )}.
\]
Then for every $\ell \ge 1$, 
\begin{equation}\label{eq:bridge-exact}
  \log w_\ell (G)=\ell \log \lambda -\log \Phi_G (\pi )
  +D(Q\, \Vert \, R)+D(\mu_\ell \, \Vert \, \nu_\ell ).
\end{equation}
In particular, dropping the two relative entropies gives Theorem
\ref{thm:bridge-intro}:
$$\lambda^\ell (G) \ \le \  
\Phi_G(\pi )\cdot w_\ell (G), $$
with equality if and only if $Q=R$ and $\mu_\ell =\nu_\ell$. 
\end{theorem}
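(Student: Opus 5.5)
The plan is to verify the exact identity \eqref{eq:bridge-exact} by computing the entropy of the walk distribution $\mu_\ell$ in two ways: once against the uniform distribution $\nu_\ell$, and once through the chain rule. The inequality and its equality case then follow from Gibbs' inequality (Lemma~\ref{lem-Gibb}).

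\emph{The chain and the two pair distributions.} First I check that $P$ is a genuine transition matrix. Since $A\bm{x}=\lambda\bm{x}$, each row sums to $\sum_v a_{uv}x_v/(\lambda x_u)=1$, and $x_u>0$ because $G$ is connected. Next, $\pi_uP_{uv}=a_{uv}x_ux_v/\lambda=Q_{uv}$, which is symmetric in $(u,v)$. So detailed balance holds, $\pi$ is stationary, and $Q$ is exactly the law of $(X_t,X_{t+1})$. Its marginal is $\sum_vQ_{uv}=x_u(A\bm{x})_u/\lambda=\pi_u$. The supports of $Q$ and $R$ are both the set of ordered edges, so both relative entropies are well defined. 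Moreover $\Phi_G(\pi)>0$ because $G$ has an edge.

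\emph{The walk distribution.} Telescoping the product of transition probabilities gives
\[
\mu_\ell(u_1,\ldots,u_\ell)=x_{u_1}^2\prod_{i=1}^{\ell-1}\frac{x_{u_{i+1}}}{\lambda x_{u_i}}=\frac{x_{u_1}x_{u_\ell}}{\lambda^{\ell-1}}
\]
on walks, and $\mu_\ell$ vanishes elsewhere. Its support is therefore exactly the set of $\ell$-vertex walks, which is the support of $\nu_\ell$. Hence
\[
D(\mu_\ell\Vert\nu_\ell)=\log w_\ell(G)-H(X_1,\ldots,X_\ell).
\]
By Lemma~\ref{lem:chain-entropy}, $H(X_1,\ldots,X_\ell)=H(\pi)+(\ell-1)H(X_2\mid X_1)$. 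For the one-step term,
\[
H(X_2\mid X_1)=-\sum_{u,v}Q_{uv}\log\frac{x_v}{\lambda x_u}=\log\lambda-\sum_{u,v}Q_{uv}\bigl(\log x_v-\log x_u\bigr)=\log\lambda,
\]
where the last sum vanishes by the symmetry of $Q$. This is the ``each step contributes $\log\lambda$'' computation.

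\emph{Identifying $H(\pi)$.} It remains to show that $H(\pi)=\log\lambda-\log\Phi_G(\pi)+D(Q\Vert R)$. Expanding the definition,
\[
D(Q\Vert R)=\sum_{u,v}Q_{uv}\log\frac{x_ux_v\,\Phi_G(\pi)}{\lambda\,\pi_u\pi_v}=\log\Phi_G(\pi)-\log\lambda-\sum_{u,v}Q_{uv}\bigl(\log x_u+\log x_v\bigr).
\]
Using that both marginals of $Q$ equal $\pi$, the last sum is $2\sum_u\pi_u\log x_u=\sum_u\pi_u\log\pi_u=-H(\pi)$. Substituting this and $H(X_2\mid X_1)=\log\lambda$ into the expression for $D(\mu_\ell\Vert\nu_\ell)$ yields \eqref{eq:bridge-exact}. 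The case $\ell=1$ is covered by the same computation, since then $\mu_1=\pi$ and there are no steps.

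\emph{Conclusion and main obstacle.} By Lemma~\ref{lem-Gibb} both relative entropies are nonnegative. Dropping them gives $\lambda^\ell\le\Phi_G(\pi)w_\ell(G)$, with equality if and only if $Q=R$ and $\mu_\ell=\nu_\ell$. I expect no real obstacle. The only delicate point is bookkeeping: the supports must match so that the relative entropies are finite, which uses connectivity and the positivity of $\bm{x}$. The other point to get right is the observation that the symmetry of $Q$ kills the $\log x$ drift terms, both in $H(X_2\mid X_1)$ and in $D(Q\Vert R)$.
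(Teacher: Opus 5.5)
Your proposal is correct and matches the paper's proof step for step. Both compute $H(\mu_\ell)=H(\pi)+(\ell-1)\log\lambda$ via Lemma~\ref{lem:chain-entropy} and the one-step computation, write $D(\mu_\ell\Vert\nu_\ell)=\log w_\ell(G)-H(\mu_\ell)$, expand $D(Q\Vert R)=\log\Phi_G(\pi)-\log\lambda+H(\pi)$ using that both marginals of $Q$ equal $\pi$, and then invoke Gibbs' inequality. Your telescoped formula $\mu_\ell(u_1,\ldots,u_\ell)=x_{u_1}x_{u_\ell}/\lambda^{\ell-1}$ is a nice extra that the paper does not state, although you use it only to identify the support.
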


\begin{proof}
Since $G$ is connected and has an edge, we
have $\lambda >0$, $\Phi_G(\pi) >0$ and $x_v>0$ for every $v$. 
Each row sum of $P$ is equal to $1$, since 
$ \sum_{v} P_{uv} = 
\frac{1}{\lambda x_u} \sum_{v} a_{uv}x_v= 1.$ 
 Moreover,
\[
  \pi_uP_{uv}=\frac{a_{uv}x_ux_v}{\lambda}=Q_{uv},
\]
which is symmetric in $u$ and $v$. Hence $(P,\pi )$
satisfies detailed balance, and $\pi$ is stationary by
the discussion in Subsection
\ref{subsec:entropy-prelim}. Consequently,
 $Q$ is the distribution of $(X_t,X_{t+1})$ for every $t\ge 1$; it is indeed a probability distribution because
$\sum_{u,v}Q_{uv}=\frac1\lambda \bm{x}^{\mathsf T}A(G)
\bm{x}=1$, and both of its marginals equal $\pi$. The
same computation with $\Phi_G(\pi )=\sum_{u,v}a_{uv}
\pi_u\pi_v$ shows that $R$ is a probability
distribution. Both $Q$ and $R$ are supported on the
ordered edge set
$$E^{*}(G):=\{(u,v):uv\in E(G)\},$$
 so $D(Q\, \Vert\, R)$ is
finite. Finally, a sequence $(v_1,\ldots ,v_\ell )$ has
$\mu_\ell (v_1,\ldots ,v_\ell )>0$ if and only if
consecutive entries are adjacent, that is, if and only
if the sequence is a walk on $\ell$ vertices of $G$.
Hence $\mu_\ell$ and $\nu_\ell$ have the same support,
and $D(\mu_\ell \,\Vert \, \nu_\ell )$ is finite as well. 

We now compute the two ingredients of
\eqref{eq:bridge-exact}. Since $X_1$ has distribution
$\pi$, we have $H(X_1)=H(\pi )$. 
Moreover, for every $t\ge 1$, it follows that  
\begin{align*}
  H(X_{t+1}\mid X_t)
  &=-\sum_{(u,v)\in  E^* (G)}Q_{uv}\log P_{uv}\\
  &=-\sum_{(u,v)\in E^* (G)}Q_{uv}
       \bigl(\log x_v-\log\lambda-\log x_u\bigr)\\
  &=\log\lambda
    +\sum_{u\in V(G)}\pi_u\log x_u
    -\sum_{v\in V(G)} \pi_v\log x_v\\
  &=\log \lambda.
\end{align*} 
The chain rule for entropy, in the form of Lemma \ref{lem:chain-entropy}, now gives 
\begin{equation}  \label{eq-chain-and-spectral}
  H(\mu_{\ell}) = H(X_1,\ldots,X_\ell)
  =H(\pi)+(\ell-1)\log\lambda.
\end{equation}

\smallskip
Let $S$ be the set of walks on $\ell$ vertices of $G$. 
Since $\nu_{\ell}$ is uniformly distributed on $S$ and $\mu_{\ell}$ is also supported on $S$, we have 
$D(\mu_{\ell} \,\Vert\, \nu_{\ell} )=\log |S|-H(\mu_{\ell} )$. Since $|S|=w_\ell (G)$  and using  \eqref{eq-chain-and-spectral},
\begin{equation}\label{eq:step3}
  \log w_\ell (G)
  =H(\mu_\ell )+D(\mu_\ell \, \Vert \, \nu_\ell )
  =H(\pi )+(\ell -1)\log \lambda
   +D(\mu_\ell \, \Vert\,  \nu_\ell ).
\end{equation}

\smallskip
We next compute the relative entropy of $Q$ with respect to $R$. Using $Q_{uv}=a_{uv}x_ux_v/\lambda$,
$R_{uv}=a_{uv}\pi_u\pi_v/\PhiG(\pi )$ and $\pi_v=x_v^2$,
we obtain 
\begin{align*}
  D(Q \, \Vert \, R)
  &=\sum_{(u,v)\in E^* (G)}Q_{uv}
      \log\frac{Q_{uv}}{R_{uv}}\\
  &=\sum_{(u,v)\in E^* (G)}Q_{uv}
      \log\frac{\PhiG(\pi)}{\lambda x_ux_v}\\
  &=\log\frac{\PhiG(\pi)}{\lambda}
    -\sum_{u\in V(G)}\pi_u\log x_u
    -\sum_{v\in V(G)}\pi_v\log x_v\\
  &=\log\frac{\PhiG(\pi)}{\lambda}+H(\pi),
\end{align*}
where the last equality follows from
\[
  H(\pi)=-\sum_{v\in V(G)}x_v^2\log(x_v^2)
  =-2\sum_{v\in V(G)}\pi_v\log x_v.
\]
 Hence, we get 
\begin{equation}\label{eq:step4}
  H(\pi )=D(Q \, \Vert \, R)+\log \lambda -\log \Phi_G(\pi) .
\end{equation}
Substituting \eqref{eq:step4} into \eqref{eq:step3}
gives
\[
  \log w_\ell (G)=\ell \log \lambda -\log \Phi_G(\pi)
  +D(Q\, \Vert \, R)+D(\mu_\ell \, \Vert \, \nu_\ell ),
\]
which is \eqref{eq:bridge-exact}. Both relative
entropies are nonnegative by Gibbs' inequality, so
$\log w_\ell (G)\ge \ell \log \lambda -\log \Phi_G(\pi)$,
that is, $\lambda^\ell (G)\le \Phi_G(\pi) w_\ell (G)$.
Equality holds in Gibbs' inequality precisely when the two distributions coincide, so equality holds here if
and only if $Q=R$ and $\mu_\ell =\nu_\ell$.
\end{proof}

\begin{remark}[Comparison with the entropic Tur\'an theorem]
\label{rem:compare-CY}
Theorem \ref{thm:entropy-bridge} follows a route different from
\eqref{eq-Chao-Yu-entropic}. The pair $(X,Y)$ used here is not arbitrary: it is
the edge distribution of the Markov chain built from
$P_{uv}=a_{uv}x_v/(\lambda x_u)$, and it is exactly the pair attaining
$2^{H(Y\mid X)}=\lambda (G)$, that is, the extremal pair produced by Chao and Yu 
\cite[Proposition 5.4]{CY2026} when $k=p=2$.
For this pair $(X,Y)$, we do not bound
$H(X,Y)-2H(X)$ as in (\ref{eq-Chao-Yu-entropic}). 
Instead, we compare it with
the Lagrangian at the single probability vector $\pi_v=x_v^2$,
and Gibbs' inequality (Lemma \ref{lem-Gibb}) gives
$\log \Phi_G(\pi) - \log \lambda + H(\pi ) \ge 0. $ 
Together with $\log \lambda = H(Y|X)$ and
$H(\pi) = H(X)$, this yields
$$  {H(X,Y)-2H(X)}\le \log \PhiG(\pi).$$
We must stop here: the entropic Tur\'an theorem needs $G$ to be $K_{r+1}$-free,
whereas an $F$-free graph may contain much larger cliques. The bound on
$\PhiG(\pi )$ instead comes from Theorem \ref{thm-second-key}, whose proof is
structural and probabilistic rather than entropic, and which applies since Lemma \ref{lem:perron-flatness} below makes $\pi$ flat. Entropy is thus a
bridge, not the whole argument.  
The comparison made here does have a purely entropic application:
carried out for an arbitrary symmetric pair rather than for the Perron pair, it
extends Chao--Yu's bound \eqref{eq-Chao-Yu-entropic} to color-critical forbidden graphs; see
Section \ref{subsec:entropic-turan}. 
\end{remark}

\begin{remark}[Comparison with the weighted graph route]  \label{rem:not-replaceable}
The entropy-Perron bridge has a natural competitor. 
Recently, Liu, Sun, Wang and Wu
\cite{LSWW2026} proved a Markov chain estimate, which attaches to $G$ a weighted graph $W_k=(G,\psi_k)$ with the edge-weight $\psi_k(uv) :=\sqrt{(w_{k+1}(u)+w_{k+1}(v))/2}$, where $w_j(v)$ denotes the number of 
walks on $j$ vertices starting at $v$, and shows that 
$\lambda (W_k)\ge \lambda^{1+k/2}(G)$.  
Taking $k=\ell -2$, let $\bm{y}$ be a nonnegative unit Perron vector of $W_{\ell -2}$. It follows that 
\begin{equation}\label{eq:W-route}
  \lambda^{\ell}(G)\ \le\ \lambda^{2}(W_{\ell -2}) 
  \ \le\ \Phi_G(\bm{p}_{_{W}})\,w_\ell (G),\quad \text{where}~~ (\bm{p}_{_W})_v:=y_v^{2} 
\end{equation}
and the second inequality follows by the Cauchy--Schwarz inequality and $2\sum_{uv}\psi_{\ell -2}(uv)^{2}=w_\ell (G)$. 
Observe that (\ref{eq:W-route}) has the shape of Theorem \ref{thm:entropy-bridge} with $\pi$
replaced by $\bm{p}_{_W}$. 
One would like to apply Theorem \ref{thm-second-key} to
$\Phi_G(\bm{p}_{_W})$, but the attempt fails at the flatness condition $\lVert \bm{p}_{_W}\rVert_\infty \le \eta (F)$, which Lemma \ref{lem:perron-flatness} does not supply.  
We are unable to show the flatness of $\bm{p}_{_W}$, so we retain Theorem \ref{thm:entropy-bridge}.
 \end{remark}

\section{Flatness lemma and the proof of
Theorem \ref{thm:large-lambda}} 

\label{sec:main-proof}

In this section, we prove Theorem \ref{thm:large-lambda}.
The proof splits into two cases according to 
$\chi (F)$: Theorem \ref{thm:large-lambda-ge4} treats
$\chi (F)\ge 4$, and Theorem \ref{thm:large-lambda-chi=3}
treats $\chi (F)=3$, and together they give Theorem
\ref{thm:large-lambda}. Both proofs combine the same three
ingredients: the entropy bridge of Section \ref{sec:entropy}, the exact weighted Tur\'{a}n theorem (Theorem
\ref{thm-second-key}) proved in Sections
\ref{sec:weighted} and \ref{sec-prove-exact} independently
of the present section, and a flatness estimate for the
Perron vector. 
Only the third ingredient changes between
the two cases, and this is where $\chi (F)=3$ is more
involved: the coefficient $1-\frac1r$ then equals
$\frac12$, so the flatness estimate of the forthcoming Lemma \ref{lem:perron-flatness} supplies no fixed gap, and we provide two substitutes, one for odd $\ell$ and one for even $\ell \ge 4$. 
We first record the flatness estimate 
\cite[Lemma 3.1]{LLZ-ESS}. 

\begin{lemma}[Li--Liu--Zhang \cite{LLZ-ESS}]
\label{lem:perron-flatness}
Let $G$ be a graph, and let $\bm{x}=(x_v)_{v\in V(G)}$ be
its unit Perron vector. If $\ell \ge 1$ is an integer and
$\lambda^\ell (G)\ge \left(\frac12+\delta \right)
w_\ell (G)$ for some $\delta \in (0,0.1)$, then
\[ 
  \max_{v\in V(G)} x_v^2 \ < \ \delta^{-8}\lambda (G)^{-1}.
\] 
\end{lemma}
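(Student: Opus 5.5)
The plan is to go through the entropy-Perron bridge rather than handle walk counts directly. We may assume $G$ is connected, passing to the component carrying $\lambda(G)$; only the walk condition needs care here, because walks in other components only increase $w_\ell(G)$.

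\textbf{Step 1 (reduce to a weighted statement).} By Theorem \ref{thm:entropy-bridge}, the hypothesis $\lambda^\ell(G)\ge(\frac12+\delta)w_\ell(G)$ gives
\[
\Phi_G(\pi)=\sum_{w}\pi_w m_w\ \ge\ \tfrac12+\delta,
\qquad\text{where } m_w:=\pi(N(w))=\sum_{v\sim w}x_v^2 .
\]
This removes $\ell$ entirely, so it suffices to prove $\alpha:=\max_v x_v^2<\delta^{-8}/\lambda$ for a connected graph whose Perron weights satisfy $\Phi_G(\pi)\ge\frac12+\delta$. The star $K_{1,n}$, with $\Phi_G(\pi)=\frac12$ and $\pi_{\mathrm{center}}=\frac12$, shows that the gap $\delta$ is exactly what rules out a heavy vertex.

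\textbf{Step 2 (density forces triangles in weight).} By averaging, the set $S:=\{w: m_w\ge\frac12+\frac\delta2\}$ satisfies $\pi(S)\ge\delta$. For adjacent $w,w'\in S$ we get $\pi(N(w)\cap N(w'))\ge\delta$. Also $\sum_{w\in S}\pi_w\,\pi(N(w)\cap S)\gtrsim\delta^2$, since vertices of $S$ see more than half of the mass. Hence the $\pi$-weighted number of triangles, $\sum_{a\sim b\sim c\sim a}\pi_a\pi_b\pi_c$, is at least $c_0\delta^{3}$ for an absolute constant $c_0$.

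\textbf{Step 3 (upper bound on weighted triangles).} Let $u$ attain $\alpha$. I would use the eigen-equation in the form $x_v\le\sqrt{\alpha}$ together with $\lambda x_a=\sum_{b\sim a}x_b$, which gives
\[
\sum_{b\sim a}\pi_b\le\sqrt\alpha\sum_{b\sim a}x_b=\sqrt\alpha\,\lambda x_a .
\]
Applying this to the innermost sum of the triangle count yields
\[
\sum_{a\sim b\sim c\sim a}\pi_a\pi_b\pi_c\ \le\ \sum_{a\sim b}\pi_a\pi_b\,\pi(N(a))\ \le\ \sqrt\alpha\,\lambda\sum_{a\sim b}x_a^3x_b^2 .
\]
This does not yet contain a factor $1/\lambda$, so I would instead expand through two-step walks. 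We have $\lambda^2\pi_a=\sum_{b\sim a}\sum_{c\sim b}x_ax_c$, so a triangle $abc$ contributes a term $x_a x_c$ with $a\sim c$. The aim is an inequality of the shape
\[
\text{(weighted triangles)}\ \lesssim\ \frac{\mathrm{poly}(1/\delta)}{\lambda\,\alpha}\cdot\alpha^{\,\ge 2},
\]
or, more robustly, a dichotomy. Either $u$ has $m_u\le\frac12+o(1)$, in which case deleting $u$ costs $2\alpha m_u$ in $\Phi$ while renormalising by $(1-\alpha)^{-2}$ gains roughly $2\alpha\Phi$, and this is too little. Or the weight near $u$ is spread, and then $\alpha\lambda=\lambda x_u^2=\sum_{v\sim u}x_ux_v\le\sum_{v\sim u}x_v^2\cdot(x_u/\min x_v)$ can be controlled by restricting to neighbours with $x_v\ge\delta^{c}x_u$. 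Iterating Step 2 inside $N(u)$ with these restrictions should give $\alpha\lambda\le\delta^{-8}$, with the exponent $8$ coming from about four nested averaging steps, each costing $\delta^{-2}$.

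\textbf{Main obstacle.} Step 3 is the hard part: converting the weighted density gap into the sharp scale $1/\lambda$. The bridge only yields $\alpha$ bounded away from $1$, so the eigen-equation has to be used quantitatively. What I would try first is to compare the quadratic forms $\lambda=\bm x^{\mathsf T}A\bm x$ and $\lambda^3=\bm x^{\mathsf T}A^3\bm x$ restricted to walks through $u$, and show that a vertex with $x_u^2\ge K/\lambda$ forces $\Phi_G(\pi)\le\frac12+O(K^{-1/8})$, in the star-like way. If this fails, the fallback is to follow the original walk-counting proof of \cite[Lemma 3.1]{LLZ-ESS}, comparing $w_\ell$ with the walks through $u$.
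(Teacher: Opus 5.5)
Your Step~1 reduces the lemma to a statement that is false, so Steps~2--3 cannot be completed in any form. The weighted claim ``$G$ connected and $\Phi_G(\pi)\ge\frac12+\delta$ imply $\max_v x_v^2<\delta^{-8}/\lambda(G)$'' fails for the book $B_t=K_2\vee I_t$ of Example~\ref{ex:book}. Let $a$ be the Perron entry of the two spine vertices and $b$ that of the pages. Then $\lambda a=a+tb$, $\lambda b=2a$ and $2a^2+tb^2=1$ give $\lambda^2=\lambda+2t$ and $a^2=1/(4-2/\lambda)$. So the spine has weight $A=2a^2\to\frac12$, the pages have weight $B=1-A\to\frac12$, and
\[
  \Phi_{B_t}(\pi)=2a^4+4ta^2b^2=\tfrac12A^2+2AB\ \longrightarrow\ \tfrac58 .
\]
Hence $\Phi_{B_t}(\pi)>\frac12+\delta$ for every $\delta\in(0,0.1)$ once $t$ is large, while $\max_v x_v^2=a^2\to\frac14$ and $\lambda(B_t)\to\infty$. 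This also refutes your Step~3 target: here $x_u^2\lambda\approx\lambda/4\to\infty$, yet $\Phi\approx\frac58$. It refutes as well your remark that a gap in $\Phi_G(\pi)$ is what rules out a heavy vertex. Step~2 is consistent with the example, since $B_t$ has weighted triangle count bounded away from $0$.

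There is no conflict with the lemma itself. Indeed $\lambda^2(B_t)/w_2(B_t)=(\lambda+2t)/(4t+2)\to\frac12$ and $\lambda^4(B_t)/w_4(B_t)\to\frac25$, and for odd $\ell$ the ratio tends to $0$. The bridge $\lambda^\ell\le\Phi_G(\pi)\,w_\ell(G)$ is simply very lossy on such graphs. The term $D(Q\Vert R)$ you discard in \eqref{eq:bridge-exact} is large, because $x_ux_v$ is about $\frac14$ on the spine edge but about $1/(2\lambda)$ on the page edges.

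Hence the flatness has to be extracted from the walk-count hypothesis directly, not from $\Phi_G(\pi)$. The paper does not prove Lemma~\ref{lem:perron-flatness}; it quotes it from \cite[Lemma 3.1]{LLZ-ESS}. Its own flatness results, Lemmas~\ref{lem:flat-odd} and~\ref{lem:flat-tree}, work in exactly this direct way: they bound $w_\ell(G)$ or $\hom(T,G)$ from below through the heaviest vertex $u$. For odd $\ell$ they use $w_\ell(G)\ge\lVert\bm x\rVert_1^2\lambda^{\ell-1}$ with $\lVert\bm x\rVert_1\ge(1+\lambda)x_u$; for trees they use the star $K_{1,d_u}$ with $d_u\ge\lambda^2x_u^2$. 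For odd $\ell$ this already gives $x_u^2\le 2/\lambda$.

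The substance of the lemma is the even case, where the star $K_{1,d}$ satisfies $\lambda^{\ell}=\frac12w_\ell$ with $\max_v x_v^2=\frac12$. There you must show that $x_u^2\ge\delta^{-8}/\lambda$ forces $w_\ell(G)>\lambda^\ell/(\frac12+\delta)$, using the walk count itself. Your proposal gives no argument for this. Its fallback of ``following the original walk-counting proof of \cite{LLZ-ESS}'' is precisely the missing step.
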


The two equality analyses run along the same lines, and we isolate them once.

\begin{lemma} 
\label{lem:equality-rigidity}
Let $F$ be a color-critical graph with
$\chi (F)=r+1\ge 3$, let $\eta (F)$ be the constant of
Theorem \ref{thm-second-key}, and let $n_0(F)$ be the
threshold of Theorem \ref{thm:simonovits}. Let
$\ell \ge 1$ be an integer, and let $G$ be a connected
$F$-free graph with at least one edge, with positive unit
Perron vector $\bm{x}$ and $\pi_v=x_v^2$. If
$ \lambda (G)\ge n_0(F)$, 
 $ \max_{v}\pi_v\le \eta (F)$ and 
$  \lambda^{\ell}(G)=\bigl(1-\frac1r\bigr)w_\ell (G)$, 
then $\Phi_G(\pi )=1-\frac1r$ and $x_u=x_w$ whenever $u$
and $w$ are joined by a walk of even length. Moreover,
exactly one of the following holds:
\begin{itemize}
\item[\rm (a)] $G$ is bipartite; then $r=2$ and $G$ is a
complete bipartite graph.

\item[\rm (b)] $G$ is not bipartite; then $r\ge 3$ and
$G=T_{n,r}$ is a regular complete $r$-partite graph.
\end{itemize}
\end{lemma}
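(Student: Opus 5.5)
The plan is to combine the two inequalities already available and read off rigidity from the equality cases. By Theorem~\ref{thm:entropy-bridge} we have $\lambda^{\ell}(G)\le \Phi_G(\pi)\,w_\ell(G)$. Since $\max_v\pi_v\le\eta(F)$ and $G$ is $F$-free, Theorem~\ref{thm-second-key} gives $\Phi_G(\pi)\le 1-\frac1r$. The hypothesis $\lambda^\ell(G)=(1-\frac1r)w_\ell(G)$ then forces equality throughout. Hence $\Phi_G(\pi)=1-\frac1r$, and by the equality clause of Theorem~\ref{thm:entropy-bridge} also $Q=R$ (and $\mu_\ell=\nu_\ell$, which we will not need). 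Writing out $Q=R$ on the ordered edge set gives $x_ux_v/\lambda=x_u^2x_v^2/\Phi_G(\pi)$. Thus $x_ux_v=c:=\lambda/\Phi_G(\pi)$ for every edge $uv$. For a walk $u,v,w$ this gives $x_u=c/x_v=x_w$, and induction along the walk shows that $x$ is constant along every walk of even length.

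\emph{Case (a): $G$ bipartite.} Let $(A,B)$ be the bipartition. By connectivity, any two vertices on the same side are joined by an even walk, so $x\equiv a$ on $A$ and $x\equiv b$ on $B$. The eigenvalue equation $\lambda a=d_u b$ for $u\in A$ (and symmetrically on $B$) shows that all degrees in $A$ equal some $d_A$ and all degrees in $B$ equal some $d_B$. We then compute
\[
\Phi_G(\pi)=2e(G)a^2b^2\le 2\,(|A|a^2)(|B|b^2)\le \tfrac12 ,
\]
where the first step uses $e(G)\le|A||B|$ and the second uses $|A|a^2+|B|b^2=1$ together with AM--GM. If $r\ge3$, then $1-\frac1r>\frac12$, which is a contradiction; so $r=2$. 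Equality in the first step then forces $e(G)=|A||B|$, so $G$ is complete bipartite.

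\emph{Case (b): $G$ not bipartite.} Here $G$ contains an odd cycle, and connectivity makes every pair of vertices joinable by an even walk (go around the odd cycle if needed). Hence $x$ is constant, so $G$ is $d$-regular with $\lambda=d$ and $\pi$ uniform. This gives $\Phi_G(\pi)=2e(G)/n^2$, so $e(G)=(1-\frac1r)\frac{n^2}{2}\ge e(T_{n,r})$. Since $n>\lambda(G)\ge n_0(F)$, Theorem~\ref{thm:simonovits} yields $G=T_{n,r}$. Then $e(T_{n,r})=(1-\frac1r)n^2/2$ forces $r\mid n$, so $G$ is a regular complete $r$-partite graph. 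Moreover $r\ne2$, since $T_{n,2}$ is bipartite. The two cases are mutually exclusive by definition.

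I do not expect a serious obstacle. The one step needing care is the passage from the equality clause $Q=R$ to the constancy of $x_ux_v$ on edges, and then to even-walk invariance; everything afterwards is elementary. Beyond that, the points to check are that $n>\lambda$ puts us in the range of Theorem~\ref{thm:simonovits}, and the standard fact that $e(T_{n,r})\le(1-\frac1r)n^2/2$ with equality iff $r\mid n$.
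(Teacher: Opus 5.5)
Your proof is correct and follows essentially the same route as the paper: you sandwich $\Phi_G(\pi)$ between the bridge and Theorem~\ref{thm-second-key} and read off $Q=R$ (the paper does this via the exact identity \eqref{eq:bridge-exact}, which is equivalent to the equality clause you invoke), and then you use the same bipartite/non-bipartite split with $\Phi_G(\pi)\le 2\pi(A)\pi(B)\le\frac12$ and Simonovits' uniqueness. One small slip: $Q=R$ gives $x_ux_v=\Phi_G(\pi)/\lambda$, not $\lambda/\Phi_G(\pi)$, but only the constancy of $x_ux_v$ on edges is used, so nothing breaks.
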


\begin{proof}
Since $\max_v\pi_v\le \eta (F)$, Theorem
\ref{thm-second-key} gives $\Phi_G(\pi )\le 1-\frac1r$. On
the other hand, substituting
$\lambda^\ell (G)=(1-\frac1r)w_\ell (G)$ into the identity 
\eqref{eq:bridge-exact} gives
\[
  \log \Phi_G(\pi )=\log \Bigl(1-\frac1r\Bigr)
  +D(Q\Vert R)+D(\mu_\ell \Vert \nu_\ell )
  \ \ge\ \log \Bigl(1-\frac1r\Bigr).
\]
Hence $\Phi_G(\pi )=1-\frac1r$ and
$D(Q\Vert R)=D(\mu_\ell \Vert \nu_\ell )=0$. The vanishing
of $D(Q\Vert R)$ says that $Q_{uv}=R_{uv}$ for every edge
$uv\in E(G)$, that is,
$  x_ux_v = {\Phi_G(\pi )}/{\lambda}$
for every $uv\in E(G)$.
Thus $x_u=x_w$ whenever $u$ and $w$ have a common
neighbor, and chaining along even walks gives the second
assertion.

Suppose first that $G$ is bipartite, with parts $A$ and
$B$. Every edge of $G$ joins $A$ to $B$, so
\[
  \Phi_G(\pi )=2\!\!\sum_{uv\in E(G)}\!\!\pi_u\pi_v
  \le 2\pi (A)\pi (B)\le \frac12 ,
\]
with equality if and only if
$\pi (A)=\pi (B)=\frac12$ and every pair in $A\times B$ is
an edge. As $\Phi_G(\pi )=1-\frac1r$, this forces $r=2$,
and then equality holds throughout, so $G$ is complete
bipartite. This is (a).

Suppose now that $G$ is not bipartite. Being connected,
$G$ contains a closed walk of odd length, say at a vertex
$z$. Given $u,w\in V(G)$, concatenating a $u$--$z$ walk,
possibly one copy of that closed walk, and a $z$--$w$ walk
produces $u$--$w$ walks of both parities. In particular,
any two vertices are joined by a walk of even length, so
$\bm{x}$ is constant and $G$ is $d$-regular with
$\lambda =d$ and
$\Phi_G(\pi )=\frac{2e(G)}{n^2}=\frac dn$. From
$\Phi_G(\pi )=1-\frac1r$, we get
$e(G)=(1-\frac1r)\frac{n^2}{2}$. Moreover, we have 
$n>d=\lambda (G)\ge n_0(F)$, so Theorem
\ref{thm:simonovits} gives
$ e(G)\le \ex (n,F)=e\bigl(T_{n,r}\bigr)
  \le \bigl(1-\frac1r \bigr)\frac{n^2}{2}$,
with equality in the last step only when $r\mid n$. Hence
$e(G)=\ex (n,F)$, and the uniqueness in Theorem
\ref{thm:simonovits} gives $G=T_{n,r}$ with $r\mid n$, a
regular complete $r$-partite graph. Since $T_{n,2}$ is
bipartite, we must have $r\ge 3$. This proves (b).
\end{proof}

\subsection{The case $\chi (F)\ge 4$}

\begin{theorem}
\label{thm:large-lambda-ge4}
Let $F$ be a color-critical graph with
$\chi (F)=r+1\ge 4$. There is a constant
$\lambda_0=\lambda_0(F)$ such that if $G$ is an $F$-free
graph with $\lambda (G)\ge \lambda_0$, then for every
integer $\ell \ge 1$,
\begin{equation}  \label{eq:main-bound-ge4}
  \lambda^\ell(G) \le
  \Big( 1- \frac{1}{r}\Big)w_\ell(G),
\end{equation}
with equality if and only if $G$ is a regular complete
$r$-partite graph, possibly together with isolated
vertices when $\ell \ge 2$. 
\end{theorem}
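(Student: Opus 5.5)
Since $r\ge 3$, we have $1-\frac1r\ge \frac23$, so the constant exceeds $\frac12$ by the fixed margin $\delta:=\frac1{20}$. This is what makes Lemma \ref{lem:perron-flatness} available. We set
\[
  \lambda_0(F):=\max\bigl\{\delta^{-8}\eta(F)^{-1},\ n_0(F)\bigr\}+1 ,
\]
where $\eta(F)$ is the constant of Theorem \ref{thm-second-key} and $n_0(F)$ is the threshold of Theorem \ref{thm:simonovits}. The argument is by contradiction, combining the entropy bridge, flatness, and the weighted theorem.

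\emph{Reduction to a connected graph.} Let $G$ be $F$-free with $\lambda(G)\ge \lambda_0$. Choose a component $G_1$ with $\lambda(G_1)=\lambda(G)$. Then $w_\ell(G)\ge w_\ell(G_1)$, so it suffices to prove \eqref{eq:main-bound-ge4} for $G_1$. For the equality case we note what this inequality forces. When $\ell\ge 2$, every other component that contains an edge contributes positively to $w_\ell$, so equality forces all other components to be isolated vertices. When $\ell=1$, equality forces $G=G_1$. From now on $G$ is connected.

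\emph{The inequality.} Suppose $\lambda^\ell(G)\ge (1-\frac1r)w_\ell(G)$. Then $\lambda^\ell(G)\ge(\frac12+\delta)w_\ell(G)$, and Lemma \ref{lem:perron-flatness} gives
\[
  \max_v \pi_v< \delta^{-8}\lambda(G)^{-1}\le \eta(F).
\]
Theorem \ref{thm-second-key} therefore yields $\Phi_G(\pi)\le 1-\frac1r$. Theorem \ref{thm:bridge-intro} then gives $\lambda^\ell(G)\le \Phi_G(\pi)w_\ell(G)\le(1-\frac1r)w_\ell(G)$. So a strict violation of \eqref{eq:main-bound-ge4} is impossible, and the only surviving case is equality.

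\emph{Equality.} In the equality case, all hypotheses of Lemma \ref{lem:equality-rigidity} hold, since $\lambda(G)\ge n_0(F)$, $\max_v\pi_v\le\eta(F)$, and $G$ is connected. Case (a) of that lemma requires $r=2$, which is excluded, so case (b) holds and $G=T_{n,r}$ with $r\mid n$. Conversely, a regular complete $r$-partite graph is $F$-free and, by the remark after Theorem \ref{thm:large-lambda}, attains equality. Adding isolated vertices changes neither $\lambda$ nor $w_\ell$ for $\ell\ge2$, but it increases $w_1=n$, which explains the restriction to $\ell\ge 2$.

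The only delicate point is checking the hypotheses of Lemma \ref{lem:perron-flatness} uniformly in $\ell$. Lemma \ref{lem:perron-flatness} needs $\delta<0.1$ and a constant that does not depend on $\ell$; this holds, so $\lambda_0$ does not depend on $\ell$. Everything else is already packaged in the earlier results.
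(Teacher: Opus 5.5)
Your proof is correct and follows the paper's proof. You use the same ingredients and the same steps:
\begin{itemize}
\item the flatness estimate of Lemma~\ref{lem:perron-flatness} with $\delta=\frac1{20}$,
\item the exact weighted bound of Theorem~\ref{thm-second-key},
\item the entropy bridge,
\item reduction to a component attaining $\lambda(G)$,
\item Lemma~\ref{lem:equality-rigidity} for the equality case.
\end{itemize}
The only difference is cosmetic. You argue forward from $\lambda^\ell(G)\ge(1-\frac1r)w_\ell(G)$, whereas the paper argues by contradiction from a strict violation; your version lets the inequality and the equality case share the same flatness step.
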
 

\begin{proof}[{\bf Proof of Theorem
\ref{thm:large-lambda-ge4}}]
Let $\eta =\eta (F)>0$ be the constant from Theorem
\ref{thm-second-key}, let $n_0=n_0(F)$ be the threshold in
Theorem \ref{thm:simonovits}, and fix the constant 
$\delta :=\frac1{20}$. Since
$r\ge 3$, we have $1-\frac1r\ge \frac23>\frac12+\delta$,
so Lemma \ref{lem:perron-flatness} is available
throughout. Put 
$\lambda_0(F):=\max \{\delta^{-8}\eta^{-1} ,\ n_0(F)\}$.

\smallskip
Suppose that $G$ is an $F$-free graph that violates
\eqref{eq:main-bound-ge4} for some $\ell \ge 1$.  Then
\[  \lambda^{\ell}(G)
  >\Bigl(1-\frac1r\Bigr)w_\ell (G). \]
Choose a
connected component $C$ of $G$ with
$\lambda (C)=\lambda (G)$. Since
$w_\ell (C)\le w_\ell (G)$, we get 
\begin{equation}\label{eq:component-C}
  \lambda^{\ell}(C)=\lambda^{\ell}(G)
  >\Bigl(1-\frac1r\Bigr)w_\ell (G)
   \ge \Bigl(1-\frac1r\Bigr)w_\ell (C).
\end{equation}
In particular $\lambda (C)>0$, so $C$ has at least one
edge. Let $\bm{x}$ be the positive unit Perron vector of
$C$ and put $\pi_v:=x_v^2$ for every $v\in V(C)$. By
Theorem \ref{thm:entropy-bridge}, we have
$$\lambda^\ell (C)\le \Phi_C(\pi ) \cdot w_\ell (C).$$ 
Together with \eqref{eq:component-C}, this gives $\Phi_C(\pi )>1-\frac1r$. The
component $C$ is $F$-free, so the contrapositive of
Theorem \ref{thm-second-key} implies
\begin{equation}\label{eq:pi-heavy}
  \max_{v\in V(C)}\pi_v>\eta .
\end{equation}
On the other hand, \eqref{eq:component-C} shows that
$\lambda^{\ell}(C)>(\frac12+\delta )w_\ell (C)$, 
so Lemma \ref{lem:perron-flatness} gives
$$\max_{v\in V(C)}\pi_v< \delta^{-8} \lambda (C)^{-1}.$$ 
Combining this with
\eqref{eq:pi-heavy}, we find
$\lambda (G)=\lambda (C)< \delta^{-8} \eta^{-1} \le \lambda_0$.
Consequently, every $F$-free graph $G$ with
$\lambda (G)\ge \lambda_0$ must satisfy the desired bound 
\eqref{eq:main-bound-ge4} for every integer $\ell \ge 1$.

\smallskip
\noindent
{\bf Equality case.}~
Suppose that $G$ is an $F$-free graph with
$\lambda (G)\ge \lambda_0$ and
$\lambda^\ell (G)=(1-\frac1r)w_\ell (G)$. Let $C$ be a
component of $G$ with $\lambda (C)=\lambda (G)$. Then $C$
is $F$-free and $\lambda (C)\ge \lambda_0$, so the
inequality proved above applies to $C$ and gives
$\lambda^\ell (G)=\lambda^\ell (C)\le
(1-\frac1r)w_\ell (C)\le (1-\frac1r)w_\ell (G)$. Hence
$w_\ell (C)=w_\ell (G)$, and no other component carries a
walk on $\ell$ vertices. For $\ell \ge 2$ the other
components are isolated vertices, and for $\ell =1$ there
are none. Replacing $G$ by $C$ changes neither side of
\eqref{eq:main-bound-ge4}, so we may assume that $G$ is
connected; then $\bm{x}>0$ and we put $\pi_v=x_v^2$.

Since $\lambda^\ell (G)=(1-\frac1r)w_\ell (G)\ge
(\frac12+\delta )w_\ell (G)$, 
Lemma \ref{lem:perron-flatness} gives
$\max_{v\in V(G)}\pi_v<\delta^{-8}\lambda (G)^{-1}
\le \delta^{-8}\lambda_0^{-1}\le \eta$, while
$\lambda (G)\ge \lambda_0\ge n_0(F)$.
Hence, Lemma \ref{lem:equality-rigidity} applies to $G$.
Its alternative (a) requires $r=2$, so alternative (b)
holds and $G=T_{n,r}$ is a regular complete $r$-partite
graph.

Conversely, let $G_0=K_r[s]$ be the regular complete $r$-partite
graph with all $r$ parts of size $s$. Then $G_0$ is
$r$-partite, hence $G_0$ is $F$-free, and $\lambda (G_0)=(r-1)s$
and $w_\ell (G_0)=rs((r-1)s)^{\ell -1}$, so equality holds
in \eqref{eq:main-bound-ge4}. If
$s\ge \lambda_0/(r-1)$, then
$\lambda (G_0)\ge \lambda_0$, so $G_0$ satisfies the
hypothesis of the theorem. Adding isolated vertices
changes neither side of \eqref{eq:main-bound-ge4} when
$\ell \ge 2$.
\end{proof}

\subsection{The case $\chi (F)=3$}

The case $\chi (F)=3$ of Theorem \ref{thm:large-lambda}
reduces to the following result. 

\begin{theorem}
\label{thm:large-lambda-chi=3}
Let $F$ be a color-critical graph with $\chi (F)=3$, and
let $\ell \neq 2$ be a positive integer. There
is a constant $\lambda_0=\lambda_0(F)$ such that every
$F$-free graph $G$ with $\lambda (G)\ge \lambda_0$
satisfies
\begin{equation}\label{eq:main-bound-chi=3}
  \lambda^{\ell}(G)\ \le\ \frac12\,w_\ell (G).
\end{equation}
For odd $\ell$, equality holds if and only if $G$ is a
regular complete bipartite graph; for even
$\ell \ge 4$, equality holds if and only if $G$ is a
complete bipartite graph. In both cases, $G$ may in
addition contain isolated vertices when $\ell \ge 2$.
\end{theorem}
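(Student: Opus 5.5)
The proof has the same skeleton as that of Theorem \ref{thm:large-lambda-ge4}: the entropy bridge, then Theorem \ref{thm-second-key}, then a flatness estimate. Only the flatness step has to be replaced, since $1-\frac1r=\frac12$ leaves no gap for Lemma \ref{lem:perron-flatness}. Suppose $G$ is $F$-free and violates \eqref{eq:main-bound-chi=3}. As before, we pass to a component $C$ with $\lambda(C)=\lambda(G)$ and $\lambda^\ell(C)>\frac12 w_\ell(C)$. Theorem \ref{thm:entropy-bridge} gives $\Phi_C(\pi)>\frac12$, and the contrapositive of Theorem \ref{thm-second-key} gives a vertex $z$ with $x_z^2>\eta(F)$. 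It then suffices to show that a heavy Perron entry together with $\lambda^\ell>\frac12 w_\ell$ forces $\lambda$ to be bounded. We do this separately for odd and even $\ell$.

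\emph{Odd $\ell=2k+1$.} We aim for a gap-free flatness bound (Lemma \ref{lem:flat-odd}). We write $w_{2k+1}(C)=\mathbf 1^{\mathsf T}A^{2k}\mathbf 1=\sum_j \lambda_j^{2k}(\mathbf 1^{\mathsf T}\bm{u}_j)^2\ge \lambda^{2k}\lVert \bm{x}\rVert_1^2$. We also have $\lVert\bm{x}\rVert_1\ge (1+\lambda)x_z$, as in the argument preceding Corollary \ref{cor:perron-wilf}. Hence $w_\ell\ge \lambda^{2k}(1+\lambda)^2x_z^2$, and the violation gives
\[
\lambda^{2k+1}>\tfrac12\lambda^{2k}(1+\lambda)^2x_z^2, \qquad\text{so}\qquad x_z^2<\frac{2\lambda}{(1+\lambda)^2}<\frac{2}{\lambda}.
\]
Combined with $x_z^2>\eta$, this yields $\lambda<2/\eta$. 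The key point is that for odd $\ell$ all spectral terms are nonnegative, so no gap is needed. Setting $\lambda_0:=\max\{2/\eta, n_0(F)\}$ proves the inequality. For the equality case, if $\lambda^\ell=\frac12 w_\ell$ and $\lambda\ge\lambda_0$, the same computation gives $\max\pi_v\le 2/\lambda\le\eta$. Lemma \ref{lem:equality-rigidity} then applies. Alternative (b) is excluded because $r=2$, so $G$ is complete bipartite $K_{a,b}$. For odd $\ell$ a direct count gives $w_{2k+1}(K_{a,b})=(a+b)(ab)^k$ and $\lambda^{2k+1}=(ab)^{k+1/2}$. Equality with $\frac12 w_\ell$ means $2\sqrt{ab}=a+b$, that is, $a=b$. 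Conversely, $K_{s,s}$ attains equality, and isolated vertices are handled exactly as in the $\chi\ge4$ case.

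\emph{Even $\ell=2k\ge4$.} Here $w_{2k}=\sum_j\lambda_j^{2k-2}(\mathbf 1^{\mathsf T}\bm{u}_j)^2$ still has nonnegative terms. We plan to prove Lemma \ref{lem:large-perron} in reverse form: if $x_z^2>\eta$ and $\lambda$ is large, then $\lambda^{2k}\le\frac12 w_{2k}$ holds directly. The first bound to try is $w_{2k}\ge \lambda^{2k-2}\lVert\bm{x}\rVert_1^2\ge\lambda^{2k-2}(1+\lambda)^2x_z^2$. This gives $\lambda^{2k}\le \frac12 w_{2k}$ whenever $x_z^2\ge 2\lambda^2/(1+\lambda)^2$, but that threshold is close to $2$, so it is useless. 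A sharper lower bound on $w_{2k}$ is needed. We would instead use the bipartite-like contribution of $-\lambda$ if present, together with a second-neighbourhood count. Since $x_z\lambda^2=\sum_{u}(A^2)_{zu}x_u$, the vertex $z$ has about $\lambda^2 x_z$-weight within distance two, and $w_4\ge\sum_v d_v^2$. We then compare $\sum_v d_v^2$ and $w_{2k}\ge w_4^{k-1}/n^{k-2}$-type bounds with $\lambda^{2k}$, using that a vertex of mass $\eta$ forces a star-like structure with $d_z\gtrsim\eta\lambda^2$. Hom-counts of stars (Lemma \ref{lem:hom-star}) then give $w_{2k}\ge d_z^{k}$-type contributions exceeding $2\lambda^{2k}$.

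\textbf{Main obstacle.} I expect the main obstacle to be this even case, where a concrete inequality converting heavy mass at $z$ into $w_{2k}\ge 2\lambda^{2k}$ is needed. Its failure at $\ell=2$, for example with nearly split graphs, shows that the argument must use $k\ge2$ essentially. Once this inequality is in hand, the equality case for even $\ell$ follows from Lemma \ref{lem:equality-rigidity}(a). Every $K_{a,b}$ attains equality, since $w_{2k}(K_{a,b})=2(ab)^{k}=2\lambda^{2k}$.
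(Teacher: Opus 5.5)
Your reduction and your odd case match the paper. The bound $w_{2k+1}\ge\lambda^{2k}\lVert\bm x\rVert_1^2$ combined with $\lVert\bm x\rVert_1\ge(1+\lambda)x_z$ is Lemma \ref{lem:flat-odd} with $c_0=\frac12$, and your odd equality analysis via Lemma \ref{lem:equality-rigidity}(a) and $2\sqrt{ab}=a+b$ is the same as the paper's.

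The genuine gap is the even case $\ell=2k\ge 4$. You leave it as a plan, and the tools you list cannot close it.

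Every complete bipartite graph satisfies $w_{2k}=2\lambda^{2k}$ exactly, including stars, whose centre carries $x^2=\frac12$. One-edge non-bipartite perturbations such as the books $B_t$ violate the bound at $\ell=4$ for $t\le 28$ (Example \ref{ex:book}(ii)). So in the heavy-vertex regime you need an estimate with no constant-factor loss, and none of your suggestions provides one:
\begin{itemize}
\item The star count cannot work. Since $\lambda\ge\sqrt{d_z}$, you always have $2d_z^k\le 2\lambda^{2k}$. From $d_z\ge\lambda^2x_z^2>\eta\lambda^2$ you only get $w_{2k}\ge 2\eta^k\lambda^{2k}$.
\item Bounds of the form $w_4^{k-1}/n^{k-2}$ involve $n$, which is not bounded in terms of $\lambda$ (attach a long path).
\item The eigenvalue $-\lambda$ is present only if $C$ is bipartite. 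That is impossible here, since a counterexample has $\Phi_C(\pi)>\frac12\ge 2\pi(A)\pi(B)$.
\end{itemize}

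The missing idea is the content of Lemma \ref{lem:large-perron}, which the paper proves separately and then simply invokes. Take $x_u=\max_vx_v$ and split $A\one=\bm y+\frac{\lambda}{x_u}\bm x$, where $y_v=\sum_{w\sim v}(1-x_w/x_u)\ge 0$ because $x_u$ is maximal. This gives the exact identity
\[
w_\ell(G)-2\lambda^{\ell}=\bm y^{\mathsf T}A^{\ell-3}\bm y+\frac{\lambda^{\ell-1}}{x_u^{2}}\bigl(2x_u\lVert\bm x\rVert_1-1-2\lambda x_u^{2}\bigr).
\]
\begin{itemize}
\item Closed walks at $u$ bound the first term below by $\lambda^{\ell-2}x_u^4Y$, where $Y=\sum_{v\in N(u)}y_v$. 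This step uses that $\ell\ge 4$ is even.
\item The identity $\lambda\bigl(2\sum_{v\in N(u)}x_v^2-1\bigr)=2W-2W'$ bounds the second term below by $-2\lambda^{\ell-2}W/x_u^2$. Here $W$ is the $\bm x$-weight of the edges inside $N(u)$.
\item Finally, $x_u^6Y\ge 2W$ once $x_u^2\ge\eta$ and $\lambda\ge 16\eta^{-6}$.
\end{itemize}

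Your even equality case has a related hole. Lemma \ref{lem:equality-rigidity} needs $\max_v\pi_v\le\eta(F)$. But every $K_{a,b}$ with $\min\{a,b\}<\frac{1}{2\eta}$, stars included, attains equality with $\max_v\pi_v=\frac{1}{2\min\{a,b\}}>\eta$. These cases must come from the equality analysis inside the even-case lemma: there $W=W'=0$ and $Y=0$ force $G$ to be complete bipartite with parts $N(u)$ and $V(G)\setminus N(u)$. Lemma \ref{lem:equality-rigidity} is then used only when $\max_v\pi_v<\eta$.
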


In the case $\chi (F)=3$, the flatness estimate of Lemma \ref{lem:perron-flatness} cannot be applied since there is no $\delta >0$ such that $\lambda^{\ell} (G) \ge (\frac{1}{2} + \delta)w_{\ell}(G)$. 
For odd $\ell$, we prove the following flatness estimate  
that requires no gap hypothesis; for even $\ell \ge 4$, we prove Theorem \ref{thm:large-lambda-chi=3} in a different way. 

\begin{lemma} 
\label{lem:flat-odd}
Let $G$ be a connected graph with at least one edge, let $\lambda =\lambda (G)$, and let $\bm{x}$ be its unit
Perron vector. If $\ell \ge 1$ is odd and
$\lambda^{\ell} (G)\ge c_0\,w_\ell (G)$ for some
$c_0\in (0,1]$, then
\[
  \max_{v\in V(G)}x_v^2
  \ \le \ \frac{1}{c_0\,\lambda (G)} .
\]
\end{lemma}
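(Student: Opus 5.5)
The plan is to compare $w_\ell(G)$ directly with the Perron vector through one Cauchy--Schwarz step. Oddness of $\ell$ is what lets us write $w_\ell$ as a squared norm.

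Write $\ell=2k+1$ with $k\ge 0$, and let $A=A(G)$. Since $A$ is symmetric,
\[
w_\ell(G)=\one^{\mathsf T}A^{2k}\one=\lVert A^k\one\rVert^2 .
\]
This identity is exactly where oddness is used: for even $\ell$ the walk count is not a squared norm of this kind.

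Next we lower-bound $\lVert A^k\one\rVert$ by testing against the unit Perron vector $\bm{x}$. By Cauchy--Schwarz and the eigenvector equation $A^k\bm{x}=\lambda^k\bm{x}$,
\[
\lVert A^k\one\rVert\ \ge\ \langle A^k\one,\bm{x}\rangle=\langle \one,A^k\bm{x}\rangle=\lambda^k\lVert\bm{x}\rVert_1 .
\]
Hence $w_\ell(G)\ge \lambda^{2k}\lVert\bm{x}\rVert_1^2$.

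Then we reuse the estimate recorded before Corollary \ref{cor:perron-wilf}. For any vertex $u$,
\[
\lVert\bm{x}\rVert_1\ \ge\ x_u+\sum_{j\sim u}x_j=(1+\lambda)x_u .
\]
This gives
\[
w_\ell(G)\ \ge\ \lambda^{2k}(1+\lambda)^2x_u^2\ \ge\ \lambda^{\ell+1}x_u^2 .
\]
Combining with the hypothesis $w_\ell(G)\le \lambda^\ell/c_0$ yields $x_u^2\le w_\ell(G)/\lambda^{\ell+1}\le 1/(c_0\lambda)$ for every $u$, which is the claim. Connectivity and the existence of an edge serve only to guarantee $\lambda>0$ and $\bm{x}>0$.

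No step here is a real obstacle. The only point needing care is the choice of test vector: we pair $A^k\one$ with $\bm{x}$ itself rather than estimating $x_u$ through closed walks at $u$. The closed-walk route would produce $(A^{2k+2})_{uu}$, which is controlled by $w_{\ell+1}$ rather than $w_\ell$, and so loses the parity advantage.
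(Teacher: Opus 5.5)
Your proof is correct and is essentially the paper's argument. The paper expands $\one$ in an orthonormal eigenbasis and discards the nonnegative terms $c_i^2\lambda_i^{\ell-1}$ with $i\ge 2$ (nonnegative because $\ell-1$ is even) to obtain $w_\ell(G)\ge \lVert\bm{x}\rVert_1^2\lambda^{\ell-1}$. Your Cauchy--Schwarz projection of $A^k\one$ onto $\bm{x}$ is the same estimate written without the eigenbasis, and your closing step via $\lVert\bm{x}\rVert_1\ge(1+\lambda)x_u$ is identical to the paper's.
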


\begin{proof}
Let $u_1=\bm{x},u_2,\ldots ,u_n$ be an orthonormal basis
of eigenvectors of $A=A(G)$, with $Au_i=\lambda_iu_i$ and
$\lambda_1=\lambda$, and write $\one =\sum_i c_iu_i$, so
that $c_1=\langle \one ,\bm{x}\rangle
=\lVert \bm{x}\rVert_1$. Then 
\[
  w_\ell (G)=\one^{\mathsf T}A^{\ell -1}\one
  =\sum_{i=1}^{n}c_i^{2}\lambda_i^{\ell -1}.
\] 
Since $\ell -1$ is even, every term of the above sum is
nonnegative, and therefore
$w_\ell (G)\ge \lVert \bm{x}\rVert_1^{2}
\lambda^{\ell -1}$. Let $z$ be a vertex with
$x_z=\max_{v\in V(G)}x_v$. The vertices of
$\{z\}\cup N_G(z)$ are pairwise distinct, so
\[
  \lVert \bm{x}\rVert_1\ \ge\ x_z+\sum_{j\sim z}x_j
  \ =\ (1+\lambda )x_z\ \ge\ \lambda x_z .
\]
Thus, we get $w_\ell (G)\ge \lambda^{\ell +1}x_z^{2}$,
while the hypothesis reads
$w_\ell (G)\le c_0^{-1}\lambda^{\ell}$. Hence
$x_z^{2}\le c_0^{-1}\lambda^{-1}$.
\end{proof}

\begin{remark}
For even $\ell$, the argument fails, since $K_{1,d}$
satisfies $\lambda (K_{1,d})=\sqrt d$ and
$w_{2k}(K_{1,d})=2d^{k}$, so
$\lambda^{2k}=\frac12 w_{2k}$, while
$\max_v x_v^{2}=\frac12$. No bound of the
form $\max_{v}x_v^{2}=O(\lambda^{-1})$ can hold.
Thus, for even $\ell$, a gap hypothesis such as
$\lambda^{\ell}\ge (\frac12+\delta )w_\ell (G)$ is
unavoidable to guarantee $\max_{v} x_v^2 =O(\lambda^{-1})$. \end{remark}

When $G=K_{1,d}$, although the case $\ell =2k$ lies outside the range of Lemma \ref{lem:flat-odd},  
the star satisfies
$\lambda^{2k}(K_{1,d})=\frac12 w_{2k}(K_{1,d})$, so it
attains equality in \eqref{eq:main-bound-chi=3}. 
In other words, the star $K_{1,d}$ obstructs the flatness estimate but not the bound \eqref{eq:main-bound-chi=3}. 
Rather than looking for a flatness estimate, we avoid the issue and prove \eqref{eq:main-bound-chi=3} directly for every graph whose Perron vector has a large coordinate.

\begin{lemma} 
\label{lem:large-perron}
Let $0<\eta \le \frac12$ and let $G$ be a connected graph
with at least one edge and with positive unit Perron
vector $\bm{x}$. If $\max_{v\in V(G)}x_v^2\ge \eta$ and
$\lambda (G)\ge 16\eta^{-6}$, then
$\lambda^{\ell}(G) \le \frac{1}{2} w_{\ell} (G)$ for every even integer
$\ell \ge 4$. For any one such $\ell$, equality holds if
and only if $G$ is complete bipartite.
\end{lemma}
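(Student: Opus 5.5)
The plan is to work with the spectral expansion used in Lemma \ref{lem:flat-odd}. Write $\ell=2k$ with $k\ge 2$. Let $u_1=\bm{x},u_2,\ldots ,u_n$ be an orthonormal eigenbasis with eigenvalues $\lambda=\lambda_1\ge \lambda_2\ge \cdots \ge \lambda_n\ge -\lambda$. Expand $\one=\sum_i c_iu_i$, so that $c_1=\lVert \bm{x}\rVert_1$ and $\sum_i c_i^2=n$. Then
\[
  w_{2k}(G)=\one^{\mathsf T}A^{2k-1}\one=\sum_{i=1}^n c_i^2\lambda_i^{2k-1},
\]
and the goal is to show that this sum is at least $2\lambda^{2k}$.

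\textbf{Calibration on the star.} The star $K_{1,d}$ shows what the inequality should look like. There $c_1^2-c_n^2=2\sqrt d=2\lambda$ and $w_{2k}=\lambda^{2k-1}(c_1^2-c_n^2)$. So the proof should pair the Perron term against the negative terms and show that the surplus is at least $2\lambda$ times $\lambda^{2k-1}$.

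\textbf{Step 1: lower bound on the Perron term.} Let $z$ be a vertex with $x_z^2\ge\eta$. As in Lemma \ref{lem:flat-odd}, $\lVert\bm{x}\rVert_1\ge (1+\lambda)x_z$, so $c_1^2\ge \eta\lambda^2$. Hence the Perron term alone is at least $\eta\lambda^{2k+1}$, which exceeds the target $2\lambda^{2k}$ by a factor of order $\eta\lambda$.

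\textbf{Step 2: controlling the negative terms.} For $\lambda_i<0$ and $k\ge 2$, the bound $|\lambda_i|\le\lambda$ gives
\[
  \lambda_i^{2k-1}\ge \lambda^{2k-2}\lambda_i ,
\]
with equality only if $\lambda_i\in\{0,-\lambda\}$. Terms with $i\ge 2$ and $\lambda_i\ge 0$ are simply dropped, since they are nonnegative. This yields
\[
  w_{2k}\ge c_1^2\lambda^{2k-1}+\lambda^{2k-2}\sum_{\lambda_i<0}c_i^2\lambda_i .
\]
It then suffices to show $\sum_{\lambda_i<0}c_i^2|\lambda_i|\le \lambda\,(c_1^2-2)$ under the hypotheses; more precisely, a deficit that is at most $c_1^2\lambda-2\lambda^2$.

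I would bound the negative mass through the constraint $\sum_{\lambda_i<0}c_i^2|\lambda_i|^2\le \sum_i c_i^2\lambda_i^2=w_3=\sum_v d_v^2$, combined with Cauchy--Schwarz. The heavy vertex $z$ forces $d_z\ge \lambda^2x_z^2\ge \eta\lambda^2$, because $\lambda x_z=\sum_{j\sim z}x_j\le \sqrt{d_z}$. This means that walks through $z$ dominate, and I would use this to compare the negative mass with $c_1^2\lambda$.

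\textbf{Main obstacle.} The hard step is this comparison. The negative eigenvalues can nearly cancel the Perron term; they do so exactly for the star. So I need a quantitative statement that the cancellation is never more than $c_1^2\lambda^{2k-1}-2\lambda^{2k}$. The plan is to split the vertex set into $\{z\}$, $N(z)$ and the rest, and to bound the $\bm{x}$-mass of $N(z)$ and of its complement using $x_z^2\ge\eta$. Then I would show that a deficit beyond the allowed amount would force roughly $\eta^{-O(1)}$ units of the Perron mass outside the bipartite-like structure around $z$. This conflicts with $\lambda\ge 16\eta^{-6}$, and the exponent $6$ should come out of iterating Cauchy--Schwarz three times.

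\textbf{Equality case.} Equality forces every inequality in Step 2 to be tight. Then every eigenvalue with $c_i\ne0$ lies in $\{\lambda,0,-\lambda\}$, and in particular $-\lambda$ is an eigenvalue. Since $G$ is connected, this makes $G$ bipartite. The remaining tightness conditions should pin down the walk counts as $w_{2k}=2(\lambda^2)^k$ with $\lambda^2=ab$. Alternatively, I would run the equality analysis of Lemma \ref{lem:equality-rigidity}(a) via the bridge identity \eqref{eq:bridge-exact} to conclude that $G$ is complete bipartite. The converse follows from the computation $w_{2k}(K_{a,b})=2(ab)^k$.
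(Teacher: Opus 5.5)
There is a genuine gap, and it is already in Step 2, before your ``main obstacle.'' Since $\sum_i c_i^2\lambda_i=\one^{\mathsf T}A\one=2m$, the right-hand side of your Step 2 display equals exactly $\lambda^{2k-2}(2m-P)$, where $P:=\sum_{i\ge 2,\,\lambda_i>0}c_i^2\lambda_i\ge 0$. So the inequality you still have to prove, $\sum_{\lambda_i<0}c_i^2|\lambda_i|\le c_1^2\lambda-2\lambda^2$, is equivalent to $2\lambda^2+P\le 2m$. In particular it would give $\lambda^2\le \frac12 w_2(G)$, which is the excluded case $\ell=2$.

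That inequality is false under the hypotheses of the lemma. Take the book $B_t$ of Example \ref{ex:book}. Its two hubs have $x^2=1/(4-2/\lambda)>\frac14$, so $B_t$ satisfies the hypotheses with $\eta=\frac14$ once $t$ is large. Yet $\lambda^2(B_t)=\lambda+2t>2t+1=e(B_t)$. Concretely, $\one$ has components only along $\lambda$ and $\lambda_2=1-\lambda$, with $c_2^2\approx t/2\approx \lambda^2/4$. Your linearization $\lambda_2^{2k-1}\ge \lambda^{2k-2}\lambda_2$ throws away about $c_2^2(2k-2)\lambda^{2k-2}\approx \frac{k-1}{2}\lambda^{2k}$. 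That is exactly the margin by which $\lambda^{2k}(B_t)\le\frac12 w_{2k}(B_t)$ holds.

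So nothing in your ``main obstacle'' step can close the argument: the loss must be avoided, not compensated. That paragraph is in any case only a heuristic as written (three Cauchy--Schwarz steps, mass ``outside the bipartite-like structure''). Your equality analysis inherits the same gap. The fallback to Lemma \ref{lem:equality-rigidity} is also unavailable, because that lemma needs an $F$-free graph with $\max_v\pi_v\le\eta(F)$. Here $G$ is arbitrary and has a heavy Perron coordinate.

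The paper avoids the spectral signs altogether and uses entrywise positivity instead. Let $u$ maximize $x_u$. It writes $w_\ell=\bm{d}^{\mathsf T}A^{\ell-3}\bm{d}$ with $\bm{d}=A\one$, and splits $\bm{d}=\bm{y}+\frac{\lambda}{x_u}\bm{x}$. Then $y_v=\sum_{w\sim v}(1-x_w/x_u)\ge 0$, so nonnegativity of $A^{\ell-3}$, together with $(A^{\ell-3})_{uv}\ge (A^{\ell-4})_{uu}\ge \lambda^{\ell-4}x_u^2$, gives $\bm{y}^{\mathsf T}A^{\ell-3}\bm{y}\ge \lambda^{\ell-2}x_u^4Y$ with $Y=\sum_{v\in N(u)}y_v$. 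The remaining term $2x_u\lVert\bm{x}\rVert_1-1-2\lambda x_u^2$ is bounded below by $-2W/\lambda$. This uses the exact identity $\lambda\bigl(2\sum_{v\in N(u)}x_v^2-1\bigr)=2W-2W'$, where $W$ and $W'$ are the $x$-weighted edge sums inside $N(u)$ and outside it. The hypotheses $x_u^2\ge\eta$ and $\lambda\ge 16\eta^{-6}$ enter only in the local claim $x_u^6Y\ge 2W$. The equality case then falls out from $W=W'=Y=0$ and $x_v=x_u$ for $v\notin N(u)$.
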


For even $\ell$, this lemma takes over the role of the previous flatness estimates, with the logic reversed: 
we do not bound $\max_v x_v^2$ from above, but use a lower bound on it instead. 

\begin{proof}
Write $\lambda =\lambda (G)$, $A=A(G)$ and
$\bm{d}=A\one$. Fix a vertex $u$ with
$x_u=\max_{v\in V(G)}x_v$ and put 
\[
  \bm{y}:=\bm{d}-\frac{\lambda}{x_u}\bm{x},
  \qquad
  Y:=\sum_{v\in N(u)}y_v,
  \qquad
  W:=\!\!\sum_{vw\in E(G[N(u)]) }\!\!x_vx_w .
\]
Let $W'$ denote the same sum over the edges of
$G[V(G)\setminus N(u)]$. 

In what follows, we use two facts.
First, we see that 
$  y_v=\sum_{w\sim v}\bigl(1- {x_w}/{x_u}\bigr) \ge 0$ 
for every $v\in V(G)$.
Second, the Cauchy--Schwarz inequality gives
$\lambda^2x_u^2=(\sum_{w\sim u}x_w)^2
\le d_u\sum_{w\sim u}x_w^2\le d_u$. So
$y_u=d_u-\lambda \ge \lambda^2x_u^2-\lambda
\ge \frac12\lambda^2x_u^2$ since
$\lambda x_u^2\ge 16\eta^{-5}\ge 2$.

We write
$w_\ell (G) =\one^{\mathsf T}A^{\ell -1}\one
=\bm{d}^{\mathsf T}A^{\ell -3}\bm{d}$. 
Since $A^{\ell -3}\bm{x}=\lambda^{\ell -3}\bm{x}$ and 
$\lVert \bm{x}\rVert =1$, we have 
$ \bm{y}^{\mathsf T}\bm{x} 
  =(A\bm{1})^{\mathsf T}\bm{x} -\frac{\lambda}{x_u}
  =\lambda \lVert \bm{x}\rVert_1-\frac{\lambda}{x_u}$. 
Expanding $\bm{d}=\bm{y}+\lambda x_u^{-1}\bm{x}$ in $w_{\ell} (G)$, we
obtain
\begin{align}
  w_\ell (G) - 2 \lambda^{\ell} (G) 
  &=\bm{y}^{\mathsf T}A^{\ell -3}\bm{y}
    +\frac{2\lambda}{x_u}\,
     \bm{y}^{\mathsf T}A^{\ell -3}\bm{x}
    +\frac{\lambda^{2}}{x_u^{2}}\,
     \bm{x}^{\mathsf T}A^{\ell -3}\bm{x}
    -2\lambda^{\ell}  \notag \\[1pt]
  &=\bm{y}^{\mathsf T}A^{\ell -3}\bm{y}
    +\frac{2\lambda^{\ell -2}}{x_u}\Big( \lambda \lVert \bm{x}\rVert_1-\frac{\lambda}{x_u} \Big)
    +\frac{\lambda^{\ell -1}}{x_u^{2}}
    -2\lambda^{\ell}  \notag \\[1pt]
  &=\bm{y}^{\mathsf T}A^{\ell -3}\bm{y}
    +\frac{\lambda^{\ell -1}}{x_u^{2}}
     \bigl(2x_u\lVert \bm{x}\rVert_1-1
     -2\lambda x_u^{2}\bigr), \label{eq-difference}
\end{align}
and we bound the two terms on the right separately.
Appending the edge $uv$ to a closed walk at $u$ is
injective, so 
$(A^{\ell -3})_{uv}\ge (A^{\ell -4})_{uu}$ for $v\in N(u)$. 
Since $\ell$ is even, we may put $k:= \frac{1}{2}(\ell -4)$. The Cauchy--Schwarz inequality gives $(A^{\ell -4})_{uu}
=\lVert A^{k}\bm{e}_u\rVert^{2}
\ge \langle A^{k}\bm{e}_u,\bm{x}\rangle^{2}
=\lambda^{\ell -4}x_u^{2}$. Since
$A^{\ell -3}$ and $\bm{y}$ are nonnegative, keeping only
the distinct ordered pairs $(u,v)$ and $(v,u)$ with
$v\in N(u)$ gives
\[
  \bm{y}^{\mathsf T}A^{\ell -3}\bm{y} \ \ge \ 
  \sum_{v\in N(u)} 2\, y_u (A^{\ell -3})_{uv} y_v 
  \ \ge\ 2y_u\lambda^{\ell -4}x_u^{2}Y
  \ \ge\ \lambda^{\ell -2}x_u^{4}Y .
\]
For the other term, the Perron equation at $u$ gives
$\sum_{v\in N(u)}x_v = \lambda x_u$, hence
\begin{equation} \label{eq-xu-1-norm}
x_u\lVert \bm{x}\rVert_1 \ = \ \sum_{v\in N(u)} x_u x_v 
+ \sum_{v\notin N(u)} x_ux_v 
\ \ge \  \lambda x_u^{2}
+\sum_{v\notin N(u)}x_v^{2}.
\end{equation}
Multiplying the Perron equation at $v$ by $x_v$ and
summing over $v\in N(u)$ gives
$$\lambda \sum_{v\in N(u)} x_v^2= \sum_{v\in N(u)} \sum_{w\in N(v)} x_vx_w =  2W + C, 
\quad \text{where}\quad  C:=\sum_{\substack{v\in N(u),\,w\notin N(u) \\ vw\in E(G)}} x_vx_w, $$ 
since each edge inside $N(u)$ is counted once from each of its two ends and each crossing edge once. The same computation
over $V(G)\setminus N(u)$ gives
$\lambda (\sum_{v\notin N(u)} x_v^2)=2W'+C$ with the same cross sum $C$. Subtracting
the two identities and using
$\sum_{v\notin N(u)} x_v^2 =1 - \sum_{v\in N(u)} x_v^2 $, we obtain
\begin{equation} \label{eq-identity}
\lambda \bigg(2\sum_{v\in N(u)} x_v^2 -1 \bigg) 
\ = \ 2W-2W'.
\end{equation}
Using (\ref{eq-xu-1-norm}) and (\ref{eq-identity}), we get 
\[
  2x_u\lVert \bm{x}\rVert_1-1-2\lambda x_u^{2}
  \ \ge\ 1-2 \sum_{v\in N(u)} x_v^2 \ =\ \frac{2W'-2W}{\lambda}
  \ \ge\ -\frac{2W}{\lambda},
\]
where the first step is an equality precisely when
$x_v=x_u$ for every $v\notin N(u)$, and the last step precisely
when $W'=0$. Combining the above inequalities, we get from (\ref{eq-difference}) that 
\begin{equation}\label{eq:even-reduction}
  w_\ell (G)-2\lambda^{\ell}(G)
  \ \ge\  \lambda^{\ell -2}x_u^{4}Y - \frac{\lambda^{\ell -1}}{x_u^{2}} \cdot \frac{2W}{\lambda} \ = \ 
  \frac{\lambda^{\ell -2}}{x_u^{2}}
         \bigl(x_u^{6}Y-2W\bigr).
\end{equation}
We claim that $x_u^{6}Y\ge 2W$, with strict inequality
when $W>0$. 
The case $W=0$ is trivial, so assume that
$W>0$. Note that $2W\le \lambda$
and $x_u\le 1$. Suppose first that
$x_v>\frac12x_u^{3}$ for some $v\in N(u)$. Then
$y_v\ge \lambda^2x_v^{2}-\lambda
>\frac14\lambda^{2}x_u^{6}-\lambda$, while
$x_u^{12}\ge \eta^{6}$ and $\lambda \ge 16\eta^{-6}$
give $\frac14\lambda x_u^{12}\ge 4$. Hence
\[
  x_u^{6}Y\ \ge\ x_u^{6}y_v
  \ >\ \lambda \Bigl(\tfrac14\lambda x_u^{12}
  -x_u^{6}\Bigr)
  \ \ge\ 3\lambda \ >\ 2W .
\]
Suppose now that $x_v\le \frac12x_u^{3}$ for every
$v\in N(u)$. Every edge $vw$ of $G[N(u)]$ satisfies
$x_vx_w\le \frac14x_u^{6}$, so $W\le e(G[N(u)]) \, \frac14x_u^{6}$, which gives 
$e(G[N(u)])\ge 4W/x_u^{6}$. 
Every edge $vw$ of $G[N(u)]$ contributes the two
terms $y_v$ and $y_w$ to $Y$, and the sum  
$y_v+y_w \ge (1-x_w/x_u) + (1-x_v/x_u)= 2- {(x_v+x_w)}/{x_u} \ge 2-x_u^{2} \ge 1 $. 
Hence $Y\ge e(G[N(u)])$, and therefore $x_u^{6}Y\ge x_u^{6}\, e(G[N(u)])\ge 4W>2W$, as claimed. 
By \eqref{eq:even-reduction}, the claim gives
$w_\ell (G)\ge 2\lambda^{\ell}(G)$, which is the first
assertion of the lemma. 

\smallskip
\noindent
\textit{Equality case.}\ Suppose that
$w_\ell (G)=2\lambda^{\ell}(G)$ for some even $\ell \ge 4$.
Then \eqref{eq:even-reduction} gives $x_u^{6}Y \le 2W$,
so $W=0$ by the above claim, and then $Y=0$. Both sides of
\eqref{eq:even-reduction} vanish, so every inequality
used in its derivation is an equality; in particular
$W'=0$ and $x_v=x_u$ for every $v\notin N(u)$. As $\bm{x}$
is positive, $W=W'=0$ says that $N(u)$ and $V(G)\setminus N(u)$ are independent sets. Let $z\notin N(u)$. Then
$N(z)\subseteq N(u)$, and the Perron equation at $z$ reads
$\sum_{v\in N(z)}x_v=\lambda x_z=\lambda x_u
=\sum_{v\in N(u)}x_v$, so $N(z)=N(u)$. Hence $G$ is complete
bipartite with parts $N(u)$ and $V(G)\! \setminus \! N(u)$.
Conversely, a walk on $\ell$ vertices of $K_{a,b}$
alternates between the parts, so
$w_\ell (K_{a,b})=2(ab)^{\ell /2}\! =2\lambda^{\ell}$ for
 even $\ell$.
\end{proof}

We are now in a position to prove Theorem
\ref{thm:large-lambda-chi=3}.

\begin{proof}[{\bf Proof of Theorem
\ref{thm:large-lambda-chi=3}}]
 Put
$\eta :=\min \{\eta (F),\frac12\}$, where $\eta (F)$ is
the constant of Theorem \ref{thm-second-key}, let $n_0(F)$
be the threshold in Theorem \ref{thm:simonovits}, and set
$\lambda_0(F):=\max \{16\eta^{-6},\ n_0(F)\}$. Since
$\eta \le \frac12$, we have $\eta^{-5}\ge 32$ and hence
$16\eta^{-6}\ge 2/\eta$, a bound used for odd $\ell$.

\smallskip
Suppose that an $F$-free graph $G$ with
$\lambda (G)\ge \lambda_0$ violates  
\eqref{eq:main-bound-chi=3} for some $\ell \neq 2$.
Choose a component $C$ with $\lambda (C)=\lambda (G)$.
Since $w_\ell (C)\le w_\ell (G)$, we have 
\begin{equation}\label{eq:component-C-chi3}
  \lambda^{\ell}(C)=\lambda^{\ell}(G)
  >\tfrac12 w_\ell (G) \ge \tfrac12 w_\ell (C).
\end{equation}
Therefore, we have $\lambda (C)>0$ and $C$ has at least one edge. Let
$\bm{x}$ be the positive unit Perron vector of $C$ and
$\pi_v=x_v^2$. The entropy-Perron bridge of Theorem \ref{thm:entropy-bridge} gives  
$\lambda^{\ell} (C) \le \Phi_C(\pi) w_{\ell}(C)$, which 
together
with \eqref{eq:component-C-chi3} implies 
$\Phi_C(\pi )>\frac12$, so the contrapositive of Theorem
\ref{thm-second-key} yields
\begin{equation}\label{eq:pi-heavy-chi3}
  \max_{v\in V(C)}\pi_v>\eta .
\end{equation}
We contradict this in each of the two ranges of $\ell$.

If $\ell$ is odd, then \eqref{eq:component-C-chi3} is the
hypothesis of Lemma \ref{lem:flat-odd} with
$c_0=\frac12$. So the flatness estimate of Lemma \ref{lem:flat-odd} gives
$\max_{v\in V(C)}\pi_v\le 2/\lambda (C)\le 2/\lambda_0
\le \eta$. This leads to a contradiction with \eqref{eq:pi-heavy-chi3}.

If $\ell \ge 4$ is even, then no flatness estimate is available, 
and \eqref{eq:pi-heavy-chi3} together with 
$\lambda (C)\ge \lambda_0\ge 16\eta^{-6}$ gives the
hypotheses of Lemma \ref{lem:large-perron}. 
So Lemma \ref{lem:large-perron} gives
$\lambda^{\ell}(C) \le \frac{1}{2} w_\ell (C)$, which  contradicts \eqref{eq:component-C-chi3}. 

Hence, every $F$-free graph $G$ with
$\lambda (G)\ge \lambda_0$ must satisfy 
\eqref{eq:main-bound-chi=3} for every $\ell \neq 2$.

\smallskip
\noindent
{\bf Equality case.}~
Suppose that $G$ is an $F$-free graph with
$\lambda (G)\ge \lambda_0$ and
$\lambda^\ell (G)=\frac12w_\ell (G)$ for some
$\ell \neq 2$. Exactly as in the proof of Theorem
\ref{thm:large-lambda-ge4}, comparing $G$ with a component
$C$ attaining $\lambda (C)=\lambda (G)$ shows that
$w_\ell (C)=w_\ell (G)$, that the remaining components are
isolated vertices when $\ell \ge 2$ and absent when
$\ell =1$, and that we may assume $G$ to be connected.

Assume first that $\ell \ge 4$ is even and that
$\max_{v\in V(G)}\pi_v\ge \eta$. Since
$\lambda (G)\ge 16\eta^{-6}$, Lemma
\ref{lem:large-perron} applies, and the equality
$w_\ell (G)=2\lambda^{\ell}(G)$ forces $G$ to be complete
bipartite.

In the remaining cases, we have
$\max_{v\in V(G)}\pi_v<\eta$: for even $\ell$ this is the
complementary assumption, and for odd $\ell$ it follows
from Lemma \ref{lem:flat-odd} with $c_0=\frac12$, which
gives $\max_v\pi_v\le 2/\lambda (G)\le 2/\lambda_0\le
\eta$. Together with $\lambda (G)\ge \lambda_0\ge n_0(F)$,
this puts $G$ within the hypotheses of Lemma
\ref{lem:equality-rigidity}. Its alternative (b) requires
$r\ge 3$, so alternative (a) holds and $G=K_{a,b}$ is
complete bipartite.

It remains to identify $a$ and $b$ when $\ell$ is odd. A
homomorphism from $P_\ell$ to $K_{a,b}$ sends one side of
the bipartition of $P_\ell$ into one part and the other
side into the other, so
$w_\ell (K_{a,b})=(ab)^{(\ell -1)/2}(a+b)$, while
$\lambda^{\ell}(K_{a,b})=(ab)^{(\ell -1)/2}\sqrt{ab}$.
Thus $\lambda^\ell (G)=\frac12w_\ell (G)$ reads
$\sqrt{ab}=\frac{a+b}{2}$, which gives $a=b$.

Conversely, $K_{a,b}$ is bipartite, and
$\lambda (K_{a,b})=\sqrt{ab}$. For even $\ell$, a walk on $\ell$ vertices of $K_{a,b}$ alternates between the two parts, so $w_\ell (K_{a,b})=2(ab)^{\ell /2}
=2\lambda^{\ell}(K_{a,b})$, and every complete bipartite graph with $ab\ge \lambda_0^2$ attains equality. For odd $\ell$, the  above argument shows that $K_{a,a}$ attains
equality as soon as $a\ge \lambda_0$. Adding isolated vertices changes
neither side of \eqref{eq:main-bound-chi=3} when
$\ell \ge 2$.
\end{proof}

\begin{remark} 
\label{rem:parity}
The pair $(r,\ell )=(2,2)$ is the only one excluded from
Theorem \ref{thm:large-lambda}, and it is excluded for a
genuine reason rather than as an artifact of the proof: by
Example \ref{ex:book}(i) below, the bound
\eqref{eq:main-bound-chi=3} already fails at $\ell =2$ for
$F=C_5$ along a family whose spectral radius tends to
infinity. When $r=2$, the coefficient $1-\frac1r$ equals
$\frac12$, so Lemma \ref{lem:perron-flatness} supplies no
fixed $\delta >0$, and the two substitutes used above
cover exactly the remaining values: Lemma
\ref{lem:flat-odd} needs $\ell$ to be odd, and Lemma
\ref{lem:large-perron} needs $\ell \ge 4$ to be even. A
threshold $\lambda_0$ is also unavoidable, by Example
\ref{ex:book}(ii).
\end{remark}

\begin{example} \label{ex:book}
Let $B_t:=K_2\vee I_t$ be the book with $t$ pages, and let
$F=C_5$, which is color-critical with $\chi (F)=3$. The
graph $B_t$ is $C_5$-free, and its equitable quotient
matrix
$\bigl(\begin{smallmatrix}1&t\\2&0\end{smallmatrix}\bigr)$
gives $\lambda^{2}(B_t)=\lambda (B_t)+2t$, so that
$\lambda (B_t)=\frac{1}{2}(1+\sqrt{1+8t}\,)\to \infty$ as
$t\to \infty$.
\begin{itemize}
\item[(i)] Since $e(B_t)=2t+1$ and $\lambda (B_t)>1$, we
  have $\lambda^{2}(B_t)>e(B_t)=\tfrac12w_2(B_t)$ for
  every $t\ge 1$.
\item[(ii)] A walk on four vertices is determined by its
  middle edge together with its neighbors in two
  directions, so
  $\tfrac12w_4(B_t)=\sum_{uv\in E(B_t)}d_ud_v
  =5t^2+6t+1$, while
  $\lambda^{4}(B_t)=(4t+1)\lambda (B_t)+4t^2+2t$. Hence,
  we see that $\lambda^{4}(B_t)\le \tfrac12w_4(B_t)$ fails
  for $t\le 28$ and holds for $t\ge 29$.
\end{itemize}
\end{example}

\section{Weighted supersaturation and weighted stability} 

\label{sec:weighted}

In this section, we prove 
the two weighted Tur\'{a}n theorems
that power every spectral result of this paper. 
In the weighted setting, 
there is no vertex count to induct on: 
the hypothesis ``$n$ is sufficiently large'' of the classical Tur\'{a}n results is replaced by the flatness condition on
$\lVert \bm{p}\rVert_\infty$, which is exactly what the
Perron vector of a graph with large spectral radius
supplies.

We first prove a weighted supersaturation result  (Theorem \ref{thm:weighted-supersat}), 
which implies Theorem \ref{thm-weighted-ESS}, 
and then we prove the corresponding stability 
result (Theorem \ref{thm:weighted-stability}). 
In Section \ref{sec-prove-exact}, 
we combine these two results with a random sampling argument to prove the exact bound of Theorem \ref{thm-second-key}.

\subsection{A weighted supersaturation theorem} 

Recall that $N_F(G)$ denotes the number of copies of $F$ in $G$. For a
probability vector $\bm{p}$ on $V(G)$, we define the \emph{weighted
number of copies} of $F$ in $G$ by
\[
  N_F(G,\bm{p})\ :=\ \sum_{F^*}\ \prod_{v\in V(F^*)}p_v ,
\]
where the sum runs over all copies $F^*$ of $F$ in $G$, that is, over
all subgraphs of $G$ isomorphic to $F$. 
 If $\bm{p}$ is uniformly distributed on $V(G)$, then $N_F(G,\bm{p})=N_F(G)/n^{v(F)}$, and the
next weighted theorem recovers the supersaturation theorem of Erd\H{o}s and Simonovits
\cite{ES1983}. 

\begin{theorem}[Weighted supersaturation]
\label{thm:weighted-supersat}
Let $F$ be a graph with $\chi (F)=r+1\ge 3$ and $f=v(F)$. For every
$\varepsilon >0$, there exist $\eta =\eta (F,\varepsilon )>0$ and
$c=c(F,\varepsilon )>0$ such that if $G$ is a graph and $\bm{p}$ is a
probability vector on $V(G)$ satisfying
$\lVert \bm{p}\rVert_\infty\le \eta$ and
$\Phi_G(\bm{p})\ge 1- \frac{1}{r} +\varepsilon$, then
\[ N_F(G,\bm{p})\ \ge\ c.  \]
\end{theorem}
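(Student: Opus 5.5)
We prove the contrapositive by passing to a blow-up, following the overview in the introduction. Fix $\varepsilon>0$. We will choose a small $\gamma>0$ and then a much smaller $\eta$.

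\textbf{Step 1: rounding and blow-up.} Choose a large integer $N$ with $1/N \ll \eta$. Round $\bm{p}$ to $\bm{q}$ with $Nq_v\in\mathbb{Z}_{\ge 0}$, $\sum_v q_v=1$ and $\lvert q_v-p_v\rvert\le 1/N$. Only vertices with $p_v>0$ matter, and one should take $N$ large relative to their number, or first discard the tiny weights. Let $B$ be the blow-up replacing $v$ by an independent cluster $V_v$ of size $Nq_v$ and joining the clusters of adjacent vertices completely. Then
\[
e(B)=\Phi_G(\bm{q})\tfrac{N^2}{2}\ge \bigl(1-\tfrac1r+\tfrac{\varepsilon}{2}\bigr)\tfrac{N^2}{2},
\]
since the rounding error is controlled once $N$ is large. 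A cleaner alternative avoids rounding: sample $N$ vertices of $G$ i.i.d.\ from $\bm{p}$ and form the blow-up along the sample. In expectation the edge density is $\Phi_G(\bm{p})(1-1/N)$, and the same accounting below works in expectation. I would use this random version.

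\textbf{Step 2: counting copies of $F$ in $B$.} Each copy of $F$ in $B$ either uses $f$ distinct clusters, and so projects to a copy of $F$ in $G$, or has two vertices in the same cluster. Copies of the first type number at most $f!\,N^f N_F(G,\bm{q})$, comparable to $N^f N_F(G,\bm{p})$; with i.i.d.\ sampling the expectation is exactly of this form up to an automorphism factor. Copies of the second type number at most $\binom{f}{2}N^{f-1}\cdot\max_v Nq_v\le \binom f2 N^f\cdot 2\eta$. Now suppose $N_F(G,\bm{p})<c$. Then $B$ has at most $(C_f c+C_f\eta)N^f$ copies of $F$. Choose $\gamma:=\varepsilon/8$ and let $\nu=\nu(F,\gamma)$ come from Theorem~\ref{thm:removal}. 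Taking $c,\eta$ so that $C_f(c+\eta)\le\nu$, we can delete $\gamma N^2$ edges to make $B$ $F$-free. The resulting graph has at least $(1-\frac1r+\frac{\varepsilon}{4})\frac{N^2}{2}$ edges, which contradicts Theorem~\ref{thm:ess} with error $\varepsilon/4$ once $N\ge N_0(F,\varepsilon/4)$. Hence $N_F(G,\bm{p})\ge c$.

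\textbf{Main obstacle.} The delicate point is the comparison between homomorphism-type counts in $B$ and the weighted count $N_F(G,\bm{p})$, together with the rounding. Copies of $F$ in $B$ that use distinct clusters correspond to injective maps, so they are counted with the weights $\prod q_v$ times $N^f$, up to $\lvert\mathrm{Aut}\rvert$ factors. The rounding must not create a large relative error on small weights, because $q_v$ may exceed $p_v$ substantially when $p_v\ll 1/N$. The i.i.d.\ sampling version handles this cleanly. In expectation, the number of $F$-copies on distinct sample indices is at most $N^f\cdot f!\,N_F(G,\bm{p})$, the collision copies contribute $O(\eta N^f)$, and the edges number $\binom N2\Phi_G(\bm{p})$. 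A fixed outcome with few copies and many edges then exists by Markov's inequality applied to the copy count, combined with the deterministic bound $e(B)\le N^2/2$ on the edge count. This forces the choices $c,\eta\ll\nu(F,\varepsilon/8)\,\varepsilon$.
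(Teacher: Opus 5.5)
Your argument is correct and is essentially the paper's proof. You blow up $G$, bound the copies of $F$ with two vertices in a common cluster by $\binom f2 N^f$ times the maximum weight, and identify the remaining copies with $N^f$ times the weighted count in $G$. You then conclude with the removal lemma and Erd\H{o}s--Stone--Simonovits.

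The one difference is how you discretize. The paper does it deterministically: it chooses rationals $q_v'\in[\frac12p_v,p_v]$ and renormalizes, so that $q_v\le 2p_v$ for every $v$. This disposes of the relative-error issue you flag without any averaging step, and it gives $\eta,c$ of order $\nu$ rather than $\nu\varepsilon$. Your i.i.d.\ sampling combined with the reverse-Markov argument also works.

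There is one small slip to fix. Apply Erd\H{o}s--Stone--Simonovits with error $\varepsilon/8$ rather than $\varepsilon/4$, since your final edge bound $(1-\frac1r+\frac{\varepsilon}{4})\frac{N^2}{2}$ only equals, and does not strictly exceed, the bound at error $\varepsilon/4$.
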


Since $N_F(G,\bm{p})=0$ whenever $G$ is $F$-free, Theorem
\ref{thm:weighted-supersat} implies Theorem \ref{thm-weighted-ESS}.

\begin{proof}  
Given $\varepsilon>0$, we first fix the constants $\nu$ and $N_0$ in the following ways. Apply Theorem
\ref{thm:removal} with parameter $\varepsilon/16$ to obtain
$\nu=\nu(F,\varepsilon/16)>0$ such that every graph on $N$ vertices
containing at most $\nu N^f$ copies of $F$ can be made $F$-free by
deleting at most $(\varepsilon/16)N^2$ edges, and apply Theorem
\ref{thm:ess} with parameter $\varepsilon/8$ to obtain
$N_0=N_0(F,\varepsilon/8)$. We now set 
\[
  \eta:=\frac{\nu}{4\binom f2}
  \qquad \text{and} \qquad
  c:=\frac{\nu}{2^{f+1}}.
\]

Suppose on the contrary that there exist a graph $G$ and a probability
vector $\bm p$ on $V(G)$ with $\lVert\bm p\rVert_\infty\le\eta$ and 
$\Phi_G(\bm p)\ge 1-\frac1r+\varepsilon$, but $N_F(G,\bm p)<c$. For every
$v\in V(G)$, we can choose a {\it rational} number $q_v'\in[\tfrac12 p_v,\,p_v]$ with
$\sum_{v\in V(G)}(p_v-q_v')\le\frac{\varepsilon}{32}$. We normalize these entries by putting 
$q_v:=q_v'/S$, where $S:=\sum_{u\in V(G)}q_u'$. Then $\frac12 \le S \le 1$, so
$q_v= q_v'/S\le 2p_v$. Moreover $q_v\ge q_v'$ and
$\sum_v(q_v-q_v')=1-S=\sum_v(p_v-q_v')$. Hence $\bm q=(q_v)_{v\in V(G)}$
is a rational probability vector with
\[
  \sum_{v\in V(G)}\lvert p_v-q_v\rvert\le\frac{\varepsilon}{16}
  \quad\text{and}\quad
  q_v\le 2p_v \quad\text{for every } v\in V(G).
\]
Writing $\Phi_G(\bm p)=\sum_{u,v}a_{uv}p_up_v$ as a sum over ordered pairs and using $p_u p_v-q_u q_v = 
p_u(p_v-q_v) + q_v(p_u-q_u)$, the first property gives $\lvert \Phi_G(\bm p)-\Phi_G(\bm q)\rvert\le 2\sum_{v\in V(G)}\lvert p_v-q_v\rvert\le \varepsilon/8$. Hence, $\bm{q}$ is a rational probability vector  satisfying
\begin{equation}\label{eq:q-density-supersat}
  \Phi_G(\bm q)\ \ge\ 1-\frac1r+\frac{7\varepsilon}{8} \quad \text{and} \quad 
  \lVert\bm q\rVert_\infty\le2\eta. 
\end{equation}
 The second property gives
$\prod_{v\in V(F^*)}q_v\le 2^{f}\prod_{v\in V(F^*)}p_v$ for every copy
$F^*$ of $F$ in $G$, and therefore
\begin{equation}\label{eq:NF-transfer}
  N_F(G,\bm q)\ \le\ 2^{f}N_F(G,\bm p)\ <\ 2^{f}c\ =\ \frac{\nu}{2}.
\end{equation}

Since all entries $q_v$ are rational numbers,  
we can choose $N_{\bm q}$ as a common denominator of the $q_v$ with $q_v>0$. For
a positive integer $t$, put $N:=tN_{\bm q}$, and construct the blow-up
$B$ of $G$ by replacing each vertex $v$ with an independent cluster
$C_v$ of size $k_v:=Nq_v$ and placing a complete bipartite graph
between $C_u$ and $C_v$ for every $uv\in E(G)$. Then $v(B)=N$ and, by
\eqref{eq:q-density-supersat},
\[
  \frac{2e(B)}{N^2}\ = \  2\sum_{uv\in E(G)} q_u q_v 
 \ = \ \Phi_G(\bm q)\ \ge\ 1-\frac1r+\frac{7\varepsilon}{8}.
\]

We claim that the blow-up $B$ contains at most $\nu N^f$ copies of $F$. 
We count the copies of $F$ in $B$ in two ways according to whether two of their vertices lie in a common cluster.
First of all, for a fixed pair of vertices of $F$, the number of injective maps
$V(F)\to V(B)$ sending that pair into a common cluster is at most
$$\sum_{v\in V(G)}k_v^2\cdot N^{f-2}\ = \ N^f\sum_{v\in V(G)} q_v^2 
\ \le \ N^f\lVert\bm q\rVert_\infty.$$
So a union bound over the ${f \choose 2}$ pairs of $F$ shows that the number of copies of $F$ in $B$ having two vertices in a common cluster of $B$ is at most 
\[
  \binom f2 N^f\lVert\bm q\rVert_\infty\ \le\ 2\binom f2\eta\, N^f
  \ =\ \frac{\nu}{2}N^f. 
\] 
Now, consider a copy $F^*$ of $F$ in $B$ whose vertices lie in pairwise distinct
clusters. Projecting each vertex of $F^*$ to the vertex of $G$ whose
cluster contains it is injective on $V(F^*)$ and maps $E(F^*)$ to
edges of $G$, so the projection of $F^*$ is a copy $F_0$ of $F$ in
$G$. Conversely, for a fixed copy $F_0$ of $F$ in $G$, the copies of
$F$ in $B$ projecting to $F_0$ are obtained precisely by choosing one
vertex from the cluster $C_u$ for each $u\in V(F_0)$ and lifting the
edges of $F_0$, which is possible because the clusters of adjacent
vertices of $G$ are joined completely. This gives
$\prod_{u\in V(F_0)}k_u$ copies. Hence the number of copies of $F$ in
$B$ with pairwise distinct clusters equals
\[
  \sum_{F_0}\ \prod_{u\in V(F_0)}k_u
  \ =\ N^f\sum_{F_0}\ \prod_{u\in V(F_0)}q_u
  \ =\ N^f\,N_F(G,\bm q)\ <\ \frac{\nu}{2}N^f,
\]
where the sums run over all copies $F_0$ of $F$ in $G$, and the last
step uses \eqref{eq:NF-transfer}. Adding the above two counts proves that $B$ contains at most $\nu N^f$ copies of $F$, as claimed.

By Theorem \ref{thm:removal} and the choice of $\nu$, we can delete
at most $(\varepsilon/16)N^2$ edges from $B$ to produce an $F$-free subgraph
$H$ on the same $N$ vertices. 
Therefore, we obtain 
\[
  \frac{2e(H)}{N^2}\ \ge\ \frac{2e(B)}{N^2}-\frac{\varepsilon}{8}
  \ \ge\ 1-\frac1r+\frac{3\varepsilon}{4}.
\]
On the other hand, choosing $t$ so large that $N\ge N_0$ and applying
Theorem \ref{thm:ess} to $H$ gives
$$ \frac{2e(H)}{N^2} \le 1-\frac1r+\frac{\varepsilon}{8},$$
 a contradiction. Hence
$N_F(G,\bm p)\ge c$ for every $G$ and $\bm p$ as in the statement.
\end{proof}

\subsection{A weighted stability theorem}

Suppose that $V(G)=U_1\cup\cdots\cup U_r$ is a
partition of the vertex set of $G$. Let $\bm{p}=(p_v)_{v\in V(G)}$ be a  probability vector on $V(G)$. 
We call a pair of vertices lying in two different parts a \emph{cross-pair}, and we call a cross-pair that is not an edge of $G$ a \emph{missing cross-pair}. 
Define the total weight of the missing cross-pairs by
\begin{equation}\label{eq:M-definition}
  M=M(G,\bm{p},U_1,\ldots,U_r)
  :=\sum_{ i<j }
    \sum_{\substack{u\in U_i, v\in U_j\\uv\notin E(G)}}p_up_v.
\end{equation}

Next, we prove a stability version of Theorem \ref{thm-weighted-ESS}. The proof follows the same lines:  replacing each
vertex $v$ by an independent set of size proportional to
$q_v$ turns the weighted edge density into the 
edge density of a blow-up $B$, and the small
weights of $\bm{p}$ make $B$ contain only  $o(N^f)$ copies of $F$.  
The graph removal lemma and the
Erd\H{o}s--Stone--Simonovits theorem then apply and show that $B$ 
is nearly extremal up to $o(N^2)$ edges. Consequently, Theorem
\ref{thm:stability} gives a partition of $V(B)$
into $r$ nearly equal parts with few missing
cross-pairs. Assigning each vertex of $G$ to a
random part, with probability the fraction of its
cluster in that part, carries this partition back to
$G$. The small weights make the part masses
and the weight of missing cross-pairs concentrate.

\begin{theorem}[Weighted stability]
\label{thm:weighted-stability}
Let $F$ be a graph with $\chi (F)=r+1\ge 3$. For every
$\varepsilon >0$, there are $\eta =\eta (F,\varepsilon )>0$ and $\delta =\delta(F,\varepsilon)>0$ such that
if $\bm{p}$ is a probability vector with
$\Vert \bm{p} \Vert_\infty\le \eta$, 
and $G$ is an $F$-free graph with $\Phi_G(\bm{p})\ge 1- \frac{1}{r} - \delta$, then there is a
  partition 
  $$V(G)=U_1\cup \cdots \cup U_r$$
   satisfying 
  \[
    \Bigl|p(U_i)-\frac1r\Bigr|\le \varepsilon 
    \quad (\text{for every}~i\in [r]),
    \qquad
    M(G,\bm{p},U_1,\ldots ,U_r)\le \varepsilon .
  \] 
\end{theorem}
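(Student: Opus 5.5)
We follow the blow-up scheme of Theorem \ref{thm:weighted-supersat} and then carry the partition back to $G$ by random rounding. Given $\varepsilon>0$, set $\varepsilon':=\varepsilon^2/(100r^2)$ (any sufficiently small power of $\varepsilon$ will do). Let $\theta=\theta(F,\varepsilon')$ and $N_1$ come from Theorem \ref{thm:stability}. Let $\nu=\nu(F,\theta/8)$ come from Theorem \ref{thm:removal}. Put $\delta:=\theta/4$, and take $\eta$ to be the minimum of $\nu/(4\binom f2)$ and a small power of $\varepsilon$ fixed at the concentration step.

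\emph{Step 1: the near-extremal blow-up.} As in the proof of Theorem \ref{thm:weighted-supersat}, round $\bm p$ to a rational $\bm q$ with $\sum_v|p_v-q_v|\le \delta/4$ and $q_v\le 2p_v$. Build the blow-up $B$ on $N=tN_{\bm q}\ge N_1$ vertices with clusters $C_v$ of size $Nq_v$, so that $2e(B)/N^2=\Phi_G(\bm q)\ge 1-\frac1r-\frac{3\delta}{2}$. Since $G$ is $F$-free, every copy of $F$ in $B$ has two vertices in a common cluster. The counting from that proof therefore bounds the number of copies by $\binom f2 N^f\lVert\bm q\rVert_\infty\le \nu N^f$. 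Removal gives an $F$-free $H\subseteq B$ with $e(H)\ge (1-\frac1r-\theta)\frac{N^2}{2}$. Theorem \ref{thm:stability} then yields a partition $B_1\cup\cdots\cup B_r$ of $V(B)$ with $\bigl||B_i|-N/r\bigr|\le\varepsilon' N$, and with at most $\varepsilon' N^2$ missing cross-pairs in $H$. Since $H\subseteq B$, the same bound holds for missing cross-pairs in $B$.

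\emph{Step 2: random rounding.} For each $v$ set $\alpha_{v,i}:=|C_v\cap B_i|/|C_v|$, and independently place $v$ in $U_i$ with probability $\alpha_{v,i}$. Vertices with $q_v=0$ go anywhere; we handle them through the distance between $\bm p$ and $\bm q$.

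First, $\EE[q(U_i)]=|B_i|/N$.

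Second, consider a pair $u\ne v$ with $uv\notin E(G)$. Every pair in $C_u\times C_v$ is non-adjacent in $B$, and $\sum_{i\ne j}\alpha_{u,i}\alpha_{v,j}$ is exactly the fraction of $C_u\times C_v$ that consists of cross-pairs. Hence
\[
\EE\bigl[M(G,\bm q,U_1,\ldots ,U_r)\bigr]\le \varepsilon' N^2/N^2=\varepsilon' .
\]
Pairs inside a single cluster are never cross-pairs of distinct $G$-vertices, so they do not enter. We then pass from $\bm q$ to $\bm p$: both $p(U_i)$ and $M$ change by at most $2\sum_v|p_v-q_v|\le\delta$, and we may take $\delta\le\varepsilon/4$.

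\emph{Step 3: concentration (main obstacle).} The quantity $q(U_i)=\sum_v q_v\mathbf 1[v\in U_i]$ is a sum of independent terms. Its variance is at most $\sum_v q_v^2\le 2\eta$, so Chebyshev's inequality (Lemma \ref{lem:chebyshev}) gives $\Pr(|q(U_i)-|B_i|/N|\ge\varepsilon/4)\le 32\eta/\varepsilon^2$. For $M$ we do not need concentration, since it is nonnegative: Markov's inequality (Lemma \ref{lem:markov}) gives $\Pr(M\ge \varepsilon/2)\le 2\varepsilon'/\varepsilon\le \varepsilon/(50 r^2)<\frac12$.

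A union bound over the $r+1$ bad events has total probability below $1$ once $\eta\le \varepsilon^2/(100 r)$. So some outcome is good for all of them at once. For it,
\[
\bigl|p(U_i)-\tfrac1r\bigr|\le \varepsilon'+\tfrac{\varepsilon}{4}+\delta\le\varepsilon
\quad\text{and}\quad
M\le \tfrac{\varepsilon}{2}+\delta\le\varepsilon .
\]

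The step needing the most care is the expectation computation for $M$ in Step 2. It rests on the observation that non-edges of $G$ correspond exactly to completely empty bipartite blocks of $B$, so that the stability bound on $B$'s missing cross-pairs controls $\EE[M]$ directly.
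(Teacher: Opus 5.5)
Your proposal is correct and follows essentially the same route as the paper. It rationalizes $\bm p$, blows up $G$, and applies the removal lemma and Erd\H{o}s--Simonovits stability to the blow-up. It then transports the partition back by independent rounding with the cluster fractions, using Chebyshev for the part masses, Markov for $M$, and a union bound. The differences are cosmetic: you invoke stability with parameter $\varepsilon^2/(100r^2)$ rather than $\varepsilon/8$, and you reuse the supersaturation rounding $q_v\le 2p_v$ instead of a strictly positive rational $\bm q$. Your bookkeeping of the constants checks out.
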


\begin{proof}
Let $f:=v(F)$, and assume without loss of generality
that $\varepsilon\le\frac12$. We first fix the
constants. 
Apply Theorem \ref{thm:stability} with parameter
$\varepsilon/8$ to obtain $\theta=\theta(F,\varepsilon/8)>0$
and $N_1=N_1(F,\varepsilon/8)$ such that every $F$-free
graph $D$ on $N\ge N_1$ vertices with
$e(D)\ge\bigl(1-\frac1r-\theta\bigr)\frac{N^2}{2}$
admits a partition $V(D)=B_1\cup\cdots\cup B_r$ with
$\bigl\lvert|B_i|-\frac Nr\bigr\rvert
\le\frac{\varepsilon}{8}N$ for every $i\in[r]$ and
with at most $\frac{\varepsilon}{8}N^2$ pairs lying
in different parts that are non-adjacent in $D$.
Apply Theorem \ref{thm:removal} with parameter
$\theta/8$ to obtain $\nu=\nu(F,\theta/8)>0$ such that
every graph on $N$ vertices containing at most
$\nu N^f$ copies of $F$ can be made $F$-free
by deleting at most $\frac{\theta}{8}N^2$ edges.
Finally, we set  
\[
  \delta:=\frac{\theta}{4},
  \qquad
  \eta:=\min\Bigl\{\frac{\nu}{f^2},\
  \frac{\varepsilon^2}{32r},\
  \frac{\varepsilon}{8}\Bigr\}.
\]

\noindent
\textit{Step 1. Rationalization and blow-up.}
Put $\xi:=\min\{\eta,\frac{\theta}{16},\frac{\varepsilon}{8}\}$. Since $V(G)$ is finite and the rational points are dense, 
we may choose a {\it rational probability vector} $\bm q=(q_v)_{v\in V(G)}$ with
$\sum_{v}\lvert p_v-q_v\rvert\le\xi$ and with $q_v>0$ for every $v\in V(G)$. 
For any set $\mathcal P$ of unordered pairs of distinct vertices, we get 
\[
  2\Bigl\lvert\sum_{uv\in\mathcal P}
  (p_up_v-q_uq_v)\Bigr\rvert
  \le\sum_{u}\sum_{v}
  \bigl(p_u\lvert p_v-q_v \rvert
  +q_v\lvert p_u-q_u \rvert\bigr)\le2\xi .
\]
Taking $\mathcal P=E(G)$, 
and taking $\mathcal P$ to be the set
of cross non-edges of any partition, we obtain 
\begin{equation}\label{eq:rat}
  \lvert\Phi_G(\bm p)-\Phi_G(\bm q)\rvert\le2\xi,
  \quad
  M(G,\bm p, \cdot)\le M(G,\bm q, \cdot)+2\xi,
  \quad
  \lvert p(S)-q(S)\rvert\le\xi
\end{equation}
for every $S\subseteq V(G)$. Moreover
$\lVert\bm q\rVert_\infty\le\eta+\xi\le2\eta
\le\nu/\binom f2$. In particular
\begin{equation}\label{eq:q-density}
  \Phi_G(\bm q)\ \ge\ \Phi_G(\bm p)-2\xi
  \ \ge\ 1-\frac1r-\frac{\theta}{4}
   -\frac{\theta}{8}
  \ \ge\ 1-\frac1r-\frac{3\theta}{8}.
\end{equation}
Let $N_{\bm q}$ be a common denominator of the $q_v$ with $q_v >0$. For a positive integer $t$ put
$N:=tN_{\bm q}$, so that $Nq_v$ is a nonnegative integer
for every $v\in V(G)$. Enlarging $t$ does not change
$\bm q$, so $N$ may be taken as large as we wish, and
we fix $t$ so large that $N\ge N_1$. Construct the
blow-up $B$ of $G$ by replacing each $v\in V(G)$ with an
independent cluster $C_v$ of size $k_v:=Nq_v$ and placing a complete bipartite graph between $C_u$ and
$C_v$ for every $uv\in E(G)$. 
Then $v(B)=N$.  By  \eqref{eq:q-density},
\begin{equation}\label{eq:blowup-density}
  \frac{2e(B)}{N^2}
  =2\sum_{uv\in E(G)}q_uq_v
  =\Phi_{G}(\bm q)
  \ \ge\ 1-\frac1r-\frac{3\theta}{8}.
\end{equation}

\noindent
\textit{Step 2. Using the graph removal lemma and stability.} 
We claim that $B$ contains at most $\nu N^f$  
copies of $F$. Project each vertex of such a copy to
the vertex of $G$ whose cluster contains it. If the
projections were distinct they would form a copy of
$F$ in $G$, contradicting $F$-freeness. Hence two
vertices of the copy lie in a common cluster. For a
fixed pair of vertices of $F$, the number of injective
maps $V(F)\to V(B)$ sending that pair into a common
cluster is at most
\[
  \sum_{v\in V(G)}k_v^2N^{f-2}
  =N^f\sum_{v\in V(G)}q_v^2
  \le N^f\lVert\bm q\rVert_\infty ,
\]
so a union bound over the $\binom f2$ pairs gives
\[
  N_F(B)\le\binom f2N^f\lVert\bm q\rVert_\infty
  \le\nu N^f,
\]
where $N_F(B)$ is the number of copies of $F$
in $B$. By the choice of $\nu$, deleting at most
$\frac{\theta}{8}N^2$ edges from $B$ yields an
$F$-free graph $H$ on $V(B)$.  Then, by
\eqref{eq:blowup-density},
\[
  \frac{2e(H)}{N^2}\ \ge\ \frac{2e(B)}{N^2}
  -\frac{\theta}{4}
  \ \ge\ 1-\frac1r-\theta .
\]
Since $N\ge N_1$, Theorem \ref{thm:stability} applies
to $H$ and yields a partition
$V(B)=B_1\cup\cdots\cup B_r$ such that 
\begin{equation}\label{eq:part-sizes}
  \Bigl\lvert|B_i|-\frac Nr\Bigr\rvert
  \le\frac{\varepsilon}{8}N
\end{equation}
for every $i\in [r]$, and such that at most $\frac{\varepsilon}{8}N^2$ pairs lie
in different parts and are non-adjacent in $H$. As
$E(H)\subseteq E(B)$, every pair non-adjacent in $B$
is non-adjacent in $H$, so
the number of pairs in different parts that are
  non-edges of $B$ is at most $\frac{\varepsilon}{8}N^2$.

\smallskip 
\noindent
\textit{Step 3. Transferring the partition of the blow-up back to $G$.}
The remaining step is to transform the above partition of the blow-up $B$ into a partition of the original graph $G$.  
For every vertex $v\in V(G)$ and every $i\in[r]$, 
we put  
$$q_{vi}:= \frac{| C_v\cap B_i|}{| C_v|}. $$
Then $\sum_{i}q_{vi}=1$. 
Assign each vertex 
$v\in V(G)$
independently to a random set $U_i$ with probability $q_{vi}$. 
We write $\xi_{vi}:=\mathbf 1_{\{v\in U_i\}}$ 
and $Z_i:=q(U_i)=\sum_{v\in V(G)}q_v\,\xi_{vi}$. Since
$\lvert C_v\rvert=Nq_v$, \eqref{eq:part-sizes} gives
\begin{equation}\label{eq:Zi-expectation}
  \EE[Z_i]=\sum_{v\in V(G)}q_v \, q_{vi}
  =\frac1N\sum_{v\in V(G)}\lvert C_v\cap B_i\rvert
  =\frac{|B_i|}{N}
  =\frac1r\pm\frac{\varepsilon}{8}.
\end{equation}
The variables $\xi_{vi}$, with $v\in V(G)$, are independent
with $\Var(\xi_{vi})=q_{vi}(1-q_{vi})$, so
\begin{equation}\label{eq:Zi-variance}
  \Var(Z_i)=\sum_{v\in V(G)}q_v^2 \, 
  q_{vi}(1-q_{vi})
  \le\sum_{v\in V(G)}q_v^2
  \le\lVert\bm q\rVert_\infty\le2\eta ,
\end{equation}
where the last inequality holds because
$\sum_vq_v^2\le \lVert\bm q\rVert_\infty 
\sum_v q_v = \lVert\bm q\rVert_\infty $.

Let $M:=M(G,\bm q, U_1,\ldots,U_r)$. For a
non-edge $uv$ of $G$ the assignment separates $u$ and
$v$ with probability $1-\sum_{i}q_{ui}q_{vi}$, and the
number of pairs of $C_u\times C_v$ lying in different
parts is exactly
$N^2q_uq_v\bigl(1-\sum_{i=1}^r q_{ui}q_{vi}\bigr)$. 
 All these pairs are non-edges of $B$. Hence,
\begin{equation}\label{eq:M-expectation}
  \EE[M]=\sum_{\substack{uv\notin E(G)\\u<v}}
  q_uq_v\Bigl(1-\sum_{i=1}^rq_{ui}q_{vi}\Bigr)
  \le\frac{\#\{\text{cross non-edges of }B\}}{N^2}
  \le\frac{\varepsilon}{8},
\end{equation}
where the inequality holds because the right-hand side also counts pairs of vertices lying in the same cluster $C_v$, which are non-edges of $B$ and have no counterpart on the left. 

By \eqref{eq:Zi-variance} and Chebyshev's inequality (Lemma \ref{lem:chebyshev}), we get that for each $i\in[r]$, 
\[
  \Pr \bigg( \bigl\lvert Z_i-\EE[Z_i]\bigr\rvert
  >\frac{\varepsilon}{2}  \bigg)
  \le\frac{4\Var(Z_i)}{\varepsilon^2}
  \le\frac{8\eta}{\varepsilon^2} \le  \frac{1}{4r}.
\]
Moreover, by \eqref{eq:M-expectation} and Markov's inequality (Lemma \ref{lem:markov}), we get 
$$\Pr\Bigl(M>\frac{\varepsilon}{2}\Bigr)
\le\frac{\varepsilon/8}{\varepsilon/2}=\frac14.$$
Since $r\cdot\frac{1}{4r}+\frac14=\frac12<1$, a union
bound shows that with positive probability none of
these $r+1$ events occurs. Consequently, 
there exists an $r$-partition $V(G)=U_1\cup \cdots \cup U_r$ satisfying 
\[   \Big| Z_i - \mathbb{E}[Z_i]\Big| \le \frac{\varepsilon}{2} \quad 
\text{for every $i\in [r]$}, \quad \text{and} \quad M \le \frac{\varepsilon}{2}.  \]
Recall that $Z_i=q(U_i)$.  By \eqref{eq:Zi-expectation}, the triangle inequality yields 
\[
  \Bigl\lvert q(U_i)-\frac1r\Bigr\rvert
  \le\frac{\varepsilon}{2}+\frac{\varepsilon}{8}
  ~~ \text{for every}~i\in[r],
  \quad \text{and} \quad 
  M(G,\bm q, U_1,\ldots,U_r)\le\frac{\varepsilon}{2}.
\]
Finally, by \eqref{eq:rat} and $\xi\le\varepsilon/8$, using the triangle inequality again, we have 
\[
  \Bigl\lvert p(U_i)-\frac1r\Bigr\rvert
  \le\frac{5\varepsilon}{8}+\xi\le\varepsilon ~~
  \text{for every}~i\in[r], 
  \quad \text{and} \quad 
  M(G,\bm p, U_1,\ldots,U_r)
  \le\frac{\varepsilon}{2}+2\xi\le\varepsilon,
\]
which is the desired assertion.
\end{proof}

\section{Proof of the exact weighted Tur\'{a}n theorem}

\label{sec-prove-exact}

The proof of Theorem \ref{thm-second-key} proceeds in the following three steps. First, we give a weighted sampling lemma (Lemma \ref{lem:weighted-sampling}). 
Second, we prove the exact bound when the weighted graph is close to a complete $r$-partite graph (Lemma \ref{lem:near-turan-weighted}). Finally, we combine the two with the weighted stability theorem of Section \ref{sec:weighted} to reduce the general case to that local statement.

\subsection{A weighted sampling lemma}

\begin{lemma} \label{lem:weighted-sampling}
Let $r,t\ge 2$, let $V(G)=U_1\cup\cdots\cup U_r$, and let $\bm{p}$ be a probability vector on $V(G)$. For each $i\in[r]$, let $A_i\subseteq U_i$ satisfy $p(A_i)\ge \gamma>0$, and let $s_i$ be an integer with $0\le s_i\le t$. Write $M=M(G,\bm{p},U_1,\ldots ,U_r)$ for the total
weight of the missing cross-pairs in
\eqref{eq:M-definition}. If
\begin{equation}\label{eq:sampling-condition}
  r\binom{t}{2} \cdot \frac{\pmax}{\gamma}
  +\binom{r}{2}t^2 \cdot \frac{M}{\gamma^2}<1,
\end{equation}
then one can choose $s_i$ distinct vertices from each $A_i$ so that any two chosen vertices from different sets $A_i$ and $A_j$ are adjacent.
\end{lemma}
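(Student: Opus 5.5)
We prove the lemma by the first moment method. Sample the vertices independently from the weight restricted to each set. For each $i\in[r]$, let $Y_{i,1},\ldots ,Y_{i,t}$ be independent random vertices of $A_i$, each distributed according to $\bm{p}$ conditioned on $A_i$, that is, $\Pr (Y_{i,a}=v)=p_v/p(A_i)$ for $v\in A_i$. All $rt$ samples are mutually independent. Call the outcome \emph{bad} if either
\begin{itemize}
\item[(i)] $Y_{i,a}=Y_{i,b}$ for some $i$ and some $a<b$, or
\item[(ii)] $Y_{i,a}Y_{j,b}\notin E(G)$ for some $i<j$ and some $a,b\in[t]$.
\end{itemize}
If the outcome is good, the $t$ samples in each $A_i$ are distinct. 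Any two samples from different sets are also distinct, because $A_i\subseteq U_i$ and the $U_i$ are disjoint, and they are adjacent. We then keep any $s_i\le t$ of the samples from $A_i$, which gives the required choice.

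For (i), fix $i$ and $a<b$. The collision probability is
\[
\sum_{v\in A_i}\frac{p_v^2}{p(A_i)^2}\le \frac{\pmax\, p(A_i)}{p(A_i)^2}=\frac{\pmax}{p(A_i)}\le \frac{\pmax}{\gamma}.
\]
Summing over the $r\binom t2$ choices of $(i,\{a,b\})$ bounds the total probability of (i) by the first term of \eqref{eq:sampling-condition}.

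For (ii), fix $i<j$ and $a,b\in[t]$. By independence,
\[
\Pr (Y_{i,a}Y_{j,b}\notin E(G))=\sum_{\substack{u\in A_i,\,v\in A_j\\ uv\notin E(G)}}\frac{p_up_v}{p(A_i)p(A_j)}\le \frac{M_{ij}}{\gamma^2},
\]
where $M_{ij}$ is the weight of missing cross-pairs between $U_i$ and $U_j$. Pairs with $u\in A_i$ and $v\in A_j$ are cross-pairs between $U_i$ and $U_j$, so they are counted in $M_{ij}$. Summing over the $t^2$ choices of $(a,b)$ and over $i<j$ gives at most $t^2M/\gamma^2$, since $\sum_{i<j}M_{ij}=M$. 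This is already no larger than the second term $\binom r2 t^2M/\gamma^2$ of \eqref{eq:sampling-condition}; the cruder factor $\binom r2$ only helps.

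By the union bound, the probability of a bad outcome is strictly less than $1$, so some outcome is good, and this proves the lemma. There is no real obstacle here. The only points needing care are that distinctness across different sets comes for free from the disjointness of the parts, and that the bound $\sum_{v\in A_i} p_v^2\le \pmax\, p(A_i)$ is what produces the $\pmax/\gamma$ term.
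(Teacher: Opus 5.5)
Your proof is correct and is essentially the paper's argument: you sample independently from $\bm{p}$ restricted to each $A_i$, then take a union bound over same-set collisions (each of probability at most $\pmax/\gamma$) and cross-set non-adjacencies (each of probability at most $M/\gamma^2$). The differences are minor: you draw $t$ samples per set and discard the extras rather than drawing $s_i$, and by splitting $M=\sum_{i<j}M_{ij}$ you show the factor $\binom{r}{2}$ in the second term of the condition is unnecessary, which is a harmless sharpening.
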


\begin{proof}
For each $i\in [r]$, pick $s_i$ vertices from $A_i$ independently and
with repetitions allowed, each pick landing on $v\in A_i$ with
probability $p_v/p(A_i)$. We call the $s_1+\cdots +s_r$ picks the
\emph{sample positions}, and call an outcome \emph{bad} if two
positions in the same part land on the same vertex, or if two positions
in different parts land on a pair of non-adjacent vertices.

First, fix two positions inside the same set $A_i$.
They land on the same vertex with probability
\[
  \sum_{v\in A_i}
    \left(\frac{p_v}{p(A_i)}\right)^{2}
  =\frac{1}{p(A_i)^2}\sum_{v\in A_i}p_v^2
  \le \frac{\pmax}{p(A_i)^2}\sum_{v\in A_i}p_v
  =\frac{\pmax}{p(A_i)}
  \le \frac{\pmax}{\gamma},
\]
where we used $p_v \le \pmax$ and $p(A_i)\ge \gamma$. 
Moreover, the
number of pairs of positions inside a single part is
$\sum_{i=1}^{r}\binom{s_i}{2}\le r\binom{t}{2}$,
because $s_i\le t$ for every $i$.

Second, fix a position in the set $A_i$ and a position in the set 
$A_j$ with $i<j$. They land on a pair of vertices
that is not an edge with probability
\[
  \sum_{\substack{u\in A_i, v\in A_j\\ uv\notin E(G)}}
  \frac{p_u}{p(A_i)}\cdot \frac{p_v}{p(A_j)}
  \le \frac{M}{p(A_i)\,p(A_j)}
  \le \frac{M}{\gamma^2}, 
\]
where we used $A_i\subseteq U_i$ and
$A_j\subseteq U_j$, so the sum  is
at most $ \frac{M}{p(A_i)\,p(A_j)}$, and then used 
$p(A_i),p(A_j)\ge \gamma$ again. The number of pairs
of positions in different parts is
$\sum_{i<j}s_is_j\le \binom{r}{2}t^2$.

By the union bound, the probability 
that the outcome is bad is at most
\[
   r\binom{t}{2} \cdot \frac{\pmax}{\gamma}
  +\binom{r}{2}t^2 \cdot \frac{M}{\gamma^2},
\]
which is less than $1$ by the hypothesis. So the probability that the
outcome is not bad is positive, and therefore at
least one outcome is not bad. In that outcome, the
$s_i$ vertices picked from $A_i$ are pairwise distinct for
every $i$, and every two picked vertices from
different parts are adjacent.
\end{proof}

\subsection{An exact bound near an $r$-partite
template} 

This subsection is the only place in which we obtain the
bound $1-\frac1r$ with no error term. 
The setting is local: 
the partition $U_1,\ldots ,U_r$ is already 
close to an $r$-partite template, and we show that 
no weighted graph near this template exceeds the bound.  
In what follows, we fix integers $r\ge 2$ and
$t\ge 2$. When we apply Lemma \ref{lem:near-turan-weighted}, $t$ will be the value $v(F)$ supplied by Lemma \ref{lem:critical-template}. Write
\[
  C_1:=r\binom{t}{2},\qquad
  C_2:=\binom{r}{2}t^2
\] 
for the two constants appearing in
\eqref{eq:sampling-condition}. Set
\begin{equation}\label{eq:constants-near}
  \varepsilon_*:=\frac{1}{100r},
  \qquad
  \rho_*:=\frac{1}{1000\,r^2C_2},
  \qquad
  \eta_*:=\frac{1}{100\,rC_1}.
\end{equation}
Note that $C_1\ge 2$ and $C_2\ge 1$. All three
constants depend only on $r$ and $t$.

\begin{lemma}[Exact bound near an $r$-partite graph] 
\label{lem:near-turan-weighted}
Let $r\ge 2$, let $G$ be a $K_r^+[t]$-free graph, let
$\bm{p}$ be a probability vector on $V(G)$, and
suppose that $V(G)=U_1\cup\cdots\cup U_r$ is a
partition satisfying 
\begin{equation*}
  \left|p(U_i)-\frac1r\right|\le \varepsilon_*
  \quad\text{for every }i\in[r],
\end{equation*}
\begin{equation*}
  M(G,\bm{p},U_1,\ldots,U_r)\ \le\  \rho_*,
\end{equation*}
and
\begin{equation*}
  \pmax \ \le \ \eta_*.
\end{equation*}
Then 
$$\Phi_G(\bm{p}) \ \le \ 1-\frac{1}{r}.$$
\end{lemma}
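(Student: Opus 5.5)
The plan is to compare $\Phi_G(\bm p)$ with the weighted density of the complete $r$-partite graph on $U_1,\ldots,U_r$. Write $E_{\rm in}:=\sum p_up_v$ over the edges $uv$ lying inside some part. Since $2\sum_{i<j}p(U_i)p(U_j)=1-\sum_i p(U_i)^2$, we have
\[
  \Phi_G(\bm p)=1-\sum_{i=1}^r p(U_i)^2-2M+2E_{\rm in}
  \le 1-\frac1r-\sum_{i=1}^r\Bigl(p(U_i)-\frac1r\Bigr)^2-2M+2E_{\rm in}.
\]
So it suffices to show $E_{\rm in}\le M$, where $M$ is the missing cross-weight, with the balance error as slack. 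The constants $\varepsilon_*,\rho_*,\eta_*$ are tuned so that Lemma~\ref{lem:weighted-sampling} applies with $\gamma$ of order $1/r$.

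\textbf{Step 1 (optimal partition).} I first replace the given partition by one that maximizes the cross weight $\sum_{i<j}\sum_{u\in U_i,v\in U_j,\,uv\in E(G)}p_up_v$ among partitions satisfying the hypotheses with $\varepsilon_*,\rho_*$ relaxed by a factor of $2$. Moving a single vertex changes the part masses by at most $\pmax\le\eta_*$ and changes $M$ by a controlled amount, so I will check that a maximizer is locally optimal. Local optimality means that for each $v\in U_i$,
\[
  a_v:=p\bigl(N(v)\cap U_i\bigr)\ \le\ p\bigl(N(v)\cap U_j\bigr)\quad\text{for every } j.
\]
For $v\in U_i$ let $m_v$ be the weight of non-neighbours of $v$ outside $U_i$, so that $\sum_v p_vm_v=2M$.

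\textbf{Step 2 (sampling kills internal edges between good vertices).} Call $v$ \emph{bad} if $p(U_j\setminus N(v))\ge\frac1{4r}$ for some $j\ne i(v)$, and let $S$ be the set of bad vertices. Since $m_v\ge\frac1{4r}$ on $S$, Markov gives $p(S)\le 8rM\le 8r\rho_*$. Suppose $uv$ is an edge inside $U_1$ with $u,v\notin S$. I apply Lemma~\ref{lem:weighted-sampling} with $A_1:=U_1\setminus\{u,v\}$ and $s_1=t-2$, and with $A_j:=N(u)\cap N(v)\cap U_j$ and $s_j=t$ for $j\ge2$. Each $A_j$ has weight at least $\frac1r-\varepsilon_*-\frac1{2r}\ge\frac1{3r}=:\gamma$. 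By the choice of $\eta_*,\rho_*$, condition \eqref{eq:sampling-condition} holds. Together with $u,v$ this yields a copy of $K_r^+[t]$, which is a contradiction. Hence every internal edge has an endpoint in $S$, and
\[
  E_{\rm in}\le\sum_{v\in S}p_va_v .
\]

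\textbf{Step 3 (the main obstacle: charging $a_v$ to missing weight).} I must show $\sum_{v\in S}p_va_v\le M$, up to the slack from the balance term. The crude estimate $a_v\le\frac1r+\varepsilon_*$ against $m_v\ge\frac1{4r}$ loses a constant factor, so local optimality is essential here. For $v\in S\cap U_i$, local optimality gives $a_v\le p(N(v)\cap U_j)=p(U_j)-(\text{missing weight of } v \text{ in } U_j)$ for every $j$.

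I would split according to whether $a_v\le m_v/4$, which can be charged directly, or $a_v>m_v/4$. In the second case $v$ has substantial weight in its own part, yet misses at least $\frac1{4r}$ of some $U_j$. I would then run the sampling lemma of Step~2 once more, with $A_i:=N(v)\cap U_i$ and $A_{j'}:=N(v)\cap U_{j'}$, to find a $K_r[t]$ attached to $v$ inside its neighbourhood. Combined with one further internal neighbour, this again produces a $K_r^+[t]$. The aim is to force $m_v\ge c\,a_v$ for a constant $c$ large enough that, after summing, $\sum_{S}p_va_v\le\frac12\sum_S p_vm_v\le M$.

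If this per-vertex charging does not close directly, the fallback is to treat $S$ as a small exceptional set of weight $O(r\rho_*)$. I would bound the contribution of $S$ to $\Phi_G(\bm p)$ by $2\sum_{v\in S}p_v d_{\bm p}(v)$, where $d_{\bm p}(v)\le 1-\min_j(\text{missing weight of } v \text{ in } U_j)-\ldots$, and apply the Motzkin--Straus bound to the $r$-partite graph $G-S$. The second-order term $\sum_i(p(U_i)-\frac1r)^2$ would absorb the leftover. I expect this balancing in Step~3 to be the delicate part of the proof.
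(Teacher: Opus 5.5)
Your reduction is sound: the identity $\Phi_G(\bm p)=1-\frac1r-\sum_i(p(U_i)-\frac1r)^2-2M+2E_{\mathrm{in}}$ is exact, so the lemma is equivalent to $2E_{\mathrm{in}}\le 2M+\sum_i(p(U_i)-\frac1r)^2$. Step~2 is also correct. The genuine gap is Step~1, on which everything else rests. You need a partition that is locally optimal \emph{and} still satisfies the hypotheses well enough for Lemma~\ref{lem:weighted-sampling}. You only assert that a maximizer of the cross weight over the family with $\varepsilon_*,\rho_*$ doubled is locally optimal, and that does not follow: a constrained maximizer can sit on the boundary of the family, where an improving move leads out of it.

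Under a move of $v$ from $U_i$ to $U_j$, the monotone quantity is $\Psi(U):=M+\frac12\sum_k(p(U_k)-\frac1r)^2=\frac12(1-\frac1r)-(\text{cross weight})$, which drops by $p_v(d_i(v)-d_j(v))$. By contrast, $M$ changes by $p_v[(p(U_i)-p(U_j))-p_v-(d_i(v)-d_j(v))]$ and can increase. Inside the relaxed family the per-move bound $4\eta_*\varepsilon_*$ on this change is not small compared with $\rho_*$. Moreover, the hypotheses give only $\Psi\le\rho_*+\frac r2\varepsilon_*^2$, and $\frac r2\varepsilon_*^2=\frac{1}{20000r}$ is typically much larger than $\rho_*=\frac{1}{1000r^2C_2}$. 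So neither the constrained maximizer nor an unconstrained max-cut partition is shown to have both local optimality and $M=O(\rho_*)$. The latter is required, since the sampling condition needs $M\lesssim\gamma^2/C_2$.

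Local optimality is really needed for your charging. Take $r=2$, $G$ complete bipartite on $V_1\cup V_2$ with $p(V_1)=p(V_2)=\frac12$, and $U_1=(V_1\setminus Z')\cup Z$, $U_2=(V_2\setminus Z)\cup Z'$, where $Z\subseteq V_2$ and $Z'\subseteq V_1$ have weight $z$. All hypotheses hold for small $z$, yet $a_v=m_v$ for every $v\in S=Z\cup Z'$, so no inequality $m_v\ge c\,a_v$ with $c\ge2$ is available.

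Once a locally optimal partition with $M\le2\rho_*$ and masses within $2\varepsilon_*$ is in hand, your Step~3 does close. If $v\in S$ had $a_v>m_v/4\ge\frac1{16r}$, then $d_j(v)\ge a_v$ for every $j$, and Lemma~\ref{lem:weighted-sampling} applied to $A_j=N(v)\cap U_j$, together with $v$, gives $K_r^+[t]$. So the fallback is unnecessary. As stated it would fail anyway: your $S$ contains vertices of $U_i$ that behave like members of another part $U_k$, with $p(N(v))\approx1-p(U_k)$, which can exceed $(1-\frac1r)p(V\setminus S)$.

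One way to repair Step~1 is a case split. Apply Step~2 to the \emph{given} partition to get $E_{\mathrm{in}}\le p(S)(\frac1r+\varepsilon_*)\le 8.1M$.
\begin{itemize}
\item If $\sum_i(p(U_i)-\frac1r)^2\ge 15M$, the identity already gives $\Phi_G(\bm p)\le1-\frac1r$.
\item Otherwise $\Psi(U)<9\rho_*$. An unconstrained max-cut partition $U^*$ is then locally optimal, with $M(U^*)<9\rho_*$ and $|p(U^*_i)-\frac1r|<\sqrt{18\rho_*}<\frac{0.14}{r}$. This suffices to rerun Steps~2--3 with $\gamma=\frac1{8r}$: the sampling condition reads $0.08+0.576<1$, and $a_v<\frac1{8r}\le\frac12 m_v$ on $S$.
\end{itemize}

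The resulting argument is genuinely different from the paper's, which never changes the partition. The paper re-sorts vertices into $V_i=\{v: d_j(v)\ge\frac23p(U_j)\ \forall j\ne i\}$. It then uses Claim~1 (every vertex has neighbourhood weight at most $\frac1{10}p(U_k)$ in some part $U_k$) to show that these sets are disjoint and independent, and that every vertex outside them has $p(N(v))\le(1-\frac1r)-\frac1{10r}\le(1-\frac1r)p(T)$. It finishes with Cauchy--Schwarz. That route needs no control of how $M$ evolves, which is exactly the cost your max-cut route has to pay.
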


The proof of Lemma \ref{lem:near-turan-weighted} takes
its strategy from the edge-spectral argument of Li, Liu
and Zhang \cite[Theorem 3.1]{LLZ-Critical}, 
but the weighted setting calls for a different implementation: there is no vertex count available, 
and every counting step is replaced by the random sampling of Lemma \ref{lem:weighted-sampling}. 
 The bound on $\pmax$ keeps independent samples from landing on the same vertex. The bound on $M$ makes
samples from different parts adjacent. Together they let Lemma \ref{lem:weighted-sampling} build a copy of $K_r^+[t]$ from any local excess. Since $G$ is $K_r^+[t]$-free, no such excess exists. The bound on the part masses makes this quantitative: every vertex misses a fixed fraction of the weight of some part. 
The outcome is an almost exact $r$-partition: $r$
independent sets carry all but a small fraction of
the weight, and the Cauchy--Schwarz inequality then completes the proof.

\begin{proof}
For $i\in[r]$, write $a_i:=p(U_i)$. 
The hypothesis on $p(U_i)$ gives 
\begin{equation}\label{eq:a-lower}
  a_i\ge \frac{99}{100r}
  \qquad\text{for every }i\in[r].
\end{equation}
We use this twice. First, for any two different
indices $i,j$, we have 
\begin{equation}\label{eq:two-part-deficit}
  \frac{9}{10}a_i+\frac13a_j
  \ge \frac{37}{30}\cdot\frac{99}{100r} 
  >\frac1r+\frac{1}{10r}.
\end{equation}
Second, put  $\gamma:=\frac{1}{11r}$ and 
$M:=M(G,\bm{p},U_1,\ldots,U_r)$.
By \eqref{eq:a-lower}, each of the numbers
$\frac{1}{10}a_i$, $\frac13a_i$ and
$a_i-2\eta_*$ is larger than $\gamma$. Also, if
$p(A_i)\ge \gamma$ for every $i$, then
\eqref{eq:constants-near} gives
\begin{equation}\label{eq:sampling-holds}
  C_1\frac{\pmax}{\gamma}
  +C_2\frac{M}{\gamma^2}
  \le \frac{11rC_1}{100\,rC_1}
     +\frac{121r^2C_2}{1000\,r^2C_2}
   <1,
\end{equation}
so \eqref{eq:sampling-condition} holds and we
may use Lemma \ref{lem:weighted-sampling}.

For a vertex $v$ and an index $i$, we write
\[
  d_i(v):=p\bigl(N_G(v)\cap U_i\bigr)
\]
for the weight of the neighbors of $v$ in
$U_i$. We first prove three claims about the
structure of $G$.

\medskip
\noindent\textbf{Claim 1.}
Every vertex $v\in V(G)$ has an index
$i\in[r]$ with $d_i(v)\le \frac{1}{10}a_i$.

\begin{proof}[Proof of claim]
Suppose not. Then
$d_i(v)>\frac{1}{10}a_i\ge \frac{99}{1000r}
>\gamma$ for every $i$. Apply Lemma
\ref{lem:weighted-sampling} to the sets
$A_i:=N_G(v)\cap U_i$, with $s_1=t-1$ and
$s_i=t$ for $i\ne 1$; its hypothesis
\eqref{eq:sampling-condition} holds by
\eqref{eq:sampling-holds}. So we can pick $t-1$
vertices from $A_1$ and $t$ vertices from each
other $A_i$ so that any two picked vertices from
different sets are joined by an edge. Adding $v$ to
this selection, we find a copy of $K_r^+[t]$. This is impossible, since $G$ is
$K_r^+[t]$-free, and Claim 1 follows.
\end{proof}

For each $i\in[r]$, let
$ V_i:=\left\{v\in V(G):d_j(v)\ge \tfrac23 a_j
    \text{ for every }j\ne i\right\}$. 

\medskip
\noindent\textbf{Claim 2.}
The sets $V_1,\ldots,V_r$ are pairwise disjoint.

\begin{proof}[Proof of claim]
Suppose $v$ lies in $V_i$ and in $V_j$ for two
different indices $i,j$. Being in $V_j$ gives
$d_i(v)\ge \frac23a_i$, and being in $V_i$ gives
$d_j(v)\ge \frac23a_j$. For any other index $k\notin \{i,j\}$,
each of the two conditions gives
$d_k(v)\ge \frac23a_k$. So
$d_k(v)\ge \frac23a_k>\frac{1}{10}a_k$ for every
$k$, which Claim 1 rules out.
\end{proof}

\medskip
\noindent\textbf{Claim 3.}
Each set $V_i$ is an independent set of $G$.

\begin{proof}[Proof of claim]
Suppose $u,v\in V_i$ and $uv\in E(G)$. For every
$j\ne i$, the weight of the common neighbors of
$u$ and $v$ in $U_j$ is at least
\[
  d_j(u)+d_j(v)-a_j
  \ge \frac13a_j
  \ge \frac{33}{100r}
  >\gamma .
\]
Using the hypothesis on $\pmax$ and the bound in (\ref{eq:a-lower}), we get  
\[
  p\bigl(U_i\setminus\{u,v\}\bigr)
  \ge a_i-2\pmax
  \ge \frac{99}{100r}-\frac{1}{100r} 
  >\gamma .
\]
Use Lemma \ref{lem:weighted-sampling} with
$s_i=t-2$ and $A_i:=U_i\setminus\{u,v\}$, and,
for each $j\ne i$, with $s_j=t$ and
$A_j:=N_G(u)\cap N_G(v)\cap U_j$. Again
\eqref{eq:sampling-holds} gives
\eqref{eq:sampling-condition}. The picked
vertices, together with the edge $uv$, form a copy
of $K_r^+[t]$. This is again impossible, and Claim 3
follows. 
\end{proof}

In what follows, we set   
\[
  T:=V_1\cup\cdots\cup V_r,
  \qquad
  S:=V(G)\setminus T,
  \qquad
  X:=p(T).
\]
We first show that $p(S)$ is small. Let
$v\in S\cap U_i$ for some $i\in [r]$. Then $v\in S$ implies $v\notin V_i$, so
$d_j(v)<\frac23a_j$ for some $j\ne i$. Hence the
vertices of $U_j$ that are not neighbors of $v$
have weight more than
$\frac13a_j\ge \frac{33}{100r}$. 
For each $v\in S$, fix one such index $j=j(v)$.
Multiplying the last inequality by $p_v$ and summing  over $v\in S$, and noting that every missing cross-pair is counted at most twice, we obtain  
\[
  \frac{33}{100r}\,p(S) 
  <\sum_{v\in S}p_v\cdot 
    p\bigl(U_{j(v)}\setminus N_G(v)\bigr)
  \le 2M .
\]
By the hypothesis $M\le \rho_*$, the choice of
$\rho_*$ in \eqref{eq:constants-near} and
$C_2\ge 1$, this gives 
\begin{equation}\label{eq:s-small}
  p(S) <\frac{200r}{33}\,\rho_*
  \le \frac{200}{33\cdot 1000\,rC_2}
  \le \frac{1}{100r}.
\end{equation}

Now let $v\in S$. By Claim 1, there is an index
$i$ with $d_i(v)\le \frac{1}{10}a_i$, and since
$v\notin V_i$ there is a different index $j$
with $d_j(v)<\frac23a_j$. So the total weight of
the neighbors of $v$,
\[
p(N_G(v))=\sum_{k=1}^r d_k(v),
\]
satisfies, by \eqref{eq:two-part-deficit},
\[
p(N_G(v)) 
  \le 1-\frac{9}{10}a_i-\frac13a_j
  \le \left(1-\frac{1}{r}\right)-\frac{1}{10r}.
\]
Note that $X=p(T)=1-p(S)$. 
At the same time, \eqref{eq:s-small} gives
\[
  \left(1-\frac{1}{r}\right)X
  =\left(1-\frac{1}{r}\right)(1-p(S))
  \ge \left(1-\frac{1}{r}\right)- p(S)
  \ge \left(1-\frac{1}{r}\right)-\frac{1}{10r}.
\]
Comparing the last two lines, we get
\begin{equation}\label{eq:S-degree-bound}
   p(N_G(v)) \le \left(1-\frac{1}{r}\right) X
  \quad\text{for every }v\in S.
\end{equation}

 By Claim 3, each $V_i$ is an independent set. Write
$$\Phi_{T}(p) :=2\sum_{uv\in E(G[T])}p_up_v.$$ 
For each $i$, write $b_i:=p(V_i)$, so that
$\sum_i b_i=X$. Then
\begin{align}
\Phi_{T}(p)
  \le 2\sum_{1\le i<j\le r}b_ib_j
  =X^2-\sum_{i=1}^r b_i^2
  \le X^2-\frac{X^2}{r}
   =\left(1-\frac{1}{r}\right) X^2,
  \label{eq:T-contribution}
\end{align}
where the last step uses the Cauchy--Schwarz inequality.

Let
\[ Q_{ST}:=\sum_{\substack{uv\in E(G)\\
    u\in S,\ v\in T}}p_up_v,
  \qquad
  Q_{SS}:=\sum_{uv\in E(G[S])}p_up_v.
\]
Multiplying \eqref{eq:S-degree-bound} by $p_v$
and summing over $v\in S$ gives
\[
  Q_{ST}+2Q_{SS}
  =\sum_{v\in S}p_v\cdot p(N_G(v)) 
  \le p(S) \left(1-\frac{1}{r}\right) X ,
\]
and therefore
\begin{equation}\label{eq:S-total-contribution}
  2Q_{ST}+2Q_{SS}
  \le 2Q_{ST}+4Q_{SS}
  \le 2 p(S) \left(1-\frac{1}{r}\right) X.
\end{equation}
Adding \eqref{eq:T-contribution} and
\eqref{eq:S-total-contribution}, and using $p(S)=1-X$, we get
\begin{align*}
  \Phi_G(\bm{p})
  \le \left(1-\frac{1}{r}\right)
    \bigl(X^2+2X(1-X)\bigr) 
  \le \left(1-\frac{1}{r}\right).
\end{align*}
This proves the lemma.
\end{proof}

\subsection{Completing the proof of Theorem \ref{thm-second-key}}

We now combine Lemma \ref{lem:near-turan-weighted} with the weighted stability theorem of Section \ref{sec:weighted}.

\begin{proof}[{\bf Proof of Theorem
  \ref{thm-second-key}}]
Put $t:=v(F)$. By Lemma
\ref{lem:critical-template}, every $F$-free graph is
$K_r^+[t]$-free. Let $\varepsilon_*,\rho_*,\eta_*$
be the constants \eqref{eq:constants-near} formed
with this value of $t$, and recall
$\rho_*\le \varepsilon_*$. Let
$\eta_1=\eta (F,\rho_*)$ and
$\delta_1=\delta (F,\rho_*)$ be the constants given
by Theorem \ref{thm:weighted-stability} applied
with $\varepsilon :=\rho_*$. We claim that
\[
  \eta (F):=\min \{\eta_1,\ \eta_*\}
\]
has the required property.

Let $G$ be an $F$-free graph and let $\bm{p}$ be a
probability vector on $V(G)$ with
$\lVert \bm{p}\rVert_\infty \le \eta (F)$. 
Suppose for contradiction that
$\Phi_G(\bm{p})>1-\frac1r$. Then we have  
$\Phi_G(\bm{p})\ge 1-\frac1r-\delta_1$ and
$\lVert \bm{p}\rVert_\infty \le \eta_1$, so Theorem
\ref{thm:weighted-stability} supplies an $r$-partition
$V(G)=U_1\cup \cdots \cup U_r$ with
\[
  \Bigl|p(U_i)-\frac1r\Bigr|\le \rho_*
  \le \varepsilon_* \quad (\text{for every}~i\in [r]),
  \qquad
  M(G,\bm{p},U_1,\ldots ,U_r)\le \rho_* .
\]
Since $G$ is $K_r^+[t]$-free and
$\lVert \bm{p}\rVert_\infty \le \eta_*$, Lemma
\ref{lem:near-turan-weighted} applies to this
partition and gives $\Phi_G(\bm{p})\le 1-\frac1r$,
which is a contradiction. Thus, we must have $\Phi_G(\bm{p}) \le 1- \frac{1}{r}$, completing the proof. 
\end{proof}

\section{Applications of the bridge and the weighted theorems}

\label{sec:applications}

The framework of Sections \ref{sec:entropy} -- 
\ref{sec-prove-exact} has three modular parts: the entropy-Perron bridge, the weighted Tur\'{a}n theorem combined with it, 
and the flatness estimate that lets the two meet.
This section replaces each part in turn and derives the five applications announced in the introduction:  
(i) walks are replaced by homomorphism counts of trees (Theorem \ref{thm:spectral-tree});  
(ii) a fixed excess over $1-\frac1r$ in the walk-spectral setting is shown to force many copies of $F$ (Theorem \ref{thm:spectral-supersat});  
(iii) the vertex-spectral stability of Nikiforov is
recovered from the weighted theorems (Proposition
\ref{prop:vertex-stability});  
(iv) the entropic Tur\'an theorem is extended from cliques to color-critical forbidden graphs (Theorem \ref{thm:entropic-turan-critical}); 
(v) the resulting inequalities are compared with the walk-spectral Sidorenko inequalities (Theorem \ref{thm:walk-sidorenko}).

\subsection{Trees in place of walks}

\label{subsec:trees}

As promised, we now prove Theorem \ref{thm:tree-intro} with the explicit threshold.

\begin{theorem}\label{thm:spectral-tree}
Let $F$ be a color-critical graph with $\chi (F)=r+1\ge 3$, let $\eta =\eta (F)$ be the constant of Theorem \ref{thm-second-key}, and
let $T$ be an unbalanced tree on $\ell$ vertices. Put $c=c(T)$ and 
\[
  \lambda_0 :=\Bigl(\bigl(1-\tfrac1r\bigr)^{-1}
    \eta^{-c}\Bigr)^{1/(2c-\ell)}.
\]
Then every $F$-free graph $G$ with
$\lambda (G)\ge \lambda_0$ satisfies
\[
  \lambda^\ell (G)\le
  \Bigl(1-\frac1r\Bigr)\hom (T,G).
\]
\end{theorem}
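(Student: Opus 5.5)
The plan is to reuse the architecture of Theorem \ref{thm:large-lambda-ge4}, with two changes. The walk bridge is replaced by a tree bridge, and the flatness estimate is replaced by a direct lower bound on $\hom (T,G)$ coming from a heavy Perron coordinate, in the spirit of Lemma \ref{lem:large-perron}.

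\emph{Reduction to a connected component.} Suppose $G$ is $F$-free, $\lambda (G)\ge \lambda_0$, and $\lambda^\ell (G)>(1-\frac1r)\hom (T,G)$. Pick a component $C$ with $\lambda (C)=\lambda (G)$. Every homomorphism $T\to C$ is a homomorphism $T\to G$, so $\hom (T,C)\le \hom (T,G)$, and the violation passes to $C$; in particular $C$ has an edge. Let $\bm{x}$ be the positive unit Perron vector of $C$ and put $\pi_v=x_v^2$.

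\emph{Step 1: a tree version of the entropy-Perron bridge.} I first prove $\lambda^\ell (C)\le \Phi_C(\pi )\hom (T,C)$.
\begin{itemize}
\item Root $T$ at any vertex $\rho$ and orient its edges away from $\rho$.
\item Sample $X_\rho \sim \pi$. For each child $b$ of an already sampled vertex $a$, sample $X_b$ from the row $P_{X_a\,\cdot}$ of the transition matrix in Theorem \ref{thm:entropy-bridge}, conditionally independently given $X_a$.
\item Because $(P,\pi )$ is reversible and stationary, every $X_b$ has law $\pi$. Every parent--child pair has law $Q$, so each conditional entropy equals $H(X_2\mid X_1)=\log \lambda$, as computed in the proof of Theorem \ref{thm:entropy-bridge}.
\item The chain rule together with conditional independence along the tree gives $H=H(\pi )+(\ell -1)\log \lambda$.
\item The joint law is supported on homomorphisms $T\to C$, so the uniform bound (E1) gives $H\le \log \hom (T,C)$.
\item Identity \eqref{eq:step4} gives $H(\pi )=D(Q\Vert R)+\log \lambda -\log \Phi_C(\pi )\ge \log \lambda -\log \Phi_C(\pi )$.
\end{itemize}
Combining these yields the tree bridge. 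Together with the violation it forces $\Phi_C(\pi )>1-\frac1r$, and the contrapositive of Theorem \ref{thm-second-key} then gives a vertex $u$ with $x_u^2>\eta$.

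\emph{Step 2: a heavy coordinate forces many homomorphisms.} By Cauchy--Schwarz, $\lambda^2x_u^2=(\sum_{w\sim u}x_w)^2\le d_u\sum_{w\sim u} x_w^2\le d_u$, so $d:=d_u\ge \lambda^2\eta$. The star $K_{1,d}$ centred at $u$ is a subgraph of $C$, so by Lemma \ref{lem:hom-star},
\[
\hom (T,C)\ \ge\ \hom (T,K_{1,d})\ \ge\ d^{\,c}\ \ge\ \eta^{c}\lambda^{2c}.
\]
Since $T$ is unbalanced, $2c-\ell \ge 1$. The condition $\lambda \ge \lambda_0$ says precisely that $\lambda^{2c-\ell}\ge (1-\frac1r)^{-1}\eta^{-c}$, that is, $(1-\frac1r)\eta^c\lambda^{2c}\ge \lambda^\ell$. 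Hence $(1-\frac1r)\hom (T,C)\ge \lambda^\ell (C)$, contradicting the violation. This argument needs no flatness estimate and no gap above $\frac12$, which is why $\chi (F)\ge 3$ suffices and the two cases of Section \ref{sec:main-proof} do not split here.

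\emph{Main obstacle.} The real work is the tree bridge in Step 1. One must check that the branching sampling has marginal $\pi$ at every vertex, with all edge pairs distributed as $Q$, whatever the orientation of each edge relative to the root; this is where reversibility is used. One must also justify that the entropy decomposes into exactly one $\log \lambda$ per edge. That decomposition uses the Markov property on a tree: given its parent, each vertex is independent of all previously sampled vertices. I would verify this by listing the vertices of $T$ in breadth-first order and applying (E2) together with the analogue of (E5) for this order. Everything after that is the short computation in Step 2.
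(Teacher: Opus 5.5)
Your proof is correct and follows the paper's argument. Step 1 is exactly Theorem~\ref{thm-tree-bridge}: the same branching sampling from $\pi$ along the rows of $P$, the same chain-rule count of one $\log\lambda$ per edge, the uniform bound, and identity \eqref{eq:step4}. Step 2 is the computation of Lemma~\ref{lem:flat-tree} (Cauchy--Schwarz at the heavy vertex together with $\hom(T,K_{1,d})\ge d^{c}$), run in contrapositive order. The paper packages it as an upper bound on $\max_v x_v^2$ and then compares that bound with $\eta$; you start from $x_u^2>\eta$ and contradict the violation directly. Your remark that no flatness estimate is needed is therefore only a matter of packaging, and the threshold $\lambda_0$ you obtain is the same.
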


The proof of Theorem \ref{thm:spectral-tree} uses the same three ingredients as the proofs in Section \ref{sec:main-proof}.
One of them is unchanged: the exact weighted bound of Theorem \ref{thm-second-key} involves no walks, and we use it verbatim. 
The other two are replaced by their tree analogues,
an entropy-Perron bridge for arbitrary trees (Theorem \ref{thm-tree-bridge}) and a flatness estimate for unbalanced trees  (Lemma \ref{lem:flat-tree}).

\begin{theorem} 
\label{thm-tree-bridge}
Let $G$ be a connected graph with at least one edge,
let $\bm{x}$ be its positive unit Perron vector, and put
$\pi_v=x_v^2$. Then, for every tree $T$ on $\ell$
vertices,
\[
  \lambda^\ell (G) \le \PhiG(\pi)\,\hom (T,G).
\]
\end{theorem}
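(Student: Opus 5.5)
We generalize the proof of Theorem \ref{thm:entropy-bridge} by replacing the path with the tree $T$ and the Markov chain with a tree-indexed Markov chain driven by the same reversible pair $(P,\pi)$. The uniform bound then gives the count $\hom(T,G)$, and the chain rule again yields one factor $\log\lambda$ per edge of $T$.

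Root $T$ at a vertex $\rho$ and order $V(T)=\{t_1=\rho,t_2,\ldots,t_\ell\}$ so that every $t_k$ with $k\ge2$ has its parent $t_{k'}$ among earlier vertices; this is a breadth-first order. Define random vertices $(Y_{t})_{t\in V(T)}$ of $G$ as follows. Let $Y_\rho\sim\pi$, and given the earlier values, let $Y_{t_k}$ be drawn from the row $P_{Y_{t_{k'}},\cdot}$ of the transition matrix $P_{uv}=a_{uv}x_v/(\lambda x_u)$, independently of everything else given the parent. Since $\pi$ is stationary and $(P,\pi)$ is reversible, an induction shows that every $Y_t$ has law $\pi$. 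For every edge $tt'$ of $T$, the pair $(Y_t,Y_{t'})$ has law $Q_{uv}=a_{uv}x_ux_v/\lambda$, which is symmetric, so the orientation does not matter.

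The support of $\mu_T$, the joint law of $(Y_t)$, is exactly the set of homomorphisms $T\to G$. The positivity of $\bm x$ ensures that every homomorphism has positive probability.

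The entropy computation runs as follows. By the chain rule (E2) together with the conditional independence built into the construction (the analogue of (E5)), we have $H(Y_{t_k}\mid Y_{t_1},\ldots,Y_{t_{k-1}})=H(Y_{t_k}\mid Y_{t_{k'}})$. This equals $-\sum_{u,v}Q_{uv}\log P_{uv}=\log\lambda$ by the same telescoping computation as in Theorem \ref{thm:entropy-bridge}, since both marginals of $Q$ are $\pi$. Hence $H(\mu_T)=H(\pi)+(\ell-1)\log\lambda$. Let $\nu_T$ be uniform on the homomorphisms. Then $\log\hom(T,G)=H(\mu_T)+D(\mu_T\Vert\nu_T)\ge H(\pi)+(\ell-1)\log\lambda$.

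Finally, we reuse identity \eqref{eq:step4}, $H(\pi)=D(Q\Vert R)+\log\lambda-\log\Phi_G(\pi)\ge\log\lambda-\log\Phi_G(\pi)$. Substituting gives
\[
\log\hom(T,G)\ \ge\ \ell\log\lambda-\log\Phi_G(\pi)+D(Q\Vert R)+D(\mu_T\Vert\nu_T),
\]
with equality in fact. Dropping the nonnegative relative entropies yields $\lambda^\ell\le\Phi_G(\pi)\hom(T,G)$. For $\ell=1$ the tree is a single vertex, $\hom(T,G)=n$, and the argument reduces to $H(\pi)\le\log n$ together with \eqref{eq:step4}.

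The only point needing care is the main obstacle: justifying that each conditional entropy reduces to $H(Y_{t_k}\mid Y_{\mathrm{parent}})$ and that every edge pair has law $Q$. Both follow because the construction samples each child from its parent's row alone, so $(Y_{t_1},\ldots,Y_{t_{k-1}})\to Y_{\mathrm{parent}}\to Y_{t_k}$ is Markov. The fact that all marginals equal $\pi$ then follows by induction from stationarity.
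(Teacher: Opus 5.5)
Your proof is correct and follows essentially the same route as the paper. You build a tree-indexed Markov chain driven by the Perron transition matrix $P$, use the chain rule to get $H(\pi)+(\ell-1)\log\lambda$, compare with the uniform distribution on the homomorphisms, and finish with the identity \eqref{eq:step4} and $D(Q\Vert R)\ge 0$. The only differences are cosmetic: the paper reads off the vertex and edge marginals (and root-independence) from an explicit product formula for $\Pr(\phi=f)$, whereas you obtain them by induction from stationarity, and you also record the exact tree analogue of the identity with both relative entropies.
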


\begin{proof}
Keep the notation of the proof of Theorem \ref{thm:entropy-bridge}: $\lambda =\lambda (G)$, $P_{uv}=a_{uv}x_v/(\lambda x_u)$ is a transition matrix with stationary distribution $\pi$, and $Q_{uv}=\pi_uP_{uv}=a_{uv}x_ux_v/\lambda$ is symmetric.
Root $T$ at a vertex $v_0$, and let $\phi: V(T)\to V(G)$ be the random map obtained by drawing $\phi_{v_0}$ from $\pi$ and then, going away from the root, drawing $\phi_j$ from the row $P_{\phi_i, *}$ whenever $i$ is the parent of $j$, written $i=\mathrm{par}(j)$. Thus, for every fixed map $f\colon V(T)\to V(G)$,
\[  \Pr (\phi =f) =\pi_{f(v_0)}\!\!\!
\prod_{{ij\in E(T),\, i=\mathrm{par}(j)}}\!\!\! P_{f(i)f(j)}
=\frac{\prod_{ij\in E(T)}Q_{f(i)f(j)}}{\prod_{i\in V(T)}\pi_{f(i)}^{\,d_T(i)-1}}, \]
by $Q_{uv}=\pi_uP_{uv}$, since each vertex $i$ is the parent of
$d_T(i)-1$ children when $i\ne v_0$, and of $d_T(v_0)$ children
when $i=v_0$. The right-hand side is symmetric in the two ends of every edge, so the distribution of $\phi$ does not depend on the choice of $v_0$. Every vertex of $T$ has marginal $\pi$, and every edge of $T$ has joint distribution $Q$.
Since $\pi$ is positive and $P_{uv}>0$ exactly when $uv\in E(G)$, a map $f$ has positive probability if
and only if it sends every edge of $T$ to an edge of
$G$. Hence, $\supp (\phi)$ is the set of homomorphisms
from $T$ to $G$, and (E1) gives
$\log \hom (T,G)\ge H(\phi)$.

List $V(T)=\{i_1,\ldots ,i_\ell\}$ with $i_1=v_0$ so
that every vertex is preceded by its parent. For
$t\ge 2$, the vertices $i_1,\ldots ,i_{t-1}$ lie
outside the subtree rooted at $i_t$, and their values
are generated without using the draw at $i_t$. Hence,
given the value at the parent of $i_t$, the value
$\phi_{i_t}$ is independent of
$\phi_{i_1},\ldots ,\phi_{i_{t-1}}$. 
As in Lemma \ref{lem:chain-entropy}, 
the chain rule (E2) gives
\[
  H(\phi)=H(\pi)+\sum_{j\ne v_0}
    H\bigl(\phi_j\mid \phi_{\mathrm{par}(j)}\bigr)
  =H(\pi)+(\ell -1)\log \lambda ,
\]
since each edge of $T$ carries the distribution $Q$, and the
computation in the proof of Theorem 
\ref{thm:entropy-bridge} shows that the corresponding
conditional entropy equals $\log \lambda$. 
Combining
this with $\log \hom (T,G)\ge H(\phi)$, the identity
\eqref{eq:step4} and $D(Q\, \Vert \, R)\ge 0$ gives
\[
  \log \hom (T,G)\ge \ell \log \lambda
    -\log \PhiG(\pi),
\]
and exponentiating finishes the proof.
\end{proof}

We also need a new flatness estimate for unbalanced trees.

\begin{lemma}
\label{lem:flat-tree}
Let $T$ be a tree on $\ell$ vertices, let
$c=c(T)$ be the size of the larger side of its
bipartition, and 
let $G$ be a connected graph with unit Perron vector $\bm{x}$. If
$\lambda^{\ell}(G)\ge c_0\hom (T,G)$ for some 
$c_0\in (0,1]$, then 
\begin{equation*}
  \max_{v\in V(G)}x_v^2  \le c_0^{-1/c}
    \lambda (G)^{-(2c-\ell )/c}. 
\end{equation*}
In particular, if $T$ is unbalanced, that is, 
$2c>\ell$, then $\max_{v\in V(G)} x_v^2$ tends to $0$ as $\lambda (G)\to \infty$.
\end{lemma}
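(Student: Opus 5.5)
We need a lower bound $\hom(T,G) \ge \lambda^{2c} x_z^{2c}$ or similar, where $z$ maximizes $x_v$. Combined with $\hom(T,G)\le c_0^{-1}\lambda^\ell$ this gives $x_z^{2c}\le c_0^{-1}\lambda^{\ell-2c}$, which is exactly the claimed bound $x_z^2\le c_0^{-1/c}\lambda^{-(2c-\ell)/c}$.

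The plan mirrors Lemma \ref{lem:flat-odd}: find many homomorphisms from $T$ into the star centred at a vertex $z$ with $x_z=\max_v x_v$. Let $d=d_z$, and let $S$ be the star formed by $z$ and its $d$ neighbours. This is a subgraph of $G$ isomorphic to $K_{1,d}$, so homomorphisms into it are homomorphisms into $G$. By Lemma \ref{lem:hom-star},
\[
  \hom (T,G)\ \ge\ \hom (T,K_{1,d})\ =\ d^{\,c}+d^{\,\ell -c}\ \ge\ d^{\,c}.
\]
Here $c$ is the larger side; sending the smaller side to $z$ and the larger side freely to leaves gives $d^{c}$ maps.

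Next we bound $d$ from below in terms of $\lambda$ and $x_z$. As in the proof of Lemma \ref{lem:large-perron}, the Perron equation at $z$ and Cauchy--Schwarz give
\[
  \lambda^2x_z^2=\Bigl(\sum_{w\sim z}x_w\Bigr)^2\le d\sum_{w\sim z}x_w^2\le d.
\]
Hence $d\ge \lambda^2x_z^2$, and so $\hom (T,G)\ge \lambda^{2c}x_z^{2c}$. With the hypothesis $\hom(T,G)\le c_0^{-1}\lambda^{\ell}$ this yields $x_z^{2c}\le c_0^{-1}\lambda^{\ell-2c}$. Taking $c$-th roots gives the displayed inequality.

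The "in particular" part is immediate: when $2c>\ell$, the exponent $-(2c-\ell)/c$ is negative and $c_0$ is fixed, so the bound tends to $0$ as $\lambda (G)\to\infty$. There is no real obstacle. The only points needing care are that the star bound uses the larger side $c$, not $\ell-c$, and that connectivity guarantees $\lambda>0$ and a well-defined Perron vector, so that $d\ge 1$ when $G$ has an edge. The degenerate edgeless case ($\ell=1$ trivial, or $G$ a single vertex) can be dismissed directly.
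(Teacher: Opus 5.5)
Your proposal is correct and follows essentially the same route as the paper. The paper's proof also takes a vertex maximizing $x_v$, uses Cauchy--Schwarz on the Perron equation to get $d\ge \lambda^2x^2$, and bounds $\hom(T,G)\ge \hom(T,K_{1,d})\ge d^{\,c}$ via Lemma~\ref{lem:hom-star} before combining with the hypothesis.
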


\begin{proof}
Write $\lambda =\lambda (G)$, let $i$ be a vertex
with $x_i=\max_{v} \{x_v\}$, and put $d_i=|N_G(i)|$. Since
$\lVert \bm{x}\rVert =1$, the Cauchy--Schwarz
inequality gives
\[
  \lambda \, x_i =\sum_{j\sim i}x_j
  \le \sqrt{d_i}\Bigl(\sum_{j\sim i}x_j^2\Bigr)^{1/2}
  \le \sqrt{d_i},
\]
so $d_i\ge \lambda^2 x_i^2$. The star $K_{1,d_i}$ is a subgraph of $G$, so
Lemma \ref{lem:hom-star} gives
\[
  \hom (T,G)\ \ge\ \hom (T,K_{1,d_i})
  \ \ge\ d_i^{\,c}\ \ge\ \lambda^{2c} x_i^{2c}.
\]
Together with the hypothesis $\hom (T,G)\le 
c_0^{-1}\lambda^{\ell}$, 
this gives $\lambda^{2c}x_i^{2c} \le c_0^{-1} \lambda^{ \ell}$, as needed. The second
statement follows immediately, since the
exponent $-(2c-\ell )/c$ is negative.
\end{proof}

\begin{remark}
The hypothesis $2c>\ell$ serves to make the exponent of $\lambda$ negative so that $\pi=(x_v^2)_{v\in V(G)}$ is flat, and it also reappears in the proof of Theorem \ref{thm:spectral-tree}, where we take a $(2c - \ell)$-th root. For balanced trees, the
conclusion fails: for $T=P_{2k}$ and  $G=K_{1,d}$, 
we have $\hom (P_{2k},K_{1,d})=2d^{k}$ and
$\lambda^{2k}(K_{1,d})=d^{k}$, so
$\lambda^{2k} (K_{1,d}) \ge \frac12\hom (P_{2k},K_{1,d})$
while $\max_{v\in V(G)} x_v^2=\frac12$ does not decay.
\end{remark}

We are now in a position to prove Theorem \ref{thm:spectral-tree}. 

\begin{proof}[{\bf Proof of Theorem \ref{thm:spectral-tree}}]
Suppose that $\lambda^\ell (G)>(1-\frac1r)\hom (T,G)$. We choose a connected component $C$ of $G$ with $\lambda (C)=\lambda (G)$.
Since $\hom (T,C)\le \hom (T,G)$, we get
$\lambda^\ell (C) >(1-\frac1r)\hom (T,C)$, so $C$ has at least one edge. Let $\bm{x}$ be its unit Perron vector and
$\pi_v=x_v^2$. Theorem \ref{thm-tree-bridge} gives
$\Phi_C(\pi)>1-\frac1r$, so the contrapositive of
Theorem \ref{thm-second-key} yields
$$\max_{v\in V(C) }\pi_v>\eta.$$
 On the other hand, Lemma \ref{lem:flat-tree}
applied to $C$ with $c_0=1-\frac1r$ gives
$$ \max_{v\in V(C)} \pi_v 
 = \max_{v\in V(C)}  x_v^2
 \le c_0^{-1/c}\lambda(C)^{-(2c-\ell)/c} .$$
Combining this with $\max_v \pi_v>\eta$ yields
$\eta^{c}<c_0^{-1}\lambda^{-(2c-\ell)}(C)$, that is,
$\lambda^{2c-\ell} (C)<c_0^{-1}\eta^{-c}$. Since $T$ is unbalanced, i.e., $2c-\ell >0$, we may take the $(2c-\ell)$-th root
and obtain $\lambda (G) = \lambda (C)<\lambda_0$. Thus, every $F$-free graph $G$ with $\lambda^\ell (G)>(1-\frac1r)\hom (T,G)$ must satisfy $\lambda (G) < \lambda_0$.
\end{proof}

\subsection{A unified formula for spectral supersaturation}
\label{subsec:supersat}

Theorem \ref{thm:large-lambda} determines the exact walk-spectral bound for a color-critical forbidden graph, and it was proved by means of the weighted Tur\'{a}n theorem. 
In this subsection, we show that
any fixed relative excess  over $1-\frac1r$ forces at least 
$ \Omega(\lambda (G)^{v(F)})$ copies of the graph $F$. 
We point out that color-criticality is no longer needed: $F$ may be an
arbitrary graph with $\chi (F)=r+1\ge 3$.

\begin{theorem}[Walk-spectral supersaturation]
\label{thm:spectral-supersat}
Let $F$ be an arbitrary graph with $\chi (F)=r+1\ge 3$ and $f=v(F)$, and let $\varepsilon >0$. There exist $\lambda_0=\lambda_0(F,\varepsilon )$ and
$c=c(F,\varepsilon )>0$ such that if $G$ is a graph with
$\lambda (G)\ge \lambda_0$ and 
\[
  \lambda^{\ell}(G)\ \ge\ \Bigl(1-\frac1r+\varepsilon \Bigr)w_\ell (G)
\]
for some integer $\ell \ge 1$, then $G$ contains at least
$c\,\lambda (G)^{f}$ copies of $F$.
\end{theorem}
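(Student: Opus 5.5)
The plan is to run the same three-ingredient scheme as in Theorem \ref{thm:large-lambda-ge4}, but with the weighted supersaturation theorem (Theorem \ref{thm:weighted-supersat}) in place of the exact weighted bound. First I reduce to a component. Choose a component $C$ of $G$ with $\lambda(C)=\lambda(G)$. Since $w_\ell(C)\le w_\ell(G)$, we still have $\lambda^\ell(C)\ge (1-\frac1r+\varepsilon)w_\ell(C)$. Copies of $F$ in $C$ are copies in $G$, so it suffices to treat $C$. We may also assume $\varepsilon$ is small, say $\varepsilon\le \frac1{20}$, since shrinking $\varepsilon$ only weakens the hypothesis. Let $\bm{x}$ be the positive unit Perron vector of $C$ and $\pi_v=x_v^2$. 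Theorem \ref{thm:entropy-bridge} gives $\Phi_C(\pi)\ge \lambda^\ell(C)/w_\ell(C)\ge 1-\frac1r+\varepsilon$.

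Next I need flatness of $\pi$. Since $r\ge 2$, we have $1-\frac1r+\varepsilon\ge \frac12+\varepsilon$. Applying Lemma \ref{lem:perron-flatness} with $\delta:=\min\{\varepsilon,\frac1{20}\}$ gives $\max_v\pi_v<\delta^{-8}\lambda(C)^{-1}$. This step needs no parity case split, unlike the exact theorem, because the fixed excess $\varepsilon$ supplies the gap even when $r=2$. Let $\eta=\eta(F,\varepsilon)$ and $c'=c(F,\varepsilon)$ be the constants from Theorem \ref{thm:weighted-supersat}, and set $\lambda_0:=\delta^{-8}\eta^{-1}$. Then $\lambda(C)\ge\lambda_0$ forces $\lVert\pi\rVert_\infty\le\eta$, and Theorem \ref{thm:weighted-supersat} yields
\[
N_F(C,\pi)=\sum_{F^*}\prod_{v\in V(F^*)}\pi_v\ \ge\ c'.
\]

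Finally I convert this weighted count into an actual count, which is the main step. Each term in the sum is a product of $f$ coordinates, each at most $\max_v\pi_v<\delta^{-8}\lambda(C)^{-1}$. Hence every copy contributes at most $\delta^{-8f}\lambda(C)^{-f}$, and so
\[
N_F(C)\ \ge\ c'\,\delta^{8f}\,\lambda(C)^{f}.
\]
This gives the theorem with $c:=c'\delta^{8f}$ and $\lambda_0$ as above. Both constants are independent of $\ell$, because Lemma \ref{lem:perron-flatness} holds uniformly in $\ell$.

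The delicate points are that the flatness bound must be of order exactly $\lambda^{-1}$, so that the exponent $f$ comes out right, and that $\varepsilon$ must be kept below $0.1$ so that Lemma \ref{lem:perron-flatness} applies. Both are handled by the choice of $\delta$.
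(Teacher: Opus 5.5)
Your proposal is correct and follows the paper's proof essentially step for step. The shared steps are: passing to a component with $\lambda(C)=\lambda(G)$, the entropy--Perron bridge, Lemma~\ref{lem:perron-flatness} with $\delta=\min\{\varepsilon,\frac1{20}\}$, Theorem~\ref{thm:weighted-supersat}, and the conversion $N_F(C)\ge N_F(C,\pi)\,(\delta^{8}\lambda(C))^{f}$, with the same constants $\lambda_0=\delta^{-8}\eta^{-1}$ and $c=\delta^{8f}c'$. Your preliminary reduction to $\varepsilon\le\frac1{20}$ is redundant, since the choice of $\delta$ already handles this.
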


The case $\ell =1$ extends a result of Bollob\'{a}s and Nikiforov \cite{BN2007}, who proved the supersaturation for cliques.  
The case $\ell =2$ extends a recent result of Fang, Lin and Zhai \cite[Theorem 1.2]{FLZ2026}.

\begin{proof}
Let $\eta =\eta (F,\varepsilon )$ and $c_0=c_0(F,\varepsilon )$ be the constants
of Theorem \ref{thm:weighted-supersat}. 
Put
$\delta :=\min\{\varepsilon ,\tfrac1{20}\}$, 
and set
$\lambda_0:=\delta^{-8} /\eta$ and $c:=\delta^{8f}c_0$. Let $C$ be a connected component of $G$
with $\lambda (C)=\lambda (G)$. Since
$\lambda (C)=\lambda (G)\ge \lambda_0>0$, the component $C$ has an
edge. As $w_\ell (C)\le w_\ell (G)$, we get
$\lambda^{\ell}(C)\ge (1-\frac1r+\varepsilon )w_\ell (C)$. Let $\bm{x}=(x_v)_{v\in V(C)}$ be
the positive unit Perron vector of $C$, and let $\pi_v=x_v^2$ for every $v\in V(C)$. 
The entropy-Perron bridge in Theorem
\ref{thm:bridge-intro} gives
\[
  \Phi_C(\pi )\ \ge\ \frac{\lambda^{\ell}(C)}{w_\ell (C)}
  \ \ge\ 1-\frac1r+\varepsilon .
\] 
Since $1-\frac1r+\varepsilon \ge \frac12+\delta$ and
$\delta \in (0,0.1)$, the flatness estimate of Lemma \ref{lem:perron-flatness} yields
$$\max_{v\in V(C)}\pi_v< \delta^{-8}\lambda (C)^{-1} \le \delta^{-8}\lambda_0^{-1}=\eta.$$
 Applying Theorem
\ref{thm:weighted-supersat} to $C$ with the probability vector $\pi$, we get 
$N_F(C,\pi )\ge c_0$. Finally, every copy $F^*$ of $F$ in $C$ satisfies
$\prod_{v\in V(F^*)}\pi_v\le \bigl(\max_{v\in V(C)}\pi_v\bigr)^{f}
 \le \bigl(\delta^{-8}\lambda (C)^{-1}\bigr)^{f}$. Hence
\[
  N_F(G)\ \ge\ N_F(C)\ \ge\
  N_F(C,\pi )\left(\delta^8{\lambda (C)}\right)^{f}
  \ \ge\ \delta^{8f}c_0  \cdot \lambda (G)^{f},
\]
as required.
\end{proof}

\begin{remark}
This bound is tight up to a constant factor, that is, exponent $f$ is best possible. The graph $G=K_{r+1}[s]$ is
$rs$-regular on $(r+1)s$ vertices, so $\lambda (G)=rs$ and
$\lambda^{\ell}(G)/w_\ell (G)=\frac{r}{r+1}
=1-\frac1r+\frac{1}{r(r+1)}$ for every $\ell \ge 1$, while
$N_F(G)\le ((r+1)s)^{f}=(\frac{r+1}{r})^{f}\lambda (G)^{f}$.
Thus, for every $\varepsilon \le \frac{1}{r(r+1)}$, the hypothesis is satisfied
for every $\ell$. Thus, the conclusion is tight up to the value of $c$. 
\end{remark}

\subsection{Spectral stability via the weighted theorems}
\label{subsec:stability-app}

Supersaturation and stability are the two companions
of Tur\'an's theorem, and the weighted framework supplies both.
The weighted theorems of this paper 
allow any probability vector, 
and already the Perron weight  
$\pi$ suffices to recover Nikiforov's vertex-spectral stability theorem \cite{Niki2009jgt}.

\begin{proposition}\label{prop:vertex-stability}
Let $F$ be a graph with $\chi (F)=r+1\ge 3$.
For every $\varepsilon >0$, there are
$\delta =\delta (F,\varepsilon )>0$ and
$n_2=n_2(F,\varepsilon )$ such that 
every $F$-free graph $G$ on $n\ge n_2$ vertices with
$\lambda (G)\ge \bigl(1-\frac1r-\delta \bigr)n$ satisfies
$e(G)\ge \bigl(1-\frac1r-\varepsilon \bigr)\frac{n^{2}}{2}$  and differs from the Tur\'{a}n graph $T_{n,r}$ in at most $\varepsilon n^{2}$ edges.
\end{proposition}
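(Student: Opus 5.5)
Our plan is to feed the Perron weight into the weighted stability theorem and then convert the resulting weighted partition into a counting statement using the uniform weight. Let $\bm{x}$ be a nonnegative unit Perron vector of $G$. We work with $\bm{p}:=\bm{x}/\lVert\bm{x}\rVert_1$, as in Corollary \ref{cor:perron-wilf}, rather than with $\pi$: it satisfies $\Phi_G(\bm{p})=\lambda/\lVert\bm{x}\rVert_1^2\ge \lambda/n\ge 1-\frac1r-\delta$ and $\pmax\le 1/(1+\lambda)\le 2/n$. Thus for $n$ large, Theorem \ref{thm:weighted-stability} (with a small parameter $\varepsilon_1$) applies and yields a partition $U_1\cup\cdots\cup U_r$ with $p(U_i)=\frac1r\pm\varepsilon_1$ and weighted missing cross-weight $M\le\varepsilon_1$.

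The key step is to show that $\bm{p}$ is close to uniform in $\ell_1$. From $\lambda\le \lVert\bm{x}\rVert_1^2\,\Phi_G(\bm p)$ and Theorem \ref{thm-weighted-ESS} (applied with $\pmax\to0$), we get $\Phi_G(\bm p)\le 1-\frac1r+o(1)$. Hence $\lVert\bm{x}\rVert_1^2\ge (1-\frac1r-\delta)n/(1-\frac1r+o(1))\ge (1-O(\delta)-o(1))n$. Since $\lVert\bm{x}\rVert_1^2=n-\sum_{u<v}(x_u-x_v)^2$, we have $n\lVert\bm x/\lVert\bm x\rVert_1... \rVert$; more directly, $\sum_v (x_v-\bar x)^2=1-\lVert\bm{x}\rVert_1^2/n=O(\delta)+o(1)$ with $\bar x=\lVert \bm x\rVert_1/n$. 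Then Cauchy--Schwarz gives
\[
\sum_v |p_v-\tfrac1n| = \lVert\bm x\rVert_1^{-1}\sum_v|x_v-\bar x| \le \lVert\bm x\rVert_1^{-1}\sqrt{n}\,\sqrt{O(\delta)+o(1)} = O(\sqrt{\delta})+o(1).
\]
Let $\bm{u}$ denote the uniform vector. Then $|u(U_i)-p(U_i)|$ is small. For the missing cross-pairs, we pass from $\bm p$ to $\bm u$ via $\sum_{\text{pairs}}|p_ap_b-u_au_b|\le 2\lVert\bm p-\bm u\rVert_1$, exactly as in \eqref{eq:rat}. This gives at most $(\varepsilon_1+O(\sqrt\delta)+o(1))n^2$ missing cross-pairs, and part sizes $|U_i|=(\frac1r\pm\varepsilon')n$.

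Finally, $e(G)$ is at least the number of cross-pairs minus the missing ones. Since the part sizes are nearly balanced, the number of cross-pairs is $(1-\frac1r-O(\varepsilon'))\frac{n^2}{2}$, which gives the edge bound. For the edit distance, we compare with the complete $r$-partite graph on the $U_i$: the missing cross-pairs cost $o$-small, and the edges inside parts number at most $e(G)-(\text{cross edges})$. Here $e(G)\le(1-\frac1r+o(1))\frac{n^2}{2}$ by Theorem \ref{thm:ess}, so there are at most $O(\varepsilon')n^2$ inside edges. Rebalancing the parts to exact Tur\'an sizes moves $O(\varepsilon' n)$ vertices and costs $O(\varepsilon' n^2)$ edges. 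Choosing $\varepsilon_1,\delta$ small relative to $\varepsilon$ completes the argument.

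The main obstacle is the transfer from the Perron weight to the uniform weight. The estimate $\lVert\bm{x}\rVert_1^2\approx n$ hinges on the asymptotic upper bound $\Phi_G(\bm p)\le 1-\frac1r+o(1)$. This requires verifying that $\pmax\le 1/(1+\lambda)$ is small, which holds because $\lambda\ge (1-\frac1r-\delta)n\to\infty$.
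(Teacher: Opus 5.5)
Your argument is correct, but it follows a different route from the paper's.

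\emph{The paper's route.} It works with $\pi_v=x_v^2$ and gets flatness from $\lambda x_u\le\sqrt{n}$. Theorem~\ref{thm-weighted-ESS} then gives $\Phi_G(\pi)\le\kappa+\beta/4$. The Cauchy--Schwarz form of the bridge, $\lambda^2\le\Phi_G(\pi)\cdot 2m$, converts this into $e(G)\ge(\kappa-\beta)\frac{n^2}{2}$. The partition is then obtained from the classical Erd\H{o}s--Simonovits theorem (Theorem~\ref{thm:stability}) applied directly to $G$. So no weighted partition ever has to be transported back to uniform weights.

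\emph{Your route.} You use $\bm p=\bm x/\lVert\bm x\rVert_1$, for which $\Phi_G(\bm p)=\lambda/\lVert\bm x\rVert_1^2$ is an identity. Theorem~\ref{thm-weighted-ESS} then yields the stronger structural fact $\lVert\bm x\rVert_1^2\ge(1-O(\delta+\beta))n$, that is, $\lVert\bm p-\bm u\rVert_1=O(\sqrt{\delta+\beta})$: the Perron vector is nearly constant in $\ell_1$. This near-uniformity is what lets you transport the partition from Theorem~\ref{thm:weighted-stability} to vertex and pair counts. Your route is also the one that actually realizes the introduction's claim that weighted stability yields vertex-spectral stability; the paper's own proof never uses Theorem~\ref{thm:weighted-stability}.

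\emph{Where your route could be shortened.} Once $\lVert\bm p-\bm u\rVert_1$ is small, the bound $\Phi_G(\bm u)\ge\Phi_G(\bm p)-2\lVert\bm p-\bm u\rVert_1$ already gives $e(G)\ge(\kappa-\delta-O(\sqrt{\delta+\beta}))\frac{n^2}{2}$. You could then finish with Theorem~\ref{thm:stability} exactly as the paper does, so the weighted stability step is not essential. Likewise, the upper bound on $e(G)$ needs no Erd\H{o}s--Stone--Simonovits, since $2e(G)/n\le\lambda=\lVert\bm x\rVert_1^2\Phi_G(\bm p)\le(\kappa+\beta)n$.

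\emph{Cosmetic slips.}
\begin{itemize}
\item The half-sentence just before ``more directly'' is garbled.
\item The bound $1/(1+\lambda)\le 2/n$ can fail for $r=2$ once $\delta n>1$. Use $3/n$ (valid for $\delta\le\frac16$), or simply $O(1/n)$, which is all you need.
\end{itemize}
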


\begin{proof}
We may assume $\varepsilon \le 0.1$ and write
$\kappa :=1-\frac1r\ge \frac12$ and $m:=e(G)$. Put
$\alpha :=\frac{\varepsilon}{2r+3}$, let $\theta$ and $N_1$
be the constants of Theorem \ref{thm:stability} for the
parameter $\alpha$, and set
$\beta :=\min \{\alpha ,\theta \}$ and $\delta :=\beta /8$.
Finally, let $\eta =\eta (F,\beta /4)$ be the constant of
Theorem \ref{thm-weighted-ESS}, and let $n_2$ exceed
$8/\eta$, $r/\alpha$ and $N_1$.

Let $\bm{x}$ be a nonnegative unit Perron vector of $G$ and
$\pi_v=x_v^{2}$. Since $\lambda x_u=\sum_{v\sim u}x_v\le
\sqrt{d_u}(\sum_{v\sim u}x_v^{2})^{1/2}\le \sqrt n$ and
$\lambda \ge (\kappa -\delta )n\ge \frac38 n$, we get
$\lVert \pi \rVert_\infty \le n/\lambda^{2}\le 8/n\le \eta$.
Hence Theorem \ref{thm-weighted-ESS} gives
$\Phi_G(\pi )\le \kappa +\frac{\beta}{4}$, while the
Cauchy--Schwarz inequality (or Theorem
\ref{thm:bridge-intro}) gives
\[
  \lambda^{2}=\bigg(\sum_{uv\in E(G)}2x_ux_v\bigg)^{2}
  \le 2m \cdot \sum_{uv\in E(G)}2x_u^{2}x_v^{2}
  =\Phi_G(\pi )\cdot 2m
  \ \le\ \Bigl(\kappa +\frac{\beta}{4}\Bigr)2m .
\]
We read this display in both directions. Together with
$\lambda \ge (\kappa -\delta )n$ it gives
\[
  m\ \ge\ \frac{(\kappa -\delta )^{2}}{\kappa +\beta /4}
  \,\frac{n^{2}}{2}\ \ge\ (\kappa -\beta ) \frac{n^{2}}{2}
  \ \ge\ \Bigl(1-\frac1r-\varepsilon \Bigr) \frac{n^{2}}{2},
\]
where the second inequality holds by $(\kappa -\delta )^{2}\ge \kappa^{2}-2\delta
=\kappa^{2}-\frac{\beta}{4}$ and $(\kappa +\frac{\beta}{4}) (\kappa -\beta ) \le \kappa^{2}-\frac38\beta$; this
is the first assertion. Together with $\lambda \ge 2m/n$, it
gives $4m^{2}/n^{2}\le \bigl(\kappa +\frac{\beta}{4}
\bigr)2m$, that is,
\[
  m\ \le\ \Bigl(\kappa +\frac{\beta}{4}\Bigr)
  \frac{n^{2}}{2}.
\]

As $\beta \le \theta$, Theorem \ref{thm:stability} applies
with parameter $\alpha$ and supplies a partition
$V(G)=B_1\cup \cdots \cup B_r$ with
$\bigl\lvert |B_i|-\frac nr\bigr\rvert \le \alpha n$ and
with at most $\alpha n^{2}$ non-adjacent cross-pairs. Let
$K$ be the complete $r$-partite graph on $V(G)$ with these
parts, so that $|E(K)\setminus E(G)|\le \alpha n^{2}$. From
$\sum_{i=1}^r |B_i|^{2}\le \frac{n^{2}}{r}+r\alpha^{2}n^{2}$, 
we get $e(K)\ge \kappa \frac{n^{2}}{2}
-\frac{r\alpha^{2}}{2}n^{2}$. 
Hence, the edges of $G$
inside the parts number
\[
  |E(G)\setminus E(K)|\le m-e(K)+|E(K)\setminus E(G)|
  \le \Bigl(\frac{\beta}{8}+\frac{r\alpha^{2}}{2}
  +\alpha \Bigr)n^{2}\le 2\alpha n^{2},
\]
because $\beta \le \alpha$ and $r\alpha \le 1$. Thus
$|E(G)\, \triangle \, E(K)|\le 3\alpha n^{2}$. Finally, moving at most $r(\alpha n+1)\le 2r\alpha n$ vertices between the parts turns $K$ into the 
Tur\'{a}n graph $T_{n,r}$, 
and each move changes at most $n$ pairs. Therefore, we have 
$|E(G)\, \triangle \, E(T_{n,r})|\le 3\alpha n^{2}+2r\alpha n^{2} =\varepsilon n^{2}$.
\end{proof}

As a further application, we give a new proof of the
edge-spectral stability theorem of Li, Liu and Zhang
\cite[Theorem 1.6]{LLZ-ESS} through the weighted stability. The proof uses no spectral
stability argument: its only spectral inputs are the
$\ell =2$ case of Theorem \ref{thm:bridge-intro} 
and the flatness estimate of Lemma
\ref{lem:perron-flatness}. 
 Indeed, if $G$ is an $F$-free graph with 
  $\lambda^{2}(G)\ge (1-\frac1r-\delta )\,2m$, then Theorem
\ref{thm:bridge-intro} gives $\Phi_G(\pi )\ge \lambda^2(G)/ w_2(G)\ge 1-\frac1r-\delta$ with the probability vector $\pi_v=x_v^{2}$. 
Lemma \ref{lem:perron-flatness} makes $\pi$
flat, so $\lVert \pi \rVert_{\infty} \le \eta$ and Theorem \ref{thm:weighted-stability} yields a partition
$V(G)=U_1\cup \cdots \cup U_r$ whose parts and missing cross-pairs are controlled in weight. 
What remains is the transition from vertex weights to edge counts. This transition is somewhat technical, and we defer the detailed deduction to Appendix \ref{sec:appendix-stability}.

\subsection{Entropic Tur\'{a}n theorem for color-critical graphs}

\label{subsec:entropic-turan}

The three applications above all 
run from entropy bridge to spectral extremal results. 
This subsection runs in a different direction: 
instead of using entropy to prove
the spectral inequalities, 
we use Theorem \ref{thm-second-key} to prove an entropic result.  
The comparison carried out in Remark \ref{rem:compare-CY} used no
property of the Perron pair beyond the fact that its two marginals coincide. 
Running it for an arbitrary symmetric pair extends the entropic Tur\'{a}n
theorem of Chao--Yu \eqref{eq-Chao-Yu-entropic} from $K_{r+1}$-free graphs to $F$-free graphs with $F$ color-critical, 
at the cost of a flatness hypothesis on the marginal. 
As in Section \ref{sec:entropy}, we write 
$E^{*}(G)=\{(u,v):uv\in E(G)\}$ for the set of ordered edges of $G$.

\begin{theorem}\label{thm:entropic-turan-critical}
For every color-critical graph $F$ 
with $\chi (F)=r+1\ge 3$, 
there exists $\eta (F) >0$ such that if $G$ is an
$F$-free graph, $(X,Y)$ is a random symmetric pair of vertices of $G$ with $XY\in E(G)$, and the distribution $\bm{p}$ of $X$ satisfies $\pmax \le \eta (F)$, then
\[
  H(X,Y)\ \le\ 2H(X)+\log \Bigl(1-\frac1r\Bigr),
\]
with equality if and only if $\Phi_G(\bm{p})=1-\frac1r$ and
\[ \Pr \bigl((X,Y)=(u,v)\bigr)
   =\frac{p_up_v}{1-\frac1r}
   \qquad \text{for every } (u,v)\in E^{*}(G). \] 
\end{theorem}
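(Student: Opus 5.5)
The plan is to take $\eta(F)$ to be the constant of Theorem \ref{thm-second-key} and to run the Gibbs comparison of Remark \ref{rem:compare-CY} for the given symmetric pair rather than for the Perron pair. Write $q_{uv}:=\Pr\bigl((X,Y)=(u,v)\bigr)$. This is a probability distribution supported on $E^{*}(G)$, and by symmetry both of its marginals equal $\bm{p}$.

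First, $\Phi_G(\bm{p})>0$. Indeed, if $(u,v)$ is in the support of $q$, then $p_u,p_v>0$ and $uv\in E(G)$. So we may define the reference distribution $R_{uv}:=a_{uv}p_up_v/\Phi_G(\bm{p})$ on ordered pairs; its total mass is $1$ because $\Phi_G(\bm{p})=\sum_{u,v}a_{uv}p_up_v$. Whenever $q_{uv}>0$ we also have $R_{uv}>0$, so $D(q\,\Vert\,R)$ is finite. Expanding it gives
\[
D(q\,\Vert\,R)=-H(X,Y)-\sum_{(u,v)}q_{uv}\log p_u-\sum_{(u,v)}q_{uv}\log p_v+\log\Phi_G(\bm{p}).
\]
Since both marginals of $q$ are $\bm{p}$, each of the two middle sums equals $-H(X)$. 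Hence
\[
D(q\,\Vert\,R)=2H(X)-H(X,Y)+\log\Phi_G(\bm{p}).
\]

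By Gibbs' inequality (Lemma \ref{lem-Gibb}), this gives $H(X,Y)\le 2H(X)+\log\Phi_G(\bm{p})$. Since $F$ is color-critical and $\pmax\le\eta(F)$, Theorem \ref{thm-second-key} gives $\Phi_G(\bm{p})\le 1-\frac1r$. Because $\log$ is monotone, the bound $H(X,Y)\le 2H(X)+\log(1-\frac1r)$ follows.

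For the equality case, equality holds if and only if both steps are tight. The Gibbs step is tight exactly when $D(q\,\Vert\,R)=0$, i.e. $q=R$. The second step is tight exactly when $\Phi_G(\bm{p})=1-\frac1r$. Together these say $q_{uv}=p_up_v/(1-\frac1r)$ on $E^{*}(G)$, which is the stated condition. For the converse, if $\Phi_G(\bm{p})=1-\frac1r$ and $q$ has this form, then $q=R$, the relative entropy vanishes, and the identity above gives equality.

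The only point needing care is the bookkeeping of supports. Pairs $(u,v)$ with $p_u=0$ carry no mass under $q$, so they contribute nothing to the logarithmic sums and nothing to $\Phi_G(\bm{p})$. I expect no genuine obstacle here: all the difficulty lies in Theorem \ref{thm-second-key}, which is assumed.
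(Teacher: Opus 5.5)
Your proposal is correct and follows the paper's proof essentially step for step: the same choice of $\eta(F)$ from Theorem~\ref{thm-second-key}, the same reference distribution $R_{uv}=a_{uv}p_up_v/\Phi_G(\bm{p})$, the identity $D(q\,\Vert\,R)=2H(X)-H(X,Y)+\log\Phi_G(\bm{p})$, and the equality analysis via $q=R$ together with $\Phi_G(\bm{p})=1-\frac1r$. Your handling of supports, including $\Phi_G(\bm{p})>0$ and finiteness of $D(q\,\Vert\,R)$, matches the paper's bookkeeping.
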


\begin{proof}
Let $\eta (F)>0$ be the constant from Theorem \ref{thm-second-key}. 
We may assume that $E(G)$ is non-empty. 
By symmetry, $Y$ has distribution $\bm{p}$ as well. 
Let $Q_{uv}:=\Pr \big( (X,Y)=(u,v) \big)$, which is a probability distribution supported on the ordered edge set $E^{*}(G)$. We define
\[
  R_{uv}:=\frac{a_{uv}\,p_up_v}{\Phi_G(\bm{p})}\qquad
  \text{for every}\quad (u,v)\in E^{*}(G) ,
\]
which is a probability distribution on $E^{*}(G)$ because
$\sum_{u,v}a_{uv}p_up_v=\Phi_G(\bm{p})$. 
If $Q_{uv}>0$, then $p_u,p_v>0$
and hence $R_{uv}>0$, so $D(Q\, \Vert \, R)$ is defined. Since both marginals of $Q$
equal $\bm{p}$,
\begin{align*}
  D(Q\, \Vert \, R)
  &=\sum_{(u,v)\in E^{*}(G)}Q_{uv}
      \log \frac{Q_{uv}\,\Phi_G(\bm{p})}{p_up_v}\\
  &=-H(X,Y)+\log \Phi_G(\bm{p})
    -\sum_{(u,v)\in E^{*}(G)}Q_{uv}\log p_u
    -\sum_{(u,v)\in E^{*}(G)}Q_{uv}\log p_v\\
  &=2H(X)-H(X,Y)+\log \Phi_G(\bm{p}).
\end{align*}
By Gibbs' inequality (Lemma \ref{lem-Gibb}), the left-hand side is nonnegative,
so $H(X,Y)-2H(X)\le \log \Phi_G(\bm{p})$. 
 Now Theorem \ref{thm-second-key} gives 
 $\Phi_G(\bm{p})\le 1-\frac1r$, as desired. 
Moreover, equality forces $Q=R$ and
$\Phi_G(\bm{p})=1-\frac1r$. 
Conversely, these two conditions give equality by a direct computation. 
\end{proof}

\begin{remark}\label{rem:entropic-necessity}
The bound is best possible. 
Let $t\ge 1/(r\,\eta (F))$ and let $G=K_r[t]$. 
Take $\bm{p}$ uniform on $V(G)$ and
$(X,Y)$ uniform on $E^{*}(G)$,  
we see that $\pmax =1/(rt)\le \eta (F)$,
$H(X,Y)=\log \bigl(r(r-1)t^{2}\bigr)$ and $H(X)=\log (rt)$, so equality holds. As in Remark \ref{rem:necessity}, 
the flatness hypothesis $\pmax \! \le  \eta$ cannot be omitted. 
 Let $F$ be color-critical with $\chi (F)=r+1$ 
 and with $f \ge r+2$ vertices, 
and let $G=K_{f -1}$, which is $F$-free. 
With both $\bm{p}$ and $(X,Y)$ being uniform, 
we get  $H(X,Y)-2H(X)=\log (1-\frac{1}{f -1} )>\log  (1-\frac1r )$. Hence, the conclusion fails, and 
$\eta (F)\le \frac{1}{f -1}$ in
Theorem \ref{thm:entropic-turan-critical}. \end{remark}

Combining Theorem \ref{thm:entropic-turan-critical} 
with Theorem \ref{thm:entropy-bridge} yields an entropic refinement of Corollary \ref{cor:perron-wilf}.

\begin{corollary}\label{cor:entropic-perron-wilf}
Let $F$ be a color-critical graph with $\chi (F)=r+1\ge 3$, and let $G$ be a
connected $F$-free graph with at least one edge, with unit Perron vector
$\bm{x}$ and $\pi_v=x_v^{2}$. If $\lVert \pi \rVert_\infty \le \eta (F)$, then
\[
  \lambda (G)\ \le\ \Bigl(1-\frac1r\Bigr)2^{H(\pi )}.
\]
\end{corollary}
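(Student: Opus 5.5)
The plan is to chain the entropic Tur\'an theorem just proved with the entropy-Perron bridge, applied to the Perron pair itself. Since $G$ is connected with at least one edge, the Perron vector $\bm{x}$ is positive. Consider the Markov chain of Theorem \ref{thm:entropy-bridge}, with $P_{uv}=a_{uv}x_v/(\lambda x_u)$. Let $(X,Y)=(X_1,X_2)$ be its first two steps. Then $\Pr((X,Y)=(u,v))=Q_{uv}=a_{uv}x_ux_v/\lambda$. This pair is symmetric, it is supported on $E^{*}(G)$, and both marginals equal $\pi$.

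Two entropy facts from the proof of Theorem \ref{thm:entropy-bridge} give all the entropies. First, $H(X)=H(\pi)$. Second, $H(Y\mid X)=\log\lambda$. Hence $H(X,Y)=H(\pi)+\log\lambda$.

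Next we feed this pair into Theorem \ref{thm:entropic-turan-critical}. The distribution of $X$ is $\pi$, and the hypothesis $\lVert\pi\rVert_\infty\le\eta(F)$ is exactly its flatness assumption (taking the same constant $\eta(F)$, namely the one from Theorem \ref{thm-second-key}). The theorem gives
\[
H(\pi)+\log\lambda \;=\; H(X,Y)\;\le\; 2H(\pi)+\log\Bigl(1-\frac1r\Bigr).
\]
Rearranging gives $\log\lambda\le H(\pi)+\log(1-\frac1r)$, and exponentiating gives $\lambda(G)\le(1-\frac1r)2^{H(\pi)}$.

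Alternatively, one can avoid the entropic theorem. By \eqref{eq:step4} and Gibbs' inequality, $\log\lambda\le H(\pi)+\log\Phi_G(\pi)$. Then Theorem \ref{thm-second-key} gives $\Phi_G(\pi)\le 1-\frac1r$, which yields the same bound.

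There is no real obstacle here. The only points to check are that the Perron pair is a legitimate symmetric edge-supported pair and that the constant $\eta(F)$ in the corollary is the one inherited from Theorem \ref{thm-second-key}. To call this a refinement of Corollary \ref{cor:perron-wilf}, one would also note that $2^{H(\pi)}\le n$, with no further work needed.
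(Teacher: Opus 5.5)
Your proof is correct and is essentially the paper's argument: the Perron pair $\Pr((X,Y)=(u,v))=a_{uv}x_ux_v/\lambda$ is symmetric with marginals $\pi$ and $H(Y\mid X)=\log\lambda$, so Theorem~\ref{thm:entropic-turan-critical} gives the bound at once. Your alternative via \eqref{eq:step4}, Gibbs' inequality and Theorem~\ref{thm-second-key} is that theorem's proof unpacked for the Perron pair. One correction to your closing aside: $2^{H(\pi)}\le n$ only compares the result with Wilf's bound, whereas refining Corollary~\ref{cor:perron-wilf} needs $2^{H(\pi)}\le\lVert\bm{x}\rVert_1^{2}$, which follows from Jensen's inequality as in Remark~\ref{rem:entropic-vs-wilf}.
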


\begin{proof}
Let $(X,Y)$ be the random pair of vertices with 
$\Pr ((X,Y)=(u,v))= {a_{uv}x_ux_v}/{\lambda (G)}$. 
It is symmetric, both of its marginals equal $\pi$. 
The proof of Theorem
\ref{thm:entropy-bridge} shows that $H(Y| X)=\log \lambda (G)$, so $H(X,Y)=H(\pi )+\log \lambda (G)$. Theorem \ref{thm:entropic-turan-critical} now 
 gives $H(\pi )+\log \lambda (G)\le 2H(\pi )+\log (1-\frac1r)$.
\end{proof}

\begin{remark}\label{rem:entropic-vs-wilf}
Applying Jensen's inequality gives $\sum_v x_v^{2}\log \frac{1}{x_v}\le \log \sum_v x_v$, that is,
$2^{H(\pi )}\le \lVert \bm{x}\rVert_1^{2}$, with equality if and only if
$\bm{x}$ is constant. Together with
$\lVert \bm{x}\rVert_1^{2}\le n $, this gives the chain
\[
  \lambda (G)\ \le\ \Bigl(1-\frac1r\Bigr)2^{H(\pi )}
  \ \le\ \Bigl(1-\frac1r\Bigr)\lVert \bm{x}\rVert_1^{2}
  \ \le\ \Bigl(1-\frac1r\Bigr)n . 
\]
The equality cases of Corollary \ref{cor:entropic-perron-wilf} are richer than those of 
Corollary \ref{cor:perron-wilf}: for $r=2$,  
every $K_{a,b}$ attains equality, whereas $K_{a,b}$ attains
equality in Corollary \ref{cor:perron-wilf} only when $a=b$.
\end{remark}

\subsection{The walk-spectral Sidorenko inequalities}
\label{subsec:sidorenko}

Theorem \ref{thm:spectral-tree} can be read as a lower bound on a homomorphism count in terms of the spectral radius under a Tur\'{a}n-type hypothesis: if $F$ is color-critical with $\chi (F)=r+1\ge 3$ and $T$ is an unbalanced tree on $\ell$ vertices, then every $F$-free graph $G$
of large spectral radius satisfies
\begin{equation} \label{eq-bound-tree-hom}
  \hom (T,G)\ \ge\ \frac{r}{r-1}\,\lambda^{\ell}(G).
\end{equation} 

There is a parallel line of work in which lower
bounds are proved with no forbidden subgraph, at the price of letting the size of $G$ enter as well. Let $H$ be a graph with $v$ vertices and $e$ edges, and recall that Sidorenko's conjecture \cite{Sid1993} predicts
$\hom (H,G)\ge (2m)^{e}n^{v-2e}$ for every
bipartite $H$ and every graph $G$ with $n$
vertices and $m$ edges; see \cite{Szegedy2014, KLL2016, CKLL2018} for background. 
Recently, Li, Lin, Liu and Zhang \cite{LLLZ-Sidorenko} proved that, in the range $v\le e$, this is equivalent to the spectral strengthenings:  
\begin{equation} 
\label{eq:vertex-spectral-sid}
\hom (H,G)\ge \lambda^{e}|G|^{\,v-e}
\end{equation}
and 
\begin{equation}\label{eq:edge-spectral-sid}
  \hom (H,G)\ \ge\
  \lambda (G)^{2e-v}(2m)^{v-e}.
\end{equation}
Inequality \eqref{eq:edge-spectral-sid} was used in \cite{LLLZ-Sidorenko} to
obtain edge-spectral supersaturation results for $K_{t,t}$ and $C_{2t}$.

All three are lower bounds on $\hom (H,G)$ in terms of $\lambda (G)$ and the
order or the size of $G$, but they differ in hypothesis and in range:
\eqref{eq-bound-tree-hom} needs $G$ to be $F$-free and concerns trees, where
$v=e+1$, whereas \eqref{eq:vertex-spectral-sid} and
\eqref{eq:edge-spectral-sid} need no forbidden subgraph but require $v\le e$.
The former is calibrated against complete multipartite graphs, the latter two
against quasirandom graphs. We now show that at $v=e+1$ the former calibration
is the correct one.

A tree $T$ on $\ell$ vertices has $e=v-1$. 
Thus, trees lie outside the range of
\eqref{eq:edge-spectral-sid}, where the
right-hand side would read $2m\,\lambda^{\ell-2}$. 
Inequality \eqref{eq:edge-spectral-sid} fails at  $H=P_3$ and $G=P_6$, where
$\hom (P_3,G)=18$ while
$2m\,\lambda (G)=10\cdot 2\cos \frac{\pi}{7}
\approx 18.019$.   
We now determine what replaces it at $v=e+1$.

\begin{theorem}[Chao--Yu \cite{CY2026}]  
\label{thm:tree-Chao-Yu} 
Let $T$ be an arbitrary tree on $\ell $ vertices and let
$G$ be a graph with clique number $\omega \ge 2$. Then
\begin{equation*} 
\hom (T,G) \ \ge \ \frac{\omega}{\omega -1} \, 
\lambda^{\ell}(G), 
\end{equation*}
with equality when $G$ is a regular complete
$\omega$-partite graph.
\end{theorem}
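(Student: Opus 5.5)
The inequality reduces to combining the tree bridge with the Motzkin--Straus theorem, the same way Theorem \ref{thm:bridge-intro} recovers \eqref{eq:nikiforov-walk}. Since $\hom (T,G)\ge \hom (T,C)$ for every component $C$, we pass to a component $C$ with $\lambda (C)=\lambda (G)$. If $C$ has no edge, then $\lambda (G)=0$ and there is nothing to prove; otherwise $C$ is connected with at least one edge. Theorem \ref{thm-tree-bridge} applied to $C$ with $\pi_v=x_v^2$ gives
\[
\lambda^{\ell}(G)\le \Phi_C(\pi)\,\hom (T,C)\le \Phi_C(\pi)\,\hom (T,G).
\]
The Motzkin--Straus theorem \cite{MS1965} yields $\Phi_C(\pi)\le 1-\frac1{\omega (C)}\le 1-\frac1{\omega}$, since $\omega (C)\le \omega (G)=\omega$. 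Rearranging gives $\hom (T,G)\ge \frac{\omega}{\omega -1}\lambda^{\ell}(G)$. No flatness hypothesis is needed here, because Motzkin--Straus holds for every probability vector. (For a component with $\omega (C)=1$ there is no edge, so this case is already covered.)

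For the equality claim, let $G=K_\omega[s]$. It is $(\omega -1)s$-regular on $\omega s$ vertices, so $\lambda (G)=(\omega -1)s$, and a homomorphism from a tree can be counted by rooting: the root has $\omega s$ choices and each subsequent vertex has exactly $(\omega -1)s$ choices (the neighbours of its parent's image). Hence
\[
\hom (T,G)=\omega s\bigl((\omega -1)s\bigr)^{\ell -1}=\frac{\omega}{\omega -1}\bigl((\omega -1)s\bigr)^{\ell}=\frac{\omega}{\omega -1}\lambda^{\ell}(G).
\]
The clique number of $K_\omega[s]$ is $\omega$, so equality holds.

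There is no real obstacle. The only points needing care are the reduction to a connected component, where the Perron vector is positive as Theorem \ref{thm-tree-bridge} requires, and the fact that the Motzkin--Straus bound for the component uses its clique number, which is at most that of $G$. The rooted counting argument for regular graphs makes the equality case routine.
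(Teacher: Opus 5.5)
Your proposal is correct and follows essentially the same route as the paper's alternative proof. You reduce to a component $C$ with $\lambda(C)=\lambda(G)$, apply the tree bridge of Theorem~\ref{thm-tree-bridge} together with the Motzkin--Straus bound $\Phi_C(\pi)\le 1-\frac{1}{\omega(C)}\le 1-\frac{1}{\omega}$, and check equality on $K_\omega[s]$ by direct counting. The paper states the component step via the monotonicity of $s\mapsto \frac{s}{s-1}$, which is equivalent to your comparison $1-\frac{1}{\omega(C)}\le 1-\frac{1}{\omega}$.
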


This result was first proved by Chao and Yu \cite{CY2026} using an entropic Tur\'{a}n theorem (\ref{eq-Chao-Yu-entropic}). In the following, we give an alternative proof based on the entropy-Perron bridge of Theorem \ref{thm-tree-bridge}.

\begin{proof}[Alternative proof]
Let $C$ be a component with $\lambda (C)=\lambda (G)$. 
Then  $2\le \omega (C)\le \omega (G)$. Since $s\mapsto \frac{s}{s-1}$ is decreasing on
$s\in [2,\infty )$ and $\hom (T,C)\le \hom (T,G)$, we see that the assertion for $C$ implies
the assertion for $G$. 
Without loss of generality, 
we may assume that $G$ is connected. 
Let $\bm{x}$ be its unit Perron vector and $\pi_v=x_v^2$. Theorem  \ref{thm-tree-bridge} gives
$$\lambda^{\ell}(G)\le \Phi_G(\pi ) \cdot \hom (T,G).$$ 
The Motzkin--Straus theorem gives
$\Phi_G(\pi )\le 1-\frac{1}{\omega}.$ 
Combining the two bounds yields
$\hom (T,G)\ge \frac{\omega}{\omega -1}\lambda^{\ell}(G)$, as
required.  
When $G$ is a regular complete $\omega$-partite graph, say  $G=K_{\omega}[s]$, one has $\lambda =(\omega -1)s$
and $\hom (T,G) =\omega s ((\omega -1)s )^{\ell -1}$, so
the desired bound is an equality.
\end{proof}

At $v=e+1$, the term  $2m\,\lambda^{\ell-2}$ of (\ref{eq:edge-spectral-sid}) should be replaced by $\frac{\omega }{\omega -1}\,
  \lambda^{\ell}(G)$.   
In fact,  Nikiforov's bound \eqref{eq-Nik-2002} gives
$\lambda^2(G)\le \frac{\omega-1}{\omega }2m$, which yields  $  \frac{\omega }{\omega -1}\,
  \lambda^{\ell}(G) \le 2m\,\lambda^{\ell -2}(G)$ 
for every graph $G$.

\smallskip 
We now prove the following extension to forests without isolated vertices. 

\begin{proposition} 
\label{prop:forest-conj}
Let $H$ be a forest on $v$ vertices with $k$ components and without
isolated vertices. Then for every $r\ge 2$ and every $K_{r+1}$-free
graph $G$,
\[
  \lambda^{v}(G) \ \le\
  \Bigl(1-\frac1r\Bigr)^{\! k} \hom (H,G),
\]
with equality when $G$ is a regular complete $r$-partite graph.
\end{proposition}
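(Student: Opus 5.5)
Since $\hom$ is multiplicative over components, write $H=T_1\cup\cdots\cup T_k$ with $T_j$ a tree on $\ell_j\ge 2$ vertices, so $v=\sum_j \ell_j$ and $\hom (H,G)=\prod_{j=1}^k \hom (T_j,G)$. The plan is to apply the tree bound to each component separately and multiply. Theorem \ref{thm:tree-Chao-Yu} (or directly Theorem \ref{thm-tree-bridge} combined with the Motzkin--Straus theorem) gives, for a $K_{r+1}$-free graph $G$ and each tree $T_j$,
\[
  \lambda^{\ell_j}(G)\ \le\ \Bigl(1-\frac{1}{\omega (G)}\Bigr)\hom (T_j,G)\ \le\ \Bigl(1-\frac1r\Bigr)\hom (T_j,G),
\]
where the second step uses $\omega (G)\le r$. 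Multiplying these $k$ inequalities yields $\lambda^{v}(G)\le (1-\frac1r)^{k}\hom (H,G)$.

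The only point needing care is the reduction to connected $G$ and the case $\omega (G)\le 1$. If $G$ has no edges, then $\lambda (G)=0$ and the inequality is trivial. Otherwise, take a component $C$ with $\lambda (C)=\lambda (G)$. Then $C$ is connected, has an edge, and is $K_{r+1}$-free, so Theorem \ref{thm-tree-bridge} applied to $C$ gives $\lambda^{\ell_j}(C)\le \Phi_C(\pi )\hom (T_j,C)$. Motzkin--Straus gives $\Phi_C(\pi )\le 1-\frac{1}{\omega (C)}\le 1-\frac1r$, and finally $\hom (T_j,C)\le \hom (T_j,G)$. All factors are nonnegative, so the product step is legitimate. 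The hypothesis that $H$ has no isolated vertices is what guarantees every component is a tree with at least one edge; an isolated vertex would contribute a factor $n$ to $\hom (H,G)$ with no matching factor $(1-\frac1r)$, and the clean exponent $k$ would break.

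For the equality case, take $G=K_r[s]$, which is $(r-1)s$-regular on $rs$ vertices. Then $\hom (T_j,G)=rs\,((r-1)s)^{\ell_j-1}$ and $\lambda =(r-1)s$, so each factor gives $\lambda^{\ell_j}=(1-\frac1r)\hom (T_j,G)$, and the product is an equality. I expect no genuine obstacle: the main step is the per-tree bound, which is already available. The subtle points are only bookkeeping, namely that the same $\lambda (G)$ appears in every factor and that passing to the component $C$ is done once, uniformly for all trees.
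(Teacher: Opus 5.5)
Your argument is correct and is essentially the paper's proof: decompose $H$ into its tree components, apply the Chao--Yu tree bound (equivalently, the entropy--Perron bridge plus Motzkin--Straus on a component attaining $\lambda(G)$) to each tree, use $\omega(G)\le r$, multiply, and check equality on $K_r[s]$. One side remark of yours is inaccurate, though it does not affect the proof: an isolated vertex of $H$ would not break the bound, because $\hom(K_1,G)=|G|$ and Wilf's bound $\lambda(G)\le(1-\frac1r)|G|$ supplies exactly the matching factor, with equality on $K_r[s]$, so that hypothesis is a convenience rather than a necessity.
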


\begin{proof}
We may assume that $G$ has at least one edge, so 
 $\omega (G)\ge 2$, which is
the hypothesis of Theorem \ref{thm:tree-Chao-Yu}, and
$\omega (G)\le r$ since $G$ is $K_{r+1}$-free. 
Write $H=T_1\cup \cdots \cup T_k$, where each $T_i$ is a tree on $v_i\ge 2$ vertices and $v_1+\cdots +v_k =v$. Homomorphism counts are
multiplicative over the components, so
$\hom (H,G)=\prod_{i=1}^{k}\hom (T_i,G)$. 
Theorem \ref{thm:tree-Chao-Yu} applied to each component $T_i$ gives
\[
  \lambda^{v}(G)=\prod_{i=1}^{k}\lambda^{v_i}(G)
  \ \le\ \prod_{i=1}^{k}\Bigl(1-\frac{1}{\omega (G)}\Bigr)\hom (T_i,G)
  \ \le\ \Bigl(1-\frac1r\Bigr)^{\! k}\hom (H,G).
\]
For $G=K_r[s]$ one has $\lambda (G)=(r-1)s$ and
$\hom (T_i,G)=rs\bigl((r-1)s\bigr)^{v_i-1}$, so 
the equality holds.
\end{proof}

\smallskip 
A natural question can be asked with
$2m$ replaced by the walk count $w_\ell (G)$,
which interpolates between the vertex-spectral
form ($\ell =1$) and the
edge-spectral form ($\ell =2$). 
To start with, we need to introduce a spectral
regularization of Li, Lin, Liu and Zhang \cite[Theorem 1.2]{LLLZ-Sidorenko}.

\begin{theorem}[Li--Lin--Liu--Zhang \cite{LLLZ-Sidorenko}]
\label{lem:spectral-regularization}
Let $G$ be a graph with at least one edge, and put
$\lambda =\lambda (G)$. There are a constant
$C=C(G)>0$ and infinitely many integers $k$ such
that the tensor power $G^{\otimes k}$ contains a
regular subgraph $F_k$ of degree $d_k$ satisfying 
$\lambda^{k}k^{-C}\le d_k\le \lambda^{k}$. 
\end{theorem}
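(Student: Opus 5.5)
The plan is to build $F_k$ by the method of types, using the Perron pair of Section~\ref{sec:entropy}. The upper bound is the easy half. $F_k$ is a subgraph of $G^{\otimes k}$ and $A(G^{\otimes k})=A^{\otimes k}$ has spectral radius $\lambda^k$. A $d_k$-regular graph has spectral radius $d_k$, and spectral radius is monotone under taking subgraphs, so $d_k\le \lambda^k$ holds automatically.

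For the lower bound, I would first pass to a component $G'$ of $G$ with $\lambda(G')=\lambda$; since $G'^{\otimes k}\subseteq G^{\otimes k}$, we may assume $G$ is connected with positive unit Perron vector $\bm{x}$. The proof of Theorem~\ref{thm:entropy-bridge} supplies the symmetric distribution $Q_{uv}=a_{uv}x_ux_v/\lambda$ on the ordered edge set $E^*(G)$. Its marginals are $\pi$, $Q_{uv}>0$ on every ordered edge, and $H(Q)-H(\pi)=H(X_2\mid X_1)=\log\lambda$.

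\emph{Rational approximation.} For each integer $m$, put $N_{uv}:=\lfloor mQ_{uv}\rfloor$ for $uv\in E(G)$ and $N_{vu}:=N_{uv}$, and set $k:=\sum_{u,v}N_{uv}$. This gives infinitely many values of $k$, each comparable to $m$. Let $Q^{(k)}:=N/k$ and let $P^{(k)}$ be its common marginal. Then $kQ^{(k)}$ is integral, symmetric, supported on $E^*(G)$, and satisfies $\lVert Q^{(k)}-Q\rVert_\infty=O_G(1/k)$. Since all entries of $Q$ on $E^*(G)$ and all entries of $\pi$ are bounded below by a positive constant depending only on $G$, entropy is Lipschitz near $(Q,\pi)$. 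Hence
\[
  H(Q^{(k)})-H(P^{(k)})\ \ge\ \log\lambda-\frac{c_1}{k}
\]
for a constant $c_1=c_1(G)$.

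\emph{The regular subgraph.} Let $V_k$ be the set of $k$-tuples in $V(G)^k$ with empirical distribution $P^{(k)}$. Join $\mathbf u,\mathbf v\in V_k$ when the joint empirical distribution of $(u_i,v_i)_{i\le k}$ equals $Q^{(k)}$. Every such pair has $u_iv_i\in E(G)$ for all $i$, so this defines a subgraph $F_k$ of $G^{\otimes k}$. The relation is symmetric because $Q^{(k)}$ is symmetric, and any $\mathbf v$ with joint type $Q^{(k)}$ against $\mathbf u\in V_k$ automatically lies in $V_k$, since the marginal of $Q^{(k)}$ is $P^{(k)}$. The number of neighbours of $\mathbf u$ depends only on its type:
\[
  d_k=\prod_{a\in V(G)}\frac{(kP^{(k)}_a)!}{\prod_b (kQ^{(k)}_{ab})!}.
\]
So $F_k$ is $d_k$-regular.

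\emph{Counting (main obstacle).} The step needing care is the lower bound for $d_k$, namely a polynomial loss in the multinomial estimate. I would use the standard bound that a multinomial coefficient $\binom{n}{n_1,\dots,n_s}$ is at least $(n+1)^{-s}2^{nH(n_1/n,\dots,n_s/n)}$, applied to each row $a$. Multiplying over $a$ gives
\[
  d_k\ \ge\ (k+1)^{-|V(G)|^2}\,2^{k(H(Q^{(k)})-H(P^{(k)}))}\ \ge\ (k+1)^{-|V(G)|^2}2^{-c_1}\lambda^k .
\]
This is at least $\lambda^k k^{-C}$ for a suitable $C=C(G)$ once $k\ge 2$, after enlarging $C$ to absorb the factor $2^{-c_1}$ and the replacement of $k+1$ by $k$.

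The delicate point I would double-check is that the approximation error in the entropy contributes only $O(1)$ rather than $O(\log k)$ to the exponent. This is exactly where positivity of $Q$ on all edges, guaranteed by connectedness and the Perron--Frobenius theorem, is used. Even if that positivity were lost, the error would only be $O(\log k)$, which is still absorbed into $k^{-C}$.
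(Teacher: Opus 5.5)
The paper does not prove this statement. It quotes it from \cite{LLLZ-Sidorenko} and uses it only as a black box in the sufficiency direction of Theorem~\ref{thm:walk-sidorenko}, so there is no internal argument to compare yours with. Judged on its own, your method-of-types proof is correct and complete.

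The graph $F_k$ on the type class $V_k$ is well behaved:
\begin{itemize}
\item it is well defined and loopless, since every coordinate pair $(u_i,v_i)$ is an ordered edge;
\item it is symmetric, because $Q^{(k)}$ is;
\item it is closed under taking neighbours, because the marginal of $Q^{(k)}$ is $P^{(k)}$;
\item it is regular, with degree equal to the stated product of row multinomials.
\end{itemize}
The type-class bound $\binom{n}{n_1,\dots,n_s}\ge (n+1)^{-s}2^{nH}$, applied row by row, gives $d_k\ge (k+1)^{-|V(G)|^2}\,2^{k(H(Q^{(k)})-H(P^{(k)}))}$. The identity $\sum_a kP^{(k)}_a H(Q^{(k)}_{a\cdot}/P^{(k)}_a)=k\bigl(H(Q^{(k)})-H(P^{(k)})\bigr)$ is exactly the conditional-entropy decomposition. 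The upper bound $d_k\le \lambda^k$ follows, as you say, from $\lambda(A^{\otimes k})=\lambda^k$ and monotonicity of the spectral radius under subgraphs.

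Two small points should be written out explicitly:
\begin{itemize}
\item The Lipschitz estimate for $H(Q^{(k)})-H(P^{(k)})$ holds only once $m$ is large enough that $N_{uv}\ge 1$ and, say, $Q^{(k)}\ge \tfrac12 Q$ on $E^*(G)$. You should therefore discard finitely many small $k$. This is harmless, since only infinitely many $k$ are needed.
\item With $c_1\ge 0$, the choice $C:=2|V(G)|^2+c_1$ works for every remaining $k\ge 2$, because $k+1\le k^2$.
\end{itemize}

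What your route buys, compared with citing \cite{LLLZ-Sidorenko}, is that the regular subgraph comes from the same Perron pair $Q_{uv}=a_{uv}x_ux_v/\lambda$ whose conditional entropy $H(X_2\mid X_1)=\log\lambda$ drives Theorem~\ref{thm:entropy-bridge}. With it, the walk-spectral Sidorenko theorem becomes self-contained inside the paper's entropy framework, apart from Sidorenko's inequality itself.
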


Now, we give a unified extension of both (\ref{eq:vertex-spectral-sid}) and (\ref{eq:edge-spectral-sid}). 

\begin{theorem}[Walk-spectral Sidorenko inequality]
\label{thm:walk-sidorenko}
Let $H$ be a bipartite graph with $v$ vertices and
$e$ edges, without isolated vertices, and put 
$\delta :=e-v\ge 0$. Let $\ell \ge 1$ be an integer. Then
\begin{equation}\label{eq:walk-sidorenko}
  \hom (H,G)\ \ge\
  \lambda (G)^{\,e+(\ell -1)\delta}\,
  w_\ell (G)^{-\delta} 
\end{equation}
holds for every graph $G$ 
 if and only if $H$ satisfies Sidorenko's
inequality.
\end{theorem}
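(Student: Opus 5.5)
The converse direction is elementary and I would do it first; the forward direction I would prove by a tensor-power argument built on Theorem~\ref{lem:spectral-regularization}.

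\emph{The inequality implies Sidorenko's inequality.} Suppose \eqref{eq:walk-sidorenko} holds for every graph $G$. Since $A=A(G)$ is symmetric with spectral radius $\lambda$, we have
\[
  w_\ell(G)=\one^{\mathsf T}A^{\ell-1}\one\le \lambda^{\ell-1}\lVert\one\rVert^2=n\lambda^{\ell-1}.
\]
As $\delta\ge 0$, this gives $w_\ell(G)^{-\delta}\ge n^{-\delta}\lambda^{-(\ell-1)\delta}$. Substituting into \eqref{eq:walk-sidorenko} yields
\[
  \hom(H,G)\ge \lambda^{e}n^{v-e}.
\]
Finally, $\lambda\ge 2m/n$ turns this into $\hom(H,G)\ge (2m)^e n^{v-2e}$, which is Sidorenko's inequality.

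\emph{Sidorenko's inequality implies \eqref{eq:walk-sidorenko}.}

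First I would check the regular case. If $D$ is $d$-regular on $N$ vertices, then $\lambda(D)=d$, $2e(D)=Nd$ and $w_\ell(D)=Nd^{\ell-1}$. Sidorenko's inequality for $D$ reads
\[
  \hom(H,D)\ge (Nd)^eN^{v-2e}=d^{e}N^{-\delta}.
\]
The right-hand side equals $d^{e+(\ell-1)\delta}(Nd^{\ell-1})^{-\delta}$, which is exactly the right-hand side of \eqref{eq:walk-sidorenko} for $D$. So regular graphs satisfy \eqref{eq:walk-sidorenko}.

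For a general $G$ with an edge, I would use Theorem~\ref{lem:spectral-regularization}. It gives infinitely many $k$ and $d_k$-regular subgraphs $F_k\subseteq G^{\otimes k}$ with $\lambda^k k^{-C}\le d_k\le\lambda^k$. I would rely on two multiplicativity facts for the tensor product:
\[
  \hom(H,G^{\otimes k})=\hom(H,G)^k, \qquad w_\ell(G^{\otimes k})=w_\ell(G)^k.
\]
The second holds because a walk in $G^{\otimes k}$ is exactly a $k$-tuple of walks in $G$. Monotonicity under subgraphs gives $\hom(H,F_k)\le\hom(H,G)^k$ and $w_\ell(F_k)\le w_\ell(G)^k$. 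The regular case applied to $F_k$, using $\delta\ge0$ to handle the $w_\ell$ factor, then gives
\[
  \hom(H,G)^k\ge d_k^{\,e+(\ell-1)\delta}\,w_\ell(F_k)^{-\delta}\ge \bigl(\lambda^kk^{-C}\bigr)^{e+(\ell-1)\delta}\,w_\ell(G)^{-k\delta}.
\]
Taking $k$-th roots and letting $k\to\infty$ along the admissible sequence, the factor $k^{-C(e+(\ell-1)\delta)/k}$ tends to $1$, which yields \eqref{eq:walk-sidorenko}. Graphs without edges are trivial, and isolated vertices of $G$ can be handled componentwise or simply ignored.

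The main obstacle is this regularization step. Walk counts are not directly controlled by $\lambda$ and $n$ in the direction we need, so the reduction to regular graphs through tensor powers is essential. The key checks are the multiplicativity of $w_\ell$ under $\otimes$ and the fact that the sign $\delta\ge0$ makes the passage to the subgraph $F_k$ go the right way.
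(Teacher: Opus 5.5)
Your proposal is correct and matches the paper's proof essentially step for step. The necessity direction uses $w_\ell(G)\le n\lambda^{\ell-1}$ followed by $\lambda\ge 2m/n$, and the sufficiency direction uses the spectral regularization of Theorem~\ref{lem:spectral-regularization}, the multiplicativity of $\hom(H,\cdot)$ and $w_\ell$ under tensor powers, and Sidorenko's inequality applied to the regular subgraphs $F_k$ (your separate treatment of the regular case is just the paper's inline estimate $n_k^{-\delta}\ge w_\ell(G)^{-k\delta}d_k^{(\ell-1)\delta}$ stated in advance).
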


\begin{proof}
Suppose first that \eqref{eq:walk-sidorenko}
holds. Let $n:=|G|$ and let $\bm{u}_1, \ldots ,\bm{u}_n$ be an orthonormal eigenvector basis. 
We write $\bm{1}=\sum_{i=1}^{n} c_i\bm{u}_i$ for some values $c_1, \ldots ,c_n$. Then 
$$w_\ell (G)=\sum_{i=1}^n c_i^2\lambda_i^{\ell -1}
 \le \lambda^{\ell -1}\sum_{i=1}^n  c_i^{2}
 = n \lambda^{\ell -1}.$$
  Since $\delta \ge 0$, the
right-hand side of \eqref{eq:walk-sidorenko} is at least $\lambda^{e} n^{v-e}$; 
this is the vertex-spectral version in (\ref{eq:vertex-spectral-sid}). 
Using $\lambda \ge 2e(G)/ n$, this is at least
$(2e(G))^{e} n^{v-2e}$, which is Sidorenko's
inequality.  

Conversely, suppose that $H$ is Sidorenko and let
$G$ be a graph with at least one edge. 
By the spectral regularization in Theorem \ref{lem:spectral-regularization}, 
there are $C=C(G)>0$ and
infinitely many $k$ for which $G^{\otimes k}$
contains a $d_k$-regular subgraph $F_k$ with
$\lambda^{k}k^{-C}\le d_k\le \lambda^{k}$. Put
$n_k:=|F_k|$. Since $A(G^{\otimes k})
 =A(G)^{\otimes k}$ we have
$w_\ell (G^{\otimes k})=w_\ell (G)^{k}$, and
$w_\ell$ is monotone under taking subgraphs, so
\[
  n_k\,d_k^{\ell -1}=w_\ell (F_k)
  \le w_\ell (G^{\otimes k})=w_\ell (G)^{k},
\]
the first equality because $F_k$ is $d_k$-regular.
Applying Sidorenko's inequality to $F_k$, and
using $2e(F_k)=n_kd_k$ together with
$n_k^{-\delta}\ge
 \bigl(w_\ell (G)^{k}\bigr)^{-\delta}
 d_k^{(\ell -1)\delta}$,  we get
\[
  \hom (H,F_k)\ \ge\ n_k^{\,v-2e} \big(2e(F_k) \big)^{e}
  =n_k^{-\delta}d_k^{\,e}
  \ \ge\ \frac{d_k^{\,e+(\ell -1)\delta}}
              {w_\ell (G)^{k\delta}} .
\]
Finally $\hom (H,G)^{k}
 =\hom (H,G^{\otimes k})\ge \hom (H,F_k)$, so
$d_k\ge \lambda^{k}k^{-C}$ gives
\[
  \hom (H,G)\ \ge\
  \frac{\lambda^{\,e+(\ell -1)\delta}}
       {w_\ell (G)^{\delta}}\,
  k^{-C(e+(\ell -1)\delta)/k},
\]
and letting $k\to \infty$ along the infinite set
above yields \eqref{eq:walk-sidorenko}.
\end{proof}

Although each of these inequalities is equivalent to Sidorenko's inequality for $H$, they are different inequalities for different $\ell$, and letting $\ell \to \infty$ gives a further strengthening.

\begin{corollary}\label{cor:perron-sidorenko}
Let $H$ be a bipartite graph satisfying Sidorenko's
inequality, with $v\le e$ and no isolated vertices. 
Then every connected non-bipartite graph $G$ with unit Perron
vector $\bm{x}$ satisfies
\[
  \hom (H,G)\ \ge\ \lambda (G)^{e}\,
  \lVert \bm{x}\rVert_1^{\,2(v-e)} .
\]
\end{corollary}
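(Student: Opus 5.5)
The plan is to apply Theorem \ref{thm:walk-sidorenko} for every $\ell\ge 1$ and let $\ell\to\infty$, using spectral asymptotics of $w_\ell(G)$. Put $\delta:=e-v\ge 0$. Theorem \ref{thm:walk-sidorenko} gives, for every $\ell$,
\[
  \hom (H,G)\ \ge\ \lambda^{e}\Bigl(\frac{\lambda^{\ell -1}}{w_\ell (G)}\Bigr)^{\delta},
\]
so it suffices to show that $w_\ell (G)/\lambda^{\ell -1}\to \lVert \bm{x}\rVert_1^{2}$ as $\ell \to \infty$. The right-hand side then tends to $\lambda^{e}\lVert \bm{x}\rVert_1^{-2\delta}=\lambda^{e}\lVert \bm{x}\rVert_1^{2(v-e)}$. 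When $\delta=0$ the statement is just the vertex-spectral form and there is nothing to do.

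For the limit, I will expand $\one=\sum_i c_i\bm{u}_i$ in an orthonormal eigenbasis with $\bm{u}_1=\bm{x}$, exactly as in the proof of Lemma \ref{lem:flat-odd}. This gives $c_1=\lVert \bm{x}\rVert_1$ and
\[
  \frac{w_\ell (G)}{\lambda^{\ell -1}}=\lVert \bm{x}\rVert_1^{2}+\sum_{i\ge 2}c_i^{2}\Bigl(\frac{\lambda_i}{\lambda}\Bigr)^{\ell -1}.
\]
The main point to check is that $|\lambda_i|<\lambda$ for every $i\ge 2$. This is where the hypotheses enter. Because $G$ is connected, Perron--Frobenius makes $\lambda$ a simple eigenvalue. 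Because $G$ is non-bipartite, $-\lambda$ is not an eigenvalue: for a connected graph, $-\lambda$ is an eigenvalue exactly when the graph is bipartite. Hence every error term decays geometrically, and the sum tends to $0$.

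Since $\delta\ge 0$ and the ratio converges to a positive limit, the bound passes to the limit. This gives $\hom(H,G)\ge \lambda^{e}\lVert \bm{x}\rVert_1^{-2\delta}$, which is the claim. The only mildly delicate step is the spectral-gap fact, and it is standard. I would also remark that the bound strengthens the vertex-spectral version $\lambda^{e}n^{v-e}$, because $\lVert \bm{x}\rVert_1^{2}\le n$ and $v-e\le 0$.
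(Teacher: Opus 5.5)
Your proposal is correct and follows the same route as the paper. Both apply Theorem~\ref{thm:walk-sidorenko} for every $\ell$, use the spectral gap of a connected non-bipartite graph to get $w_\ell(G)=\lVert \bm{x}\rVert_1^{2}\lambda^{\ell-1}(1+o(1))$, and let $\ell\to\infty$; you simply spell out the eigenvector expansion and the Perron--Frobenius justification that the paper states in a single line.
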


\begin{proof}
Since $G$ is connected and non-bipartite, we have $|\lambda_i|<\lambda$ for
every $i\ge 2$, and hence
$w_\ell (G)=\sum_i c_i^{2}\lambda_i^{\ell -1}
 =\lVert \bm{x}\rVert_1^{2}\lambda^{\ell -1}\bigl(1+o(1)\bigr)$
as $\ell \to \infty$. Writing $\delta =e-v\ge 0$ and applying
\eqref{eq:walk-sidorenko} for every $\ell \ge 1$, we obtain
\[
  \hom (H,G)\ \ge\ \lambda^{e}
  \left(\frac{\lambda^{\ell -1}}{w_\ell (G)}\right)^{\!\delta}
  =\lambda^{e}\bigl(\lVert \bm{x}\rVert_1^{-2}+o(1)\bigr)^{\delta}.
\]
Letting $\ell \to \infty$ gives
$\hom (H,G)\ge \lambda^{e}\lVert \bm{x}\rVert_1^{-2\delta}$,
which is the assertion.
\end{proof}

Corollary \ref{cor:perron-sidorenko} improves the
vertex-spectral form in (\ref{eq:vertex-spectral-sid}) since $\lVert \bm{x}\rVert_1^{2}\le |G|$. 
Observe that the proof of the sufficiency direction of 
Theorem \ref{thm:walk-sidorenko} uses only three properties of $w_\ell$: 
the multiplicativity under tensor powers, 
the monotonicity under subgraphs,
and the value $|G|\,d^{\ell -1}$ on $d$-regular graphs. 
Tree homomorphism counts share all three. 
Hence, we obtain a tree analogue: 
for every tree $T$ on $\ell$ vertices and every
 bipartite graph $H$ with $\delta =e-v\ge 0$ and no
isolated vertices, if $H$ satisfies the Sidorenko inequality, then 
$  \hom (H,G) \ge \lambda (G)^{\,e+(\ell -1)\delta}\,
  \hom (T,G)^{-\delta}$
 for every graph $G$.

\section{Concluding remarks}
\label{sec:remarks}

Theorem \ref{thm:large-lambda} determines the exact spectral constant for
counting walks (homomorphisms of paths) in $F$-free graphs with $F$ color-critical, and Theorem \ref{thm:spectral-tree} extends it to unbalanced trees. 
One natural Tur\'{a}n-type problem remains: 
for a general forbidden graph $F$, 
what does the constant become when walks are replaced by homomorphism counts of a general bipartite graph $H$?

\subsection{A general limit superior problem}

\begin{problem}  \label{prob-limsup}
Let $F$ be a graph and let $H$ be a bipartite graph. Determine 
\begin{equation*}
   \limsup_{\lambda (G)\to \infty} \ \frac{\lambda^{v(H)}(G)}{\hom (H,G)}, 
   \end{equation*}
where $G$ runs over all $F$-free graphs. 
\end{problem}

In this language, some known results read as follows.
When $F$ is a color-critical graph with
$\chi (F)=r+1\ge 3$, and $H=P_\ell$ with $(r,\ell) \neq (2,2)$, Theorem \ref{thm:large-lambda} says that the limit superior in
Problem \ref{prob-limsup} equals $1-\frac{1}{r}$, and this value is attained by
the regular complete $r$-partite graphs. Moreover, Theorem \ref{thm:spectral-tree} gives the same conclusion when $H$ is an unbalanced tree.  
A related gap concerns the case of balanced trees in Theorem \ref{thm:spectral-tree}, and
it is open for every $r\ge 2$. When $2c(T)=\ell$, Lemma \ref{lem:flat-tree}
gives no decay, and Lemma \ref{lem:perron-flatness} is stated for walks. What is missing is exactly the following
tree analogue: whether
$\lambda^{\ell}(G)\ge \bigl(\frac12+\delta \bigr)\hom (T,G)$ forces
$\max_{v\in V(G)}x_v^{2}=O_{\delta ,T}\bigl(\lambda^{-1}(G)\bigr)$. 
A positive answer would extend Theorem \ref{thm:spectral-tree} to all trees when
$\chi (F)=r+1 \ge 4$.

\smallskip 
 It is interesting to determine the limit superior for other pairs $(H, F)$. In particular, for $F=K_{r+1}$, 
one may conjecture that for a fixed bipartite graph $H$, if $r$ is sufficiently large, then the regular complete $r$-partite graphs attain the limit superior of $\lambda^{v(H)}(G)/ \hom (H,G)$ among all $K_{r+1}$-free graphs $G$.  
 This conjectured behavior is motivated by an interesting result \cite{MNNRW2023} from generalized Tur\'{a}n theory.  
When $H$ is a forest, the answer is affirmative; this is
Proposition \ref{prop:forest-conj}. When $H$
contains a cycle, the answer is negative: the regular complete $r$-partite graphs need not be extremal, 
even for large $r$.  
Examples \ref{ex:C4-not-turan} and \ref{ex:K23-not-turan} exhibit two different ways in which this fails.

\begin{example}[Even cycle] \label{ex:C4-not-turan} 
Note that $\hom (C_4,G) =\sum_i\lambda_i^4$, and the eigenvalues of $K_r[s]$ are  
$$\left\{ (r-1)s,\,(-s)^{(r-1)},\,0^{(r(s-1))} \right\}. $$
Then  we have 
\[  \frac{\lambda^{4}(K_r[s])}{\hom (C_4,K_r[s])}
  =\frac{(r-1)^{4}}{(r-1)^{4}+(r-1)}
  \ <\ 1 .
\]
On the other hand, Alon \cite[Theorem 2.1]{Alon1994} constructed a sequence of triangle-free $d$-regular graphs $G_n$ on
$n$ vertices with $d=\Theta (n^{2/3})$ and
$\theta :=\max_{i\ge 2}|\lambda_i|=O(n^{1/3})$. Then we have 
\[ 0 <  
  \hom (C_4,G_n)-\lambda^{4}(G_n)
  =\sum_{i\ge 2}\lambda_i^{4}
  \le \theta^{2}\bigl(nd-d^{2}\bigr) = O(n^{7/3}) 
  =O\bigl(n^{-1/3}\bigr)\lambda^{4}(G_n).
\]
Thus, we get $\lambda^4(G_n)/ \hom (C_4,G_n) \to 1$ and $\lambda(G_n) \to \infty$ as $n \to \infty$. 
Since triangle-free graphs are $K_{r+1}$-free for every $r\ge 2$, the limit superior in Problem \ref{prob-limsup} equals $1$ for the pair $(H, F)=(C_4, K_{r+1})$. 
It exceeds the value attained by the regular
complete $r$-partite graphs by a fixed factor. 
\end{example}

The tensor product
$G_1\otimes G_2$ is the graph with vertex set  $V(G_1)\times V(G_2)$ in which two pairs $(u_1,u_2)$ and
$(v_1,v_2)$ are adjacent whenever $u_iv_i\in E(G_i)$
for $i=1,2$. It is easy to see that 
$\lambda (G_1\otimes G_2)
 =\lambda (G_1)\lambda (G_2)$ and 
$\hom (H,G_1\otimes G_2)
 =\hom (H,G_1)\hom (H,G_2)$ for every $H$.

\begin{example}[Complete bipartite graph]
\label{ex:K23-not-turan}
 A homomorphism from
$K_{2,3}$ to $G$ is determined by an ordered pair $(u,v)$ of
vertices of $G$ together with three vertices of
$N_G(u)\cap N_G(v)$, so
\[
  \hom (K_{2,3},G) \ \  =\sum_{u,v\in V(G)}
  \bigl|N_G(u)\cap N_G(v)\bigr|^{3}.
\]
Taking $G=K_r$ gives
$\hom (K_{2,3},K_r)=r(r-1)^{3}+r(r-1)(r-2)^{3}$. 
Then for every $r\ge 3$, 
\[   \frac{\lambda^{5}(K_r[s])}{\hom (K_{2,3},K_r[s])} =  
\frac{\lambda^{5}(K_r )}{\hom (K_{2,3},K_r)} = 
 \frac{(r-1)^5}{r(r-1)^{3}+r(r-1)(r-2)^{3}} >1. \]
 Now write $G_0:=K_r[s]$, and let $G:=G_0^{\otimes k}$ be the $k$-th tensor power of $G_0$. 
 Then $G$ is again $K_{r+1}$-free. Since $\lambda (\cdot )$ and $\hom (K_{2,3},\cdot )$ are
multiplicative under tensor powers,  
we get 
\[   \frac{\lambda^{5}(G)}{\hom (K_{2,3},G)} = 
 \bigg(\frac{\lambda^{5}(G_0)}{\hom (K_{2,3},G_0)}\bigg)^{\! k}  \to \infty \quad \text{as $k\to \infty$}, \]
 while $\lambda (G)=\bigl((r-1)s\bigr)^{k}\to \infty$ as well. Hence, each tensor power of $G_0$ has a strictly larger ratio than $G_0$ itself. For the pair 
 $(H,F)=(K_{2,3},K_{r+1})$ with every $r\ge 3$, the limit superior in Problem
\ref{prob-limsup} is infinite. 
In particular, it is not attained at the regular complete $r$-partite graphs. 
\end{example}

\section*{Acknowledgments}
This work was supported by the Postdoctoral 
Science Foundation of China (No. 2026M793384) and 
the Postdoctoral Fellowship Program of China Postdoctoral Science Foundation (No. GZC20261656). 
The author would like to thank Hongzhang Chen and Yongjiang Wu for their valuable assistance in examining the possible problems. In particular, the author thanks Guorong Gao for discussing the proof of Lemma \ref{lem:large-perron}, which addresses the missing case of the original version of this paper.  The author developed the arguments, wrote and verified all proofs. During an early exploratory stage, the author additionally used language-model-based tools to brainstorm candidate proof strategies. Any suggestions arising from these tools served only as informal inspiration.

\appendix 
\section{Edge-spectral stability via weighted stability}
\label{sec:appendix-stability}

In Subsection \ref{subsec:stability-app}, we sketched how the weighted stability theorem yields the edge-spectral stability theorem of Li, Liu and Zhang \cite[Theorem 1.6]{LLZ-ESS}, and
we isolated the one point: the transition from the Perron weights $\pi_v=x_v^2$ to the vertex
counts and edge counts. This
appendix carries out that transition in full. 
We fix notation for the whole appendix. We
write $\kappa :=1-\frac1r$, and $a=(1\pm \theta )b$ means $|a-b|\le \theta b$. For graphs $G$ and $H$, we write
$d(G,H)=|E(G)\, \triangle \, E(H)|$ for the edit distance
between $G$ and $H$. For disjoint vertex sets $A$ and $B$, we write $K_{A,B}$ for the complete bipartite graph with parts $A$ and $B$. For a vertex set $K$, we write $T_{K,r}$ for an $r$-partite Tur\'{a}n  graph on $K$.

\begin{theorem}[Li--Liu--Zhang \cite{LLZ-ESS}]
\label{thm:edge-spectral-stability}
Let $F$ be a graph with $\chi (F)=r+1$. 
For every $\varepsilon > 0 $, there are  
$\delta >0$ and $m_0$ such that  
if $G$ is $F$-free with $m\ge m_0$ edges and
$  \lambda^2(G) \ge  (\kappa -\delta )2m$, 
then
\begin{itemize}
\item[\rm (a)] 
 for $r=2$, there exist disjoint vertex sets $A,B\subset V(G)$ such that $d(G,K_{A,B}) \le \varepsilon m$.
 
\item[\rm (b)]  for $r\ge 3$, there exists a vertex set $K\subseteq V(G)$ such that 
$d\bigl(G,T_{K,r}\bigr)\le \varepsilon m$. 
\end{itemize}
\end{theorem}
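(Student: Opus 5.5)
The plan is to follow the route sketched in Subsection~\ref{subsec:stability-app}: first convert the spectral hypothesis into a weighted one, then apply Theorem~\ref{thm:weighted-stability}, and finally transfer the weighted partition back to vertex and edge counts.

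\textbf{Step 1: from the spectral hypothesis to a flat weighted statement.} Given $\varepsilon$, choose $\varepsilon'\ll\varepsilon$, and let $\eta_1=\eta(F,\varepsilon')$ and $\delta_1=\delta(F,\varepsilon')$ be the constants of Theorem~\ref{thm:weighted-stability}. Set $\delta\le\delta_1$ small. We may assume $G$ has no isolated vertices, since they affect neither side. Let $C$ be a component with $\lambda(C)=\lambda(G)$ and $\pi_v=x_v^2$.
\begin{itemize}
\item The bridge (case $\ell=2$ of Theorem~\ref{thm:bridge-intro}) gives $\Phi_C(\pi)\ge \lambda^2/2e(C)\ge\lambda^2/2m\ge\kappa-\delta$.
\item For $r\ge3$, we have $\kappa-\delta\ge\frac12+\frac1{20}$. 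Lemma~\ref{lem:perron-flatness} then gives $\lVert\pi\rVert_\infty<20^8/\lambda$.
\item Nosal-type bounds give $\lambda\ge\sqrt{(\kappa-\delta)2m}\to\infty$ as $m\to\infty$, so $\lVert\pi\rVert_\infty\le\eta_1$ once $m_0$ is large.
\end{itemize}
For $r=2$ there is no gap, and this is the first delicate point. Instead, I would use $\lambda^2\ge(\frac12-\delta)2m$ directly. A vertex $u$ with large $x_u$ has $d_u\ge\lambda^2x_u^2$. If $x_u^2\ge\eta$, the star at $u$ already carries a constant fraction of the edges, and I would argue toward the bipartite conclusion (a) via a near-star / complete bipartite structure. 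Otherwise $\pi$ is flat and we proceed as for $r\ge3$.

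\textbf{Step 2: weighted stability.} Theorem~\ref{thm:weighted-stability} yields a partition $V(C)=U_1\cup\cdots\cup U_r$ with $|\pi(U_i)-\frac1r|\le\varepsilon'$ and missing cross weight $M\le\varepsilon'$.

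\textbf{Step 3: transfer from Perron weight to edge counts.} This is the main obstacle. The idea is to use near-equality in the Cauchy--Schwarz step
\[
\lambda^2=\Bigl(\sum_{uv\in E}2x_ux_v\Bigr)^2\le 2m\,\Phi(\pi).
\]
Since $\lambda^2\ge(\kappa-\delta)2m$ and $\Phi\le\kappa+\varepsilon'$ (Theorem~\ref{thm-weighted-ESS}), we have near-equality. Hence $x_ux_v$ is nearly constant, close to $\lambda/(2m)\cdot$const, on all but an $o(1)$ fraction of the edges, measured in the counting measure on edges. The argument then proceeds as follows.
\begin{enumerate}
\item Let $K$ be the set of vertices $v$ with $x_v^2=(1\pm\theta)\lambda/m'$ for the appropriate normalization, where $m'$ is read off from the near-constant value of $x_ux_v$. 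Show that all but $\theta m$ edges have both ends in $K$. This needs a variance argument on the edge distribution $Q$ against the uniform measure on edges, with the relative entropy $D(Q\Vert R)$ from identity~\eqref{eq:bridge-exact} controlling the discrepancy.
\item On $K$ the weights $\pi$ are nearly uniform. Hence $\pi$-mass statements become cardinality statements: $|U_i\cap K|\approx|K|/r$, and the number of missing cross pairs in $K$ is at most $\varepsilon'' |K|^2$.
\item Also $|K|^2\approx 2m/\kappa$, because $\lambda\approx\kappa|K|$.
\item Edges of $G$ inside the parts, or touching $V\setminus K$, number $O(\theta m)$ by edge counting, as in Proposition~\ref{prop:vertex-stability}.
\item Rebalance the parts into $T_{K,r}$ by moving $O(\varepsilon'|K|)$ vertices, which costs $O(\varepsilon' m)$ edits.
\end{enumerate}
The hardest part is step~1: extracting the uniform-weight set $K$ from approximate equality in Cauchy--Schwarz with a quantitative loss. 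For $r=2$, this step also has to incorporate the star case from Step~1.
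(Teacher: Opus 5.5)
Your outline for part (b) follows the paper's route: the $\ell=2$ bridge, Lemma~\ref{lem:perron-flatness}, Theorem~\ref{thm-weighted-ESS} for $\Phi\le\kappa+\sigma$, Theorem~\ref{thm:weighted-stability}, and then a transfer to counts. However, the step you single out as hardest cannot be done with the tool you propose. Each of the following controls only the \emph{edge products} $x_ux_v$:
a variance argument on the edge distribution;
smallness of $D(Q\Vert R)$ and $D(\mu_2\Vert\nu_2)$ in \eqref{eq:bridge-exact};
the identity $\sum_{uv\in E}(x_ux_v-\tau)^2=\frac{\Phi}{2\lambda^2}(2m\Phi-\lambda^2)$ with $\tau=\Phi/\lambda$ (Lemma~\ref{lem:good-edges}).
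Edge products do not determine the vertex weights. In $K_{a,b}$ with $a\ne b$, the product $x_ux_v$ is exactly constant on every edge, while $\pi$ takes the two values $\frac{1}{2a}$ and $\frac{1}{2b}$. So near-equality in Cauchy--Schwarz does not give you a set $K$ on which $x_v^2$ is close to a single constant and which carries almost all edges.

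The missing idea is a second use of $r\ge 3$, that is, of odd cycles. Call $u\in U_1$ typical if the $\pi$-weight of the vertices outside $U_1$ that are not joined to $u$ by a good edge is at most $\frac{1}{50r}$. Since $M\le\sigma$ and the bad edges have weight $O(\gamma)$, typical vertices carry weight $1-O(\gamma)$. For typical $u$, choose $v\in U_2$ and $w\in U_3$ at random (weighted by $\pi$) among the good neighbours of $u$; then $vw$ is a good edge with positive probability. This gives $\pi_u=(x_ux_v)(x_ux_w)/(x_vx_w)=(1\pm4\gamma)\tau$, which yields the core. Only after this do your items 2--5 go through.

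For part (a), note that the paper gives no proof: it quotes it from \cite{LLZ-ESS}, because with $\kappa=\frac12$ neither Lemma~\ref{lem:perron-flatness} nor the triangle argument is available. Your sketch does not fill this gap.

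In the flat case you cannot ``proceed as for $r\ge3$'', because the conclusion of (a) allows very unbalanced $K_{A,B}$. Take $G=K_{a,b}$ with $a=m^{1/3}$ and $b=m^{2/3}$. Then $\lVert\pi\rVert_\infty=\frac{1}{2a}\to 0$ and $\lambda^2=m$. Yet no set carrying most of the $\pi$-weight has nearly constant $\pi$, and $G$ is at edit distance $(1-o(1))m$ from every balanced $T_{K,2}$. A flat-case transfer would instead have to pin the coordinates $x_v$ to two values, $\alpha$ on one side and $\tau/\alpha$ on the other. These come from comparing typical vertices on the same side through a common good neighbour.

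In the non-flat case, knowing that the star at a heavy vertex carries a constant fraction of the edges is far from near-complete-bipartite structure on all $m$ edges, and your proposal supplies no argument for the rest.
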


We deduce part (b) of Theorem
\ref{thm:edge-spectral-stability} from the weighted stability of Theorem \ref{thm:weighted-stability}. The only spectral ingredient is the flatness estimate of 
Lemma \ref{lem:perron-flatness}. 
This also explains why part (a) is not treated here. 
Indeed, Lemma
\ref{lem:perron-flatness} requires $\lambda^2(G)\ge \left( \frac12+\delta \right) w_2(G)$, and the hypothesis
$\lambda^{2}(G)\ge (\kappa -\delta )2m$ supplies this only
when $r\ge 3$. For $r=2$, we therefore quote part (a) from \cite{LLZ-ESS}.

\smallskip
The hypothesis $\lambda^{2}\ge (\kappa -\delta )2m$ says that
the Cauchy--Schwarz inequality 
$\lambda^{2}\le 2m\Phi_G(\pi )$ is
almost tight. Equality in this step forces the products
$x_ux_v$ to be constant over all edges of $G$, so
near-equality should force them to be nearly constant. Such a
near-constancy is exactly what converts a statement about the
weights $\pi_v$ into a statement about vertex and edge counts.
Lemma \ref{lem:good-edges} below makes it quantitative.
We fix a nonnegative unit Perron vector
$\bm{x}=(x_v)_{v\in V(G)}$ of $A(G)$, and we abbreviate
$\pi_v:=x_v^{2}$, $\Phi :=\Phi_G(\pi )$ and
$\lambda :=\lambda (G)$. The Cauchy--Schwarz inequality gives
\[
  \lambda^{2}
  =\biggl( 2\!\!\sum_{uv\in E(G)}\! x_ux_v \biggr)^{2}
  \ \le\ 2m\,\Phi ,
\]
with equality if and only if $x_ux_v= \frac{\lambda}{2m} = \frac{\Phi}{\lambda}$
for every $uv\in E(G)$.

\begin{lemma} \label{lem:good-edges}
Let $G$ be a graph with $m\ge 1$ edges, and let $\tau :=\Phi /\lambda$. Then
\[
  \mathcal{D}\ :=\!\!\sum_{uv\in E(G)}\!\!
  (x_ux_v-\tau )^{2}
  \ =\ \frac{\Phi}{2\lambda^{2}}
       \bigl(2m\Phi -\lambda^{2}\bigr)\ \ge\ 0 .
\]
Let $\gamma \in (0, 0.1)$, call an edge $uv\in E(G)$ \emph{good} if $|x_ux_v-\tau |\le \gamma \tau$ and \emph{bad} otherwise, and let $\mathcal{B}$ be the set of bad edges. 
If $  \Phi  \le \kappa +\sigma $ 
and  $ \lambda^{2} \ge (\kappa -\delta )2m$ 
for some $\sigma ,\delta \ge 0$ with
$\sigma +\delta \le \frac{1}{2}\gamma^{3}$, then
\[
  \sum_{uv\in \mathcal{B}}\pi_u\pi_v\ \le\ 4\gamma .
\]
\end{lemma}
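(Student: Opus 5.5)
Expand the square, then use the near-equality hypotheses to bound $\mathcal D$ and convert it into a weighted bound on bad edges.

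\textbf{Computing $\mathcal D$.} Expanding,
\[
\mathcal D=\sum_{uv\in E(G)}\pi_u\pi_v-2\tau\sum_{uv\in E(G)}x_ux_v+m\tau^2 .
\]
The three pieces are known:
\begin{itemize}
\item $\sum_{uv\in E(G)}\pi_u\pi_v=\Phi/2$;
\item $\sum_{uv\in E(G)}x_ux_v=\lambda/2$, since $\bm x^{\mathsf T}A\bm x=\lambda$;
\item $\tau=\Phi/\lambda$.
\end{itemize}
Hence
\[
\mathcal D=\frac{\Phi}{2}-\Phi+\frac{m\Phi^2}{\lambda^2}=\frac{\Phi}{2\lambda^2}\bigl(2m\Phi-\lambda^2\bigr).
\]
This is nonnegative because it is a sum of squares, which also recovers the Cauchy--Schwarz bound.

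\textbf{Upper bound on $\mathcal D$.} Insert the hypotheses $\Phi\le\kappa+\sigma$ and $\lambda^2\ge(\kappa-\delta)2m$. This gives
\[
2m\Phi-\lambda^2\le 2m(\sigma+\delta),\qquad \frac{\Phi}{\lambda^2}\le\frac{\kappa+\sigma}{(\kappa-\delta)2m}.
\]
Therefore
\[
\mathcal D\le\frac{(\kappa+\sigma)(\sigma+\delta)}{2(\kappa-\delta)}.
\]
We have $\kappa\ge\frac12$ and $\sigma+\delta\le\frac12\gamma^3<10^{-3}$. So the prefactor $(\kappa+\sigma)/(\kappa-\delta)$ is at most about $1.01$, and $\mathcal D\le 0.51\,(\sigma+\delta)\le\gamma^3/3$, say.

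\textbf{Lower bound on $\mathcal D$ from bad edges.} Each bad edge contributes at least $\gamma^2\tau^2$ to $\mathcal D$, so $|\mathcal B|\gamma^2\tau^2\le\mathcal D$. This counts bad edges, but the target is the weighted sum $\sum_{\mathcal B}\pi_u\pi_v=\sum_{\mathcal B}(x_ux_v)^2$. This conversion is the step needing care. I split the bad edges into two types.

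\begin{enumerate}
\item Bad edges with $x_ux_v<(1-\gamma)\tau$. Each has $(x_ux_v)^2<\tau^2$. Their total weight is therefore at most
\[
|\mathcal B|\tau^2\le\frac{\mathcal D}{\gamma^2}\le\frac{\gamma}{3}.
\]
\item Bad edges with $x_ux_v>(1+\gamma)\tau$. Write $y=x_ux_v$. Then $y-\tau>\gamma\tau$, so $y<(1+1/\gamma)(y-\tau)$. Squaring, $y^2\le(1+1/\gamma)^2(y-\tau)^2$, which gives a total weight of at most
\[
\Bigl(1+\frac1\gamma\Bigr)^2\mathcal D\le\frac{1.21}{\gamma^2}\cdot\frac{\gamma^3}{3}<\frac{\gamma}{2}.
\]
\end{enumerate}

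Adding the two types gives $\sum_{uv\in\mathcal B}\pi_u\pi_v<\gamma\le4\gamma$, with room to spare.

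\textbf{Where to watch.} The only delicate points are the constant bookkeeping and the domination $y^2\le(1+1/\gamma)^2(y-\tau)^2$ used for large-product edges. The cubic hypothesis $\sigma+\delta\le\frac12\gamma^3$ exactly absorbs the $\gamma^{-2}$ losses in both cases.
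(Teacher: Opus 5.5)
Your proof is correct and follows the paper's route: the same expansion gives the identity for $\mathcal D$, the two hypotheses give the same bound $\mathcal D=O(\sigma+\delta)$, and the same count $|\mathcal B|\gamma^2\tau^2\le\mathcal D$ controls the bad edges. The only difference is in converting that count into the weighted sum: you split the bad edges by the sign of $x_ux_v-\tau$, whereas the paper handles all of them at once via $\pi_u\pi_v\le 2(x_ux_v-\tau)^2+2\tau^2$, obtaining $\sum_{\mathcal B}\pi_u\pi_v\le 4\mathcal D/\gamma^2\le 4\gamma$ (your split merely yields a slightly better constant).
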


\begin{proof}
Since $\sum_{uv\in E}x_ux_v 
=\frac{\lambda}{2}$ and
$\sum_{uv\in E}(x_ux_v)^{2}=\frac{\Phi}{2}$, expanding the
square gives
\[
  \mathcal{D}=\frac{\Phi}{2}-\lambda \tau +m\tau^{2}
  =\frac{m\Phi^{2}}{\lambda^{2}}-\frac{\Phi}{2},
\]
which is the required identity. Since $\mathcal{D}\ge 0$, the identity also yields $\lambda^{2}\le 2m\Phi$.

Suppose now that $\Phi \le \kappa +\sigma$ and
$\lambda^{2}\ge (\kappa -\delta )2m$. Then
\[
  2m\Phi -\lambda^{2}\le 2m(\sigma +\delta )
  \qquad \text{and} \qquad
  \frac{m}{\lambda^{2}}\le \frac{1}{2(\kappa -\delta )} .
\]
Here $\delta \le \sigma +\delta \le \frac12\gamma^{3}<0.1$
and $\kappa =1-\frac1r\ge \frac12$, so that
$\kappa -\delta >\frac13$. Moreover
$\Phi \le \kappa +\sigma \le \frac43$. Substituting these
bounds into the identity, we obtain
\[
  \mathcal{D}\ \le\ \frac{\Phi}{2\lambda^{2}}
  \cdot 2m (\sigma + \delta)
  \ \le\ \frac{\Phi }{2(\kappa -\delta )} \cdot
  (\sigma + \delta) \
  \le\ 2 (\sigma + \delta) .
\] 
Every bad edge contributes more than $\gamma^{2}\tau^{2}$ to
$\mathcal{D}$, so
$|\mathcal{B}|\,\tau^{2}\le \mathcal{D}/\gamma^{2}$. Since
$\pi_u\pi_v=(x_ux_v)^{2}\le 2(x_ux_v-\tau )^{2}+2\tau^{2}$
and $\gamma \le 1$, we conclude that
\[
  \sum_{uv\in \mathcal{B}}\pi_u\pi_v
  \ \le\ 2\mathcal{D}+2|\mathcal{B}|\tau^{2}
  \ \le\ \frac{4\mathcal{D}}{\gamma^{2}}
  \ \le\ \frac{8(\sigma + \delta)}{\gamma^{2}}\ \le\ 4\gamma .
  \qedhere
\]
\end{proof}

The proof of Theorem \ref{thm:edge-spectral-stability}(b) has three steps. 
Step 1 verifies the hypotheses of Theorem
\ref{thm:weighted-stability} and records the partition it supplies. 
Step 2 uses Lemma \ref{lem:good-edges} to pin down $\pi$ on a set of almost full measure. Step 3 turns the resulting flatness into counts of pairs, and then into the edit distance.

\begin{proof}[{\bf Proof of Theorem
\ref{thm:edge-spectral-stability}(b)}] 
The proof depends on a small parameter
$0< \sigma < (10r)^{-10} $, whose value will be determined later. Given  $\sigma$, let $\eta$ be the
smaller of the two constants $\eta (F,\sigma )$ supplied by
Theorems \ref{thm-weighted-ESS} and
\ref{thm:weighted-stability}, and let $\delta_1$ be the
constant $\delta (F,\sigma )$ of Theorem
\ref{thm:weighted-stability}. 
 Constants implicit in $O(\cdot )$ depend on $r$ alone. 
Put $\gamma :=(4\sigma )^{1/3}$ and $\delta :=\min \{\delta_1,\sigma \}$. Let
$m_0=m_0(F,\varepsilon )$ be such that
\begin{equation}\label{eq:es-m0}
  \frac{20^{8}}{\sqrt{m_0}}\ \le\ \eta
  \qquad\text{and}\qquad
  \sqrt{m_0}\ \ge\ \frac{2r}{\gamma}.
\end{equation}
Suppose that $G$ is an $F$-free graph with $m\ge m_0$ edges and
$\lambda^{2}\ge (\kappa -\delta )2m$.

\medskip\noindent
\textit{Step 1. An $r$-partition with small weight.}\
First of all, we claim that 
\begin{equation}\label{eq:es-step1}
  \lVert \pi \rVert_\infty \le \eta
  \qquad\text{and}\qquad
  \kappa -\delta \ \le\ \Phi \ \le\ \kappa +\sigma .
\end{equation} 
We begin with the first estimate in \eqref{eq:es-step1}. The
hypothesis reads $\lambda^{2}\ge (\kappa -\delta )w_2(G)$,
and here $r\ge 3$ enters for the first time: it gives
$\kappa -\delta \ge \frac23-\sigma >\frac12+\frac1{20}$, so
that Lemma \ref{lem:perron-flatness} applies with $\ell =2$
and with $\frac{1}{20}$ in place of $\delta$. It yields
$\lVert \pi \rVert_\infty <20^{8}/\lambda \le
20^{8}/\sqrt{m}\le \eta$, the last step by \eqref{eq:es-m0}.

The flatness just proved is precisely the hypothesis of
Theorem \ref{thm-weighted-ESS}. Applying that theorem to $G$
with the weight vector $\pi$, we obtain the upper bound
$\Phi \le \kappa +\sigma$. The matching lower bound is
immediate from Lemma \ref{lem:good-edges}, which gives
$\Phi \ge \lambda^{2}/(2m)\ge \kappa -\delta$. This proves  \eqref{eq:es-step1}, as claimed.

We now find a partition with small missing weight. 
Since $\Phi \ge \kappa -\delta$ and $\delta \le \delta_1$, Theorem
\ref{thm:weighted-stability} applies to $G$ and $\pi$ and
produces a partition $V(G)=U_1\cup \cdots \cup U_r$ with
\begin{equation}\label{eq:es-partition}
  \Bigl|\pi (U_i)-\frac1r\Bigr|\le \sigma \quad 
  \text{for every}~i\in [r],
  \quad \text{and} \quad 
  M:=M(G,\pi ,U_1,\ldots ,U_r)\le \sigma .
\end{equation} 
Write $a_i:=\pi (U_i)$ and $b_i:= a_i - \frac{1}{r} $, 
so that $|b_i| \le \sigma$ for every $i\in [r]$. Since  $\sum_{i=1}^r a_i =1$, we have $\sum_{i=1}^r b_i =0$, and therefore   
$\sum_{i=1}^r a_i^{2} = \sum_{i=1}^r (\frac{1}{r^2} + \frac{2b_i}{r} + b_i^2) \le \frac1r+r\sigma^{2}$.  
Split $\Phi$ along the partition as
$\Phi =\Phi_{\mathrm{cr}}+\Phi_{\mathrm{in}}$, where
$\Phi_{\mathrm{cr}}$ collects the edges of $G$ joining two
distinct parts and $\Phi_{\mathrm{in}}$ collects the edges
lying inside a part. Every cross pair is either an edge of
$G$ or a missing cross pair counted by $M$, so
\eqref{eq:es-partition} gives $\Phi_{\rm cr} = (\sum_{v\in V} \pi_v)^2 -\sum_{i=1}^r a_i^{2}-2M \ge 1- \frac{1}{r} - r\sigma^2 - 2M \ge \kappa - 3\sigma$. 
Combining this with the upper bound in \eqref{eq:es-step1},
we bound the inside part 
$  \Phi_{\mathrm{in}} =  \Phi - \Phi_{\rm cr} \le (\kappa + \sigma) - (\kappa -3\sigma) \le 4\sigma$.

\smallskip\noindent
\textit{Step 2. A large core of nearly equal weights.}\
By \eqref{eq:es-step1} and
$\sigma + \delta \le 2\sigma \le \frac{1}{2} \gamma^3$, the
hypotheses of Lemma \ref{lem:good-edges} hold for $(G,\pi )$.
Call $K:=\{u\in V(G):\ \pi_u=(1\pm 4\gamma )\tau \}$ the
\emph{core} of $G$, 
and let us prove that $\pi (K)=1-O(\gamma )$.
The argument proceeds in two stages. We first show that almost all the weight sits on ``typical'' vertices, which are joined by good edges to all but a small fraction of the weight of the other parts, and we then show that every typical vertex belongs to $K$.

For $u\in V(G)$, let $i=i(u)$ be the index with $u\in U_{i}$, and let $\mathrm{def}(u)$ be the total weight of those
vertices of $V(G)\setminus U_{i}$ that are not joined to $u$ by a good edge. Every vertex counted by $\mathrm{def}(u)$ is either a non-neighbor of $u$ or a neighbor joined to $u$ by a bad edge, whence Lemma \ref{lem:good-edges} gives
\[
  \sum_{u\in V}\pi_u\,\mathrm{def} (u)
  \ \le\ 2 M+ 2 \sum_{uv\in \mathcal{B}}\! \pi_u\pi_v
  \ \le\ 2 \sigma +8\gamma \ \le\ 10\gamma .
\]
Call a vertex $u$ \emph{typical} if
$\mathrm{def}(u)\le \frac{1}{50r}$, and let $W$ be the set of
typical vertices of $G$. Since every vertex $u$ outside $W$ satisfies $\mathrm{def}(u) > \frac{1}{50r}$, we get
$\pi (V(G) \setminus W) \le 50r \cdot 10\gamma =O(\gamma)$.

Next, we claim that every typical vertex lies in the core.
Let $u$ be a typical vertex, 
and relabel the parts so that $u\in U_1$.
For $j\in \{2,3\}$, which is where $r\ge 3$ enters for the
second time, put
\[
  A_j:=\{v\in U_j:\ uv\ \text{is a good edge of}\ G\}.
\]
Every vertex of $U_j\setminus A_j$ is counted by
$\mathrm{def}(u)$, so \eqref{eq:es-partition} gives
$\pi (A_j)\ge \pi (U_j)-\mathrm{def}(u)\ge
\frac1r-\sigma -\frac{1}{50r}\ge \frac{9}{10r}$. 
 Draw $v\in A_2$ and $w\in A_3$ independently, with probabilities
${\pi_v}/{\pi (A_2)}$ and ${\pi_w}/{\pi (A_3)}$. 
Since $A_2$ and $A_3$
lie in different parts, the probability that $vw$ fails to be
a good edge is at most
\[
  \frac{M+\sum_{uv\in \mathcal{B}}\pi_u\pi_v}
       {\pi (A_2)\,\pi (A_3)}
  \ \le\ \frac{100\,r^{2}}{81}\cdot 5\gamma
  \ \le\ 7r^{2}\gamma \ <\ 1 .
\]
So we may fix $v$ and $w$ for which $uv$, $uw$ and $vw$ are
all good. It follows that 
\[
  \pi_u\ =\ \frac{(x_ux_v)(x_ux_w)}{x_vx_w}
  \ \in\ \Bigl[\tfrac{(1-\gamma )^{2}}{1+\gamma}\,\tau ,\
  \tfrac{(1+\gamma )^{2}}{1-\gamma}\,\tau \Bigr]
  \ \subseteq\ \bigl[(1-4\gamma )\tau ,\,
  (1+4\gamma )\tau \bigr]. 
\]
Hence $u\in K$, and so $W\subseteq K$. 
Since $\pi (V(G)\setminus W) = O(\gamma)$, we conclude that  
\begin{equation} \label{eq-large-core}
  \pi (V(G)\setminus K) = O(\gamma) 
  \quad \text{and} \quad 
\pi (K)= 1- O(\gamma). 
\end{equation}  
\textit{Step 3. From Perron weights to edge counts.} 
Steps 1 and 2 concern only the weights $\pi$,  
and we now convert them into edge counts. 
Write $n:=|K|$ and $K_i:=K\cap U_i$. Summing
$\pi_u=(1\pm 4\gamma )\tau$ over all $u\in K$ gives 
$(1+ 4\gamma)\tau n \ge \sum_{u\in K} \pi_u  = \pi (K) \ge 1- O(\gamma)$ by (\ref{eq-large-core}), 
so $n =(1\pm O(\gamma )) \frac{1}{\tau}$.
 Since $\tau =\Phi /\lambda$ and $\Phi =(1\pm 2\sigma )\kappa$ by
\eqref{eq:es-step1}, we get 
$n=(1\pm O(\gamma ))\lambda /\kappa$. 
By \eqref{eq:es-partition} and Step 2, we have 
$\pi (K_i)= \pi (U_i) \pm O(\gamma) = \frac1r\pm O(\gamma )$. 
Together with
$(\kappa -\delta )2m\le \lambda^{2}\le 2m\Phi \le
(\kappa +\sigma )2m$, we obtain
\begin{equation}\label{eq:es-core}
  \pi_u= \big(1\pm O(\gamma ) \big) \frac{1}{n}\ \ (u\in K),
  \quad
  |K_i|= \big(1\pm O(\gamma ) \big)\frac{n}{r} \ \ (i\in [r]),
  \quad
  \kappa n^{2}=\big(1\pm O(\gamma ) \big) 2m ,
\end{equation}
and in particular
$n^{2}\le (1+O(\gamma ))\,2m/\kappa \le 4m$.

\smallskip 
We are now ready to count edges. 
Since $\pi_u= \big(1\pm O(\gamma ) \big) \frac{1}{n}$ for every $u\in K$, 
every cross pair of $K$ that is a
non-edge contributes at least $\frac{1}{2n^{2}}$ to $M$, and
every edge with both ends in one $K_i$ contributes at least
$\frac{1}{n^{2}}$ to the inside part $\Phi_{\mathrm{in}}$ of $\Phi$. 
Since $M\le \sigma$ and $\Phi_{\mathrm{in}} \le 4\sigma$, 
there are at most $2\sigma n^{2}\le 8\gamma \, m$ cross non-edges of $K$, 
and at most $4\sigma n^{2}\le 16\gamma \, m$ edges inside the sets $K_i$. Let $T_0$ be the complete $r$-partite graph on $K$
with vertex parts $K_1,K_2,\ldots ,K_r$. 
By \eqref{eq:es-core}, 
we get  
$$ e(T_0)=\tfrac12\bigl(n^{2}- {\textstyle \sum_{i}} |K_i|^{2}\bigr) 
= \big(1\pm O(\gamma ) \big) \tfrac{1}{2} \Big(n^2 - r(\tfrac{n^2}{r^2}) \Big) 
= \big(1\pm O(\gamma ) \big)\tfrac{\kappa n^{2}}{2}
= \big(1\pm O(\gamma ) \big)m.$$ 
Since all edges of $T_0$ but at most
$8\gamma m$ of them are edges of $G$, we have 
$e_G(K) \ge e(T_0) - 8\gamma m \ge (1-O(\gamma ))m$. An edge of $G$ outside $T_0$
 either has an end outside $K$, of which there are
$m-e_G(K)=O(\gamma )m$, or lies inside some $K_i$, of which
there are at most $16\gamma m$. 
On the other hand, 
an edge of $T_0$ outside
$G$ is a cross non-edge inside $K$, of which there are at
most $8\gamma m$. Hence $d(G,T_0)=O(\gamma )m$.

Finally, rebalancing $T_0$ into $T_{K,r}$ moves at most
$O(\gamma )n$ vertices by \eqref{eq:es-core}, each move
changing at most $n$ pairs. 
Then $d(T_0,T_{K,r}) = O(\gamma)n^2 =O(\gamma) m$, 
so $d (G,T_{K,r})\le  C(r) \gamma \, m$, where $C(r)$
depends on $r$ alone. Choosing
$\sigma :=\frac14 (\frac{\varepsilon}{C(r)})^{3}$
makes $\gamma \le \varepsilon /C(r)$, hence
$d\bigl(G,T_{K,r}\bigr)\le \varepsilon m$, as required.
\end{proof}

\end{document}